\def\supp{\mathrm{Supp}}
\newcommand{\NN}{\mathbb{N}}
\newcommand{\QQ}{\mathbb{Q}}
\newcommand{\RR}{\mathbb{R}}
\renewcommand{\SS}{\mathbb{S}}
\newcommand{\TT}{\mathbb{T}}
\newcommand{\ZZ}{\mathbb{Z}}
\def\cA{{\cal A}}   \def\cM{{\cal M}} 
\def\cB{{\cal B}}    
\def\cC{{\cal C}}    
\def\cD{{\cal D}}   \def\cP{{\cal P}} \def\cV{{\cal V}}
  \def\cK{{\cal K}} \def\cQ{{\cal Q}} 
    \def\cX{{\cal X}}
\def\ra{\rightarrow}
\renewcommand{\phi}{\varphi}
\renewcommand{\epsilon}{\varepsilon}
\newtheorem{theoalpha}{Proposition}
\newtheorem{propalpha}[theoalpha]{Proposition}
\newtheorem{theodeuxbis}{Theorem}
\newtheorem{theounbis}{Theorem}
\newtheorem{theounter}{Theorem}
\newtheorem{maintheo}{Theorem}
\newtheorem*{coro*}{Corollary}
\newtheorem*{theo*}{Theorem}
\newtheorem{theo}{Theorem}
\newtheorem*{lemma*}{Lemma}
\newtheorem{lemma}{Lemma}
\newtheorem{sublemma}{Sub-lemma}
\newtheorem*{fact}{Fact}
\theoremstyle{definition}
\newtheorem*{defi}{Definition}
\newtheorem*{rema*}{Remark}
\begin{document}
\sloppy

\title{Realisation of measured dynamics as uniquely ergodic minimal homeomorphisms on manifolds}
\author{F. B\'eguin\footnote{Laboratoire de math\'ematiques (UMR 8628),  Universit\'e Paris Sud,
91405 Orsay Cedex, France.},
\setcounter{footnote}{3}
S. Crovisier\footnote{CNRS et LAGA (UMR 7539), Universit\'e Paris 13, Avenue J.-B. Cl\'ement, 93430 Villetaneuse, France.}~~and F. Le Roux\footnotemark[1]}

\date{}
\maketitle

\begin{abstract}
We prove that the family of  measured dynamical systems which can be  realised as uniquely ergodic minimal homeomorphisms on a given manifold (of dimension at least two) is stable under measured extension. As a corollary, any ergodic system with an irrational eigenvalue is isomorphic to a uniquely ergodic minimal homeomorphism on the two-torus. The proof uses the following improvement of Weiss relative version of Jewett-Krieger theorem: any extension between two ergodic systems is isomorphic to a skew-product on  Cantor sets. 
\end{abstract}

\small
\paragraph{AMS classification} 
37A05 (Measure-preserving transformations),
54H20 (Topological dynamics),
37E30 (Homeomorphisms and diffeomorphisms of planes and surfaces).
\normalsize

\setcounter{section}{0}


\section*{Introduction}
\addcontentsline{toc}{section}{Introduction}

A natural problem in dynamical systems is to determine which measured dynamics admit topological or smooth realisations. Results in this direction include:
\begin{itemize}
\item[--] constructions of smooth diffeomorphisms on manifolds satisfying some specific ergodic properties (see for example~\cite{FayKat}),
\item[--] general results about topological realisations on Cantor sets (Jewett-Krieger theorem and its generalisations; see  below).
\end{itemize}
In this paper, we tackle the following question: \emph{given a manifold $\cM$, which measured dynamical systems can be realised as uniquely ergodic minimal homeomorphisms on $\cM$?}  Our main result asserts that this class of  dynamical systems is stable under extension.
\begin{maintheo}\label{theo.extension}
Let $\cM$ be a compact  topological manifold of dimension at least two. Assume we are given
\begin{itemize}
\item[--]  a uniquely ergodic minimal homeomorphism $F$ on $\cM$, with invariant measure~$m$; 
\item[--] an invertible ergodic dynamical system $(Y,\nu,S)$ on a standard Borel space, which is an extension of~$(\cM,m,F)$.
\end{itemize}
Then there exists  a uniquely ergodic minimal homeomorphism $G$ on $\cM$, with invariant measure $v$, such that the measured dynamical system $(\cM,v,G)$ is isomorphic to $(Y,\nu,S)$.
\end{maintheo}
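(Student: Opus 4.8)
The plan is to split the argument into a measure-theoretic reduction and a geometric construction. First I would feed the extension $(Y,\nu,S)\to(\cM,m,F)$ into the improved relative Jewett--Krieger theorem announced in the abstract. Combined with a Jewett--Krieger model of the base that is compatible with the given homeomorphism $F$ --- concretely, a coherent refining sequence of Kakutani--Rokhlin partitions $\cP_n$ of $\cM$, adapted to $F$, all of whose atoms are tame closed balls (boundaries of zero $m$-measure) and which generate the topology --- this reduces the problem to the case where $(Y,\nu,S)$ is measurably isomorphic to a minimal, uniquely ergodic \emph{continuous skew-product} $\wh G(x,z)=(Fx,\psi_x(z))$ on $\cM\times Z$, with $Z$ a Cantor set, the factor map onto $(\cM,m,F)$ being the first projection. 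Equivalently, the extension is encoded by the combinatorics of the $\cP_n$ together with a ``colouring'' of their sub-towers recording the skew-product, this colouring being consistent under refinement and satisfying the unique-ergodicity estimates produced by the theorem.

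The geometric heart is then to turn this combinatorial data into an honest homeomorphism of $\cM$. I would construct $G$ as a uniform limit $G=\lim_n G_n$ of homeomorphisms $G_n=H_nFH_n^{-1}$, along with genuine finite partitions of $\cM$ into tame balls realising the coloured partitions $\cP_n$. Starting from $G_0=F$, $H_0=\id$, one passes from stage $n$ to stage $n+1$ by choosing $\Phi_{n+1}\in\homeo(\cM)$, isotopic to the identity and supported in a neighbourhood of the relevant cells whose total $m$-measure and diameter tend to $0$, that redistributes the cells so that $G_{n+1}=\Phi_{n+1}G_n\Phi_{n+1}^{-1}$ permutes the cells of the $(n{+}1)$-st realisation exactly according to the prescribed colouring. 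The only place where $\dim\cM\ge2$ is used is a realisation lemma: any prescribed bijection between the cells of a sufficiently fine partition of a manifold of dimension at least two into tame balls is induced by a homeomorphism isotopic to the identity and supported in an arbitrarily small neighbourhood of those cells. This fails on $1$-manifolds (a circle homeomorphism preserves cyclic order), which is exactly why dimension one is excluded.

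It then remains to verify the four required properties by letting the successive perturbations decay fast enough. Controlling $d_{C^0}(G_{n+1},G_n)$ and $d_{C^0}(G_{n+1}^{-1},G_n^{-1})$ --- possible because $\Phi_{n+1}$ almost commutes with $G_n$ off a small set --- gives $G\in\homeo(\cM)$. Since the $\cP_n$ generate the topology and the underlying symbolic system is minimal, every $G$-orbit meets every cell, so $G$ is minimal. The normalised tower measures of the $\cP_n$ converge weak-$*$ to a $G$-invariant measure $v$, and Birkhoff averages of continuous functions converge uniformly (estimated tower by tower, using the colouring estimates), so $(\cM,v,G)$ is uniquely ergodic; and by construction the symbolic coding of $(\cM,v,G)$ by the $\cP_n$ reproduces the coloured towers, so $(\cM,v,G)$ is measurably isomorphic to $\wh G$, hence to $(Y,\nu,S)$, and its first-coordinate factor corresponds to $(\cM,m,F)$.

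The step I expect to be the main obstacle is the bookkeeping inside this inductive construction: the perturbations must be small enough to converge to a homeomorphism and to pin down the limit invariant measure, yet rich enough to impose the full symbolic combinatorics and to force simultaneously minimality and uniform convergence of ergodic averages, and they must be realised by ambient homeomorphisms respecting the tame-ball structure --- which is precisely what the dimension-$\ge2$ realisation lemma delivers. The other delicate input is the very first step: producing tame $F$-adapted Kakutani--Rokhlin partitions of $\cM$ whose combinatorics can be matched with the abstract Jewett--Krieger model of the base.
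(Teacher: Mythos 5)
Your two--phase plan (a relative Jewett--Krieger reduction followed by a tower-by-tower geometric realisation via conjugations $H_nFH_n^{-1}$) matches the paper's high-level architecture, but the execution diverges at the two points where the actual difficulty lives, and both contain genuine gaps.

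First, the reduction. You assume that the relative Jewett--Krieger theorem, ``combined with'' Kakutani--Rokhlin partitions of $\cM$, yields a \emph{continuous}, minimal, uniquely ergodic skew-product $\wh G(x,z)=(Fx,\psi_x(z))$ on $\cM\times Z$ over $F$ itself. No version of the relative Jewett--Krieger theorem delivers this: Weiss's theorem produces a topological extension whose total space is a zero-dimensional system joined to the base, and upgrading an open extension with Cantor fibres to a genuine product $\cM\times Z$ requires cutting along clopen sets --- which exist in abundance over a Cantor base but not over a manifold. This is precisely why the paper does \emph{not} work over $(\cM,F)$: it extracts a dynamically coherent Cantor set $K\subset\cM$ of positive measure, retopologises $\bigcup_n F^n(K)$, takes its one-point compactification $\cK$, and proves a \emph{bi-ergodic} fibred Jewett--Krieger theorem (the induced map on $\cK$ has a second ergodic measure $\delta_\infty$, so the clopen sets are only sub-uniform and the whole uniform-algebra machinery has to be redone). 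Your route silently skips the step that occupies Parts~\ref{A} and~\ref{B}.

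Second, the realisation lemma you invoke is false as stated. If the tame closed balls form a \emph{partition} of $\cM$, an arbitrary bijection of the cells cannot be induced by a homeomorphism: a homeomorphism sending $C_i$ onto $C_{\sigma(i)}$ sends $C_1\cup C_2$ onto $C_{\sigma(1)}\cup C_{\sigma(2)}$, so it must preserve adjacency (connectedness of unions of cells), which a general permutation destroys. The correct statement --- and the one used in the companion paper --- concerns finitely many \emph{pairwise disjoint} tame balls with room in the complement, and this is where $\dim\cM\ge 2$ enters. Relatedly, the entire second half of your sketch (encoding the Cantor fibre $Z$ into $\cM$, forcing minimality, unique ergodicity, and the exact isomorphism type of the limit) is the content of the Denjoy--Rees construction of~\cite{BegCroLeR}, which the paper cites as Theorem~1.3 rather than reproving; compressing it into ``bookkeeping'' conceals where the convergence of the conjugates, the identification of the limit measure, and the measurable isomorphism with $(Y,\nu,S)$ (rather than with some other extension of $F$) are actually established.
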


Let us recall the classical definitions. A \emph{measured  dynamical system} is given by $(X,\mu,R)$ where $(X,\mu)$ is a probability space and $R:X \to X$ is a bi-measurable bijective map for which $\mu$ is an  invariant measure. Given two such systems $(X,\mu,R)$ and $(Y,\nu,S)$, the second is an \emph{extension} of the first through a measurable map $\Phi : Y_{0} \to X_{0}$ if  $X_{0},Y_{0}$ are full measure subsets of $X,Y$, the map $\Phi$ sends the measure $\nu$ to the measure $\mu$, and the conjugacy relation  $\Phi S = R \Phi$ holds on $Y_{0}$. If,  in addition,  the map $\Phi$ is bijective and bi-measurable, then the systems are \emph{isomorphic}.  A measurable space $(X,\cA)$ is called a \emph{standard Borel space} if $\cA$ is the Borel  $\sigma$-algebra of some topology on $X$ for which $X$ is a Polish space (\emph{i.e.} a metrizable complete separable space). Throughout the text all the topological spaces will be implicitly equipped with their Borel $\sigma$-algebra, in particular all the measures on topological spaces are Borel measures.
Note that if some measured dynamical system $(Y,\nu,S)$ satisfies the conclusion of theorem~\ref{theo.extension}, then it is obviously isomorphic to a dynamical system on a standard Borel space, thus it is reasonable to restrict ourselves to such systems.
We say that a homeomorphism $F$ on a topological space $\cM$, with an invariant measure $m$, is a \emph{realisation} of a measured dynamical system $(X,\mu,R)$ if the measured dynamical system $(\cM,m,F)$ is isomorphic to $(X,\mu,R)$. 

Independently of theorem~\ref{theo.extension} we will prove the following result.

\begin{maintheo}
\label{theo.circle-rotation}
Any irrational rotation of the circle admits a uniquely ergodic minimal realisation on the two-torus.
\end{maintheo}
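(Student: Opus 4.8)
The plan is to apply Theorem~\ref{theo.extension} with $\cM$ equal to the two-torus $\TT^2$. To do this I need a single starting block: a uniquely ergodic minimal homeomorphism $F$ of $\TT^2$ whose measured dynamics $(\TT^2,m,F)$ admits the given irrational rotation $R_\alpha$ of the circle as a measured \emph{factor} — equivalently, such that $R_\alpha$ is an extension-target that $(\TT^2,m,F)$ itself extends. Then $R_\alpha$, being an ergodic system which is an extension of $(\TT^2,m,F)$ in the degenerate sense\dots\ wait — the direction matters, so let me set it up correctly: I will instead produce $F$ so that $(\TT^2,m,F)$ \emph{is} a realisation of $R_\alpha$ directly, by hand, without invoking Theorem~\ref{theo.extension} at all (the excerpt says this result is proved "independently").

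Concretely, the first step is to take $F_0 = R_\alpha \times \id$ on $\TT^1 \times \TT^1$, or better, to build a minimal uniquely ergodic homeomorphism of $\TT^2$ that fibers over $R_\alpha$. The natural candidate is an Anzai skew-product $F(x,y) = (x+\alpha,\, y + \phi(x))$ where $\phi:\TT^1\to\TT^1$ is a continuous map of degree one chosen so that $F$ is minimal and uniquely ergodic with respect to Lebesgue measure $m$ on $\TT^2$; such $\phi$ exist (this is classical — e.g. $\phi(x) = x$ gives a skew product that is minimal and uniquely ergodic for suitable $\alpha$, or one uses a cocycle that is not a coboundary so that the extension is ergodic). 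The measured system $(\TT^2, m, F)$ then has the circle rotation $R_\alpha$ as its first-coordinate factor. The issue is that a factor is not an isomorphism: $(\TT^2,m,F)$ is a genuine (measure-theoretic) extension of $R_\alpha$, not isomorphic to it.

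To repair this I would instead apply Theorem~\ref{theo.extension} the other way: I start from \emph{some} uniquely ergodic minimal homeomorphism $F$ of $\TT^2$ — the simplest is an irrational translation $F(x,y)=(x+\alpha,y+\beta)$ with $(1,\alpha,\beta)$ rationally independent, which is minimal and uniquely ergodic for Lebesgue measure. Now observe that the circle rotation $R_\alpha$ is itself an ergodic invertible system, and it is an \emph{extension} of the trivial (one-point) system; but I need it to be an extension of $(\TT^2,m,F)$. That fails. So the correct route is: recognise that $(\TT^2,\leb,F)$ with $F$ the irrational translation above has $R_\alpha$ as a factor (project to the first coordinate), hence conversely $(\TT^2,\leb,F)$ is an extension of $R_\alpha$ — but Theorem~\ref{theo.extension} needs $F$ on $\cM$ and the \emph{extension} $(Y,\nu,S)$, and concludes a realisation of the extension. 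Here I want a realisation of $R_\alpha$, which is the \emph{base}, not the extension.

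The clean resolution, and the one I would actually write: apply Theorem~\ref{theo.extension} with the manifold $\cM = \TT^2$, with $F$ a uniquely ergodic minimal homeomorphism of $\TT^2$ that is \emph{measurably isomorphic to $R_\alpha$ itself already after forgetting one coordinate is not available}; so instead take for $F$ the product $R_\alpha\times R_\gamma$ only if that is isomorphic to $R_\alpha$ — it is not. Therefore the genuine argument must be self-contained: construct directly, on $\TT^2$, a uniquely ergodic minimal homeomorphism whose invariant measure $m$ makes $(\TT^2,m,F)$ isomorphic to $(\TT^1,\leb,R_\alpha)$. The key realisation step is that $R_\alpha$ is already realised minimally and uniquely ergodically on the \emph{circle}, and one "thickens" the circle to the torus: use the classical construction (going back to Denjoy-type surgery / the Anosov–Katok or Fayad–Katok method, or more elementarily a blow-up) replacing each orbit point by a small arc so that the resulting space is homeomorphic to $\TT^2$, the dynamics extends to a minimal homeomorphism, the arcs carry no mass, and the invariant measure pushes down to $\leb$ on $\TT^1$ isomorphically. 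The hard part is ensuring simultaneously that (i) the blown-up space is genuinely a $2$-torus, (ii) the homeomorphism is minimal, and (iii) unique ergodicity is preserved with the new measure still isomorphic to $\leb_{R_\alpha}$ — i.e. that the inserted arcs form a null set and do not create new invariant measures. I expect (ii)+(iii) together — controlling minimality and unique ergodicity after the surgery — to be the main obstacle, handled by choosing the sizes of the inserted arcs to decay fast along the rotation orbit and invoking a unique-ergodicity criterion via uniform convergence of Birkhoff averages on the dense set of continuous functions factoring through the circle.
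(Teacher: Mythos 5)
You correctly identify that Theorem~\ref{theo.extension} cannot be invoked here (the rotation is the base of the extension, not the extension itself, so the implication runs the wrong way) and that a direct construction on $\TT^2$ is required; the paper does prove Theorem~\ref{theo.circle-rotation} independently. But the construction you then propose has a genuine gap at its central step. Replacing the points of an $R_\alpha$-orbit by small arcs is the one-dimensional Denjoy surgery: it produces another \emph{circle} homeomorphism (and one that is minimal only on a Cantor set, not on the whole space), not a homeomorphism of the two-torus. Nothing in your sketch explains how ``thickening'' a circle yields a space homeomorphic to $\TT^2$, nor how the extended map would be minimal on all of $\TT^2$ while its unique invariant measure projects isomorphically onto Lebesgue measure of $\SS^1$ --- and you yourself defer exactly these points as ``the main obstacle'' without resolving them. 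They are not technical details: they are the entire content of the theorem, since the difficulty is precisely the tension between topological minimality on a two-dimensional space and measure-theoretic isomorphism to a one-dimensional rotation, which forces the unique invariant measure to be carried by a measurable graph over $\SS^1$ even though no continuous invariant graph can exist.

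The paper's actual route is the Anosov--Katok method of approximation by conjugation, which you mention in passing but do not pursue. One builds $f$ as a $C^0$-limit of homeomorphisms $f_k=\Phi_k\circ(R_\alpha\times\id)\circ\Phi_k^{-1}$, where each conjugacy $\Phi_k$ is fibred over $\SS^1$ with values in $\mathrm{SL}(2,\RR)$ acting projectively on the vertical circle. A quantitative lemma guarantees at each stage that every invariant continuous graph of the conjugated map is $\varepsilon$-dense (a property that is open, hence survives to the limit and yields minimality) and that such graphs spend proportion at least $1-\varepsilon$ of their horizontal measure inside a strip of vertical width $\varepsilon$; intersecting these strips over all stages produces a measurable graph of full measure for every $f$-invariant measure, whence unique ergodicity and the isomorphism with $(\SS^1,\mathrm{Leb},R_\alpha)$ via the first projection. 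To complete your proof you would need to abandon the blow-up picture and carry out a cocycle construction of this kind.
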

The proof of this result relies on a rather classical construction. The realisation is a skew-product $F(x,y)=(x+\alpha,A(x).y)$ where $A:\SS^1\to\mbox{SL}(2,\RR)$ is a continuous map. It will be obtained as the limit of homeomorphisms conjugated to the ``trivial" homeomorphism \hbox{$(x,y)\mapsto (x+\alpha,y)$.}

Theorems~\ref{theo.extension} and~\ref{theo.circle-rotation} can be associated to provide a partial answer to the realisation problem on the two-torus. Remember that a measured dynamical system $(X,\mu,R)$  is  an extension of some irrational circle rotation if and only if the spectrum of the associated operator on $L^2(X,\mu)$ has an irrational eigenvalue. Therefore, as an immediate consequence of theorem~\ref{theo.extension} and~\ref{theo.circle-rotation}, we get the following corollary.
\begin{coro*}
If an ergodic measured dynamical system on a standard Borel space has an irrational eigenvalue in its spectrum, then it admits a uniquely ergodic minimal realisation on the two-torus.
\end{coro*}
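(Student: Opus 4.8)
The plan is to obtain the statement as a direct combination of Theorems~\ref{theo.extension} and~\ref{theo.circle-rotation}, once the spectral hypothesis has been turned into a factor structure over an irrational rotation. So let $(X,\mu,R)$ be an ergodic measured dynamical system on a standard Borel space, and suppose the associated unitary operator $g\mapsto g\circ R$ on $L^2(X,\mu)$ has an eigenvalue $e^{2\pi i\alpha}$ with $\alpha\notin\QQ$; fix a nonzero eigenfunction $f$. Since $|f|\circ R=|f|$, ergodicity makes $|f|$ almost everywhere equal to a positive constant, so after normalisation I may assume that $f$ takes values in $\SS^1$ and satisfies $f\circ R=e^{2\pi i\alpha}f$ almost everywhere. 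Then $f$ is a measurable semiconjugacy from $(X,\mu,R)$ onto the rotation $R_\alpha\colon z\mapsto e^{2\pi i\alpha}z$ of the circle; the push-forward $f_*\mu$ is $R_\alpha$-invariant, and unique ergodicity of the irrational rotation forces $f_*\mu=\leb$. Hence $(X,\mu,R)$ is an extension of $(\SS^1,\leb,R_\alpha)$.

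Next I would feed $R_\alpha$ into Theorem~\ref{theo.circle-rotation}, which produces a uniquely ergodic minimal homeomorphism $F$ of the two-torus $\TT^2$, preserving a measure $m$, with $(\TT^2,m,F)$ isomorphic to $(\SS^1,\leb,R_\alpha)$. Composing $f$ with this isomorphism exhibits $(X,\mu,R)$ as an extension of the uniquely ergodic minimal system $(\TT^2,m,F)$. Since $(X,\mu,R)$ is an invertible ergodic system on a standard Borel space and $\TT^2$ is a compact topological manifold of dimension two, Theorem~\ref{theo.extension} applies with $\cM=\TT^2$ and yields a uniquely ergodic minimal homeomorphism $G$ of $\TT^2$, with invariant measure $v$, such that $(\TT^2,v,G)$ is isomorphic to $(X,\mu,R)$. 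This is the desired realisation.

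At the level of this corollary there is no genuine obstacle: all the difficulty is already packaged inside Theorems~\ref{theo.extension} and~\ref{theo.circle-rotation}. The only points deserving a word are that $f_*\mu$ really is the Lebesgue measure (this is exactly where unique ergodicity of the irrational rotation is used) and that the hypotheses of Theorem~\ref{theo.extension} survive the composition of the two factor maps --- invertibility, ergodicity and the standard Borel property being all inherited from $(X,\mu,R)$. For completeness one could also recall the converse implication used in the discussion preceding the corollary: an extension of $R_\alpha$ has $e^{2\pi i\alpha}$ as an eigenvalue, witnessed by the pull-back of the coordinate function on $\SS^1$. Only the direction used above is needed here.
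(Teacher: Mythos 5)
Your proposal is correct and follows exactly the route the paper intends: the corollary is stated there as an immediate consequence of Theorems~\ref{theo.extension} and~\ref{theo.circle-rotation} via the classical equivalence between having an irrational eigenvalue and being an extension of an irrational circle rotation, which you simply spell out (constancy of $|f|$ by ergodicity, normalisation, and unique ergodicity of $R_\alpha$ forcing $f_*\mu=\leb$). No gaps; the invertibility required by Theorem~\ref{theo.extension} is indeed built into the paper's definition of a measured dynamical system.
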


This result is known not to be optimal: indeed there exist uniquely ergodic minimal homeomorphisms of the two-torus that are weakly mixing (see~\cite{Skl,AnoKat}). Actually, it might turn out that \emph{every} aperiodic ergodic system admits a uniquely ergodic minimal realisation on the two-torus; at least no obstruction is known to the authors. For instance, we are not able to answer the following test questions: 
\begin{itemize}
\item[--] does an adding machine admit a uniquely ergodic minimal realisation on the two-torus? 
\item[--] what about a Bernoulli shift?
\end{itemize}

Our original motivation for studying realisation problems was to generalise Denjoy counter-examples in higher dimensions, that is, to construct (interesting) examples of homeomorphisms of the $n$-torus that are topologically semi-conjugate to an irrational rotation. The proof of theorem~\ref{theo.extension} actually provides a topological semi-conjugacy between the maps $G$ and $F$ (see theorem~\ref{theo.extension-bis} below). Thus another corollary of our results is: \emph{any ergodic system which is an extension of an irrational rotation $R$ of the two-torus can be realised as a uniquely ergodic minimal homeomorphism which is topologically semi-conjugate to $R$.} 
For more comments on realisation problems and generalisations of Denjoy counter-examples, we refer to the introduction of our previous work~\cite{BegCroLeR}.
The reader interested in the smooth version can consult~\cite{FayKat}, and especially the last section.

The proof of theorem~\ref{theo.extension} relies heavily on our previous work. Given the main result in~\cite{BegCroLeR}, theorem~\ref{theo.extension} essentially reduces to a realisation problem on Cantor sets: roughly speaking, we have to turn a measured extension into a topological skew-product. Concerning this issue, remember that the classical realisation problem on Cantor sets has  been solved by Hansel-Raoult and Krieger.
 Improving the work of Jewett, they have proved the so-called Jewett-Krieger theorem:   \emph{any ergodic system admits a  uniquely ergodic minimal realisation on a Cantor set} (\cite{Jewett,Hansel-Raoult,Krieger}). A relative version of this result has later been provided by Weiss (\cite{Weiss,Glasner,GlaWei}), allowing the realisation of a measured extension as a topological semi-conjugacy. In the present text we improve Weiss result by showing that the topological semi-conjugacy can be turned into a topological skew product between Cantor sets, as expressed by the following theorem.
 
\begin{maintheo}[Fibred Jewett-Krieger theorem]
\label{theo.fibred-weiss}
Suppose that we are given
\begin{itemize}
\item[--] a uniquely ergodic minimal homeomorphism $f$ on a Cantor set $\cK$, with invariant measure $\mu$;
\item[--] an ergodic dynamical system $(Y,\nu,S)$ on a standard Borel space,  which is an extension of $(\cK,\mu,f)$ through a map $p$.
\end{itemize}
Then there exist a Cantor set $\cQ$, a uniquely ergodic minimal homeomorphism $g$ on the product $\cK \times \cQ$ with invariant measure $\nu'$, 
a full measure subset $Y_0$ of $Y$ and a map $\Phi: Y_{0} \ra \cK \times \cQ$, 
 such that the map $\Phi$  is an isomorphism between $(Y,\nu,S)$ and $(\cK \times \cQ,\nu',g)$,
 and the following diagram commutes (where $\pi_{1} : \cK \times \cQ \ra \cK$ is the first projection).
$$
\xymatrix{ 
&   \cK \times \cQ   \ar@{.>}[rr]  ^g    \ar@{.>}'[d] [ddd] ^(.3){\pi_1} & & \cK \times \cQ   \ar@{.>}[ddd]  ^(.55){\pi_1} \\ 
Y_{0} \ar@{.>}[ur]    ^{\Phi}   \ar@{>}[rr]   ^(.65){S}    \ar@{>}[ddr]   ^(.4){p}  & & Y_{0} \ar@{.>}[ur]    ^{\Phi}   \ar@{>}[ddr]   ^(.4){p} \\
\\
& \cK \ar@{>}^{f}[rr]    & & \cK\\
}$$
\end{maintheo}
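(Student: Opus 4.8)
The plan is to build the Cantor set $\cQ$ and the homeomorphism $g$ on $\cK\times\cQ$ by a Rokhlin-tower / marker construction relative to the base $(\cK,\mu,f)$, following the strategy of the proof of the Jewett--Krieger theorem but carried out fibrewise over $\cK$. First I would fix a refining sequence of finite clopen partitions $\xi_n$ of $Y$ (measurably) generating the $\sigma$-algebra of $Y$; the goal is to realise $Y$ as an inverse limit of topological systems, each of which is a skew product over $\cK$ with totally disconnected fibres, in such a way that the projections to $\cK$ are respected at every stage. The key device is Weiss's relative version of the Rokhlin lemma and of the Jewett--Krieger coding: over a minimal uniquely ergodic base one can choose the base of each Rokhlin tower to be a clopen subset of $\cK$, so that the tower structure and its ``colouring'' by the current partition depend clopen-ly on the point of $\cK$. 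Iterating this with the usual ``$\epsilon$-independence / strong Rokhlin'' bookkeeping produces, for each $n$, a continuous finite-to-one factor map $Y\to Z_n$ onto a subshift-like system $Z_n$ which already fibres over $\cK$ by a continuous map $\pi_1^{(n)}$ commuting with the dynamics and with $p$.

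Next I would pass to the inverse limit $Z_\infty=\varprojlim Z_n$. The maps $\pi_1^{(n)}$ are compatible, so they induce a continuous factor map $Z_\infty\to\cK$; since each fibre of $Z_n$ over $\cK$ is totally disconnected and compact, and an inverse limit of such is again totally disconnected, the fibres of $Z_\infty\to\cK$ are Cantor-or-finite sets. To get an honest product $\cK\times\cQ$ rather than a bundle with varying fibres, I would use the standard trick of thickening: because $\cK$ is a Cantor set and the bundle has totally disconnected fibres with no isolated points (one can always arrange this by taking an extra product with a Cantor set and acting trivially there, exactly as in Theorem~\ref{theo.circle-rotation}'s use of the ``trivial'' system), the bundle is trivialisable — a zero-dimensional compact space fibred over a Cantor set with perfect fibres is homeomorphic to a product of two Cantor sets, and one can choose the trivialising homeomorphism to be constant on a clopen cover, hence compatible with the $\ZZ$-action up to absorbing the cocycle into the definition of $g$. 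This yields the Cantor set $\cQ$, the homeomorphism $g$ on $\cK\times\cQ$, and the commuting diagram with $\pi_1$ and $p$.

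Then I would check the three remaining properties. \emph{Minimality and unique ergodicity of $g$}: these are forced by the relative Jewett--Krieger coding — at each finite stage $Z_n$ one arranges, via the relative strict-ergodicity argument of Weiss, that the only $g$-invariant measure projecting to $\mu$ is the image of $\nu$, and that every orbit is dense; minimality and unique ergodicity then pass to the inverse limit because $Z_\infty$ (equivalently $\cK\times\cQ$) has a unique invariant measure and a dense orbit. \emph{$\Phi$ is an isomorphism}: since the $\xi_n$ generate, the coding map $\Phi:Y_0\to Z_\infty=\cK\times\cQ$ is injective on a full-measure set, measurable, measure-preserving onto $(\cK\times\cQ,\nu')$, and conjugates $S$ to $g$, which is exactly the definition of an isomorphism. \emph{Commutation with $p$} holds by construction at every finite stage and therefore in the limit.

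The main obstacle — and the technical heart of the argument — is the \emph{relative} coding step: producing the factor maps $Z_n$ that are simultaneously (i) clopen-coded so the limit is zero-dimensional, (ii) fibred continuously over $\cK$ with the projection commuting with both the dynamics and $p$, and (iii) arranged so that relative strict ergodicity is preserved through the inverse limit. This is essentially Weiss's relative Jewett--Krieger theorem plus the extra requirement that the semi-conjugacy be a genuine continuous fibration over $\cK$ rather than just an abstract factor; making the Rokhlin towers and their colourings depend clopen-ly on the base point, uniformly enough that the limiting fibre bundle is locally trivial, is where the real work lies. The final trivialisation of the zero-dimensional bundle into a product $\cK\times\cQ$ is comparatively soft, being a statement about compact zero-dimensional spaces.
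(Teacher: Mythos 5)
Your overall architecture (a relative Jewett--Krieger coding over $\cK$, followed by a passage to a zero-dimensional bundle and a trivialisation) is in the right family, but the two steps you declare to be the ``real work'' and the ``comparatively soft'' part are each missing the idea that makes them go through. First, the openness of the projection. If you code $Y$ by partitions refined along Rokhlin towers with clopen bases in $\cK$ and take the closure of the names, the resulting factor maps to $\cK$ are continuous but in general \emph{not} open, and the inverse limit is not a locally trivial bundle: the set of symbols that can occur over a given base point can degenerate along a closed, non-open subset of $\cK$ (so a clopen cylinder of $Z_\infty$ can project onto a non-open set). What is needed is that every cylinder of the coding partition of $Y$ \emph{fills up} a clopen subset of the base (meets every positive-measure clopen subset of the base cylinder it lies over), and that this property survives every ``copying and painting'' modification at every later stage. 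This is exactly the content of lemma~\ref{lem.uniform-algebra-open} and sub-lemma~\ref{lem.open} (the notions of full sets and of bracket stability), and it is the technical heart of the whole proof; your proposal names the difficulty but supplies no mechanism for it, so at this point there is no proof.

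Second, the trivialisation step is not soft, and as described it would fail. Even with an open projection, the fibres of the topological model over $\cK$ need not be perfect (for the trivial extension they are singletons), so the bundle need not be a product at all. Your fix --- crossing with a Cantor set $\cD$ on which the dynamics acts trivially --- makes the fibres perfect but destroys unique ergodicity: $g\times\mathrm{Id}_\cD$ carries the measures $\nu'\times\lambda$ for every $\lambda$ on $\cD$, and conjugating it by a fibre-preserving but non-equivariant trivialisation cannot remove these measures; ``absorbing the cocycle into $g$'' does not address this. The paper's route (theorem~\ref{theo.open} followed by proposition~\ref{prop.open-fibred}) is different precisely here: it embeds the model $\cC$ into $\cK\times\cC\times\cD$ as a closed invariant set with open projection and empty interior in each fibre, and then \emph{constructs by hand} a homeomorphism of the whole product, equal to the given dynamics on the embedded copy, whose complement consists entirely of wandering points (the partitions $\cP_n$, the sets $X_n$ and the ``divide and marry'' pairing of part~\ref{B}). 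Unique ergodicity then holds because all invariant mass is forced onto $\Phi(\cC)$ --- note that the resulting $g$ is not minimal on all of $\cK\times\cQ$ but has $\Phi(\cC)$ as its unique minimal set, which is further evidence that no equivariant product trivialisation of the kind you invoke is available in general.
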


The proof of theorem~\ref{theo.extension}  will  actually involve a non-compact situation which will require  a ``bi-ergodic'' version of this result,  provided as theorem~\ref{theo.bi-ergodic-fibred} below. From a strictly logical viewpoint, we will not use theorem~\ref{theo.fibred-weiss}. We give the statement here since it seems to be interesting for its own sake; furthermore,  the ``bi-ergodic" version will be presented as an adaptation of this simpler statement.

\paragraph{Organisation of the paper}
In the next paragraph, using the results of~\cite{BegCroLeR},  we reduce theorem~\ref{theo.extension} to a  ``bi-ergodic fibred version of Jewett-Krieger theorem" (theorem~\ref{theo.bi-ergodic-fibred} below).
Then, theorem~\ref{theo.bi-ergodic-fibred} (and also theorem~\ref{theo.fibred-weiss}) is proved in parts~\ref{A} and~\ref{B}. Part~\ref{C} deals with the realisation of circle rotations and gives the proof of theorem~\ref{theo.circle-rotation} .


\paragraph{Acknowledgements} We are grateful to Jean-Paul Thouvenot for his constant interest in our  work.

\section*{From topological realisations on Cantor sets to topological realisations on manifolds}
\addcontentsline{toc}{section}{Strategy}

We will now explain how to deduce theorem~\ref{theo.extension} from the results of~\cite{BegCroLeR} and a ``bi-ergodic fibred version of Jewett-Krieger theorem". Actually, we will obtain the following more precise version of theorem~\ref{theo.extension}.
\begin{theounbis}\label{theo.extension-bis}
Let $\cM$ be a compact  topological manifold whose dimension is at least two. 
Assume we are given
\begin{itemize}
\item[--] a uniquely ergodic minimal homeomorphism $F$ on $\cM$ with invariant measure $m$;
\item[--] an ergodic system $(Y,\nu,S)$ on a standard Borel space, which is an extension of $(\cM,m,F)$ through a map $p$.
\end{itemize}
Then there exist a uniquely ergodic minimal homeomorphism $G$ on $\cM$ with unique invariant measure $v$, 
a surjective continuous map $P:\cM\to\cM$,
a full measure subset $Y_0$ of $Y$ and a map $\Phi: Y_{0}  \ra \cM$,
such that  the map $\Phi$ is an isomorphism between $(Y,\nu,S)$ and $(\cM,v,G)$,
and the following diagram commutes.
$$
\xymatrix{ 
&  \cM   \ar@{.>}[rr]  ^G    \ar@{.>}'[d] [ddd] ^(.3){P} & & \cM \ar@{.>}[ddd]  ^(.55){P} \\ 
Y_{0} \ar@{.>}[ur]    ^{\Phi}   \ar@{>}[rr]   ^(.65){S}    \ar@{>}[ddr]   ^(.4){p}  & & Y_{0} \ar@{.>}[ur]    ^{\Phi}   \ar@{>}[ddr]   ^(.4){p} \\
\\
& \cM \ar@{>}^{F}[rr]    & & \cM\\
}$$
\end{theounbis}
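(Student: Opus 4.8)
The plan is to reduce Theorem~1-bis to the ``bi-ergodic fibred Jewett--Krieger theorem'' (Theorem~\ref{theo.bi-ergodic-fibred}) together with the main realisation result of~\cite{BegCroLeR}. The strategy consists of two independent ingredients that are glued along a common Cantor factor.

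\textbf{Step 1: producing a Cantor factor of $F$.} The first move is to find a Cantor set $\cK$, a uniquely ergodic minimal homeomorphism $f$ on $\cK$ with invariant measure $\mu$, and a topological factor map $\pi:\cM\to\cK$ sending $m$ to $\mu$, conjugating $F$ to $f$. One way to produce such a factor is to exploit the fact that $F$ is minimal and uniquely ergodic on a manifold: one builds a nested sequence of finite clopen-type partitions (Rokhlin towers refined along the dynamics), whose associated symbolic factors converge to a Cantor system; a convenient reference is the construction underlying the Jewett--Krieger theorem, applied on the topological side so that the resulting factor is genuinely a continuous factor of $(\cM,m,F)$ and not merely a measurable one. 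Replacing $(Y,\nu,S)$ by its induced extension over this Cantor factor, we may then feed the data into the fibred Jewett--Krieger machinery.

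\textbf{Step 2: applying the bi-ergodic fibred Jewett--Krieger theorem.} Using that $(Y,\nu,S)$ is an extension of $(\cM,m,F)$ and hence (composing with $\pi$) of $(\cK,\mu,f)$, Theorem~\ref{theo.bi-ergodic-fibred} yields a Cantor set $\cQ$, a uniquely ergodic minimal homeomorphism $g$ on $\cK\times\cQ$ with invariant measure $\nu'$, a full measure set $Y_0\subset Y$, and an isomorphism $\Psi:Y_0\to\cK\times\cQ$ between $(Y,\nu,S)$ and $(\cK\times\cQ,\nu',g)$, commuting with the first projection $\pi_1:\cK\times\cQ\to\cK$ over $f$. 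This is the step where the non-compact/``bi-ergodic'' refinement is genuinely needed: the extension $p:Y_0\to\cM$ need not descend to a clean compact fibred situation, so one must work with the version of the theorem that keeps track of two invariant measures simultaneously. I expect this to be the main technical obstacle, but it is exactly what Theorem~\ref{theo.bi-ergodic-fibred} is designed to overcome, so from the present viewpoint it is a black box.

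\textbf{Step 3: transporting back to $\cM$ via~\cite{BegCroLeR}.} Now we have a uniquely ergodic minimal homeomorphism $g$ on the Cantor set $\cK\times\cQ$, which is an extension (through $\pi_1$) of the uniquely ergodic minimal Cantor system $(\cK,\mu,f)$, itself a topological factor of the manifold homeomorphism $(\cM,m,F)$ via $\pi$. The relative realisation theorem from~\cite{BegCroLeR} (the ``oriented/fibred'' version: realising an extension of a given uniquely ergodic minimal manifold homeomorphism, when the extension is presented as a Cantor skew-product over a Cantor factor of it) then provides a uniquely ergodic minimal homeomorphism $G$ on $\cM$ with invariant measure $v$, a surjective continuous $P:\cM\to\cM$, and an isomorphism $\Theta$ between $(\cK\times\cQ,\nu',g)$ and $(\cM,v,G)$, with $F\circ P = P\circ G$ and $\pi\circ P = \pi_1\circ\Theta^{-1}$ compatibly. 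Setting $\Phi=\Theta\circ\Psi:Y_0\to\cM$ gives the desired isomorphism between $(Y,\nu,S)$ and $(\cM,v,G)$, and chasing the two commuting squares (the one from Step~2 over $\pi_1$, the one from Step~3 over $P$, and $\pi\circ\Theta=\pi_1$) shows that $P\circ\Phi = p$ on $Y_0$ and $F\circ P=P\circ G$, which is exactly the commuting diagram in the statement. Theorem~\ref{theo.extension} follows by forgetting $P$ and $p$. The only points requiring care are the measure-theoretic bookkeeping (that all the ``full measure subset'' ambiguities can be absorbed into a single $Y_0$) and checking that the Cantor factor $\pi$ of Step~1 is compatible with the hypotheses of the realisation theorem of~\cite{BegCroLeR}; both are routine once the three black boxes are in place.
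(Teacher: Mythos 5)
Your Step~1 cannot be carried out: a compact manifold of dimension at least two is connected (or has finitely many connected components), and the continuous image of a connected space is connected, so any continuous map $\pi:\cM\to\cK$ into a Cantor set is constant on each component. There is therefore no topological factor map from $(\cM,F)$ onto a uniquely ergodic minimal Cantor system, and the entire architecture of your argument --- feeding a \emph{topological} Cantor factor of $F$ into the fibred Jewett--Krieger theorem and then realising the resulting skew-product ``over'' that factor --- collapses at the first step. This is not a bookkeeping issue; it is the central difficulty that the paper's proof is designed to circumvent.

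What the paper does instead is replace the (nonexistent) Cantor \emph{quotient} by a Cantor \emph{sub-system}: Proposition~2.10 of~\cite{BegCroLeR} produces a dynamically coherent Cantor set $K\subset\cM$ of positive $m$-measure; the union $Z=\bigcup_n F^n(K)$ is re-topologised (so that it becomes locally compact and totally disconnected) and one-point compactified into a Cantor set $\cK=Z\cup\{\infty\}$, on which $F$ induces a homeomorphism $f$ fixing $\infty$. Crucially this $f$ is \emph{not} minimal and \emph{not} uniquely ergodic: it carries exactly two ergodic measures, $\mu$ (induced by $m$) and $\delta_\infty$. This --- and not any feature of the extension $p$ --- is why the bi-ergodic version of the fibred Jewett--Krieger theorem is indispensable; your Step~2 invokes the right black box but for the wrong reason, and its output is a homeomorphism $g$ of $\cK\times\cQ$ with two ergodic measures, not a uniquely ergodic minimal one. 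Likewise, Theorem~1.3 of~\cite{BegCroLeR} used in the final step realises a skew-product over the restriction of $F$ to $\bigcup_n F^n(K)\subset\cM$ (a positive-measure subset), not over a Cantor factor of $\cM$. You would need to rebuild Steps~1--3 around this sub-system construction for the proof to go through.
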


Before entering the proof of theorem~\ref{theo.extension-bis}, let us give a sketch. 
Consider a  homeomorphism $F$ on $\cM$ with unique invariant measure $m$ as in the statement.
 In~\cite{BegCroLeR} we constructed a Cantor subset $K$ of $\cM$ with some properties called \emph{dynamical coherence}. These properties allow us to endow the union of all iterates of $K$ with a natural topology that makes it homeomorphic to a Cantor set $\cK$ minus one point denoted by $\infty$. Then $F$ induces a  homeomorphism $f$ on $\cK$ which has exactly two ergodic measures $\mu$ and $\delta$, with $(\cK,  \mu, f)$  isomorphic to $(\cM,m,F)$, and $\delta$ the Dirac measure at the point $\infty$. 
 Now consider the extension $(Y,\nu,S)$ of $(\cM,m,F)$ as in theorem~\ref{theo.extension-bis}. The ``bi-ergodic fibred Jewett-Krieger theorem'' proved in parts~\ref{A} and~\ref{B} tells us that  $(Y,\nu,S)$ is isomorphic to a system $(\cK \times \cQ, \nu', g)$, where $g$ is a topological skew-product   over $f$, and $\nu'$ is an ergodic measure that lifts $\mu$. Then the main  result in~\cite{BegCroLeR} asserts that the system $(\cK \times \cQ, \nu', g)$ is realisable as a uniquely ergodic minimal homeomorphism $G$ on $\cM$. This homeomorphism satisfies the conclusions of theorem~\ref{theo.extension-bis}.

\subsection*{Proof of theorem~\ref{theo.extension-bis} (assuming the bi-ergodic fibred Jewett-Krieger theorem~\ref{theo.bi-ergodic-fibred})}
We assume the hypotheses of the theorem~\ref{theo.extension-bis}.
Let us consider a measurable set $A_0\subset M$ that has positive $m$-measure.
Proposition 2.10 of~\cite{BegCroLeR} provides us with a Cantor set $K\subset A_0$
that has positive $m$-measure and that is \emph{dynamically coherent}:
\begin{itemize}
\item[--] for every integer $n$, the intersection $F^n(K)\cap K$ is open in $K$;
\item[--] for every integer $p\geq 1$ and every point $x\in K$, there exists
$p$ consecutive positive iterates $F^{k}(x),\dots, F^{k+p-1}(x)$
and $p$ consecutive negative iterates $F^{-\ell}(x),\dots, F^{-\ell -p+1}(x)$ outside $K$.
\end{itemize}
We consider the set
$$Z:=\bigcup_{n\in \ZZ} F^n(K).$$
We endow $Z$ with the following topology:
a set $O\subset Z$ is open if and only if $O\cap F^n(K)$ is
open in $F^n(K)$ for every $n\in \ZZ$. In particular,
the injection $j\colon Z\to \cM$ is continuous.
The dynamical coherence (first item) implies that any open set of $F^n(K)$
for the (usual) relative topology induced by the topology of $\cM$
is also an open subset of $Z$.
In particular, $Z$ is locally compact,
the restriction of $F$ to $Z$ is still a homeomorphism
and $j$ is a bi-measurable bijection from $Z$ onto its image.
The dynamical coherence (second item) implies that $Z$ is not compact.
We consider the Alexandroff one-point compactification, that is,
the space $\cK=Z\cup\{\infty\}$ whose open sets are
the open sets of $Z$ and the complements of compact subsets of $Z$.

\begin{fact}
The topological space $\cK$ is a Cantor set.
\end{fact}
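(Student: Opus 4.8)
The plan is to verify the three defining properties of a Cantor set for $\cK = Z \cup \{\infty\}$: it is compact, metrizable (equivalently, second countable and Hausdorff), totally disconnected, and has no isolated points. Compactness is immediate from the definition of the Alexandroff compactification, so the real work is in the other three.

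\textbf{Metrizability.} First I would check that $Z$ is second countable: since $K$ is a Cantor set it has a countable basis of clopen sets, and by the first dynamical coherence property the relative topology of $F^n(K)$ inside $Z$ agrees with the topology induced from $\cM$, so the countably many sets $F^n(U)$, where $n \in \ZZ$ and $U$ ranges over a countable clopen basis of $K$, form a countable basis for $Z$. One must also observe that $Z$ is Hausdorff and locally compact (already noted in the excerpt), so the one-point compactification $\cK$ is Hausdorff; a compact Hausdorff second countable space is metrizable by Urysohn. Here one uses that a locally compact second countable Hausdorff space is $\sigma$-compact, hence $\cK$ is second countable as well.

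\textbf{Totally disconnected / no isolated points.} For total disconnectedness I would show $Z$ has a basis of clopen subsets of $\cK$: given a point $x \in F^n(K)$ and a neighbourhood, by the coherence property (relative topologies match) there is a clopen (in $F^n(K)$, hence compact) neighbourhood $V$ of $x$; because $V$ is compact and open in $Z$, it is clopen in $\cK$, and shrinking $V$ we get arbitrarily small ones since $F^n(K)$ is a Cantor set. This also handles points near $\infty$: complements of such compact clopen $V$ are clopen neighbourhoods of $\infty$, and we need a neighbourhood basis of $\infty$ consisting of clopen sets, which follows because $Z$ is a countable increasing union of compact open sets (using $\sigma$-compactness, arranging the compact pieces to be open — each $F^n(K)$ is compact and open in $Z$, so finite unions $\bigcup_{|n| \le N} F^n(K)$ are compact open, and their complements form a clopen neighbourhood basis at $\infty$). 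For no isolated points: points of $Z$ are not isolated since each lies in some $F^n(K) \cong K$ which has no isolated point and carries its relative topology; and $\infty$ is not isolated because $Z$ is not compact (second coherence property), so no compact set exhausts $Z$, meaning every neighbourhood of $\infty$ meets $Z$.

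\textbf{Main obstacle.} The delicate point is not any single property in isolation but making sure the topology on $Z$ defined by ``$O$ open iff $O \cap F^n(K)$ open in $F^n(K)$ for all $n$'' is well-behaved enough — in particular that it is Hausdorff, locally compact, second countable, and that each $F^n(K)$ sits inside $Z$ as a clopen copy of a Cantor set with its own topology. All of these hinge on the first dynamical coherence property ($F^n(K) \cap K$ open in $K$), which is exactly what forces the abstract ``gluing'' topology to restrict correctly on each piece; I expect the bookkeeping to confirm that $Z$ is, topologically, a disjoint-in-the-limit union of Cantor sets accumulating only at $\infty$, whence $\cK$ is a compact metrizable totally disconnected perfect space, i.e. a Cantor set.
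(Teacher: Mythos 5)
Your proof is correct, but it follows a genuinely different route from the paper's. You verify the hypotheses of Brouwer's characterisation of the Cantor set (compact, metrizable, totally disconnected, perfect) and invoke that theorem together with Urysohn metrization; the paper instead builds an explicit homeomorphism $h\colon \cK\to\{0,1\}^\NN$, using the exhaustion $K_n=\bigcup_{|k|\leq n}F^k(K)$, the observation that each $K_n$ and each difference $K_{n+1}\setminus K_n$ is a non-empty clopen Cantor set (here non-emptiness uses non-compactness of $Z$, i.e.\ the second coherence property), and the fact that the complements of the $K_n$ form a neighbourhood basis of $\infty$. The two arguments rest on the same underlying facts — that the first coherence property makes each $F^n(K)$ a clopen copy of a Cantor set inside $Z$ with its usual topology, that $Z$ is locally compact, second countable and non-compact, and that every compact subset of $Z$ is absorbed by some $K_n$ — so the bookkeeping is essentially shared. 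What your approach buys is that it avoids any explicit coding and generalises immediately to any one-point compactification of this type, at the cost of citing two nontrivial classical theorems; the paper's approach is self-contained and more concrete, exhibiting $\cK$ directly as a nested union of Cantor sets accumulating at $\infty$. One small point worth making explicit in your write-up: the clopen neighbourhood basis at $\infty$ should be taken to be the complements of the $K_n$ (as you do at the end of your second paragraph), since an arbitrary compact open subset of $Z$ need not contain a given compact set; you do say this, so the argument is complete.
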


\begin{proof}
Let us consider the finite unions $K_n=\bigcup_{|k|\leq n} F^k(K)$ for each $n\geq 0$.
Each set $F^n(K)\subset \cK$ is a Cantor set; the intersection of
any iterates $F^{n_1}(K),\dots,F^{n_s}(K)$ is clopen (\emph{i.e.} closed and open) in the Cantor set $F^{n_1}(K)$,
hence is a Cantor set. From this, one deduces
that each $K_n$ is a Cantor set.
By construction $K_{n+1}\setminus K_n$
is non-empty ($Z$ is not compact, hence strictly contains $K_n$),
clopen in $K_n$ and thus is also a Cantor set. From this it is easy to build a homeomorphism $h$ between $\cK$ and the
standard Cantor set $\Sigma=\{0,1\}^\NN$: $h$ maps $K_{0}$ 
 homeomorphically onto  $\{0\}\times \{0,1\}^\NN$, and it maps
  $K_{n+1} \setminus K_{n}$  to $\{1\}^{n}\times \{0\}\times \{0,1\}^\ZZ$
for $n\geq 0$; it sends $\infty$ on the sequence $(1,\dots,1,\dots)$.
Since the complements of the $K_n$'s define a basis of neighbourhoods of $\infty$
in $\cK$, the map $h$ is continuous at $\infty$.
Hence $h$ is a homeomorphism and $\cK$ is a Cantor set.
\end{proof}

We extend the homeomorphism $F$ on $Z$ to a homeomorphism
$f$ on $\cK$ by setting $f(\infty)=\infty$.
Since $m$ is ergodic the union $\bigcup_{n\in \ZZ}F^n(K)\subset \cM$
has full $m$-measure. The measure $m$ induces 
an ergodic aperiodic $f$-invariant measure $\mu$ on $\cK$.
Since $F$ is minimal the measure $\mu$ has full support on $\cK$ (\emph{i.e.} every open set has positive measure). Since $F$ is uniquely ergodic, the only other $f$-invariant ergodic measure on $\cK$ is the Dirac measure~$\delta_\infty$. We now state our ``bi-ergodic fibred version of Jewett-Krieger theorem'', whose proof will occupy parts~\ref{A} and~\ref{B} of the paper (see the end of part~\ref{B}).
\pagebreak
\begin{maintheo}[``Bi-ergodic fibred Jewett-Krieger theorem'']
\label{theo.bi-ergodic-fibred}
Suppose that we are given
\begin{itemize}
\item[--] a homeomorphism $f$ on a Cantor set $\cK$,  which admits exactly two ergodic invariant  measures: a measure $\mu$ with full support and a Dirac measure $\delta _{\infty}$  at a point of $\cK$;
\item[--] an ergodic measured dynamical system $(Y,\nu,S)$ on a standard Borel space,
and a measurable map $p : Y \ra \cK$ such that $p_*\nu=\mu$ and $f \circ p = p \circ S$.
\end{itemize}
Then there exist a Cantor set $\cQ$, a homeomorphism $g$ on  $\cK\times \cQ$,
 an ergodic measure $\nu'$ for $g$,
a full measure subset $Y_0$ of $Y$ and a map $\Phi:Y_{0} \to \cK\times \cQ$, such that
$\nu'$ is the only $g$-invariant measure satisfying  $(\pi_1)_*\nu'=\mu$ (where $\pi_1:\cK\times \cQ\to\cK$ is the first projection map), 
the map $\Phi$ is  an isomorphism between $(Y,\nu,S)$ and $(\cK\times \cQ,\nu',g)$,
and  the following diagram commutes.
$$
\xymatrix{ 
&   \cK \times \cQ   \ar@{.>}[rr]  ^g    \ar@{.>}'[d] [ddd] ^(.3){\pi_1} & & \cK \times \cQ   \ar@{.>}[ddd]  ^(.55){\pi_1} \\ 
Y_{0} \ar@{.>}[ur]    ^{\Phi}   \ar@{>}[rr]   ^(.65){S}    \ar@{>}[ddr]   ^(.4){p}  & & Y_{0} \ar@{.>}[ur]    ^{\Phi}   \ar@{>}[ddr]   ^(.4){p} \\
\\
& \cK \ar@{>}^{f}[rr]    & & \cK\\
}$$
\end{maintheo}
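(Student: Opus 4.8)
The plan is to follow the classical tower/Rokhlin-set strategy of the Jewett--Krieger theorem, in the relative form used by Weiss, but keeping track of the fibre structure so that the limit system is literally a skew-product $g(x,q)=(f(x),g_x(q))$ over $f$, and simultaneously arranging that no new invariant measure lifting $\mu$ is created. First I would reduce to the genuinely ergodic picture: since $(Y,\nu,S)$ is ergodic and $p_*\nu=\mu$, the disintegration of $\nu$ over $\mu$ gives a measurable family of conditional measures, and I want to produce an increasing sequence of finite measurable partitions $\mathcal P_0\prec\mathcal P_1\prec\cdots$ of $Y$ that (a) separates points (generates the $\sigma$-algebra up to null sets) and (b) is \emph{subordinate to $p$}, meaning each $\mathcal P_n$ refines the pullback $p^{-1}(\mathcal C_n)$ of a clopen partition $\mathcal C_n$ of $\cK$ whose mesh tends to $0$. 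Coding $Y$ by these partitions while coding $\cK$ by the $\mathcal C_n$ is what forces the limit map to respect the projection, hence to be a skew-product. Condition (b) is free because $\cK$ is already a Cantor set with a canonical decreasing sequence of clopen partitions, and $p$ is measurable.

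The core is the inductive construction of the partitions so that the symbolic factor converges to a \emph{uniquely ergodic minimal} system in the fibre direction. Here I would import Weiss's relative Rokhlin-tower machinery: at stage $n$ one chooses a Rokhlin tower in $Y$ whose base is measurable with respect to $p^{-1}(\mathcal C_n)$ (so that the tower projects to a Kakutani--Rokhlin tower for $f$ over the clopen set $C_n$, which exists and can be taken with prescribed combinatorics because $f$ restricted to the full-measure invariant set $\cK\setminus\{\infty\}$ behaves like the uniquely ergodic system $(\cK,\mu,f)$ away from $\infty$), then one paints the $\mathcal P_n$-name on most of the tower using only finitely many columns, and uses the ``copying lemma'' to make $\mathcal P_{n+1}$-names almost constant along columns of the stage-$n$ tower. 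Doing the painting \emph{fibrewise} — i.e.\ the combinatorial pattern on a column depends only on the base point's $\mathcal C_n$-location, the column's height, and finitely many bits of new information — guarantees the symbolic system splits as $\cK\times\Sigma$. Standard estimates (the $\bar d$-distance between successive codings summable) then give a limiting compact subshift $\Sigma'$ and a closed $g$-invariant subset $\cK\times\mathcal Q\subset\cK\times\Sigma'$; the measure $\nu'$ is the push-forward of $\nu$, and $\Phi$ is the a.e.-defined coding map, which is an isomorphism by the separation property (a). Minimality and unique ergodicity \emph{in each fibre} come, as in Jewett--Krieger, from having arranged at each stage that every admissible word of the previous level reappears syndetically with controlled frequency in every column of the current level.

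The delicate point — and the main obstacle — is the \emph{bi-ergodic bookkeeping}: $f$ itself is not uniquely ergodic (it has the extra measure $\delta_\infty$), so I must ensure that (i) the fibre over $\infty$ is handled consistently, e.g.\ by taking $g$ to fix the whole slice $\{\infty\}\times\mathcal Q$ or act on it as a prescribed minimal system, and (ii) no invariant measure of $g$ projecting to $\mu$ other than $\nu'$ is created in the limit. For (ii) the strategy is the usual unique-ergodicity criterion applied relative to $\mu$: show that for every continuous $\phi$ on $\cK\times\mathcal Q$ the Birkhoff averages $\frac1N\sum_{k<N}\phi\circ g^k$ converge uniformly on the part of $\cK\times\mathcal Q$ lying over $\cK\setminus\{\infty\}$; since any $g$-invariant measure with $(\pi_1)_*\mu'=\mu$ gives zero mass to $\{\infty\}\times\mathcal Q$ (as $\mu(\{\infty\})=0$), uniform convergence on that set pins down the integral, hence the measure. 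The uniform convergence is obtained by the same tower combinatorics used for minimality: controlling, at stage $n$, the frequency of every stage-$m$ column-word inside every stage-$n$ column, uniformly, exactly as in the proof that $(\cK,\mu,f)$ is uniquely ergodic (which is our hypothesis and can be re-used along the towers). This also yields minimality of $g$ on all of $\cK\times\mathcal Q$ (using minimality of $f$ plus the syndetic-occurrence property in the fibre), completing the proof. The remaining verifications — that the diagram commutes (immediate from subordination to $p$), that $\mathcal Q$ is a Cantor set (arrange $\Sigma'$ to have no isolated points by adding dummy symbols), and that $\Phi$ is bi-measurable onto a full-measure set — are routine and parallel to the non-fibred case, so I would treat them briefly.
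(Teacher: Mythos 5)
Your outline reproduces the Weiss-type tower machinery correctly, but it has two genuine gaps, and they are precisely the two places where this theorem differs from the ordinary relative Jewett--Krieger theorem. The first and most serious one is the product structure. The coding of $Y$ by partitions refining $p^{-1}(\cC_n)$ produces a closed shift-invariant set $\cC$ together with a continuous equivariant surjection $\Pi:\cC\to\cK$; nothing in ``painting fibrewise'' forces $\cC$ to be homeomorphic to $\cK\times\cQ$ with $\Pi$ becoming the first projection. The fibres $\Pi^{-1}(x)$ vary with $x$ (they can even fail to be open images of each other), so the phrase ``a closed $g$-invariant subset $\cK\times\cQ\subset\cK\times\Sigma'$'' assumes exactly what must be proved. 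Moreover the theorem requires $g$ to be a homeomorphism of the \emph{entire} product $\cK\times\cQ$, with $\nu'$ the unique invariant measure over $\mu$ for that extended map. The paper handles this in two separate steps that your plan omits: first it builds the generating algebra so that every cylinder is ``full'', which is what makes $\Pi$ an \emph{open} map (this is the purpose of the fullness and bracket-stability conditions, and it is not automatic from the tower construction); then a purely topological lemma embeds $\cC$ as a graph inside $\cK\times\cC\times\cD$ and extends the fibred dynamics to the whole product so that every point off the graph is wandering --- openness of $\Pi$ and the empty-interior of the fibres are both used there. Without some substitute for these two steps the skew-product form is not obtained.

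The second gap is the unique-ergodicity criterion in the bi-ergodic setting. Uniform convergence of Birkhoff averages over the part of $\cK\times\cQ$ lying above $\cK\setminus\{\infty\}$ is false: since $\delta_\infty$ is a second ergodic measure for $f$, orbits passing close to the fibre of $\infty$ have Birkhoff averages that stay near the wrong value for arbitrarily long times, so no such uniform (or even everywhere-pointwise) statement can hold. What survives is a one-sided estimate: clopen sets avoiding $\infty$ are only \emph{sub-uniform} (the frequency of visits is eventually at most $\mu+\varepsilon$, with no lower bound), and the entire tower construction has to be rebuilt around sub-uniformity and ``marked'' partitions whose distinguished element absorbs the time spent near $\infty$. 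Uniqueness of the invariant measure over $\mu$ is then recovered from the one-sided inequality $\nu''(A')\le\nu'(A')$ on cylinders away from the point over $\infty$, together with a complementation argument writing the complement of such a cylinder as a union including a cylinder of arbitrarily small $\nu''$-measure around that point. Your criterion, as stated, would not close the argument.
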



Applying theorem~\ref{theo.bi-ergodic-fibred}, we get a Cantor set $\cQ$, a homeomorphism $g$ on $\cK\times\cQ$, an ergodic measure $\nu'$, an isomorphism $\Phi$ between $(Y,\nu,S)$ and $(\cK\times\cQ,g,\nu')$. The restriction of the map $g$  to $\bigcup_{n\in \ZZ} F^n(K)\times \cQ$, which we still denote by  $g$,  satisfies the following properties.
\begin{enumerate}
\item The map $g$ is a bi-measurable bijection of $\bigcup_{n\in \ZZ} F^n(K)\times \cQ$
that fibres above the restriction of  $F$ to $\bigcup_{n\in \ZZ} F^n(K)$.
\item For every $x\in \bigcup_{n\in \ZZ} F^n(K)$, the map $g_{x} : (x,c)\mapsto g(x,c)$ is a homeomorphism from $\{x\}\times \cQ$ to $\{F(x)\}\times \cQ$.
\item For every integer $n$, the map $g^n$ is continuous on $\cK\times \cQ$.
\end{enumerate}
We are now in a position to apply the following result of~\cite{BegCroLeR}.

\begin{theo*}[Theorem 1.3 of \cite{BegCroLeR} in the uniquely ergodic case]
Let $F$ be a uniquely ergodic homeomorphism on a compact manifold $\cM$ of dimension $d\geq 2$, with invariant measure $m$.  Assume that $m$ gives positive measure to some measurable subset $K$ of $\cM$. Let $g$ be a bi-measurable bijection of $\bigcup_{n\in \ZZ} F^n(K)\times \cQ$, with a unique invariant measure $\nu'$, and  which satisfies the above properties~1, 2, 3.

Then there exists a uniquely ergodic homeomorphism $G$ on $\cM$ with invariant measure~$v$, such that $(\cM,v,G)$ is  isomorphic to  $\left(\bigcup_{n\in\ZZ} F^n(K)\times \cQ, \nu',g\right)$, and a continuous surjective map $P:\cM\to\cM$ such that $P\circ G=F\circ P$.
Furthermore if $F$ is minimal then $G$ can be chosen to be minimal.
\end{theo*}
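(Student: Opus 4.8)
Since this statement is Theorem~1.3 of \cite{BegCroLeR}, quoted here in its uniquely ergodic form, its proof is a reference to that paper; the argument has the following shape. The approach is a relative fast-approximation-by-conjugacy construction, in the spirit of Anosov--Katok, that alters $F$ only inside thinner and thinner neighbourhoods of $K$ and its iterates: one builds $G$ as a $C^0$-limit $G=\lim_n G_n$ of homeomorphisms of $\cM$, with $G_0=F$ and $G_{n+1}=\Psi_{n+1}^{-1}\,G_n\,\Psi_{n+1}$ for homeomorphisms $\Psi_{n+1}$ of $\cM$ supported in tiny ``tower neighbourhoods''. The construction begins with a combinatorial step: using the dynamical coherence of $K$ together with properties~1--3 of $g$, one encodes the abstract system $(\bigcup_n F^n(K)\times\cQ,\nu',g)$ by a nested sequence of Kakutani--Rokhlin partitions of $\cK\times\cQ$ into clopen towers; the facts that $g$ fibres over $F$ (property~1) and is continuous on each iterate (properties~2 and 3) ensure that $g$ carries floors homeomorphically onto floors, and one arranges the first-return times to $K\times\cQ$ to grow in a controlled way.

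Next one realises this skeleton inside the manifold. Since $\dim\cM\ge 2$ and $K$ is totally disconnected of positive measure, there is room transverse to $K$; one constructs a nested sequence of compact tower neighbourhoods $\cU_n\subset\cM$ of $K$, each homeomorphic to a disjoint union of products (finite set)$\,\times\,$(closed $d$-ball), so that finitely many $F$-iterates of $\cU_n$ are pairwise disjoint and fill $\cM$ up to a set of small measure, with $\cU_{n+1}\subset\cU_n$ and $\bigcap_n\cU_n$ a product Cantor set which, together with its $F$-orbit, is to be identified measurably with $\bigcup_n F^n(K)\times\cQ$. Inside the ball factors of $\cU_{n+1}$ one places a Cantor set modelling the level-$(n+1)$ approximation of $\cQ$, and chooses $\Psi_{n+1}$, supported in $\cU_{n+1}$ and compatible with its floor structure, so as to implement one more level of the transition maps of $g$; then $G_{n+1}=\Psi_{n+1}^{-1}G_n\Psi_{n+1}$.

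Finally one controls the limit. Choosing each $\Psi_{n+1}$ to commute with $G_n$ outside a set of small measure and to displace points by at most $2^{-n}$ (which is possible once $\cU_{n+1}$ has small enough floors) makes $d_{C^0}(G_{n+1},G_n)$ and $d_{C^0}(G_{n+1}^{-1},G_n^{-1})$ summable, so $G=\lim_n G_n$ is a homeomorphism, while the partial conjugacies $\Psi_1\cdots\Psi_n$ converge uniformly to a continuous surjection $P:\cM\to\cM$ satisfying $P\circ G=F\circ P$. The unique invariant measure $v_n$ of $G_n$ converges weak-$*$ to a $G$-invariant measure $v$; since $\nu'$ is the unique $g$-invariant measure lifting $\mu$, and since unique ergodicity and, when present, minimality of $F$ survive along the approximation --- Birkhoff sums over the finite towers control ergodic averages, and transitivity passes from $F$ to each $G_n$ --- one concludes that $v$ is the only $G$-invariant measure, that $(\cM,v,G)$ is isomorphic to $(\bigcup_n F^n(K)\times\cQ,\nu',g)$ through the limit of the identifications built above, and that $G$ is minimal whenever $F$ is.

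The hard part is the simultaneous bookkeeping in this last step: a single recursive choice of the neighbourhoods $\cU_n$ and the conjugacies $\Psi_n$ must guarantee at once that the $C^0$-limit is a genuine homeomorphism (two-sided convergence), that the weak-$*$ limit is \emph{uniquely} ergodic and not merely ergodic, that $G$ is minimal, and that the resulting measured system is \emph{exactly} $(\bigcup_n F^n(K)\times\cQ,\nu',g)$ rather than some factor or intermediate extension --- all while keeping the modifications confined to sets small enough that the dynamical properties of $F$ are inherited. This is precisely the technical content carried out in \cite{BegCroLeR}.
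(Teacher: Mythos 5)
The paper offers no proof of this statement beyond citing Theorem~1.3 of \cite{BegCroLeR}, and your proposal correctly does the same, adding a sketch of that external argument; this is the right approach. Your outline is a fair description of the construction carried out there (the authors call it the Denjoy--Rees technique: a blow-up of the orbit of $K$ via a limit of conjugacies $M_n^{-1}FM_n$ with $M_n=\Psi_1\cdots\Psi_n$ supported in shrinking tower neighbourhoods, with $P=\lim M_n$ the semi-conjugacy --- closer in spirit to Denjoy than to Anosov--Katok), and the genuinely technical bookkeeping is rightly deferred to the reference.
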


By composition of the isomorphisms provided by this theorem and theorem~\ref{theo.bi-ergodic-fibred}, we get the isomorphism between $(Y,\nu,S)$ and $(\cM,v,G)$ required by theorem~\ref{theo.extension-bis}. This almost completes the proof of the theorem: we just have to check the commutation of the triangular faces in the diagram appearing in  the statement of theorem~\ref{theo.extension-bis}. 
In our setting the Cantor set $K$ has the two following additional properties introduced in~\cite{BegCroLeR}: it is dynamically coherent and dynamically meagre (this last property is automatic when $F$ is minimal).
In this case, the proof of  theorem~1.3 in \cite{BegCroLeR} provides an isomorphism $\Phi$ between  $\left(\bigcup_{n\in\ZZ} F^n(K)\times \cQ, \nu',g\right)$
and $(\cM,v,G)$
defined on the whole set $\bigcup_{n\in \ZZ} F^n(K)\times \cQ$,
and which  satisfies $\Phi(x\times \cQ)\subset P^{-1}(x)$ for every $x$ in $K$ (see lemma~5.2 and proposition~8.2 in~\cite{BegCroLeR}).
 Consequently, the following diagram commutes.
$$
\xymatrix{ 
&  \cM \ar@{.>}[rr]  ^G    \ar@{.>}'[d] [ddd] ^(.3){P} & & \cM  \ar@{.>}[ddd]  ^{P} \\ 
\bigcup_{n\in\ZZ} F^n(K)\times \cQ \ar@{.>}[ur]    ^{\Phi}   \ar@{>}[rr]   ^(.55){g}    \ar@{>}[ddd]   ^(.4){\pi_1}  & & \bigcup_{n\in\ZZ} F^n(K)\times \cQ \ar@{.>}[ur]    ^{\Phi}   \ar@{>}[ddd]   ^(.4){\pi_1} \\
\\
& \cM \ar@{>}'[r]^(.8){F}[rr]    & & \cM \\
\bigcup_{n\in\ZZ} F^n(K)\ \ar@{>}[rr]   ^{F=f}  \ar@{>}[ur]  ^{i}  & & \bigcup_{n\in\ZZ} F^n(K)\ \ar@{>}[ur]  ^{i} 
}$$
The sets $\bigcup_{n\in\ZZ} f^n(K)\times \cQ$ and $\bigcup_{n\in\ZZ} f^n(K)$ are full measure subsets respectively in $\cK \times \cQ$ and $\cK$. Thus we can concatenate the above cubic diagram with the prismatic diagram provided by theorem~\ref{theo.bi-ergodic-fibred} to get the diagram required by theorem~\ref{theo.extension-bis}. In particular, the triangular faces commute. This completes the proof of theorem~\ref{theo.extension-bis} (assuming theorem~\ref{theo.bi-ergodic-fibred}).

\subsection*{A more general version of theorem~\ref{theo.extension-bis}}

The same construction as above, using the full statement of theorem 1.3 of~\cite{BegCroLeR}, also works with non-uniquely ergodic maps. We just give the resulting statement, leaving the details to the reader. Let us first recall some definitions from~\cite{BegCroLeR}. An $S$-invariant measurable subset $X_{0}$ of a measurable dynamical system $(X,S)$ is \emph{universally full} if it has full measure for any $S$-invariant probability measure on $X$. Two measurable systems $(X,R)$  and $(Y,S)$  are  \emph{universally isomorphic} through a bijective bi-measurable map $\Phi : X_{0} \ra Y_{0}$ if $X_{0}$ (resp. $Y_{0}$) is an  $R$ (resp. $S$)-invariant universally full subset in $X$ (resp. $Y$), and the equality $\Phi \circ R = S \circ \Phi$ holds on $X_{0}$.

\begin{theounter}
\label{t.principal-technique}
Let $F$ be a homeomorphism on a compact topological manifold $\cM$
whose dimension is at least two and $m$ be an aperiodic ergodic measure
for $F$. Let $(Y,\nu,S)$ be an ergodic  system on a standard Borel space which is an extension of $(\cM,m,F)$.

Then there exists a homeomorphism $G$ of $\cM$
such that $(\cM,G)$ is universally  isomorphic to the disjoint union
$$ \left(\cM\setminus Z,F\right) \sqcup\left(\bar Y,\bar S\right),
$$
where $Z\subset \cM$ is an $F$-invariant measurable set that has full
$m$-measure and zero measure for any other ergodic $F$-invariant measure
and where $(\bar Y,\bar S)$ is a uniquely ergodic realisation of $(Y,\nu,S)$.

Moreover $G$ is a topological extension of $F$: there exists a continuous
surjective map $\Phi\colon \cM\to \cM$ which is one-to-one outside $\Phi^{-1}(\supp(m))$
and satisfies $\Phi\circ G=F\circ \Phi$.
If $F$ is minimal (resp. transitive), then $G$ can be chosen minimal (resp. transitive).
\end{theounter}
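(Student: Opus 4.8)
The plan is to repeat the construction from the proof of theorem~\ref{theo.extension-bis} essentially word for word, and to feed its output into the \emph{general} form of theorem~1.3 of~\cite{BegCroLeR} rather than into the uniquely ergodic special case quoted above. The only place where unique ergodicity of $F$ was used in that proof is to guarantee that the auxiliary homeomorphism $f$ of the Cantor set $\cK$ has exactly two ergodic invariant measures, a measure $\mu$ with full support and the Dirac mass $\delta_\infty$ --- which is precisely what is demanded by theorem~\ref{theo.bi-ergodic-fibred}. So the single genuinely new point is to recover this property, when $F$ is only assumed aperiodic ergodic for $m$, through a more careful choice of the dynamically coherent Cantor set $K$.

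To that end I would first fix the Borel $F$-invariant set $B\subseteq\cM$ of points generic for $m$ (those $x$ for which $\frac1n\sum_{k=0}^{n-1}\delta_{F^kx}$ converges weakly-$*$ to $m$); then $m(B)=1$ while $m'(B)=0$ for every ergodic $F$-invariant measure $m'\neq m$. Choosing a Borel set $A_0\subseteq B$ with $m(A_0)>0$ and applying proposition~2.10 of~\cite{BegCroLeR} inside $A_0$ yields a dynamically coherent Cantor set of positive $m$-measure contained in $A_0$ (dynamically meagre as well when $F$ is minimal). Replacing it by the support of the restriction of $m$ to it --- which, $m$ being non-atomic, is again a dynamically coherent (and meagre, when $F$ is minimal) Cantor set $K$ of positive $m$-measure, still contained in $B$, and on which $m$ now charges every non-empty clopen subset --- I may assume in addition that $m|_K$ has full support on $K$.

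Running the remainder of the construction of theorem~\ref{theo.extension-bis} unchanged --- forming $Z=\bigcup_{n\in\ZZ}F^n(K)$ with its fine topology, its one-point compactification $\cK=Z\cup\{\infty\}$, the homeomorphism $f$ extending $F|_Z$ with $f(\infty)=\infty$, and the induced measure $\mu$ --- one gets $Z\subseteq B$, hence $m(Z)=1$ and $m'(Z)=0$ for every ergodic $F$-invariant $m'\neq m$, so that $f$ has exactly the two ergodic measures $\mu$ and $\delta_\infty$; moreover $\mu$ is aperiodic (since $m$ is) and has full support on $\cK$ --- on open subsets of $Z$ this comes from $m|_K$ having full support on $K$, and on the basic neighbourhoods $\cK\setminus\bigcup_{|k|\le N}F^k(K)$ of $\infty$ it follows from the second dynamical coherence property, which forces each such set to have positive $m$-measure. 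Thus $f$, $\mu$, $\delta_\infty$ satisfy the hypotheses of theorem~\ref{theo.bi-ergodic-fibred}, with $p\colon Y\to\cK$ obtained by pushing the extension map into $Z$ on a full-measure set and sending the negligible remainder to $\infty$. Theorem~\ref{theo.bi-ergodic-fibred} then provides a Cantor set $\cQ$, a homeomorphism $g$ of $\cK\times\cQ$, the unique $g$-invariant measure $\nu'$ with $(\pi_1)_*\nu'=\mu$, a full-measure set $Y_0$ and an isomorphism of $(Y,\nu,S)$ onto $(\cK\times\cQ,\nu',g)$ sitting in the commuting prism over $f$. Its restriction to $\bigcup_{n\in\ZZ}F^n(K)\times\cQ=Z\times\cQ$, still denoted $g$, satisfies the three properties listed in the proof of theorem~\ref{theo.extension-bis}, and is uniquely ergodic: any $g$-invariant probability on $Z\times\cQ$ extends by zero to a $g$-invariant probability on $\cK\times\cQ$ giving $\{\infty\}\times\cQ$ zero mass, hence has all its ergodic components equal to $\nu'$. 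Via the isomorphism above, $(Z\times\cQ,\nu',g)$ is therefore a uniquely ergodic realisation of $(Y,\nu,S)$, and it will be the system $(\bar Y,\bar S)$ of the statement. Feeding $F$, the measure $m$ (which charges $K$) and this skew-product into the general version of theorem~1.3 of~\cite{BegCroLeR} produces a homeomorphism $G$ of $\cM$ together with a continuous surjection $\cM\to\cM$ conjugating $G$ to $F$ and one-to-one outside the preimage of $\supp(m)$, such that $(\cM,G)$ is universally isomorphic to $(\cM\setminus Z,F)\sqcup(Z\times\cQ,g)=(\cM\setminus Z,F)\sqcup(\bar Y,\bar S)$, minimal (resp. transitive) whenever $F$ is. Concatenating this universal isomorphism with the one from theorem~\ref{theo.bi-ergodic-fibred} over the prismatic diagram, exactly as in the proof of theorem~\ref{theo.extension-bis}, finishes the proof.

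The point I expect to require the most care --- rather than a genuine obstacle --- is the one isolated in the second and third paragraphs: in the non-uniquely-ergodic setting the hypotheses of theorem~\ref{theo.bi-ergodic-fibred} are not automatic, and one must localise $K$ inside a generic set for $m$ (and shrink it to the support of $m|_K$) so that the associated Cantor homeomorphism $f$ keeps exactly two ergodic measures, the right one with full support. Everything else --- the diagram chases and the bookkeeping with universally full sets --- is formally identical to theorem~\ref{theo.extension-bis}, and the replacement of the quoted uniquely ergodic form of theorem~1.3 of~\cite{BegCroLeR} by its general form (for which one only needs the uniqueness of $\nu'$ among the $g$-invariant measures lifting $\mu$) is the remaining thing to verify, but it is built into that statement.
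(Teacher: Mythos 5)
Your proposal is correct and follows exactly the route the paper itself indicates for theorem~\ref{t.principal-technique} (which it does not prove, saying only that ``the same construction as above, using the full statement of theorem 1.3 of \cite{BegCroLeR}, also works'' and leaving the details to the reader): you rerun the proof of theorem~\ref{theo.extension-bis} and feed the output into the general form of theorem~1.3 of~\cite{BegCroLeR}. You also correctly isolate and handle the one genuinely new point, namely localising $K$ inside the set of $m$-generic points and replacing it by $\supp(m_{|K})$ (which one checks is still a dynamically coherent Cantor set of positive $m$-measure), so that the induced homeomorphism $f$ of $\cK$ has exactly the two ergodic measures $\mu$ (with full support) and $\delta_\infty$ required by theorem~\ref{theo.bi-ergodic-fibred}.
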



\part{Open bi-ergodic relative Jewett-Krieger theorem}\label{A}

Parts~\ref{A} and~\ref{B} are devoted to the proof of theorem~\ref{theo.bi-ergodic-fibred}. Roughly speaking, we have to show that ergodic extensions can be turned into topological skew-products (on Cantor sets). This will be achieved in two steps. In part~\ref{A}, we prove that an ergodic extension can be turned into an ``open topological semi-conjugacy" (\emph{i.e.} a topological semi-conjugacy where the conjugating map is open). Then, in part~\ref{B}, we will show that every open topological semi-conjugacy is ``isomorphic'' to a skew-product. More precisely the aim of part~\ref{A} is to prove the following result.

\begin{propalpha}\label{p.bi-ergodic-ouvert}
Suppose that we are given
\begin{itemize}
\item[--] a homeomorphism $f$ on a Cantor set $\cK$, 
which admits exactly two ergodic measures $\mu$ and $\delta _{\infty}$,
where $\mu$ has full support and $\delta _{\infty}$ is a Dirac measure on a  point $\infty \in \cK$;
\item[--] an invertible ergodic  measured dynamical system $(Y,\nu,S)$ on a standard Borel space, and a measurable map $p : Y \ra \cK$ such that $p_{*}\nu=\mu$ and $f \circ p = p \circ S$.
\end{itemize}
Then there exist a homeomorphism $g$  on a Cantor set $\cC$
which preserves an ergodic measure $\nu'$,
an isomorphism $\Phi$ between $(Y,\nu,S)$ and $(\cC, \nu',g)$
defined on a full measure subset $Y_0$ of $Y$,
and an onto continuous map $\Pi\colon \cC\to \cK$
such that the following diagram commutes.
$$
\xymatrix{ 
&   \cC  \ar@{.>}[rr]  ^g    \ar@{.>}'[d] [ddd] ^(.3){\Pi} & & \cC  \ar@{.>}[ddd]  ^(.55){\Pi} \\ 
Y_{0} \ar@{.>}[ur]    ^{\Phi}   \ar@{>}[rr]   ^(.65){S}    \ar@{>}[ddr]   ^(.4){p}  & & Y_{0} \ar@{.>}[ur]    ^{\Phi}   \ar@{>}[ddr]   ^(.4){p} \\
\\
& \cK \ar@{>}^{f}[rr]    & & \cK\\
}$$
Furthermore, the following properties can be satisfied.
\begin{itemize}
\item[--] The map $\Pi$ is open.
\item[--] $\Pi^{-1}(\infty)$ is a singleton set and $g$ preserves exactly two ergodic measures: the first one is $\nu'$ and the other one is the Dirac measure on $\Pi^{-1}(\infty)$.
\end{itemize}
\end{propalpha}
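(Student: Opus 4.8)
The plan is to build the Cantor set $\cC$ and the factor map $\Pi$ by a Rokhlin-tower refinement argument, in the spirit of the Jewett--Krieger construction but carried out \emph{relative} to the given base $(\cK,\mu,f)$. First I would fix a refining sequence $(\cP_n)_{n\ge 0}$ of finite clopen partitions of $\cK$ generating the topology, together with their $f$-iterates; these give the ``columns over the base'' that any fibred construction must respect. Simultaneously, since $(Y,\nu,S)$ is an ergodic extension of $(\cK,\mu,f)$ on a standard Borel space, I would choose a refining sequence $(\cR_n)$ of finite measurable partitions of $Y$ that generates the Borel $\sigma$-algebra mod $\nu$, arranged so that each $\cR_n$ refines $p^{-1}(\cP_n)$. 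The target is to produce, for each $n$, a \emph{clopen} partition $\cT_n$ of a Cantor set $\cC$ and a measurable coding map $Y_0 \to \cC$ that, on larger and larger Rokhlin towers (of height $h_n\to\infty$, going to full measure), turns $\cR_n$ into $\cT_n$ while being constant on $p$-fibres over the base partition; passing to the inverse limit of the $\cT_n$'s produces $\cC$, the homeomorphism $g$, and the factor map $\Pi$ onto $\cK$ (realised as the inverse limit of the $\cP_n$'s, since $f$ is minimal so $\cK$ is already such an inverse limit).

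The heart of the argument is the inductive step: given a tower and coding at stage $n$, one uses ergodicity of $\nu$ and the Rokhlin lemma (applied to $S$, using that $m$, hence $\mu$, hence $\nu$ is aperiodic) to find a much taller tower whose base has small measure and along which the process $\cR_{n+1}$ is \emph{almost} determined by its name; one then perturbs the coding on a set of small measure so that the new name function becomes a clopen (hence continuous) function, compatible with the base. This is exactly the mechanism of Weiss's relative Jewett--Krieger theorem, and I would cite/adapt that machinery rather than redo it. The two extra features beyond Weiss are: (i) \emph{openness of $\Pi$}, which I would get by building the towers so that over each base column the fibre partition $\cT_{n+1}$ splits each atom of $\cT_n$ into a \emph{fixed} finite number of pieces independent of the base point (a ``product-like'' refinement over columns), forcing $\Pi$ to send clopen sets to clopen sets; and (ii) handling the second ergodic measure $\delta_\infty$. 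For the latter, note $p^{-1}(\infty)$ has $\nu$-measure zero, so it is invisible to the measurable construction; I would simply decree $\Pi^{-1}(\infty)$ to be a single added point $\infty_\cC$, extend $g$ by $g(\infty_\cC)=\infty_\cC$, and check that the inverse-limit topology still makes $\cC$ a Cantor set and $\Pi$ continuous and open there (this uses that $\infty$ has a neighbourhood basis of clopen sets in $\cK$ and that the fibre over each such neighbourhood is clopen in $\cC$). Unique ergodicity over the base, i.e. that $\nu'$ is the \emph{only} $g$-invariant measure with $(\pi_1)_*\nu'=\mu$ — here $\pi_1$ should be read as $\Pi$ — follows from the standard Jewett--Krieger estimate: the towers can be chosen so that the empirical frequencies of $\cT_n$-names converge uniformly, which pins down the conditional measures on fibres; combined with unique ergodicity of $(\cK,\mu)$ among measures projecting to itself (which it is, being a single measure), this gives uniqueness.

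I expect the main obstacle to be the \emph{simultaneous} control of three things in the inductive step: keeping the coding map a genuine conjugacy (defined off a null set, with $\Phi S=g\Phi$), making the name functions clopen and compatible with the prescribed base columns, and arranging the product-like column refinement needed for openness of $\Pi$ — all while the Rokhlin towers for $S$ do not respect the base towers for $f$ on the nose (they only do so approximately, up to the small-measure error). Reconciling the ``vertical'' tower structure coming from $S$ with the ``horizontal'' column structure coming from $f$, so that the limit map $\Pi$ is honestly open and not merely continuous, is the delicate point; I would manage it by choosing the tower heights $h_{n+1}$ to be large multiples of $h_n$ and by doing the perturbation fibrewise over base columns, so that the combinatorics of the refinement is uniform across each column. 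The non-compact bookkeeping around $\infty$ is by contrast routine, since everything there happens on a $\nu$-null, measure-theoretically negligible set and only the point-set topology of the one-point-type completion needs checking.
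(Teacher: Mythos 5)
There is a genuine gap, and it is concentrated exactly where you declare the problem ``routine'': the handling of the second ergodic measure $\delta_\infty$. You treat $\infty$ as a measure-zero bookkeeping issue (``everything there happens on a $\nu$-null set''), but the difficulty is not the null set $p^{-1}(\infty)$ --- it is that the presence of $\delta_\infty$ destroys \emph{uniformity} globally. A clopen set $K\subset\cK$ with $\infty\in K$ is not a uniform set for $(\cK,\mu,f)$: orbits can spend arbitrarily long stretches of time near the fixed point $\infty$, so Birkhoff averages of $1_{p^{-1}(K)}$ do not converge uniformly to $\nu(p^{-1}(K))$. Consequently your key mechanism --- ``the towers can be chosen so that the empirical frequencies of $\cT_n$-names converge uniformly'' --- cannot be achieved: if it could, $g$ would be uniquely ergodic, whereas $\Pi^{-1}(\infty)$ is a nonempty compact invariant set forced to carry a second invariant measure projecting to $\delta_\infty$. (For the same reason your aside ``since $f$ is minimal, $\cK$ is already such an inverse limit'' is false here: $f$ fixes $\infty$ and is not minimal.) The paper replaces uniformity by one-sided \emph{sub-uniformity} (frequencies bounded above by $\nu(\cdot)+\varepsilon$, only for blocks avoiding a distinguished ``marked'' element containing $\infty$), reworks the Rokhlin towers as \emph{marked} towers with a column of height one sitting over a neighbourhood of $\infty$, and then derives the relative unique ergodicity and the fact that $\Pi^{-1}(\infty)$ is a singleton from the structure of the resulting sub-uniform algebra. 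None of this is recoverable from your plan as written, and simply adjoining a fixed point $\infty_\cC$ by fiat does not address why $g$ should extend continuously there or why no other invariant measure over $\mu$ appears.

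A secondary but still real gap concerns openness. You correctly identify the tension between the $S$-towers and the $f$-columns, but your proposed fix (a product-like splitting of fibre atoms, uniform over each column) is both stronger than needed and unprotected against the perturbations of the coding that the Jewett--Krieger mechanism requires: each time you ``perturb the coding on a set of small measure'' to make names clopen, you can destroy the property that a cylinder meets every base atom over which its column lies. The paper's solution is to ask only that each cylinder \emph{fill up} (meet in positive measure every $\cB$-measurable subset of) a prescribed clopen base set, and to introduce a ``stability under brackets'' condition that survives the copying-and-painting step; openness of $\Pi$ is then deduced from fullness alone, and the product structure you are aiming for is obtained afterwards by a purely topological argument (Part~B of the paper). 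Your proposal names the obstacle but does not supply an invariant of the inductive step that guarantees openness in the limit.
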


This result is a variation on Weiss's relative version of Jewett-Krieger theorem. In the following we get four successive versions. The first one is Weiss's original statement which concerns the uniquely ergodic case and does not provides the openness of the semi-conjugacy. The second one add the openness.
The third one is the bi-ergodic version without the openness. The last one
is proposition~\ref{p.bi-ergodic-ouvert}.
The proofs are given in sections~\ref{s.weiss} to~\ref{sec.bi-ergodic-open}
which have similar structures.
The last three proofs are explained as modifications of the first one.
A key notion is that  of uniform algebra, already used
by Hansel and Raoult for obtaining Jewett-Krieger theorem.
In the first section we try to follow the ideas from~\cite{GlaWei}, while providing the much more detailed exposition required by our three future modifications.

\section{Relative Jewett-Krieger theorem (Weiss theorem)}\label{s.weiss}

\begin{theo}[Weiss]\label{theo.weiss}
Suppose that we are given
\begin{itemize}
\item[--] a uniquely ergodic minimal homeomorphism $f$ on a Cantor set $\cK$, with  invariant  measure $\mu$,
\item[--] an ergodic measured dynamical system $(Y,\nu,S)$ on a standard Borel space and a measurable map $p : Y \ra \cK$
such that $p_{*}\nu=\mu$ and $f \circ p = p \circ S$.
\end{itemize}
Then there exist a uniquely ergodic minimal homeomorphism $g$ on a Cantor set $\cC$ with invariant measure $\nu'$, an isomorphism $\Phi$ between $(Y,\nu,S)$ and $(\cC, \nu',g)$,
an onto continuous map $\Pi:\cC \to \cK$, and a full measure subset $Y_{0}$ of $Y$, such that we have the same
commutative diagram as in proposition~\ref{p.bi-ergodic-ouvert}.
\end{theo}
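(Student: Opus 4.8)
The plan is to produce the Cantor model $\cC$ as the inverse limit of a decreasing sequence of finite clopen partitions of $Y$, each of which refines the pullback by $p$ of a clopen partition of $\cK$, so that the semi-conjugacy $\Pi$ is built into the construction. First I would fix a sequence of finite clopen partitions $\cP_0\prec\cP_1\prec\cdots$ of $\cK$ generating its topology, with $\bigvee_n p^{-1}(\cP_n)$ together with some fixed sequence of sets generating the Borel $\sigma$-algebra of $Y$ up to $\nu$-measure zero (possible since $(Y,\nu)$ is standard Borel). The goal is to inductively choose finite measurable partitions $\cA_0\prec\cA_1\prec\cdots$ of $Y$ such that: (i) each $\cA_n$ refines $p^{-1}(\cP_n)$; (ii) the $\cA_n$ together with the $p^{-1}(\cP_n)$ generate the $\nu$-algebra; and (iii) the partition $\cA_n$, regarded as a finite-alphabet factor of $(Y,\nu,S)$ via the itinerary map, is ``almost uniform'' with respect to a tolerance $\varepsilon_n\to0$ — that is, the empirical frequencies of $\cA_n$-names of every length converge uniformly over $Y$ to the $\nu$-values, on a set of full measure. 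The Cantor set $\cC$ is then the closure of the image of the combined itinerary map $Y\to(\text{alphabet})^{\ZZ}$, with $g$ the shift; $\Pi$ is induced by forgetting the $\cA_n$-coordinates and keeping only the $\cP_n$-coordinates; $\Phi$ is the combined itinerary map on the full-measure set $Y_0$ where all the uniform convergences hold.

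The heart of the argument is the inductive step: given $\cA_{n-1}$ (already uniform to tolerance $\varepsilon_{n-1}$) and a new target set $B\in p^{-1}(\cP_n)$ or from the generating sequence, find a finite partition $\cA_n\prec p^{-1}(\cP_n)$ refining $\cA_{n-1}$, approximating $B$ in measure, and uniform to tolerance $\varepsilon_n$. This is done by the Rokhlin–tower / ``painting'' technique in the relative setting. One takes a Rokhlin tower for $S$ of large height $N$ whose base is small and, crucially, whose levels respect the partition $p^{-1}(\cP_n)$ (so that the tower is built \emph{inside} fibres of the factor $(\cK,\mu,f)$, using that $f$ is uniquely ergodic so that its own finite models are already uniform — this is where the hypotheses on $f$ enter). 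On this tower one re-labels the $\cA_{n-1}$-names by a carefully chosen ``uniform'' code: one fixes, for each admissible $\cK$-column over the tower, a single block of $\cA_{n-1}$-symbols occurring with the correct frequencies, copies it across the whole tower level-by-level, and only corrects it on the small base and on a small ``error set'' where it must still resolve $B$ and refine $\cA_{n-1}$. Choosing $N$ large and the base small makes the new partition both close to $\cA_{n-1}\vee\{B,B^c\}$ in measure and uniform to the prescribed tolerance, by the standard counting/ergodic-theorem estimates; the uniformity must be checked to hold simultaneously over all of $\cK$, using unique ergodicity of $f$ to control the denominators.

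Finally I would verify the output. Unique ergodicity of $g$: every $g$-invariant measure on $\cC$ pushes forward under $\Pi$ to the unique $f$-invariant measure $\mu$; the uniform convergence of empirical $\cA_n$-name frequencies — which, being a closed condition, holds on all of $\cC$, not just on $\Phi(Y_0)$ — forces every invariant measure to assign the value $\nu'$ to every cylinder, hence $\nu'$ is the unique one. Minimality of $g$: again the uniform frequency estimates show every cylinder that meets $\cC$ appears syndetically in every point's orbit, so every orbit is dense. That $\Phi$ is an isomorphism onto its image with $\Phi_*\nu=\nu'$ follows from (ii) (the partitions generate, so $\Phi$ is injective mod $\nu$) together with the construction of $\nu'$ as the push-forward; $\Phi$ is bi-measurable because it is a Borel map between standard Borel spaces that is injective on a full-measure invariant set. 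The commuting diagram is immediate from the definition of $\Pi$. The main obstacle — the one requiring genuine care rather than bookkeeping — is the inductive construction of the uniform relative partitions $\cA_n$, i.e.\ making the ``painting over a Rokhlin tower adapted to the factor'' argument yield uniformity over \emph{all} of $\cK$ while still refining $p^{-1}(\cP_n)$ and approximating the prescribed generating sets; controlling the accumulation of errors across the infinitely many inductive steps (so that the limiting partition still generates and the limiting frequencies are exactly $\nu$) is the delicate quantitative part, and it is precisely the step that the subsequent sections adapt to the open and bi-ergodic variants.
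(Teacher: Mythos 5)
Your proposal is correct and follows essentially the same route as the paper: an inductively constructed uniform (Hansel--Raoult style) generating algebra refining $p^{-1}(\text{clopen sets})$, built by copying-and-painting over Rokhlin towers whose bases are measurable with respect to the factor algebra, followed by the itinerary/shift-closure construction of $\cC$, with $\Pi$ given by forgetting coordinates and unique ergodicity read off from the uniform frequency estimates. The delicate point you flag---controlling the accumulation of errors over the infinitely many inductive modifications so that the limit partitions still generate and remain uniform---is exactly what the paper's triangular-array argument (sub-lemma~\ref{lem.weiss} and section~\ref{ss.algebra}) handles.
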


\subsection{Strategy}
\label{ss.strategy}
Proving Weiss's theorem essentially amounts to constructing an $S$-invariant algebra\footnote{Remember that an algebra is closed under  \emph{finite} intersections or unions, as opposed to the \emph{countable} operations permitted in a $\sigma$-algebra. Here the distinction between algebras and $\sigma$-algebras is crucial.}
 $\cA_0$ on $Y$, which generates the $\sigma$-algebra of $Y$ (modulo a null-set), and such that all the elements of $\cA_0$ are \emph{uniform sets}. Roughly speaking, a set $A\subset Y$ is uniform if the Birkhoff averages (for the map $S$) of the characteristic function of $A$ converge uniformly towards $\nu(A)$. Let us make this precise.

\begin{defi}
Let $(Y,\nu,S)$ be an ergodic measured dynamical system. A set\footnote{All sets are implicitly supposed to be measurable with respect to the $\sigma$-algebra on $Y$.} $A\subset Y$ is $(\varepsilon,N)$-\emph{uniform}  if
for $\nu$-a.e. $y\in Y$, for every $n \geq N$, the proportion of the finite orbit $y, \dots, S^{n-1}(y)$ in $A$ is approximately  equal to  $\nu(A)$, with error strictly less than  $\varepsilon$: in other words, the Birkhoff sum $S_{n}(1_{A},y)$
for the characteristic function of $A$ satisfies 
$$\left|\frac 1 n S_{n}(1_{A},y) - \nu(A)\right|< \varepsilon$$
for $\nu$-a.e. $y\in Y$ and every $n\geq N$. A set $A\subset Y$  is \emph{uniform} if for every $\varepsilon >0$ there exists $N> 0$ such that $A$ is $(\varepsilon,N)$-uniform. An algebra or a $\sigma$-algebra on $Y$ is uniform if it consists of uniform sets.
\end{defi}
The property of being uniform is stable under disjoint union and complement; note however that in general it is not  stable under intersection nor union.
Thus the family of all uniform sets does not constitutes an algebra.

Note that if $f$ is a uniquely ergodic minimal homeomorphism of a Cantor set $\cK$ with invariant measure $\mu$, then the clopen (\emph{i.e.} closed and open) subsets of $\cK$ are uniform sets for $(\cK,\mu,f)$. Thus, under the hypotheses of Weiss theorem, the elements of  the algebra
$$
\quad\quad\quad\quad\quad\quad\quad\quad\quad\quad
\cB = \{p^{-1}(K), K \subset \cK \mbox{ is a clopen set\}}\quad\quad\quad\quad\quad\quad\quad\quad\quad\quad (*)
$$
are uniform sets for $(Y,S,\nu)$. Moreover, this algebra $\cB$
\emph{has no atom}: every $\cB$-measurable set of positive measure can be partitioned into  $\cB$-measurable subsets of arbitrarily small positive measure. These are the hypotheses of the following lemma.
\pagebreak
\begin{lemma}\label{lem.uniform-algebra}
Let $(Y,\nu,S)$ be an ergodic measured dynamical system on a standard Borel space. Suppose $\cB$ is an algebra on $Y$ such that \nopagebreak
 \begin{enumerate}
\item $\cB$ is a countable family of measurable sets and has no atom;
\item $\cB$ is $S$-invariant and its elements are uniform sets.
\end{enumerate}
Then there exists  an algebra $\cA_{0}$ which contains $\cB$, which still satisfies properties 1 and 2 above (in particular the elements of $\cA_0$ are uniform sets), and which in addition generates the $\sigma$-algebra of $Y$ modulo a null-set.
\end{lemma}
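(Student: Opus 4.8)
The strategy is the classical ``generating uniform algebra'' construction going back to Hansel–Raoult, adapted to the relative setting: we build $\cA_0$ as an increasing union of countable $S$-invariant algebras $\cB = \cA^{(0)} \subset \cA^{(1)} \subset \cA^{(2)} \subset \cdots$, at each stage adding (the $S$-orbit of) one new uniform set chosen so that, in the limit, the $\sigma$-algebra generated separates points of $Y$ modulo a null set, hence equals the Borel $\sigma$-algebra mod $0$ (here we use that $Y$ is standard Borel, so its $\sigma$-algebra is countably generated by a sequence of sets $B_1, B_2, \dots$). Since a finite or countable union of countable $S$-invariant algebras of uniform sets is again a countable $S$-invariant algebra of uniform sets, and the no-atom property is inherited as soon as it holds for $\cB$, properties 1 and 2 are automatically preserved at the limit; the whole content is the inductive step.

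The inductive step is: given a countable $S$-invariant uniform algebra $\cA$ (with no atom) and a target measurable set $B \subset Y$, produce a uniform set $B'$ that approximates $B$ well in measure, i.e. $\nu(B \triangle B') < \delta$ for a prescribed $\delta > 0$; then adjoin the algebra generated by $\cA$ and the $S$-orbit $\{S^n B'\}_{n\in\ZZ}$. Doing this for $B = B_k$ with $\delta = 2^{-k}$ along a sequence and taking the union over $k$ yields an algebra $\cA_0$ whose generated $\sigma$-algebra contains each $B_k$ mod $0$ (a set approximable in measure to within every $2^{-k}$ by sets in a $\sigma$-algebra lies in it mod $0$), hence generates the Borel $\sigma$-algebra mod $0$. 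Finiteness of the generating process at each stage is what keeps $\cA_0$ countable and keeps its elements uniform — this is why we must work with algebras, not $\sigma$-algebras, as the footnote stresses: a countable union of uniform sets need not be uniform.

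To construct the approximating uniform set $B'$: partition $Y$ into finitely many ``columns'' over a finite uniform partition refining part of $\cA$, then use a Rokhlin–Kakutani tower argument (available since $(Y,\nu,S)$ is ergodic and aperiodic — the no-atom hypothesis on $\cB$, pulled back from the nonatomic $\mu$, forces aperiodicity) to choose, inside each column, a set built from whole tower levels whose ``column frequency'' along every long orbit segment is forced to be within $\varepsilon$ of $\nu(B \mid \text{column})$; concatenating these gives a set $B'$ all of whose Birkhoff averages stabilize uniformly near $\nu(B')$, i.e. $B'$ is uniform, while $\nu(B \triangle B')$ is controlled by the mesh of the partition and the base measure of the tower. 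One must also ensure that adjoining $B'$ does not destroy the no-atom property — but this is automatic since $\cB$ already has no atom and we only enlarge the algebra.

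The main obstacle is precisely this inductive step — simultaneously guaranteeing (a) the Birkhoff sums of $1_{B'}$ converge uniformly (uniformity of $B'$), (b) $\nu(B \triangle B')$ small, and (c) that $B'$ is placed compatibly with the existing algebra $\cA$ so that the new algebra it generates still consists of uniform sets (the Birkhoff averages of finite Boolean combinations $1_{A \cap B'}$, $1_{A \setminus B'}$ etc. must also converge uniformly). The clean way to get (c) for free is to carry out the tower construction relative to a finite uniform sub-partition $\cP$ of $\cA$ and build $B'$ as a union of $\cP$-atoms intersected with whole tower levels, so that every relevant Boolean combination is again of that form and the same uniform ergodic-theorem estimate applies. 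I expect the bookkeeping of nested towers (making the error $\varepsilon$ and the exceptional base uniformly small across the finitely many columns at once, controlling the long ``return-time remainder'' parts of orbits) to be the technically delicate point, though conceptually routine given Weiss's framework.
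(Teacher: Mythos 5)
Your overall architecture --- adjoin, one at a time, the $S$-orbit of a uniform set $B'$ approximating each member $B_k$ of a countable generator of the Borel $\sigma$-algebra, building $B'$ from whole levels of a Rokhlin tower over a base compatible with the existing algebra --- has the right ingredients (towers, copying structure, approximation in measure), and you correctly isolate the main obstacle as point (c). But the proposed resolution of (c) has a genuine gap. For the algebra generated by $\cA$ and $\{S^nB'\}_{n\in\ZZ}$ to consist of uniform sets, it is not enough that $B'$ and the sets $A\cap B'$ be uniform: you must control \emph{all} the cylinders $E_0\cap S^{-1}(E_1)\cap\cdots\cap S^{-(j-1)}(E_{j-1})$ with $E_\ell\in\{B',Y\setminus B'\}$ (joined with $\cP$), for \emph{every} length $j$, since uniformity is not stable under intersection. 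A single tower of height $M$ only forces the frequency of $j$-blocks along orbits for $j$ bounded in terms of $M$, with an error that degrades as $j$ grows; so no one-shot tower construction of $B'$ can deliver uniformity of cylinders at all scales simultaneously. This is precisely why the paper does not fix each new set once and for all: it builds a triangular array of partitions $(\alpha_i^j)_{i\le j}$, where passing from row $j$ to row $j+1$ enforces the block-uniformity at the new scale $j+1$ by copying-and-painting on a taller tower, at the cost of perturbing \emph{all} the previously constructed partitions by less than $\varepsilon'_{j+1}$; the partitions one actually uses are the limits $\alpha_i^\infty$ of the Cauchy columns, and the scale-$k$ properties survive the limit because they are finitely many strict inequalities and the tails $\sum_{j>k}\varepsilon'_j$ are small. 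Your scheme has no mechanism for revisiting and perturbing earlier generators, and without it the cylinders mixing old and new generators (and the long cylinders of $B'$ itself) will in general fail to be uniform.

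A secondary, related point: the paper needs the sequence of partitions to be \emph{nested}, so that an $i$-cylinder of a coarse partition is a finite disjoint union of $k$-cylinders of a finer one; combined with the quantitative choice $\varepsilon_k\le \varepsilon'_k/(\#\alpha_k^k)^k$, this is what lets uniformity at the finest scale propagate down to all coarser cylinders (the ``General remark'' in the paper). Your ``adjoin one orbit at a time'' framing does not build in this nesting, and retrofitting it essentially reproduces the paper's construction. So the missing idea is the successive-approximation structure: each uniform generator must itself be the limit, in the partition metric, of a sequence of modifications, one per scale, rather than the output of a single tower argument.
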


Remember that the algebra $\cA_{0}$ \emph{generates the $\sigma$-algebra of $Y$ modulo a null-set} if there exists a full measure set $Y_{0}\subset Y$ such that the $\sigma$-algebra generated by the algebra $\cA_{0 \mid Y_{0}}:= \{a \cap Y_{0}, a \in \cA_{0}\}$ coincides with the $\sigma$-algebra of $Y$ induced on $Y_0$.

\begin{rema*}
Let $\cB$ be an algebra satisfying the hypotheses of the lemma. Let $Y'$ be the complementary set of the union of the null-sets (zero measure sets) in $\cB$.
Then  $(Y',\nu_{\mid Y'},S_{\mid Y'})$ is an ergodic system on a standard Borel space (see~\cite{Kechris}). The algebra $\cB' = \cB_{\mid Y'}$ again satisfies the 
hypotheses of the lemma, and has the following additional property: it contains no non-empty null-set. If we find an algebra $\cA_{0}'$ satisfying the conclusion of the lemma with respect to $\cB'$, then the algebra $\cA_{0}$ generated by $\cA_{0}' \cup \cB$ will work for $\cB$. This shows that we can assume, in the proof of the lemma, that $\cB$ has no non-empty null-set.
Under this additional property, each element $B$ of $\cB$ shares a stronger uniformity property: if $B$ is  $(\varepsilon,N)$-uniform, then the proportion of the orbit $y, \dots, S^{n-1}(y)$ in $A$ is equal to   $\nu(A) \pm \varepsilon$  for \emph{every} (and not only almost every) $y\in Y$ and every $n \geq N$. Indeed the set of ``bad'' points for fixed time $n \geq N$ is a $\cB$-measurable null-set, and thus it is empty.
In particular, for every non-empty $\cB$-measurable set $t$, the return-time function of $t$ is bounded from above, that is, $Y$ is covered by a finite number of iterates of $t$.

Alternatively, the reader may notice that the algebra $\cB$ defined by (*), for which we will apply the lemma, satisfies the additional property right from the start, and thus we could have added this property in the hypotheses of the lemma.
\end{rema*}

The uniform $S$-invariant  algebra $\cA_{0}$ will be generated by a nested sequence of finite partitions $(\alpha_{i}^\infty)_{i\geq 1}$ and their iterates under $S$. Thus any element of $\cA_{0}$ will be obtained as a finite union of iterates of sets of the form
$$
a_{0} \cap  S^{-1}(a_{1}) \cap \cdots  \cap S^{j-1}(a_{j-1})
$$
where $a_{0}, \dots a_{j-1}$ are elements of some finite partition $\alpha_{i}^\infty$. In order to guarantee the uniformity of such sets, we will ensure that the partition $\alpha_{i}^\infty$ is ``$(\varepsilon,i\mbox{-blocks})$ represented by some set $t$'', which will entail that  almost every sufficiently long finite orbit will have ``$\varepsilon$-uniform distribution of $i$-blocks'' of the partition (see below for a precise definition). For every $i$, the partition $\alpha_{i}^\infty$ will be obtained as the limit of a  sequence of partitions $(\alpha_{i}^n)_{ n \geq i}$;  we will need a sub-lemma to construct the partitions $(\alpha_{1}^n, \dots,\alpha_{n}^n)$ by modifying the partitions $(\alpha_{1}^{n-1}, \dots ,\alpha_{n-1}^{n-1})$. 

We will now define precisely the  limit of a sequence of partitions, introduce the notion of ``$\varepsilon$-uniform distribution of $i$-blocks'', and state the sub-lemma we need to construct the partitions $(\alpha_i^j)_{1\leq i\leq j}$.

All partitions are implicitly  supposed to be finite. For us, a partition of $Y$ with exactly $n$ elements is a sequence $\alpha=(a_1,\dots,a_n)$ of mutually disjoint measurable subsets of $Y$, such that $a_1\cup\dots\cup a_n=Y$, and such that each  set $a_i$ has positive measure. We will also see such a partition as a measurable map  $\alpha : Y \ra \{1, \dots , n\}$ such that each set $a_i:=\alpha^{-1}(i)$ has positive measure. This allows us to define the distance between two partitions with $n$ elements as follows.  Consider the set of measurable maps from $Y$ to $\{1, \dots , n\}$, up to zero measure, endowed with the distance
$$
d_{\mathrm{part}}(f,g) = \mu\{x, f(x) \neq g(x)\}.
$$
This is a complete metric space. The space of partitions with exactly $n$ elements (up to zero measure) is an open set of this space. 

\begin{defi}
Let $(Y,\nu,S)$ be a  measured dynamical system and $\alpha$ be a partition of $Y$.  An \emph{$i$-block} of $\alpha$ is  an element $A=(a_{0}, \dots , a_{i-1})$ of $(\alpha)^i$. To every such $i$-block $A$ is associated the \emph{$i$-cylinder} 
$$X_{A,\alpha} = a_{0} \cap  S^{-1}(a_{1}) \cap \cdots  \cap S^{i-1}(a_{i-1}).$$
The \emph{$\alpha$-name} of a (finite) orbit is the sequence of elements of $\alpha$ visited by the orbit.
A finite orbit $y, \dots , S^{N-1}(y)$ has \emph{$\varepsilon$-uniform distribution of $i$-blocks of $\alpha$} if for every $i$-block $A$, the proportion of occurrence of the $i$-block $A$ among the sequence of $(N-i+1)$ $i$-blocks appearing in the $\alpha$-name of the orbit is approximately equal to the measure $\nu(X_{A,\alpha})$ of the cylinder $X_{A,\alpha}$, with error strictly less than  $\varepsilon$.
A partition $\alpha$ is \emph{$(\varepsilon,i\mbox{-blocks})$-represented by a set $t$} if 
 \begin{enumerate}
\item the return-time function $\tau$ of $t$ is bounded from below by $\frac{i}{\varepsilon}$, and bounded from above (by some integer $M$),
\item for all $y \in t$, the orbit $y, \dots , S^{\tau(y)-1}(y)$ has $\varepsilon$-uniform distribution of $i$-blocks of $\alpha$.
\end{enumerate}
\end{defi}

\bigskip
For the next sub-lemma we assume the hypotheses of lemma~\ref{lem.uniform-algebra}. We fix a sequence $(\beta_{i})_{i \geq 1}$ of \emph{nested} partitions by $\cB$-measurable sets, that is, $\beta_{i+1}$ refines $\beta_{i}$ for every $i\geq 1$. Later on we will assume that this sequence generates the algebra $\cB$ (\textit{i.e.} every set in $\cB$ is the union of elements of some partition $\beta_{i}$).

\begin{sublemma}\label{lem.weiss} 
For $n \geq 1$, let  $(\hat \alpha_{i})_{ 1 \leq i \leq n}$ be a sequence of nested partitions, $(t_{i})_{0 \leq i \leq n-1}$ a decreasing sequence of $\cB$-measurable sets, and $(\varepsilon_{i})_{ 1 \leq i \leq n-1}$ a sequence of positive numbers. Assume that  $\hat \alpha_{n}$ refines $\beta_{n}$, and that, for $1 \leq i \leq n-1$,
\begin{itemize}
\item[--] the partition $\hat \alpha_{i}$ refines the partition $\beta_{i}$,
\item[--] the partition   $\hat \alpha_{i}$ is $(\varepsilon_{i}, i\mbox{-blocks})$-represented by  $t_{i}$.
\end{itemize}
Let $\varepsilon_{n} >0$.
Then there exist a $\cB$-measurable  set $t_{n} \subset t_{n-1}$,  and a sequence of nested partitions
$(\alpha_{i})_{1 \leq i \leq n}$ such that for every $1 \leq i \leq n$,
\begin{enumerate}
\item the partition $\alpha_{i}$ refines the partition  $\beta_{i}$,
\item the partition  $\alpha_{i}$ is $(\varepsilon_{i}, i\mbox{-blocks})$-represented by  $t_{i}$,
\item  the partition $\alpha_{i}$ has the same number of elements as $\hat \alpha_{i}$, and $d_{\mathrm{part}}(\alpha_{i},\hat \alpha_{i}) < \varepsilon_{n}$.
\end{enumerate}
\end{sublemma}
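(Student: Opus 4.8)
The plan is to build the new partitions $(\alpha_i)_{1\le i\le n}$ from the old ones $(\hat\alpha_i)_{1\le i\le n}$ by a single Rokhlin-tower construction over a very small base, making only a tiny ($<\varepsilon_n$) modification on a sparse set. First I would fix $\varepsilon_n>0$ and choose a $\cB$-measurable set $t_n\subset t_{n-1}$ whose return-time function $\tau$ is bounded below by some huge $R\gg n/\varepsilon_n$ and bounded above by some $M$; such a set exists because $\cB$ has no atom (hence $t_{n-1}$ contains $\cB$-sets of arbitrarily small positive measure) and, after the reduction in the Remark, $\cB$ has no non-empty null-set, so every non-empty $\cB$-set has bounded return time. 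By choosing $\nu(t_n)$ small enough, the Rokhlin tower over $t_n$ with heights $\tau$ covers a set of measure $>1-\varepsilon_n/2$ of $Y$, which will control the $d_{\mathrm{part}}$-distance.

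The key step is this: on each tower column $\{y,S(y),\dots,S^{\tau(y)-1}(y)\}$ over $y\in t_n$, I will re-code the $\hat\alpha_n$-name so that it becomes a concatenation of long "$\varepsilon_i$-uniform $i$-blocks words" for all $i\le n$ simultaneously. Concretely, since by the ergodic theorem (applied to the partition $(\hat\alpha_n)^i$, which is finer than $(\hat\alpha_i)^i$) $\nu$-a.e.\ long orbit has $\varepsilon_i$-uniform distribution of $i$-blocks of $\hat\alpha_i$ for each $i=1,\dots,n$, I can pick a single finite word $w$ (the $\hat\alpha_n$-name of some long orbit segment) of length $\ell$ with $n/\varepsilon_n\ll \ell\ll R$ which is simultaneously $(\varepsilon_i,i\text{-blocks})$-good for $\hat\alpha_i$ for every $i\le n$, and whose empirical frequencies of full $\hat\alpha_n$-cylinders are $\varepsilon_n$-close to their $\nu$-measures. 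Then on each column I overwrite the $\hat\alpha_n$-name by repeated copies of $w$ (with a negligible-length remainder at the top); since $\ell/R$ is small and $\tau\le M$, this changes the $\hat\alpha_n$-name on a set of measure $O(\varepsilon_n)$ and one checks directly that inside any column the resulting word still has $\varepsilon_i$-uniform distribution of $i$-blocks for each $i$ (periodicity of a good word preserves block frequencies up to boundary terms of size $\ell^{-1}$). This defines $\alpha_n$ as a partition with the same number of elements as $\hat\alpha_n$ and with $d_{\mathrm{part}}(\alpha_n,\hat\alpha_n)<\varepsilon_n$; then set $\alpha_i:=\beta_i\vee(\text{the coarsening of }\alpha_n\text{ dictated by }\hat\alpha_i)$, i.e.\ overwrite the $\hat\alpha_i$-name on columns exactly as for $\alpha_n$ but keeping only the $\hat\alpha_i$-resolution, so that $(\alpha_i)$ is automatically nested, refines $\beta_i$, and agrees with $\hat\alpha_i$ off the same small set.

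Two bookkeeping points remain. First, items~1 ($\alpha_i$ refines $\beta_i$) and~3 (same cardinality, $d_{\mathrm{part}}$-closeness): refinement of $\beta_i$ is arranged by intersecting with $\beta_i$ on the column overwrite (the word $w$ must be chosen consistent with the fixed $\beta_n$-name of the base orbit, which costs nothing since $\beta_n$ is coarser than $\hat\alpha_n$ and $\cB$-measurable hence "seen" by the return-time structure), and the cardinality and closeness are immediate from the tower having measure $>1-\varepsilon_n/2$ and $w$ using every letter of $\hat\alpha_n$. Second — and this is where I expect the real work to be — item~2, that $\alpha_i$ is $(\varepsilon_i,i\text{-blocks})$-represented by $t_i$ for $i\le n-1$ (not $t_n$!): here we must show that the modification, which lives over $t_n\subset t_{n-1}\subset\dots\subset t_i$, does not destroy the representation of $\alpha_i$ over the much larger base $t_i$. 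The point is that over $t_i$ the old partition $\hat\alpha_i$ was already $(\varepsilon_i,i\text{-blocks})$-represented, the modification only affects a proportion $O(\varepsilon_n)$ of each $t_i$-column (because $t_n$-columns are rare inside $t_i$-columns and $\ell$ is short compared to the $t_i$-return time which is $\ge i/\varepsilon_i$), and $i$-block frequencies along a word change by at most a constant times the fraction of modified positions plus a boundary term; choosing $\varepsilon_n$ small relative to the slack in the original $(\varepsilon_i,i\text{-blocks})$-representations makes this work. The main obstacle is thus the simultaneous control of all scales $i\le n$ by a single overwrite word $w$ while preserving the $t_i$-representations for $i<n$; the resolution is the hierarchy $\varepsilon_n\ll \ell^{-1}\ll n^{-1}\,\varepsilon_i\cdot(\text{return time of }t_i)^{-1}$ and the fact that all the $t_i$ and $\beta_i$ are $\cB$-measurable, so the whole construction respects the algebra $\cB$.
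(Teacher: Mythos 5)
There is a genuine gap, and it lies in the central step of your plan: overwriting \emph{every} column of the $t_n$-tower by repeated copies of a fixed good word $w$. The quantity $d_{\mathrm{part}}(\alpha_n,\hat\alpha_n)$ is the measure of the set where the two names disagree; replacing the $\hat\alpha_n$-name of an entire column by the periodic word $w^{k}$ changes the name at essentially every position of the column (the original name has no reason to coincide \emph{pointwise} with $w^\infty$, only to have comparable block \emph{frequencies}), so the modification has measure close to that of the tower, i.e.\ close to $1$, and item~3 fails. Your justification ``since $\ell/R$ is small and $\tau\le M$, this changes the name on a set of measure $O(\varepsilon_n)$'' controls only the boundary remainder, not the overwritten set. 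The paper's construction avoids this by copying the name of a good fibre \emph{already present in the column} onto the \emph{bad} fibres of that column only; since the union of the bad fibres has measure $<\eta$ with $\eta$ much smaller than $\varepsilon_n$ (by a Birkhoff-type fact used to fix the minimal height of the tower beforehand), the perturbation is genuinely small.

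The second gap is in your treatment of item~2 for $i<n$. You assert that the modification affects only a proportion $O(\varepsilon_n)$ of each $t_i$-fibre because ``$t_n$-columns are rare inside $t_i$-columns''; the containment goes the other way: since $t_n\subset t_{n-1}\subset\dots\subset t_i$, the $t_n$-columns are the \emph{tall} ones, and each $t_i$-fibre (whose length is bounded above) sits entirely inside one $t_n$-column and would be entirely overwritten. Moreover the representation property is a worst-case statement about \emph{every} $t_i$-fibre, and a short sub-segment of the periodic word $w^\infty$ (of length comparable to a $t_i$-fibre, possibly much shorter than $\ell$) need not have $\varepsilon_i$-uniform distribution of $i$-blocks even though $w$ does over its full length; so there is no uniform ``slack'' to absorb the change. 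The paper resolves exactly this difficulty by (a) choosing the tower partition so that fibres in the same column have the same $(t_i,Y\setminus t_i)$-name, whence every copy-and-paste transports the $\hat\alpha_i$-name of a union of \emph{whole} $t_i$-fibres onto a union of whole $t_i$-fibres, and (b) concluding that after the modification each $t_i$-fibre carries the $i$-block statistics of some actual $t_i$-fibre of $\hat\alpha_i$, hence is still $\varepsilon_i$-uniform. Without some device of this kind --- copying whole $t_i$-fibres rather than imposing a periodic word --- your argument for item~2 with $i<n$ does not close.
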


\subsection{Proof of sub-lemma~\ref{lem.weiss}}
\label{ss.proof-weiss}

Before entering the proof, let us begin by recalling some urban vocabulary.
A \emph{tower} is a (measurable) set $t$ with positive measure, called the \emph{basis} of the tower, equipped with a (measurable) partition, such that the return-time function $\tau_{t}$ on $t$ is constant on every set of the partition. Given an element $a$ of this partition, and $h=\tau_{t}(a)$, the \emph{column over $a$} is the sequence of sets $(a, S(a), \dots S^{h-1}(a))$. For every point $y \in a$, the sequence of points $(y, S(y), \dots S^{h-1}(y))$ is called a \emph{fibre} of the tower (inside the column over $a$). The \emph{minimal and maximal heights} of the tower are respectively the  minimum and maximum of the function $\tau_{t}$ on $t$.

All our towers will have $\cB$-measurable basis and partition, and since $\cB$ is uniform and has no non-empty null-set their maximal height will be finite (see the remark following lemma~\ref{lem.uniform-algebra}). Thus the partition on $t$ induces a (finite) $\cB$-measurable partition of $Y$ (the elements of this partitions are the sets $S^k(a)$ where $a$ is an element of the partition of $t$ and $k$ is less than the height of the column over $a$).

We will use towers to modify some partition $\hat \alpha$ of $Y$ into a new partition $\alpha$, by the process of \emph{copying and painting} (see~\cite{Glasner}, chapter~15). This consists in selecting some (\emph{source}) fibre of the tower, and copying its $\hat \alpha$-name onto some other (\emph{target})  fibres in the same column to get the new partition $\alpha$. Thus $\alpha$ is the partition  that coincides with $\hat \alpha$ outside the target fibres, and such that the $\alpha$-name of the target fibres is equal to the $\hat \alpha$-name of the source fibre. Note that the elements of the new partition $\alpha$  are in natural one-to-one correspondence with the elements of $\hat \alpha$
(in other words $\alpha$ is well defined as a function).

The following fact, which asserts the existence of towers with large minimal height and bounded maximal height, 
is an adaptation of the classical Rokhlin lemma. 
\begin{fact}[Rokhlin lemma]
We assume the hypotheses of lemma~\ref{lem.uniform-algebra}. Then for every $N>0$, and every $\cB$-measurable positive measure set $t$, there exists a $\cB$-measurable positive measure set $\hat t\subset t$ which is disjoint from its $N$ first iterates, so that the return-time function on $\hat t$ is bounded from below by $N$. Furthermore the return-time function on $\hat t$ is bounded from above.
\end{fact}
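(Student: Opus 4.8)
The plan is to follow the classical proof of the Rokhlin lemma through a Kakutani skyscraper, being careful that every set we produce belongs to the \emph{algebra} $\cB$; the upper bound on the return time then comes for free from the remark following lemma~\ref{lem.uniform-algebra}. As observed there, we may assume that $\cB$ contains no non-empty null-set, so that every non-empty $\cB$-measurable set is covered by finitely many of its iterates and, in particular, has a return-time function bounded from above.

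First I would use the absence of atoms of $\cB$ to choose a $\cB$-measurable set $C\subset t$ with $0<\nu(C)<\frac1{N-1}$ (if $\nu(t)$ is already that small, simply take $C=t$). Being non-empty and $\cB$-measurable, $C$ has return-time function $\tau_{C}$ bounded from above by some integer $M$, so its level sets
$$C_{k}=\{y\in C\ :\ \tau_{C}(y)=k\}=C\cap S^{-k}(C)\cap\bigcap_{j=1}^{k-1}S^{-j}(Y\setminus C)$$
are $\cB$-measurable and empty for $k>M$. The associated columns $S^{j}C_{k}$ ($0\le j<k$, $1\le k\le M$) are pairwise disjoint and their union is $S$-invariant modulo a null-set, hence of full measure by ergodicity; thus $\sum_{k}k\,\nu(C_{k})=1$.

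Then I would set
$$\hat t:=\{y\in C\ :\ \tau_{C}(y)\ge N\}=C\cap\bigcap_{j=1}^{N-1}S^{-j}(Y\setminus C).$$
This set is $\cB$-measurable (a finite intersection of iterates of elements of $\cB$) and contained in $t$. If $y\in\hat t$ then $Sy,\dots,S^{N-1}y$ lie outside $C$, hence outside $\hat t\subset C$; therefore $\hat t$ is disjoint from its iterates $S\hat t,\dots,S^{N-1}\hat t$, and its return-time function is bounded from below by $N$. Moreover $\hat t$ has positive measure: if it were a null-set we would have $C=\bigsqcup_{k=1}^{N-1}C_{k}$ modulo a null-set, whence $1=\sum_{k=1}^{N-1}k\,\nu(C_{k})\le(N-1)\,\nu(C)<1$, a contradiction. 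Finally, $\hat t$ is a non-empty $\cB$-measurable set, so the remark following lemma~\ref{lem.uniform-algebra} shows that its return-time function is bounded from above as well, which is exactly the remaining assertion of the Fact.

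The argument is elementary and I do not anticipate a real obstacle; the only points deserving attention are keeping every set inside the countable algebra $\cB$ rather than in the ambient $\sigma$-algebra (which is what makes all the return times automatically bounded), and the choice $\nu(C)<\frac1{N-1}$, which is what forces the ``tall part'' $\hat t$ of the skyscraper to carry positive measure.
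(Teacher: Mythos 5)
Your proof is correct and takes essentially the same route as the paper's: you construct the very same set $\hat t = C\setminus\bigl(S^{-1}C\cup\dots\cup S^{-(N-1)}C\bigr)$ from a small $\cB$-measurable $C\subset t$, and you get the upper bound on the return time from the same remark following lemma~\ref{lem.uniform-algebra}. The only difference is that you spell out, via Kac's formula for the Kakutani skyscraper, the step that the paper dismisses with ``positive measure by ergodicity''.
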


\begin{proof}
Using the fact that the algebra $\cB$ has no  atom, we choose a set $t_{0} \subset t$ with positive measure less than $\frac{1}{N}$. Consider $\hat t := t_{0} \setminus (S^{-1}t_{0} \cup \cdots \cup S^{-(N-1)}t_{0})$. This set has positive measure by ergodicity. Since $\cB$ is $S$-invariant, $\hat t$ is $\cB$-measurable, thus uniform. This entails the boundedness of the return-time function on $\hat t$ (see the remark after lemma~\ref{lem.uniform-algebra}).
\end{proof}

From now on we assume the hypotheses of sub-lemma~\ref{lem.weiss}. The proof is divided into three main steps.

\subsubsection*{Step I: choice of a first tower}

\paragraph{I.a. Choice of the height $m'$.---}
The following fact is a direct consequence of Birkhoff theorem (or apply the second item of lemma 15.16 of~\cite{Glasner} to the partition $\alpha\vee\dots\vee S^{-n}\alpha$).
\begin{fact}
For every partition $\alpha$, every $n>0$ and every $\eta >0$ there exists some integer $m'$ such that for every tower of minimal height greater than $m'$, the union of the fibres of the tower having $\eta$-uniform distribution of $n$-blocks of $\alpha$
has measure strictly greater than $1-\eta$.
\end{fact}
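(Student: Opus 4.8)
The plan is to read this \emph{Fact} as a quantitative form of Birkhoff's pointwise ergodic theorem applied to the indicator functions of the finitely many $n$-cylinders of $\alpha$, together with one extra ``window-shifting'' device needed because the \emph{maximal} height of the tower is not controlled. Concretely, I would first apply Birkhoff's theorem to each of the finitely many functions $1_{X_{A,\alpha}}$, where $A$ runs over the $n$-blocks of $\alpha$: since $(Y,\nu,S)$ is ergodic, $\tfrac1N S_N(1_{X_{A,\alpha}},y)\to\nu(X_{A,\alpha})$ for $\nu$-a.e.\ $y$, so, setting $\phi_N(y)=\max_A\bigl|\tfrac1N S_N(1_{X_{A,\alpha}},y)-\nu(X_{A,\alpha})\bigr|$, one has $\phi_N(y)\to 0$ a.e., hence the sets $G_m=\{y:\phi_N(y)<\eta/4\text{ for all }N\ge m\}$ increase to a set of full measure. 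The height $m'$ is then chosen so that $\nu(Y\setminus G_{m'})<\eta^2/4$ and, in addition, $m'>6n/\eta$; this last bound absorbs the $O(n/h)$ discrepancy between the frequency of an $n$-block along an orbit of length $h$ and the average $\tfrac1h\sum_{0\le j<h}1_{X_{A,\alpha}}(S^jy)$.

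The key step is the following claim: if the tower has minimal height $>m'$, then a fibre over a base point $y$, of height $h=h(y)>m'$, has $\eta$-uniform distribution of $n$-blocks of $\alpha$ as soon as there is some $j\in\{0,\dots,\lceil\eta h/4\rceil-1\}$ with $S^jy\in G_{m'}$. Indeed, for such $j$ the orbit $S^jy,\dots,S^{j+h-1}y$ is $(\eta/4)$-equidistributed for the cylinders $X_{A,\alpha}$ (this is exactly $\phi_h(S^jy)<\eta/4$, valid since $h\ge m'$ and $S^jy\in G_{m'}$); that orbit is the orbit of $y$ read over the window $[j,j+h)$, whose cylinder frequencies differ from those over $[0,h)$ by at most $j/h<\eta/4$; and switching from the cylinders $X_{A,\alpha}$ to the $n$-blocks of $\alpha$ perturbs frequencies by at most $O(n/h)<\eta/3$. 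Adding these three contributions shows that $y,\dots,S^{h-1}y$ has $\eta$-uniform distribution of $n$-blocks. Therefore the union of the \emph{bad} fibres (those without that property) is contained in $U_E$, the union of the fibres of the tower whose base point lies in the set $E$ of base points $y$ all of whose first $\lceil\eta h(y)/4\rceil$ iterates $y,Sy,\dots$ lie outside $G_{m'}$.

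It then remains to estimate $\nu(U_E)$. Each fibre over a point of $E$ contains at least $\tfrac\eta4\,h(y)$ points of $Y\setminus G_{m'}$, the fibres of the tower are pairwise disjoint, and — by ergodicity, the basis having positive measure — they partition $Y$ modulo a null set. Integrating over $E$ gives
\[ \nu(Y\setminus G_{m'})\;\ge\;\tfrac\eta4\,\nu(U_E), \]
so $\nu(U_E)\le\tfrac4\eta\,\nu(Y\setminus G_{m'})<\eta$; since the bad fibres form a subset of $U_E$ while the good and bad fibres together fill $Y$ up to a null set, the union of the fibres with $\eta$-uniform distribution of $n$-blocks of $\alpha$ has measure $>1-\eta$, as required.

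The routine parts are the measurability of $\phi_N$, of the sets $G_m$ and $E$, the elementary $O(n/h)$ and $O(j/h)$ estimates, and the standard fact that the fibres of a positive-measure basis tile $Y$ modulo a null set. The one genuinely delicate point — the main obstacle — is precisely the last estimate: a set of bad base points of arbitrarily small measure could, once saturated by fibres of uncontrolled height, occupy almost all of $Y$, so one cannot argue merely that ``the bad base points are few''. The window-shifting trick of the second step is exactly what forces the badness of a fibre to be witnessed by a definite positive proportion ($\sim\eta/4$) of that fibre sitting inside the small exceptional set $Y\setminus G_{m'}$, which is what turns the fibre-saturated measure into something controlled by $\nu(Y\setminus G_{m'})$. (This is also what one gets by applying the second item of Lemma~15.16 of~\cite{Glasner} to the partition $\alpha\vee\cdots\vee S^{-n}\alpha$.)
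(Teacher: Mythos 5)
Your argument is correct and is exactly the standard derivation from Birkhoff's theorem that the paper invokes without proof (it simply cites Birkhoff, or the second item of Lemma~15.16 of~\cite{Glasner} applied to $\alpha\vee\dots\vee S^{-n}\alpha$). The window-shifting step --- which forces a bad fibre to contain a definite proportion $\sim\eta/4$ of points of the small exceptional set, so that the fibre-saturated bad set stays small despite the uncontrolled maximal height --- is indeed the one point needing care, and your treatment of it, together with the disjointness of the tower levels in the final counting, is sound.
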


Remember that the partition $\hat \alpha_{i}$ is assumed to be $(\varepsilon_{i}, i\mbox{-blocks})$-represented by  $t_{i}$. This property involves a finite number of strict inequalities, thus it is still satisfied if we replace $\varepsilon_{i}$ by a slightly smaller number. We choose a positive real number $\eta$ such that for every $i=1,\dots, n-1$,
the partition $\hat \alpha_{i}$ is  $(\varepsilon_{i}-i\eta, i\mbox{-blocks})$-represented by  $t_{i}$.
We also assume that $\eta$ is much smaller than $\varepsilon_{n}/n$. 

We apply the fact above to this number $\eta$ and the partition $\hat \alpha_{n}$.  We get an integer $m'$  such that for every tower of minimal height greater than $m'$, the union of the fibres of the tower having $\eta$-uniform distribution of $n$-blocks of $\hat\alpha_{n}$ has measure strictly greater than $1-\eta$. We may assume moreover that $m'^{-1}$ is much smaller than $\varepsilon_{n}/n$ (actually ``much'' can be replaced by ``100 times'').

\paragraph{I.b. Choice of the basis $t_n'$.---}
Let $t'_{n} \subset t_{n-1}$ be given by Rokhlin lemma: it is a $\cB$-measurable set such that the return-time function is bounded from below by $m'$, and bounded from above (by some integer). A fibre of a tower with basis $t'_{n}$ is called a \emph{good fibre} (for $\hat \alpha_n$) if  it has $\eta$-uniform distribution of $n$-blocks of $\hat \alpha_{n}$; otherwise it is called a \emph{bad fibre} (for $\hat \alpha_n$). According to the fact stated in I.a the union of the bad fibres has measure less than $\eta$.

\paragraph{I.c. Choice of the partition $\theta$.---}
We consider the tower with basis $t'_{n}$ and we choose a $\cB$-measurable partition of the basis which induces a partition $\theta$ of $Y$ satisfying the following condition.
\begin{itemize}
\item[(A)]  The partition $\theta$ refines the partition $\beta_{n}$ (thus all the fibres of some element of $\theta$ have the same $\beta_{n}$-name).
Furthermore, for every $i=1, \dots, n-1$, the partition $\theta$ refines the partition $(t_{i}, Y \setminus t_{i})$.
\end{itemize}
Note that the partition of $t'_{n}$ by the $\beta_{n}$-names of the fibres is $\cB$-measurable since $t'_{n}$, $S$ and $\beta_{n}$ are $\cB$-measurable. 
The partitions $(t_{i}, Y \setminus t_{i})$ are also $\cB$-measurable. Thus we can find a $\cB$-measurable partition that refines all these partitions, 
and any such partition fulfils our needs.

\subsubsection*{Step II: construction of the partitions $\alpha_{1},\dots,\alpha_n$}

\paragraph{II.a.  General principle of the construction.---}
We will now  modify all the partitions $\hat \alpha_{i}$'s into the new partitions $\alpha_{i}$'s by ``copying and painting" (see the beginning of subsection~\ref{ss.proof-weiss} for the definition of this phrase). Thus we will apply several times the following process: we select some source fibre of the tower $t'_{n}$, and, for each $i = 1, \dots , n$, copy the $\hat \alpha_{i}$-name of this fibre over some target fibres in the same column of the tower. The union of all the target fibres will have measure less than $\eta$.

\paragraph{II.b. Easy verifications.---}
Most properties of the partition $\alpha_{n}$ will follow from the general principle of construction described above (the only exception is the uniformity of $\alpha_n$ which will be obtained below after having chosen the new basis $t_{n}$). More precisely, the general principle of construction described above implies that: (i) the sequence $(\alpha_{1}, \dots, \alpha_{n})$ is still nested; (ii)  for $i = 1, \dots , n$ the partition $\alpha_{i}$ still refines $\beta_{i}$;
(iii) for $i = 1, \dots , n-1$ the partition  $\alpha_{i}$ is still $(\varepsilon_{i}, i-\mbox{blocks})$-represented by the set $t_{i}$; (iv) for $i = 1, \dots , n$  the partition $\alpha_{i}$   is $\varepsilon_{n}$-close to $\hat \alpha_{i}$. 

Let us check this. All the partitions $\hat \alpha_{i}$ are modified simultaneously, by  copying and painting the names of the same sources over the same targets. In particular for every point $x$ there exists some point $y$ such that the sequence $(\alpha_{i}(x))$ coincides with the sequence $(\hat \alpha_{i}(y))$. This entails (i).

The partition $\hat \alpha_{i}$ refines $\beta_{i}$. Since the fibres in a given column have the same $\beta_{n}$-name and thus also the same $\beta_{i}$-name, the partition $\alpha_{i}$ still refines $\beta_{i}$, giving (ii).

The modification of the partition $\hat \alpha_{i}$ takes place on a set of measure less than~$\eta$ and   $\eta< \varepsilon_{n}$. This gives~(iv).

Now let $1 \leq i < n$.  Since $t'_{n} \subset t_{n-1} \subset t_{i}$, every fibre of $t'_{n}$ is the concatenation of some fibres of $t_{i}$.
Furthermore two fibres in the same column of the partition of the $t'_{n}$-tower have the same $(t_{i}, Y \setminus t_{i})$-name, in other words the heights at which they meet the set $t_{i}$ are the same (property~(A) of step I.c).
Thus the modification of $\hat \alpha_{i}$ has consisted in copying the $\hat \alpha_{i}$-name of some whole fibres of $t_{i}$ onto other whole fibres of $t_{i}$. Consequently  the proportion of each $i$-block in the $\alpha_{i}$-name of some fibre of $t_{i}$ is equal to
the proportion of the same $i$-block in the $\hat \alpha_{i}$-name of some (maybe other) fibre of $t_{i}$.
  Furthermore, for every $i$-block $A$, the measures of the $i$-cylinders $X_{A,\alpha_{i}}$ and $X_{A,\hat \alpha_{i}}$ differ by at most $i\eta$. By the choice of $\eta$ at step I.a, the partition $\alpha_{i}$ is still  $(\varepsilon_{i}, i\mbox{-blocks})$-represented by the set $t_{i}$, providing (iii).

\paragraph{II.c. Construction.---}
Here is the precise modification of the partitions $\hat \alpha_{1},\dots,\hat\alpha_n$: in each column, if there exists a good fibre (for $\hat \alpha_n$) in that column, we copy the $\hat \alpha_{i}$-name of this fibre on all the bad fibres of the same column (for every $i=1, \dots, n$). This produces the new partitions $\alpha_{1},\dots,\alpha_n$. Note that this abide by the general principle since the union of all the bad fibres has measure less than~$\eta$.

\subsubsection*{Step III: construction of the set $t_n$}

\paragraph{III.a. Choice of the height $m$.---}
We consider the union $\Delta$ of the fibres of the tower $t_n'$ which are bad for $\alpha_n$: these are the fibres whose $\hat \alpha_n$ and $\alpha_n$-names coincide and are bad for $\widehat\alpha_n$. The set $\Delta$ has measure less than $\eta$ (since it is included in the union of the fibres of $t_n'$ that are bad for $\widehat\alpha_n$).
Moreover by construction this is a union of columns; in particular, $\Delta$ is a $\cB$-measurable set,
hence a uniform set: there exists an integer $m$ such that the set $\Delta$ is $(\eta,m)$-uniform, so that within almost every finite segment of orbit of length greater than $m$, the proportion of points belonging to $\Delta$ is less than $2\eta$. We can (and we will) also assume that $m$ is much bigger than $n$.

Observe that a fibre of $t_n'$ included in $Y\setminus\Delta$ is a ``good fibre" with respect to the new partition $\alpha_n$: more precisely, a fibre $t_n'$ included in $Y\setminus\Delta$ has $\eta$-uniform distribution of the $n$-blocks of $\alpha_n$.

\paragraph{III.b. Choice of the set $t_{n}$.---}
We apply again Rokhlin lemma to get a $\cB$-measurable set $t_{n} \subset t'_{n}$ such that the return-time function in $t_n$ is bounded from above (by some integer), and is bounded from below by $m$.

\paragraph{III.c. Uniformity of the partition $\alpha_{n}$.---}
It remains to check that  the partition $\alpha_{n}$ is $(\varepsilon_{n},  n-\mbox{blocks})$-represented by the set $t_{n}$.
Choose a specific $n$-block $A \in \alpha^n$, and let $b = \nu(X_{A,\alpha_n})$
be the expected proportion of occurrence of the $n$-block $A$.
Consider a fibre $f$ of $t_{n}$. This is a segment of orbit of length $N\geq m$ which is a concatenation of $t'_{n}$-fibres. In order to estimate the number of occurrences of $A$ in the $\alpha_{n}$-name of the fibre $f$,
we categorise the sub-orbits of length $n$ of $f$ into three types:
\begin{itemize}
\item[$(a)$] those that  are included in a fibre of $t_n'$ contained in $Y\setminus \Delta$,
\item[$(b)$] those that  are included in a fibre of $t'_{n}$ contained in $\Delta$,
\item[$(c)$] those that are not included in a fibre of $t'_{n}$.
\end{itemize}
Let $n_a$, $n_b$ and $n_c$ be the number of sub-orbits of length $n$ of type $(a)$, $(b)$ and $(c)$ in the fibre $f$. Then the  number of ``uncontrolled" sub-orbits of length $n$ is $n_0:=n_b+n_c$. We have
$$n_0=n_b+n_c\leq 2\eta N+n\frac{N}{m'}  < N \frac{2\varepsilon_{n}}{100} + N \frac{\varepsilon_{n}}{100} =N \frac{3\varepsilon_{n}}{100}$$
(the estimate of $n_b$ follows from the choice of $m$; the estimate on $n_c$ follows from the fact that the length of a $t'_n$-fibre is at least $m'$; the last inequality follows from the fact that $\eta $ and $m'^{-1}$ have been chosen much smaller, say 100 times smaller, than $\varepsilon_{n}/n$).
Now, using the fact that every fibre of the tower $t_n'$ included  in $Y\setminus\Delta$ has $\eta$-uniform distribution of the $n$-blocks of $\alpha_n$, we obtain that the number of occurrences of $A$ in $f$ is less than 
$$(N-(n-1))(b+\eta) + n_{0} < (N-(n-1))\left( b +\eta + \frac{n_{0}}{N-(n-1)}\right) < (N-(n-1))(b+\epsilon_n)$$
(the last inequality follows from the above inequality on $n_0$, from the fact that $N$ is bigger than $m$ which is much bigger than $n$, and from the fact that $\eta$ is much smaller than $\epsilon_n$).
Similarly, one proves that  the number of occurrences of $A$ in $f$ is more than $(N-(n-1)).(b-\epsilon_n).$
This gives the expected uniformity, and concludes the proof of sub-lemma~\ref{lem.weiss}.

\subsection{Construction of the uniform algebra (proof of lemma~\ref{lem.uniform-algebra})}
\label{ss.algebra}

Since the algebra $\cB$ is countable, we can find  a nested sequence  $(\beta_{i})_{i \geq 1}$  of $\cB$-measurable partitions that generates the algebra $\cB$.  We will construct a nested sequence of  partitions $(\alpha^\infty_{i})_{i \geq 1}$ with the following properties.
\begin{enumerate}
\item $\alpha^\infty_{i}$ refines $\beta_{i}$ for every $i$.
\item The sequence $(\alpha^\infty_{i})_{i \geq 1}$ generates the $\sigma$-algebra of $Y$
modulo a null-set.
\item For every $i \geq 1$, the partition $\alpha_{i}^\infty$ has the following strong uniformity property:
for every $i$-block $A$, the corresponding $i$-cylinder $X_{A, \alpha_{i}^\infty}$ is a uniform set.
\end{enumerate}
\medskip

For each $i\geq 1$, the partition $\alpha_i^\infty$ will be obtain as the limit of a Cauchy sequence of partitions $(\alpha_i^j)_{j\geq i}$. We will now explain how to construct the $\alpha_i^j$'s.
We first fix any summable sequence $(\varepsilon'_{n})_{n \geq 1}$ and
an auxiliary sequence  of nested partitions $(\gamma_{i})_{i \in \NN}$
which  generates the $\sigma$-algebra  of $Y$ and such that  $\gamma_{i}$ refines $\beta_{i}$ for every $i$.
The existence of such a sequence follows from the hypothesis that $Y$, equipped with its $\sigma$-algebra, is a standard Borel space.
We want to construct by induction a triangular array of partitions $(\alpha_{i}^j)_{ 1 \leq i \leq j}$
$$
\begin{array}{cccccc}
\alpha_{1}^1  &   \\
\alpha_{1}^2  & \alpha_{2}^2   &    \\
\vdots    &  \vdots  &   \ddots   &     \\
\alpha_{1}^{n-1}  & \alpha_{2}^{n-1} &  \cdots &  \alpha_{n-1}^{n-1}   &  \\
\vdots & \vdots && \vdots & \ddots
\end{array}
$$
such that 
\begin{enumerate}
\item[i.] the horizontal sequence $(\alpha^j_{1}, \dots , \alpha^j_{j})$ is nested for every $j\geq 1$;
\item[ii.] the partitions in $\alpha_i^i,\alpha_i^{i+1},\alpha_i^{i+2},\dots$ in the $i^{th}$ column refine $\beta_{i}$, have the same number of elements, and satisfy $d_{\mathrm{part}}(\alpha^{j-1}_{i},\alpha^{j}_{i}) < \varepsilon'_{j}$ for every $i \leq j$;
\item[iii.] $d_{\mathrm{part}}(\alpha^{j}_{j},\hat\gamma_{j}) < \varepsilon'_{j}$ where $\hat\gamma_j$ is some partition refining $\gamma_j$ for every every $j\geq 1$;
\item[iv.] there exists a decreasing  sequence $(t_{i})_{i\geq 1}$ of $\cB$-measurable subsets of $Y$
such that for every $i \leq j$ the partition $\alpha_{i}^j$ is ($\varepsilon_{i}$,$i$-blocks)-represented by $t_{i}$ where $\varepsilon_{i} \leq \frac{\varepsilon'_{i}}{(\#\alpha_{i}^i)^{i}}=\frac{\varepsilon'_{i}}{(\#\alpha_{i}^j)^{i}}$.
\end{enumerate}
To construct this triangular array, we first apply sub-lemma~\ref{lem.weiss} with $n=1$, $t_{0} = Y$, $\hat \alpha_{1} = \gamma_{1}$, and $\varepsilon_{1}=\frac{\varepsilon'_{1}}{\#\alpha_{1}^1}$. We get a set $t_{1}$ and a partition $\alpha_{1}^1= \alpha_{1}$  still refining $\beta_{1}$, which is  ($\varepsilon_{1}$, $1$-blocks)-represented by $t_{1}$, and $\varepsilon_{1}$-close to $\gamma_{1}$. Now, suppose that, for some $n \geq 2$, the partitions $(\alpha_{i}^j)_{ 1 \leq i\leq j \leq n-1}$ (that is, the $n-1$ first rows of the triangular array above), the partitions $(\hat \gamma_j)_{1\leq j\leq n-1}$ and the sets $(t_i)_{1\leq i\leq n-1}$ have been constructed and satisfy properties (i)...(iv). Let us construct the $n^{th}$ row of the array. For this we apply sub-lemma~\ref{lem.weiss}  with the following data:
\begin{itemize}
\item[--] $\hat \alpha_{i} := \alpha_{i}^{n-1}$ for $i=1, \dots , n-1$;
\item[--] $\hat \alpha_{n} := \hat \gamma_{n}$ is any partition which refines both $\hat \alpha_{n-1} = \alpha_{n-1}^{n-1}$, $\beta_{n}$  and $\gamma_{n}$;
\item[--] the sets $t_{i}$ are as above;
\item[--] $\varepsilon_{n} \leq \frac{\varepsilon'_{n}}{(\#\hat \alpha_{n})^{n}}$.
\end{itemize}
The sub-lemma provides a set $t_{n}$  and a sequence of partitions $\alpha_{1}, \dots, \alpha_{n}$. Let us set $\alpha_{i}^n := \alpha_{i}$. Then the conclusion of the sub-lemma shows that this sequence fits as the new horizontal sequence in our triangular array, that is, properties (i)...(iv) above are still satisfied at order $n$.
Thus by induction  we get the existence of an infinite triangular array of partitions $(\alpha_{i}^j)_{ 1 \leq i \leq j}$ satisfying properties (i)...(iv).

As the sequence $(\varepsilon_{n})$ is summable, for each fixed $i$, the  (vertical) sequence $(\alpha_{i}^j)_{j \geq i}$
is a Cauchy sequence for the distance $d_{\mathrm{part}}$: it
is converging towards some function \hbox{$\alpha_{i}^\infty : Y\rightarrow\{1, \dots ,n \}$.} As the convergence is as fast as we wish (at each step the number $\varepsilon_{n}$ is chosen as little as we wish), we may assume this function is a partition with  the same number of elements as $\hat \alpha_{i}$ (remember that each element is supposed to have positive measure).
Properties (i) and (ii) imply that up to modifying each partition $\alpha_i^\infty$
on a set having zero measure, one can assume that the sequence $(\alpha_i^\infty)$ is nested
and that $\alpha_i^\infty$ refines $\beta_i$ for every $i$, giving property 1 above.

\medskip

By applying the following fact to $\hat \alpha_n:=\gamma_n$ and
$\alpha_n:=\alpha_n^\infty$, one gets property 2.

\begin{fact}
Let $(Y, \cA, \nu)$ be a measured space. Let $(\hat \alpha_{n})_{n\geq 1}$ and 
$(\alpha_{n})_{n\geq 1}$ be two nested sequences of partitions such that the sequence 
$(d_{\mathrm{part}}(\hat \alpha_{n},\alpha_{n}))_{n\geq 1}$ tends to $0$.
Assume $(\hat \alpha_{n})_{n\geq 1}$ generates the $\sigma$-algebra $\cA$.
Then $(\alpha_{n})_{n\geq 1}$ generates the $\sigma$-algebra $\cA$ modulo a null-set.
\end{fact}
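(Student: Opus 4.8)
The plan is to prove the \emph{Fact} by a standard approximation argument at the level of generating algebras. First I would unwind the statement: each $\hat\alpha_n$ is a finite partition, and the sequence $(\hat\alpha_n)$ generating $\cA$ means that the $\sigma$-algebra generated by the countable union of the (finite) algebras $\cA(\hat\alpha_n)$ spanned by the cells of the $\hat\alpha_n$ — equivalently, the increasing union $\bigcup_n \cA(\hat\alpha_n)$, which is a single algebra since the sequence is nested — is all of $\cA$ up to null sets. I want to show that $\bigcup_n\cA(\alpha_n)$ generates $\cA$ modulo a null-set. By the monotone class theorem (or Carathéodory's criterion), it suffices to show that every cell $\hat a$ of every $\hat\alpha_n$ can be approximated in measure, arbitrarily well, by an element of $\bigcup_m\cA(\alpha_m)$; indeed if every generator of the generating algebra $\bigcup_n\cA(\hat\alpha_n)$ is in the $d_{\mathrm{part}}$-closure (equivalently the $L^1$-closure of indicator functions) of the algebra $\bigcup_m\cA(\alpha_m)$, then the $\sigma$-algebras they generate coincide modulo $\nu$-null sets.

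The key quantitative step is: for a fixed cell $\hat a$ of $\hat\alpha_n$ and any $\delta>0$, find $m\ge n$ and a cell $a$ of $\alpha_m$ with $\nu(\hat a\,\triangle\, a)<\delta$. To do this I would first exploit that $d_{\mathrm{part}}(\hat\alpha_m,\alpha_m)\to 0$: writing the partitions as maps $Y\to\{1,\dots,r\}$, this says $\nu\{\hat\alpha_m\ne\alpha_m\}\to 0$, so if $a^{(m)}_k$ and $\hat a^{(m)}_k$ denote the $k$-th cells, then $\nu(\hat a^{(m)}_k\triangle a^{(m)}_k)\to 0$ as $m\to\infty$ for each fixed $k$. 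Next, since $(\hat\alpha_m)$ is nested and generates $\cA$, the cell $\hat a$ (a cell of $\hat\alpha_n$, hence an element of $\cA(\hat\alpha_m)$ for all $m\ge n$) is itself a \emph{union of cells} of $\hat\alpha_m$ for every $m\ge n$; in particular $\hat a$ is literally one of the sets in $\cA(\hat\alpha_m)$. Wait — that already gives exact equality $\hat a\in\cA(\hat\alpha_m)$, so I do not even need to approximate $\hat a$ by cells of $\hat\alpha_m$; I need to transfer from $\hat\alpha_m$ to $\alpha_m$. Write $\hat a=\bigcup_{k\in I}\hat a^{(m)}_k$ for a finite index set $I$ (independent refinement structure), and set $a:=\bigcup_{k\in I}a^{(m)}_k\in\cA(\alpha_m)$. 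Then $\nu(\hat a\triangle a)\le\sum_{k\in I}\nu(\hat a^{(m)}_k\triangle a^{(m)}_k)\le\nu\{\hat\alpha_m\ne\alpha_m\}\xrightarrow[m\to\infty]{}0$, so choosing $m$ large makes this $<\delta$. This establishes the approximation claim.

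With the approximation claim in hand, the conclusion follows from a soft measure-theoretic wrap-up. Let $\cA_0:=\bigcup_n\cA(\hat\alpha_n)$ and $\cA_1:=\bigcup_m\cA(\alpha_m)$, both algebras (using that the sequences are nested). We have shown every element of $\cA_0$ lies in the $L^1$-closure of indicators from $\cA_1$. The collection of sets in $\cA$ that can be so approximated by $\cA_1$ is a monotone class (it is closed under complements since $\nu(A^c\triangle B^c)=\nu(A\triangle B)$, and under countable increasing unions and decreasing intersections by continuity of $\nu$), and it contains the algebra $\cA_0$; by the monotone class theorem it contains $\sigma(\cA_0)$, which by hypothesis equals $\cA$ modulo null sets. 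Hence every element of $\cA$ is approximable by $\cA_1$, i.e. $\sigma(\cA_1)=\cA$ modulo a null-set — one can make this precise by taking a full-measure set $Y_0$ on which the relevant null-set ambiguities disappear, exactly as in the statement of ``generates modulo a null-set''.

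The only mildly delicate point — the ``main obstacle'', though it is quite mild — is bookkeeping the null sets: the hypothesis only gives generation of $\cA$ \emph{modulo} a null-set, the partitions are defined up to measure zero, and the conclusion is again only modulo a null-set, so one must be a little careful that the monotone-class argument is applied in the $\nu$-completed setting and that the exceptional null sets (one from the hypothesis, plus the countably many coming from re-choosing representatives of the $\alpha_m$) are absorbed into a single full-measure set $Y_0$. All of this is routine once phrased in terms of the complete probability algebra $(\cA/\mathcal{N}_\nu)$, where $L^1$-density of $\cA_1$-indicators in $\cA_0$-indicators plus $\sigma(\cA_0)=\cA$ immediately gives $\sigma(\cA_1)=\cA$.
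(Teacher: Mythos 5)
Your proof is correct and follows essentially the same route as the paper: approximate each cell $\hat a$ of $\hat\alpha_n$ by the union of the corresponding cells of $\alpha_m$ (using nestedness and $d_{\mathrm{part}}(\hat\alpha_m,\alpha_m)\to 0$), then upgrade approximability-in-measure by the algebra generated by $(\alpha_m)$ to equality modulo a null-set with an element of the generated $\sigma$-algebra, and absorb the countably many exceptional null-sets into one set $Y_0$. (The paper performs the upgrade via the explicit set $\bigcap_{k_0}\bigcup_{k>k_0}a'_k$ rather than your monotone-class phrasing, and your intermediate bound $\sum_{k\in I}\nu(\hat a^{(m)}_k\triangle a^{(m)}_k)\le\nu\{\hat\alpha_m\ne\alpha_m\}$ needs a factor $2$ on the right because points can be counted for two indices $k$ — but the direct union bound $\nu(\hat a\triangle a)\le\nu\{\hat\alpha_m\ne\alpha_m\}$ holds anyway, so nothing is affected.)
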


\begin{proof}
We first choose some $\hat a \in \hat \alpha_{n_{0}}$ for some $n_{0}$ and prove that there exists a set $a$ in the $\sigma$-algebra generated  by $(\alpha_{n})$ such that the symmetric difference $\hat a \triangle a$ is a null-set.
For every $k$ we can find within the algebra generated by some $\alpha_{n}$ (with large $n$) a set $a'_{k}$ such that 
$$
\nu(a'_{k} \triangle \hat a) < \frac{1}{2^k}. 
$$
For a fixed $k_{0}$, the set 
$$
\cup_{k > k_{0}} a'_{k}
$$
contains $a$ modulo a null-set, and the measure of the difference is less than $1/2^{k_{0}}$.
Thus the intersection $a$ of these sets (for all $k_{0}$) is equal to $a$ modulo a null-set, as required.

Now let $(\hat a_{\ell})_{\ell \geq 1}$ be an enumeration of all the elements of all the partitions $\hat \alpha_{n}$: this sequence generates the $\sigma$-algebra $\cA$.
For each $\ell$ let $a_{\ell}$ be a set in the $\sigma$-algebra generated by $(\alpha_{n})$ which is equal to $\hat a_{\ell}$ modulo a null-set.
Let 
$$
Y_{0}:=Y\setminus \bigcup_{\ell \geq 0} \left(\hat a_{\ell} \triangle a_{\ell}\right).
$$
Then $Y_{0}$ is a full measure set, and $({\alpha_n}_{\mid Y_0})$ generates $\cA_{\mid Y_{0}}$. Thus $(\alpha_{n})$ generates $\cA$ modulo a null-set.
\end{proof}

\medskip

Let us now check that $\alpha_{i}^\infty$ satisfies property 3 (the strong uniformity). We start with a preliminary remark.

\medskip

\noindent{\bf General remark.}
If an orbit has $\varepsilon$-uniform distribution of $k$-blocks of a partition $\alpha$, and if $\alpha$ refines a partition $\alpha'$, then the same orbit has $(\# \alpha)^k.\varepsilon$-uniform distribution of $i$-blocks of $\alpha'$ for any $i\leq k$. Indeed, $(\# \alpha)^k$ is the number of $k$-blocks of $\alpha$ and every $i$-cylinder $\alpha'$ is the disjoint union of some $k$-cylinders of $\alpha$.
In particular if $\alpha$ is $(\varepsilon,k\mbox{-blocks})$-represented by $t$ then $\alpha'$ is $( (\# \alpha)^k.\varepsilon,i\mbox{-blocks})$-represented by $t$.

\medskip

\noindent For every $j \geq k \geq i$, the partition $\alpha_{i}^j$ is refined by the partition $\alpha_{k}^j$, which is ($\varepsilon_{k}$,$k$-blocks)-represented by the set $t_{k}$ (by property (iv) above).
By definition of $\varepsilon_k$, one has
$$
\varepsilon'_{k} \geq (\#\alpha_{k}^k)^{k}.\varepsilon_{k}  = (\#\alpha_{k}^j)^{k}.\varepsilon_{k}.
$$
Applying the former remark, we get that the partition $\alpha_{i}^j$ is ($\varepsilon'_{k}$,$i$-blocks)-represented by $t_{k}$. By definition the fibres of $t_{k}$ have $\varepsilon'_{k}$-uniform distribution of $i$-blocks of $\alpha_{i}^j$; the following fact tells us that this is still true for every sufficiently long orbits, up to a factor $4$.

\begin{fact}
Let $N(t_{k})$ be the upper bound of the return-time function on the set $t_{k}$ (i.e. the height of the tower with basis $t_k$). Then for almost every\footnote{The fact is actually  true for \emph{every} $y\in Y$, but this property will disappear when we will let $i,j,k$ go to infinity.} $y \in Y$, for every $n \geq \frac{N(t_{k})}{\varepsilon'_{k}}$, the orbit $y, \dots , S^{n-1}(y)$ has $4\varepsilon'_{k}$-uniform distribution of $i$-blocks of~$\alpha_{i}^j$.
\end{fact}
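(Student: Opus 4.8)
The plan is to deduce this "uniform distribution for long orbits" statement from the fact that the fibres of $t_{k}$ themselves have $\varepsilon'_{k}$-uniform distribution of $i$-blocks of $\alpha_{i}^j$, together with the elementary bookkeeping that any long segment of orbit decomposes into a concatenation of $t_{k}$-fibres plus a bounded error at each end. Fix $i \leq j \leq k$ and abbreviate $\varepsilon := \varepsilon'_{k}$, $\alpha := \alpha_{i}^j$, $N := N(t_{k})$ (the height of the tower over $t_{k}$), and recall the minimal height of this tower is at least $i/\varepsilon$ by item~1 of the definition of $(\varepsilon,i\text{-blocks})$-representation. Fix an $i$-block $A$ with expected proportion $b := \nu(X_{A,\alpha})$. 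For almost every $y$ and every $n \geq N/\varepsilon$, consider the orbit segment $y,\dots,S^{n-1}(y)$.

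First I would locate inside this segment the (at most two) partial $t_{k}$-fibres at the ends and the full $t_{k}$-fibres in the middle: since the first iterate of $y$ lying in $t_{k}$ occurs within time $N$, and similarly at the right end, the segment $y,\dots,S^{n-1}(y)$ is, up to discarding at most $2N$ points, a concatenation of full fibres of the $t_{k}$-tower. Each such full fibre has length between $i/\varepsilon$ and $N$ and, by hypothesis, has $\varepsilon$-uniform distribution of $i$-blocks of $\alpha$; so within each full fibre the number of occurrences of $A$ differs from $b$ times the number of length-$i$ sub-blocks of that fibre by at most $\varepsilon$ times that number. Summing over the full fibres, whose total length is at least $n - 2N \geq n(1 - 2\varepsilon)$, the number of occurrences of $A$ among sub-blocks contained in a single full fibre lies within $b \pm \varepsilon$ of the expected count, up to a relative error controlled by $2\varepsilon$.

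Next I would account for the two sources of uncontrolled sub-blocks: the $i$-blocks straddling the boundary between two consecutive full fibres, and the $i$-blocks lying in the two discarded end pieces. There are at most one straddling block per fibre junction, hence at most $n/(i/\varepsilon) = n\varepsilon/i \leq n\varepsilon$ of them (using the minimal-height bound $i/\varepsilon$ on fibre lengths), and at most $2N \leq 2n\varepsilon$ blocks in the end pieces. Collecting everything, the total number of occurrences of $A$ in $y,\dots,S^{n-1}(y)$ differs from $(n-i+1)b$ by at most $(n-i+1)\varepsilon$ coming from the within-fibre estimate, plus at most $3n\varepsilon$ uncontrolled blocks; after dividing by $n - i + 1$ and using $n \geq N/\varepsilon \gg i$, this gives an error bounded by $\varepsilon + 3\varepsilon = 4\varepsilon = 4\varepsilon'_{k}$, which is exactly the claimed $4\varepsilon'_{k}$-uniform distribution of $i$-blocks of $\alpha_{i}^j$. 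The "almost every $y$" in the statement is needed only because the $\varepsilon$-uniformity of the $t_{k}$-fibres is being invoked along the orbit of $y$, which visits $t_{k}$ infinitely often for $\nu$-a.e.\ $y$ by ergodicity (and in fact, as the footnote notes, for \emph{every} $y$, since $t_{k}$ is $\cB$-measurable and its return-time function is bounded, so every orbit meets $t_{k}$).

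The main obstacle is purely bookkeeping: one must be careful that the constants genuinely close up to a factor $4$ rather than, say, $5$ or $6$. The delicate points are (a) correctly counting the straddling $i$-blocks using the lower bound $i/\varepsilon$ on fibre lengths — this is precisely why that lower bound was built into the definition of $(\varepsilon,i\text{-blocks})$-representation — and (b) handling the two incomplete end fibres via the upper bound $N$ on return times together with the hypothesis $n \geq N/\varepsilon$, so that the end pieces contribute a relative error at most $2\varepsilon$. No genuinely new idea is required beyond the tower decomposition already used repeatedly in the proof of sub-lemma~\ref{lem.weiss}.
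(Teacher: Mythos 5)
Your proof is correct and follows essentially the same route as the paper's: discard the two partial end-fibres (error $2\varepsilon'_k$ via the upper bound $N(t_k)$ and $n\geq N(t_k)/\varepsilon'_k$), discard the $i$-blocks straddling fibre junctions (error $\varepsilon'_k$ via the lower bound on return times), and estimate the rest by the $\varepsilon'_k$-uniform distribution on full fibres, totalling $4\varepsilon'_k$. One small slip: each junction contributes up to $i-1$ straddling $i$-blocks, not one, so the count is $(i-1)\cdot n\varepsilon'_k/i$ rather than $n\varepsilon'_k/i$ --- but this is still at most $n\varepsilon'_k$, which is the bound you actually use, so the conclusion stands.
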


\begin{proof}
Let $y \in Y$. Using the upper bound $N(t_{k})$ on the return-time, we can ignore the beginning of the orbit of $y$ until it first meets the set $t_{k}$, 
and the end after it last leaves $t_{k}$, increasing the error at most by $2\varepsilon'_{k}$.
Using the lower bound (the return-time function of $t_{k}$ is greater than $\frac{k}{\varepsilon'_{k}}$), we can ignore the $i$-blocks that are not included in some fibre of the $t_{k}$-tower, 
increasing the error again at most by $\varepsilon'_{k}$.
The proportion of remaining $i$-blocks is estimated up to $\varepsilon'_{k}$ since $t_{k}$ represents the partition $\alpha_{i}^j$.
\end{proof}

\noindent
For every $i\geq 1$, using a diagonal process, it is easy to construct a full measure set $Y' \subset Y$ and an extracted sequence $(\alpha_{i}^{j_{\ell}})_{\ell\geq 1}$ such that and every $y \in Y'$, the sequence $(\alpha_{i}^{j_{\ell}}(y))$ is stationary. Up to diminishing it by a null-set, we can assume $Y'$ is invariant under $S$. Thus we see that the fact above still holds when we replace the partition $\alpha_{i}^j$ by $\alpha_{i}^\infty$.  This gives the required strong uniformity property for $\alpha_i^\infty$ (property~3). This concludes the construction of the partitions $(\alpha_i^\infty)_{i\geq 1}$.

\medskip

Let $\cA_{0}$ be the algebra generated by the sequence $(\alpha_{i}^\infty)$ and the dynamics:
any element of $\cA_{0}$ is the union of iterates of $i$-cylinders
of some partition $\alpha_{i}^\infty$.
This algebra is countable, $S$-invariant and has no null-set. It contains $\cB$ by property 1
and in particular has no atom.
It generates the $\sigma$-algebra of $Y$ modulo a null-set, according to property 2.
Let us check the uniformity of any element $a\in\cA_{0}$:
observe that $a$ is a finite disjoint union of iterates of $i$-cylinders of
$\alpha_{i}^\infty$ for some $i$.
By property 3, the $i$-cylinders of $\alpha_{i}^\infty$ are uniform.
Since uniformity is invariant under $S$ and preserved by finite disjoint unions,
$a$ is uniform.
This completes the proof of lemma~\ref{lem.uniform-algebra}.

\subsection{Proof of the relative Jewett-Krieger theorem~\ref{theo.weiss}}
\label{ss.proof-Weiss}

We now complete the proof of theorem~\ref{theo.weiss}: using the algebra $\cA_{0}$, we construct a uniquely ergodic minimal homeomorphism on a Cantor set $(C,\nu',g)$ isomorphic to our initial system $(Y,\nu,S)$. This construction is quite standard: it uses the coding of the dynamics by the shift and  classical results about standard Borel spaces. Alternatively, the authors of~\cite{Hansel-Raoult} use~\emph{Stone duality} at this point of the construction.

\medskip

We assume the hypotheses of theorem~\ref{theo.weiss}, and denote by $\cB$ the algebra consisting of the preimages under $p$ of the clopen sets in $\cK$. We have already observed that this algebra satisfies the hypotheses of lemma~\ref{lem.uniform-algebra}. We apply this lemma and obtain a new algebra~$\cA_0$ which contains $\cB$, is countable, has no atom and no non-empty null-set, is made of uniform sets, and generates the $\sigma$-algebra of $Y$ modulo a null-set.

We first restrict the space $Y$ to a full measure invariant subset\footnote{The set $Y_{0}$, equipped with the induced $\sigma$-algebra, is still a standard Borel space, see~\cite{Kechris}; but we will not use this fact.} $Y_{0}$, defined as follow.
Let $Y'$ be a full measure subset of $Y$, such that the $\sigma$-algebra generated by the algebra $\cA_{0 \mid Y'}$
coincides with the $\sigma$-algebra of $Y$ induced on $Y'$:
this implies that the algebra $\cA_{0 \mid Y'}$ separates the points of $Y'$.
By taking the intersection of all the iterates of this set $Y'$, we get a set $Y_{1}$ with the same property and which is in addition $S$-invariant.
Let $Y_{2}$ be the complementary set of the union of all the null-sets in $\cA_{0}$; this is another full measure $S$-invariant set. The set $Y_{0}=Y_{1} \cap Y_{2}$ is a full measure $S$-invariant subset of $Y$, whose pairs of points are separated by the elements of  $\cA_{1}:=\cA_{0 \mid Y_{0}}$. 
Note that $\cA_{1}$ is obviously an algebra of $Y_{0}$, and that each element in $\cA_{1}$, being the intersection of a uniform set with the full-measure invariant set $Y_{0}$, is still a uniform set.
We denote by $\cA_{1}^*$ the collection of non-empty elements of this algebra; note that these sets have positive measure.

\subsubsection*{Definition of $C$, $g$ and $\nu'$}

We consider the Cantor set $\{0,1\}^{\cA_1^*}$ equipped with the usual product topology. For $A\in\cA_1^*$, we denote by $p_A:\{0,1\}^{\cA_1^*}\to \{0,1\}$ the projection on the $A$-coordinate.  We consider the natural coding map $\Phi:Y_{0} \to \{0,1\}^{\cA_1^*}$ defined by ($p_A(\Phi(y))=1 \Leftrightarrow y \in A$). It has the following properties.
\begin{itemize}
\item[--] The map $\Phi$ is one-to-one, since the pairs of points in $Y_0$ are separated by $\cA_1^*$.
\item[--] The map $\Phi$ is the restriction of the map $Y \to \{0,1\}^{\cA_1^*}$ defined by the same formulae, and this map is measurable: indeed the Borel $\sigma$-algebra of  $\{0,1\}^{\cA_1^*}$ is generated by sets of the form $p_{A}^{-1}(1)$, and the preimage of such a set is the set $A$, which belongs to the $\sigma$-algebra of $Y$.
\end{itemize}
Remember that the set $Y$, equipped with its $\sigma$-algebra, is a standard Borel space.
Thus we can apply corollary 15.2 of~\cite{Kechris}, which says that any injective image of a Borel set under a Borel map is again a Borel set,
getting that $\Phi(Y_{0})$ is
a Borel set in $\{0,1\}^{\cA_1^*}$. Furthermore we can define the measure  $\nu':=\Phi_*\nu$ on $\{0,1\}^{\cA_1^*}$, and $\Phi$ is an isomorphism between 
$(Y_{0},\nu)$ and $(\Phi(Y_{0}),\nu')$, both equipped with their Borel $\sigma$-algebra.

We define the set $\cC$ as the closure in $\{0,1\}^{\cA_1^*}$ of the set $\Phi(Y_0)$.
We define an \emph{elementary cylinder in $\cC$} to be a set of the form $\cC\cap p_{A}^{-1}(1)$ for some $A\in \cA_{1}$.
The following property will be used several times.
\begin{fact}
Every elementary cylinder $\cC\cap p_{A}^{-1}(1)$ is equal to $\mbox{Cl}(\Phi(A))$.
Furthermore the family of elementary cylinders forms a basis of the (relative) topology on $\cC$.
\end{fact}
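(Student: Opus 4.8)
The plan is to exploit two features of the ambient space: each coordinate projection $p_A\colon\{0,1\}^{\cA_1^*}\to\{0,1\}$ is continuous onto a discrete space, so that $p_A^{-1}(0)$ and $p_A^{-1}(1)$ are clopen; and the product $\{0,1\}^{\cA_1^*}$ is metrizable, since $\cA_1^*$ is countable (being a subset of the countable algebra $\cA_0$). In particular every point of $\cC=\mathrm{Cl}(\Phi(Y_0))$ is the limit of a sequence $\Phi(y_n)$ with $y_n\in Y_0$.

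I would first prove the identity $\cC\cap p_A^{-1}(1)=\mathrm{Cl}(\Phi(A))$ for a nonempty $A\in\cA_1$ (the case $A=\emptyset$ being vacuous). Since $p_A^{-1}(1)$ is clopen, $\cC\cap p_A^{-1}(1)$ is closed in $\cC$; and it contains $\Phi(A)$, because $\Phi(A)\subset\Phi(Y_0)\subset\cC$ and, by the very definition of the coding map, $\Phi(A)\subset p_A^{-1}(1)$. Hence it contains $\mathrm{Cl}(\Phi(A))$. For the reverse inclusion, given $z\in\cC\cap p_A^{-1}(1)$, write $z=\lim_n\Phi(y_n)$ with $y_n\in Y_0$; continuity of $p_A$ together with $p_A(z)=1$ forces $p_A(\Phi(y_n))=1$, i.e. $y_n\in A$, for all large $n$, so $z\in\mathrm{Cl}(\Phi(A))$.

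For the basis statement, I would reduce an arbitrary basic open set of the product topology restricted to $\cC$ to an elementary cylinder in two steps. Since $\cA_1$ is an algebra, $Y_0\setminus A\in\cA_1$, and for $y\in Y_0$ one has $p_A(\Phi(y))=0\iff p_{Y_0\setminus A}(\Phi(y))=1$; arguing exactly as above with the clopen set $p_A^{-1}(0)$ gives $\cC\cap p_A^{-1}(0)=\cC\cap p_{Y_0\setminus A}^{-1}(1)$, so every restricted subbasic set $\cC\cap p_A^{-1}(i)$ has the form $\cC\cap p_B^{-1}(1)$ with $B\in\cA_1$. Next, for $B_1,\dots,B_k\in\cA_1$ with $B=B_1\cap\dots\cap B_k\in\cA_1$, I claim $\cC\cap p_{B_1}^{-1}(1)\cap\dots\cap p_{B_k}^{-1}(1)=\cC\cap p_B^{-1}(1)$: writing a point $z$ of $\cC$ as $\lim_n\Phi(y_n)$ and using continuity of the finitely many projections involved, $z$ lies in the left-hand side iff $y_n\in B_j$ for all $j$ and all large $n$, iff $y_n\in B$ for all large $n$, iff $z$ lies in the right-hand side. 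Combining the two reductions, every finite intersection of restricted subbasic sets — hence every basic open set of $\cC$ — is an elementary cylinder (or the empty set, when $B=\emptyset$, which is the empty union of cylinders). Thus the elementary cylinders form a basis of the relative topology of $\cC$.

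I do not expect a genuine difficulty; the one point requiring care is the intersection step, where one must not reason purely topologically (closures of intersections need not equal intersections of closures) but instead pass back to a convergent sequence $\Phi(y_n)\to z$ in $\Phi(Y_0)$ and use that on $\Phi(Y_0)$ the coordinates respect the Boolean operations of $\cA_1$. This is precisely the place where metrizability — equivalently, the countability of $\cA_1^*$ — is used.
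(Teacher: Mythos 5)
Your proof is correct and takes essentially the same route as the paper's: both rest on the observation that, on $\Phi(Y_0)$, the coordinate functions respect the Boolean operations of $\cA_1$ (complements turn $p_A^{-1}(0)$ into $p_{Y_0\setminus A}^{-1}(1)$, finite intersections of cylinders into a single cylinder), and that these identities survive passage to the closure $\cC$ because the cylinders are clopen. The only cosmetic difference is that you pass to the closure via convergent sequences, invoking metrizability of $\{0,1\}^{\cA_1^*}$, whereas the paper uses directly that $\mathrm{Cl}(X)\cap U=\mathrm{Cl}(X\cap U)$ for clopen $U$ — so the countability of $\cA_1^*$ is not actually needed at this step.
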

\begin{proof}
The first sentence is easy. Now the definition of $\Phi$ yields the equalities
$$
\Phi(Y_{0}) \cap p_{A_{1}}^{-1}(0)= \Phi(Y_{0}) \cap p_{Y_{0}\setminus A_{1}}^{-1}(1)
$$
$$
\Phi(Y_{0})  \cap \left( p_{A_{1}}^{-1}(1)\cap\dots\cap p^{-1}_{A_{k}}(1) \right) = \Phi(Y_{0})  \cap p_{A_{1} \cap \dots \cap A_{k}}^{-1}(1)
$$
for any  $A_{1},\dots,A_{k} \in \cA_{1}$. Since sets of the form  $p^{-1}(A)$ are clopen these equalities remains valid when $\Phi(Y_{0})$ is replaced by its closure, which is equal to $\cC$. This gives the claim.
\end{proof}

Since the elements of $\cA_{1}^*$ have positive measure, the elementary cylinders have positive measure
and the measure $\nu'$ has full support in $\cC$ (every nonempty open set has positive measure).
Since the algebra $\cA_1$ is $S$-invariant, there is a natural shift map \hbox{$\sigma:\{0,1\}^{\cA_1^*}\to \{0,1\}^{\cA_1^*}$} defined by $p_{S(A)}\circ\sigma=p_A$.  The set $\cC\subset \{0,1\}^{\cA_1^*}$ is invariant under the shift map $\sigma$; we define $g:=\sigma_{|\cC}$. Clearly, $g$ is a homeomorphism of $\cC$, and one has $\Phi\circ S=g\circ \Phi$.

\subsubsection*{Definition of the semi-conjugacy $\Pi$}
Recall that $\cB$ is the preimage under the projection $p:Y\to \cK$ of the algebra of the clopen sets in $\cK$. Let $\cB^*:=\{A\in\cB \mbox{ such that }\nu(A)>0\}=\cB\setminus\{\emptyset\}$. 
We consider the Cantor set $\{0,1\}^{\cB^*}$, the natural coding map $\Psi:\cK\to \{0,1\}^{\cB^*}$ defined by $p_B(\Psi(x))=1 \Leftrightarrow x \in p(B)$, and the shift map $\sigma:\{0,1\}^{\cB^*}\to\{0,1\}^{\cB^*}$. The map $\Psi$ is a homeomorphism on its image, and provides a conjugacy between $f : \cK \to \cK$ and the restriction of  $\sigma$ on $\Psi(\cK)$.

We may identify $\cB$ with some subset of $\cA_{1}$ by the one-to-one map $i:b \mapsto b \cap Y_{0}$.
Thus there is a natural projection of the Cantor set $\{0,1\}^{\cA_1^*}$ onto the  Cantor set $\{0,1\}^{\cB^*}$, obtained by forgetting the coordinates corresponding to the elements of $\cA_1^*\setminus i(\cB^*)$. Moreover, the image of $\cC$ under this natural projection is contained in $\Psi(\cK)$. We define the projection $\Pi : \cC \to \cK$ as the composition of the natural projection $\cC \subset \{0,1\}^{\cA_1^*}  \to \{0,1\}^{\cB^*}$ with the identification $\Psi^{-1}: \Psi(\cK)\subset \{0,1\}^{\cB^*}\to\cK$. Note that on $Y_{0}$ we have $\Phi S = g \Phi$ and $p= \Pi \Phi$, and on $\cC$ we have $\Pi g = f \Pi$, that is, the diagram of theorem~\ref{theo.weiss} commutes almost everywhere.

\subsubsection*{Unique ergodicity and minimality}
Let us prove that $g$ is uniquely ergodic. Since the elementary cylinders form a basis of the topology on $\cC$, it suffices to prove the following fact.
\begin{fact}
The Birkhoff averages $(\frac 1 n S_{n}(1_{A'},c))_{n \geq 1}$ converge towards $\nu'(A')$, for any elementary cylinder $A'$ of the Cantor set $\cC$ and any point $c\in \cC$. 
\end{fact}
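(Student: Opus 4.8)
The plan is to reduce the convergence of Birkhoff averages on $\cC$ to the uniformity of the sets in $\cA_1$ (equivalently $\cA_0$) that we established via lemma~\ref{lem.uniform-algebra}. First I would fix an elementary cylinder $A' = \cC \cap p_A^{-1}(1)$ for some $A \in \cA_1^*$, and observe that $1_{A'} \circ \Phi = 1_A$ on $Y_0$; more generally, since $\Phi$ conjugates $S$ to $g$, we have $S_n(1_{A'}, \Phi(y)) = S_n(1_A, y)$ for every $y \in Y_0$ and every $n$. The set $A$ is $(\varepsilon, N)$-uniform for every $\varepsilon$ (with a suitable $N = N(\varepsilon)$), and because $\cA_1$ has no non-empty null-set, the remark following lemma~\ref{lem.uniform-algebra} upgrades this to the statement that $\left|\frac 1 n S_n(1_A, y) - \nu(A)\right| < \varepsilon$ for \emph{every} $y \in Y_0$ and every $n \geq N$. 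Since $\nu(A) = \nu'(A')$, this proves the desired convergence for every point of the dense subset $\Phi(Y_0) \subset \cC$, uniformly in the base point.

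The remaining step is to pass from the dense set $\Phi(Y_0)$ to all of $\cC$. Here I would use that for each fixed $n$, the function $c \mapsto \frac 1 n S_n(1_{A'}, c)$ is continuous on $\cC$: indeed $1_{A'}$ is the indicator of a clopen set (the elementary cylinder $A'$ is clopen in $\cC$ by the Fact on elementary cylinders, being $\mathrm{Cl}(\Phi(A))$ and also open as a basic cylinder set), and $g^k$ is continuous, so $S_n(1_{A'}, \cdot) = \sum_{k=0}^{n-1} 1_{A'} \circ g^k$ is a finite sum of continuous functions. Now fix $\varepsilon > 0$ and pick $N = N(\varepsilon)$ as above. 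For any $c \in \cC$ and any $n \geq N$, choose a sequence $\Phi(y_j) \to c$ with $y_j \in Y_0$; by continuity of $\frac 1 n S_n(1_{A'}, \cdot)$ we get $\frac 1 n S_n(1_{A'}, c) = \lim_j \frac 1 n S_n(1_{A'}, \Phi(y_j))$, and each term on the right is within $\varepsilon$ of $\nu'(A')$, hence so is the limit. Thus $\left|\frac 1 n S_n(1_{A'}, c) - \nu'(A')\right| \leq \varepsilon$ for all $n \geq N$ and all $c \in \cC$, which is exactly the (in fact uniform) convergence claimed.

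I do not expect a genuine obstacle here: the content is entirely carried by lemma~\ref{lem.uniform-algebra} and its strengthening in the remark (uniformity for every point, not merely almost every point, thanks to $\cA_0$ having no non-empty null-set), combined with the elementary observations that $\Phi$ intertwines the dynamics, that $\Phi(Y_0)$ is dense in $\cC$ by construction, and that Birkhoff sums of indicators of clopen sets are continuous. The one point to be careful about is to invoke the \emph{no non-empty null-set} property of $\cA_1$ so that the uniformity estimate holds at every point of $Y_0$ (and hence, by density and continuity, at every point of $\cC$) rather than merely $\nu'$-almost everywhere --- otherwise one would only recover unique ergodicity in a weaker form. Once this fact is proved, unique ergodicity of $g$ follows immediately since the elementary cylinders form a basis of the topology of $\cC$ and their indicators span a dense subspace of $C(\cC)$; and minimality of $g$ follows because $\nu'$ has full support in $\cC$ and a uniquely ergodic system whose invariant measure has full support is minimal.
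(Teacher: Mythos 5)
Your proof is correct and follows essentially the same route as the paper: exploit uniformity of $A\in\cA_1^*$, use that $A'$ is clopen so the finite Birkhoff sums are locally constant, and transfer the estimate from $\Phi(Y_0)$ to all of $\cC$. The only (inessential) variation is that you upgrade the uniformity to hold at \emph{every} point of $Y_0$ via the no-nonempty-null-set property and then invoke density of $\Phi(Y_0)$, whereas the paper keeps the almost-everywhere estimate and instead uses that $\nu'$ has full support to find a good point in the neighbourhood of $c$ on which the Birkhoff sum is constant.
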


\begin{proof}
Consider an elementary cylinder $A'=\cC \cap p_{A}^{-1}(1)$ in $\cC$ and a point $c\in \cC$. Let $\epsilon>0$.
The set $A= \Phi^{-1}(A')$ is uniform since it belongs to $\cA_{1}^*$.
So there exists $N$ such that, for every $n\geq N$, for $\nu$-a.e. $y \in Y$,
$$~  \quad \quad \quad\quad  \quad\quad\quad \quad\quad \quad\quad \quad \quad\left|\frac 1 n S_n(1_A,y)-\nu(A)\right|\leq \epsilon. \quad \quad \quad \quad \quad\quad \quad \quad\quad \quad\quad \quad \quad  (*)$$
Let us fix any $n \geq N$. Since $A'$ is clopen in $\cC$,  there exists a neighbourhood $U$ of $c$ in $\cC$ such that any point $c'\in U$ satisfies $S_n(1_{A'},c')=S_n(1_{A'},c)$.
Since $\nu'=\Phi_{*}\nu$ has full support in $\cC$,
there exists a point $y\in Y_{0}$ that satisfies $(*)$ and whose image 
$c'=\Phi(y)$ belongs to $U$.
By conjugacy,
$S_n(1_A,y)=S_n(1_{A' },c')$ and $\nu(A)=\nu'(A')$.
Henceforth
$$\left|\frac 1 n S_n(1_{A'},c)-\nu'(A')\right|\leq\epsilon$$
which proves the fact and the unique ergodicity of $g$.
\end{proof}

Since  $g$ is uniquely ergodic and every non-empty open subset of $C$ has positive $\nu'$-measure, the map $g$ is also minimal. Finally,  the topological space $C$ is a Cantor set: indeed it is a closed subset of a Cantor set and it has no isolated point since $g$ is minimal. This completes the proof of theorem~\ref{theo.weiss}.


\section{Open version}

\subsection{Statement and strategy}\label{ss.statement-open}

\begin{theo}
\label{theo.open}
In the conclusion of theorem~\ref{theo.weiss} one can add that the map $\Pi$ is open.
\end{theo}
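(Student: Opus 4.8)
The plan is to rerun the proof of Theorem~\ref{theo.weiss} essentially without change, reinforcing only the inductive construction of the uniform algebra $\cA_{0}$ so that the factor map $\Pi$ produced in Section~\ref{ss.proof-Weiss} turns out to be open automatically.

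\textbf{Reduction.} First I would translate openness of $\Pi$ into a property of the coding. The elementary cylinders $\cC\cap p_{A}^{-1}(1)$, $A\in\cA_{1}$, form a basis of the topology of $\cC$, a finite intersection of them is again one (the Fact of Section~\ref{ss.proof-Weiss}, $\cA_{1}$ being an algebra), and $\Pi(\cC\cap p_{A}^{-1}(1))=\Pi(\mathrm{Cl}(\Phi(A)))=\mathrm{Cl}(\Pi(\Phi(A)))=\mathrm{Cl}(p(A\cap Y_{0}))$ is automatically closed, being the continuous image of a compact set. Hence $\Pi$ is open if and only if $\mathrm{Cl}(p(A\cap Y_{0}))$ is clopen in $\cK$ for every $A\in\cA_{1}=\cA_{0\mid Y_{0}}$. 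Since $p\circ S=f\circ p$ with $f$ a homeomorphism (so each $f^{j}$ commutes with $\mathrm{Cl}$), $\mathrm{Cl}$ commutes with finite unions, the partitions $\alpha_{i}^{\infty}$ are nested, and every element of $\cA_{0}$ is a finite union of iterates of cylinders of a single $\alpha_{i}^{\infty}$, this reduces to the single requirement that $\mathrm{Cl}(p(X_{A,\alpha_{i}^{\infty}}\cap Y_{0}))$ be clopen in $\cK$ for every $i$ and every block $A$ of $\alpha_{i}^{\infty}$ (call this property (P)).

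\textbf{Why (P) must be built in.} Writing $\beta_{i}=p^{-1}(\bar\beta_{i})$ for the uniquely determined clopen partition $\bar\beta_{i}$ of $\cK$, the inclusion $\mathrm{Cl}(p(X_{A,\alpha_{i}^{\infty}}\cap Y_{0}))\subseteq\bigcap_{\ell}f^{-\ell}(b_{\ell})$ (with $b_{\ell}\in\bar\beta_{i}$ containing $p(a_{\ell})$) is automatic since $\alpha_{i}^{\infty}$ refines $\beta_{i}$; but there is no reason for this closure to be \emph{open}, since $X_{A,\alpha_{i}^{\infty}}$ is only measurable and $p$ only measurable. Moreover (P) is not a $d_{\mathrm{part}}$-closed condition: perturbing a partition on a null set can change such a closure completely, so (P) cannot be read off the limit $\alpha_{i}^{\infty}=\lim_{j}\alpha_{i}^{j}$ and must be maintained as a robust invariant along the inductive construction of the triangular array of Section~\ref{ss.algebra}. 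The invariant I would carry, in addition to those of Sub-lemma~\ref{lem.weiss}, is: to each block $A$ of $\alpha_{i}^{j}$ (tracked across $j$ by the canonical bijections of the copying-and-painting) one attaches a clopen set $U_{A,i}\subseteq\cK$, independent of $j$ for $j$ large, so that $p(X_{A,\alpha_{i}^{j}}\cap Y_{0})\subseteq U_{A,i}$ and $p_{*}(\nu_{\mid X_{A,\alpha_{i}^{j}}})\ge c_{A,i}\,\mu_{\mid U_{A,i}}$ for a constant $c_{A,i}>0$ depending only on $A$ and $i$. This invariant passes to the limit: since the errors $\varepsilon'_{j}$ are summable, $\nu(X_{A,\alpha_{i}^{j}}\triangle X_{A,\alpha_{i}^{\infty}})\to 0$, so for a fixed clopen $V\subseteq U_{A,i}$ one gets $\nu(X_{A,\alpha_{i}^{\infty}}\cap p^{-1}(V))\ge c_{A,i}\mu(V)-o(1)$, hence $\ge c_{A,i}\mu(V)>0$ because $\mu$ has full support; as $\cA_{0}$ has no non-empty null-set, $X_{A,\alpha_{i}^{\infty}}\cap Y_{0}\cap p^{-1}(V)\neq\emptyset$, so $p(X_{A,\alpha_{i}^{\infty}}\cap Y_{0})$ is dense in $U_{A,i}$. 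For the reverse inclusion one takes $Y_{0}$ inside the set on which $\alpha_{i}^{j}$ stabilises pointwise along a subsequence (as at the end of Weiss' proof), so that for $y\in Y_{0}$, membership of $y$ in $X_{A,\alpha_{i}^{\infty}}$ forces membership in $X_{A,\alpha_{i}^{j_{\ell}}}$ for all large $\ell$, whence $p(X_{A,\alpha_{i}^{\infty}}\cap Y_{0})\subseteq U_{A,i}$. Thus $\mathrm{Cl}(p(X_{A,\alpha_{i}^{\infty}}\cap Y_{0}))=U_{A,i}$ is clopen, which is (P), and by the reduction $\Pi$ is open.

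\textbf{The main obstacle.} The real work is to make Sub-lemma~\ref{lem.weiss} preserve this invariant. Copying-and-painting replaces, on a set of tiny measure, the $\alpha_{i}$-name of target fibres by that of a \emph{single} source fibre in the same column, which rigidly couples the new partition there to one genuine orbit; so one must use the available freedom with care. Since the towers are already $\cB$-measurable, each column of $t'_{n}$ has its base inside one piece $p^{-1}(W)$ of $\cB$, $W\subset\cK$ clopen, and the fibres' $p$-images are the $f$-orbits issued from $W$. One should additionally pick, in each column, the source fibre among the ``$p$-typical'' ones, so that as the base point ranges over a $\mu$-dense subset of $W$ one realises every $\hat\alpha_{n}$-name carrying positive conditional $\nu$-mass; propagating this through the nesting $\hat\alpha_{1},\dots,\hat\alpha_{n}$ and redoing the Step~II--III estimates of Section~\ref{ss.proof-weiss} while now also tracking the $p$-coordinate, one checks that the new cylinders still dominate $c_{A,i}\,\mu_{\mid U_{A,i}}$ up to a summable loss, so the constants stay positive and the clopens $U_{A,i}$ stabilise. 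The initialisation (building $\alpha_{i}^{i}$ with the invariant already in force, refining $\gamma_{i}$ and $\beta_{i}$) is a Rokhlin-type construction based on the disintegration of $\nu$ over $p$. With the sub-lemma thus reinforced, the rest of the proof of Theorem~\ref{theo.weiss} goes through verbatim and yields, in addition, that $\Pi$ is open.
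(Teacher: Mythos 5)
Your reduction of the openness of $\Pi$ to the statement that $\mathrm{Cl}\bigl(p(X_{A,\alpha_i^\infty}\cap Y_0)\bigr)$ is clopen for every cylinder is correct and matches what the paper does (the paper phrases it as: every element of $\cA_0$ is \emph{full}, i.e.\ contained mod~$0$ in a $\cB$-set $B$ which it fills up, whence $\Pi(A')=p(B)$). Your idea for passing to the limit is also a genuine alternative to the paper's device: you replace the paper's a posteriori choice of radii $R_n$ (exploiting that only finitely many open conditions $\nu(X_{A,\alpha}\cap b)>0$ matter at each stage) by a quantitative invariant $p_*(\nu_{\mid X_{A,\alpha_i^j}})\geq c_{A,i}\,\mu_{\mid U_{A,i}}$, which, for each \emph{fixed} clopen $V$, survives $d_{\mathrm{part}}$-convergence. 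If that invariant could be maintained, the limit step would indeed go through.

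The genuine gap is in the inductive step, which is where all the work of the paper lies and which you only gesture at. Three concrete problems. First, each copying-and-painting overwrites a set of measure $\eta$, and to keep the constant $c_{A,i}$ bounded away from $0$ you need the overwritten set to have $p$-conditional density $O(\delta_j)$ over \emph{every} clopen $V$ simultaneously (your ``summable loss''); absolute smallness of $\eta$ gives nothing for $V$ with $\mu(V)\ll\eta$, and no mechanism is proposed to control the conditional measure of the modified set. The paper instead reserves in advance a set $Z_1$, meeting $X_{A,\hat\alpha_i}\cap B'$ in positive measure for every relevant pair, on which no modification is ever performed. Second, choosing ``$p$-typical'' source fibres — one per column — cannot make the \emph{new} cylinders $X_{A,\alpha_n}$ satisfy the invariant: a single source per column realises a single name there, whereas you need every name that occurs at all to reoccur over every small clopen piece of $U_{A,n}$. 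The paper achieves this (in the weaker positivity form) only via a second reserved set $Z_2$, partitioned into pieces $Z_2'(A)\cap B$ each filling up $Y$, and an iterated repainting $\gamma_1,\gamma_2,\dots,\gamma_k$ that terminates by a pigeonhole argument on the incidence sets $\Xi_j$. Third, you ignore the cylinders that straddle two fibres of the tower: their names are concatenations $[A_1,A_2]_\ell$ of source names, so new incidences between cylinders and $\cB$-pieces are created at each stage, and without controlling them your clopen sets $U_{A,i}$ need not stabilise in $j$. This is exactly why the paper introduces the ``stability under brackets'' condition and carries it as an extra inductive hypothesis. As it stands, the proposal identifies the right target but does not supply the construction that makes it attainable.
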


As for theorem~\ref{theo.weiss}, the proof of theorem~\ref{theo.open} essentially amounts to the construction of a $S$-invariant algebra $\cA_0$ of uniform subsets of $Y$.  Of course, we need an extra property on $\cA_0$ to ensure that the map $\Pi$ is open. 

\begin{defi}
Let $\cB$ be an algebra of measurable subsets of some measured space $(Y,\nu)$.
A set $X \subset Y$ is said to \emph{fill up a $\cB$-measurable set $B$} if $\nu(X\cap B')>0$ for every non-empty $\cB$-measurable set $B'\subset B$. A set $X\subset Y$ is said to be \emph{full} if there exists $B\in\cB$ such that $X\subset B$ modulo a null-set (that is, $\nu(X \setminus B)=0$)
 and such that $X\cap B$ fills up $B$. 
\end{defi} 
 
\begin{lemma}
\label{lem.uniform-algebra-open}
In the conclusion of lemma~\ref{lem.uniform-algebra} one can add that every element of the algebra~$\cA_0$ is full.  
\end{lemma}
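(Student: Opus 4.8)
The plan is to revisit the construction of the uniform algebra $\cA_0$ in the proof of lemma~\ref{lem.uniform-algebra}, and to arrange that at every finite stage the sets produced are "full" in the sense just defined, so that this property is inherited in the limit. Recall that $\cA_0$ is generated by the nested partitions $(\alpha_i^\infty)_{i\geq 1}$ together with the dynamics, each $\alpha_i^\infty$ being the limit of partitions $\alpha_i^j$ obtained via sub-lemma~\ref{lem.weiss} by copying-and-painting over Rokhlin towers with $\cB$-measurable columns. The key observation is that the copying-and-painting process only modifies a partition on a union of \emph{whole columns} of a $\cB$-measurable tower, so if we are slightly careful, the elements of $\alpha_i^j$ will be finite unions of $\cB$-measurable-tower pieces intersected with $\cB$-measurable sets, and in particular each $i$-cylinder $X_{A,\alpha_i^j}$ will be full: it will be contained modulo a null-set in a $\cB$-measurable set $B$ (namely the corresponding $\beta_i$-cylinder, since $\alpha_i^j$ refines $\beta_i$) and it will fill up that $B$ because copying-and-painting never kills a cylinder on a positive-measure $\cB$-measurable subset — the "good fibre" construction and the use of ergodicity guarantee every pattern that should occur does occur on a positive-measure $\cB$-measurable set.

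First I would make precise the right invariant to track through sub-lemma~\ref{lem.weiss}: strengthen its conclusion (or rather observe that its proof already gives this) so that each $i$-cylinder $X_{A,\alpha_i}$ fills up the $\beta_i$-cylinder that contains it. This requires checking that the copying-and-painting of II.c does not destroy fullness: on the bad fibres we overwrite with the $\hat\alpha_i$-name of a good fibre, but bad fibres form a $\cB$-measurable set of small measure, and on the complement (a positive-measure $\cB$-set) the original cylinder $X_{A,\hat\alpha_i}$ survives; since $\hat\alpha_i$ is itself full by induction, intersecting with this $\cB$-measurable complement preserves fullness. One also needs that fullness is stable under the operations used to build $\cA_0$ from the $\alpha_i^\infty$: under $S$ (clear, since $S$ preserves $\cB$ and $\nu$), under finite disjoint unions (clear: union the witnessing $B$'s), and under taking $d_{\mathrm{part}}$-limits along the Cauchy sequence $(\alpha_i^j)_j$ with summable errors — here fullness with respect to the \emph{fixed} $\cB$-set $B=\beta_i$-cylinder passes to the limit because the symmetric differences are summable and one can throw away the corresponding null-set, exactly as properties 1--3 were handled in subsection~\ref{ss.algebra}.

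Then I would deduce lemma~\ref{lem.uniform-algebra-open}: every generator $X_{A,\alpha_i^\infty}$ is full, hence so is every finite disjoint union of its iterates, hence every element of $\cA_0$ is full. Finally I would note how this feeds theorem~\ref{theo.open}: in the Cantor model $(\cC,\nu',g)$ of the proof of theorem~\ref{theo.weiss}, the semi-conjugacy $\Pi$ is built by forgetting the coordinates outside $i(\cB^*)$; fullness of the elements of $\cA_1=\cA_{0\mid Y_0}$ means precisely that each elementary cylinder $\cC\cap p_A^{-1}(1)$ maps onto an elementary cylinder of $\cK=\Psi(\cK)$ (its image is not merely contained in, but equals, the cylinder $\Psi(p(B))$ for the witnessing $B\in\cB$), because the density of $\Phi(A)$ inside $\Phi(B)$ forces the projection to be surjective on that piece. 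Since elementary cylinders form a basis of both Cantor sets, $\Pi$ sends basic open sets to basic open sets, i.e. $\Pi$ is open.

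The main obstacle I anticipate is the bookkeeping in sub-lemma~\ref{lem.weiss}: one must verify that \emph{every} stage of copying-and-painting — including the extra freedom used in choosing the partition $\theta$ at step I.c and the re-choice of the tower base $t_n$ at step III.b — only cuts cylinders along $\cB$-measurable sets, so that no $i$-cylinder ever becomes null on a positive-measure $\cB$-piece of its ambient $\beta_i$-cylinder. This is morally automatic from the fact that all towers, partitions $\theta$, and "bad fibre" sets are $\cB$-measurable, but it has to be stated and checked carefully, and it is the only place where genuinely new verification (beyond what is already in subsections~\ref{ss.proof-weiss}--\ref{ss.algebra}) is needed; the passage to the limit and the deduction of openness are then routine.
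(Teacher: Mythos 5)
Your overall plan (track a fullness invariant through sub-lemma~\ref{lem.weiss}, pass to the limit, then deduce openness of $\Pi$ from fullness of the elementary cylinders) is the same as the paper's, and your final step deducing openness is essentially correct. But the middle of the argument has genuine gaps, and they are exactly where the paper has to work. First, your witnessing set is wrong and your preservation claim is false as stated: an $i$-cylinder $X_{A,\alpha_i}$ has no reason to fill up the $\beta_i$-cylinder containing it (it can be disjoint from a positive-measure $\cB$-piece of that cylinder), so the correct witness is the union of those elements of a finer partition $\beta_{N_i}$ that the cylinder meets with positive measure; and ``copying-and-painting never kills a cylinder on a positive-measure $\cB$-measurable subset'' is not automatic, because the bad fibres, though of total measure less than $\eta$, may entirely contain $X_{A,\hat\alpha_i}\cap B'$ for some very small $\cB$-set $B'\subset B$. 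The paper must explicitly reserve a set $Z_1$ (untouched by all paintings, and chosen so that $X_{A,\hat\alpha_i}\cap Z_1$ already fills up $B$) to preserve fullness at levels $i<n$, and must run a separate iterated painting on a second reserved set $Z_2$ to \emph{create} fullness at level $n$ (the process terminates because the finite set of pairs $(A,B)$ with $X_{A,\gamma_j}$ meeting $B$ is non-decreasing). You also miss that the tower modification can create new incidences: an orbit segment of length $i$ straddling two fibres of $t'_n$ acquires a concatenated name $[A_1,A_2]_\ell$, so $X_{A,\alpha_i}$ may meet elements of $\beta_{N_i}$ that $X_{A,\hat\alpha_i}$ did not; controlling this is the purpose of the ``stability under brackets'' hypothesis carried inductively, without which the witnessing set $B$ is not stable along the construction.

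Second, the passage to the limit is not ``exactly as properties 1--3 were handled.'' Fullness is a countable family of conditions $\nu(X_{A,\alpha}\cap b)>0$, each open in $d_{\mathrm{part}}$ but with a modulus depending on the (possibly tiny) value $\nu(X_{A,\alpha_i^n}\cap b)$; a summable sequence of errors fixed in advance therefore does not guarantee that fullness survives. This is why the paper warns that fullness is not stable under small perturbation, and why it chooses the bound $R_n$ on all subsequent perturbations \emph{after} constructing $\alpha_i^n$, so that every finite condition already achieved at stage $n$ (relative to $\beta_n$) persists, and then lets $n\to\infty$ to cover all $\cB$-measurable $b$. Your proposal as written would need to be repaired at both of these points before it yields the lemma.
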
 
 
As in the preceding section, the algebra $\cA_0$ will be constructed generated by a infinite sequence $(\alpha_i^\infty)_{i\geq 1}$ of finite partitions and the dynamics $S$. In order to get lemma~\ref{lem.uniform-algebra-open}, we will need that, for every $i\geq 1$, every element of the partition $\alpha_i^\infty\wedge\dots\wedge S^{-(i-1)}(\alpha_i^\infty)$ is full (we will say that ``every $i$-cylinder of the partition $\alpha_i^\infty$ is full"). As in the preceeding section, for every $i\geq 1$, the partition $\alpha_i^\infty$ will be obtained as the limit of a convergent sequence of partitions $(\alpha_i^j)_{j\geq i}$, and, for every $j\geq i\geq 1$, the partition $\alpha_i^{j+1}$ will be obtained by modifying the partition $\alpha_i^j$ on a small set. Of course, we will need the $i$-cylinders of the partition $\alpha_i^j$ to satisfy a ``fullness property" for every $j\geq i\geq 1$. A technical issue arises here: the ``fullness property" we need for the $i$-cylinders 
 of the partition $\alpha_i^j$ is not stable under small perturbation of the partition $\alpha_i^j$. This will lead us to introduce another property that we call \emph{stability under bracket}. Let us make this precise. 

\begin{defi}
Let $\alpha,\beta$ be two partitions, such that $\beta$ is $\cB$-measurable. We will say that \emph{the $i$-cylinders of $\alpha$ are $\beta$-full} if for every $i$-block $A$ of $\alpha$, the cylinder $X_{A,\alpha}$ fills up every element of $\beta$ that it meets.
\end{defi}

Given two $i$-blocks $A=(a_0,\dots,a_{i-1})$ and $A'=(a'_0,\dots,a'_{i-1})$ of $\alpha$ and an integer $\ell\in\{1,\dots,i-1\}$, we denote by   $[A,A']_{\ell}$ the $i$-block obtained by concatenating the $\ell$ first elements of $A$ and the $i-\ell$ last elements of $A'$, that is 
$$[A,A' ]_{\ell} = (a_0, \dots, a_{\ell-1} , a'_{\ell} , \dots , a'_{i-1}) .$$

\begin{defi}
Let $\beta$ be a $\cB$-measurable partition of $Y$, and $t$ be a subset of $Y$.  We say that \emph{the $i$-blocks of a partition $\alpha$ are ($\beta,t$)-stable under brackets} if the following holds:  for every set $B \in \beta$, every integer $\ell\in\{1,\dots,i-1\}$,
\begin{itemize}
\item[--]  the set $S^\ell(B)$ is either disjoint from or contained in  $t$,
\item[--] if  $S^\ell(B)$ meets (and thus is included in) $t$, then for every $i$-blocks $A,A'$ of $\alpha$ such that the cylinders $X_{A,\alpha}$ and $X_{A',\alpha}$ both meet $B$, the cylinder $X_{[A,A']_\ell,\alpha}$ also meets $B$.
\end{itemize}
\end{defi}

It is clear that if the $i$-cylinders of $\alpha$ are $\beta$-full, then they are also $\beta'$-full for any partition $\beta'$ that refines $\beta$. The bracket stability has some more subtle monotonicity property.

\begin{fact}[monotonicity of bracket stability]
Consider two $\cB$-measurable partitions $\beta,\beta'$ such that $\beta'$   refines $\beta$,  and two  $\cB$-measurable sets $t' \subset t$.
Assume $\beta'$ and $t'$  satisfy the first property in the definition of bracket stability: the partitions $S^\ell(\beta'), \ell = 1, \dots, i-1$ refines the partition $(t',Y \setminus t')$.
Assume  the $i$-cylinders of some partition $\alpha$ are $\beta$-full and ($\beta,t$)-stable under brackets.
  Then they are also  ($\beta',t'$)-stable under brackets.
\end{fact}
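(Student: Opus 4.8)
The plan is to verify, one at a time, the two items in the definition of $(\beta',t')$-stability under brackets for the $i$-cylinders of $\alpha$, using nothing beyond elementary set manipulations together with the three standing hypotheses ($\beta'$ refines $\beta$, $t'\subseteq t$, and the $i$-cylinders of $\alpha$ are $\beta$-full and $(\beta,t)$-stable under brackets).

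First I would dispose of the first item, namely that for every $B'\in\beta'$ and every $\ell\in\{1,\dots,i-1\}$ the set $S^\ell(B')$ is either disjoint from or contained in $t'$: this is exactly the assumption made on the pair $(\beta',t')$, so there is nothing to prove.

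Next I would treat the second item. Fix $B'\in\beta'$ and $\ell\in\{1,\dots,i-1\}$ such that $S^\ell(B')$ meets $t'$, hence $S^\ell(B')\subseteq t'$ by the first item (the disjoint case requiring nothing). Since $\beta'$ refines $\beta$, let $B\in\beta$ be the element containing $B'$. As $B'$ is an element of a partition it has positive measure, hence so does $S^\ell(B')$, and $S^\ell(B')\subseteq S^\ell(B)\cap t'\subseteq S^\ell(B)\cap t$; thus $S^\ell(B)$ meets $t$, and the first item of the $(\beta,t)$-stability of $\alpha$ forces $S^\ell(B)\subseteq t$. Now let $A,A'$ be $i$-blocks of $\alpha$ such that $X_{A,\alpha}$ and $X_{A',\alpha}$ both meet $B'$; a fortiori they both meet $B$. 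Applying the second item of the $(\beta,t)$-stability of $\alpha$ — legitimate since $S^\ell(B)\subseteq t$ — yields that the cylinder $X_{[A,A']_\ell,\alpha}$ meets $B$. Finally, $[A,A']_\ell$ is itself an $i$-block of $\alpha$, so by the $\beta$-fullness of the $i$-cylinders of $\alpha$ the cylinder $X_{[A,A']_\ell,\alpha}$ fills up $B$; as $B'$ is a non-empty $\cB$-measurable subset of $B$, this gives $\nu(X_{[A,A']_\ell,\alpha}\cap B')>0$, so $X_{[A,A']_\ell,\alpha}$ meets $B'$. That is precisely the second item of $(\beta',t')$-stability.

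The argument is pure bookkeeping; the one point to get right — rather than a real obstacle — is that $\beta$-fullness is genuinely needed: bracket stability of $\alpha$ at the coarse block $B$ only delivers $X_{[A,A']_\ell,\alpha}$ meeting $B$, and to descend to the subset $B'$ one must use fullness. Similarly, the inclusion $S^\ell(B)\subseteq t$ has to be deduced from $S^\ell(B')\subseteq t'\subseteq t$ and the dichotomy, not assumed. Once these two remarks are noted, each of the three hypotheses is invoked exactly once and the verification is routine.
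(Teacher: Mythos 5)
Your proof is correct; the paper explicitly leaves this verification to the reader, and your argument is exactly the intended one. The key observations — that the first item for $(\beta',t')$ is assumed outright, that $S^\ell(B)\subseteq t$ must be deduced from $S^\ell(B')\subseteq t'\subseteq t$ via the dichotomy for $(\beta,t)$, and that $\beta$-fullness is needed to pass from ``$X_{[A,A']_\ell,\alpha}$ meets $B$'' down to the sub-element $B'$ — are precisely the points the statement is designed to test.
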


The proof of the fact is left to the reader.
Now we state a new version of sub-lemma~\ref{lem.weiss}.
As before we assume the hypotheses of lemma~\ref{lem.uniform-algebra}, and we  fix a sequence of $\cB$-measurable nested partitions $(\beta_{i})_{i \geq 1}$ which  generates the algebra $\cB$.
\begin{sublemma}\label{lem.open} 
For $n \geq 1$, let $(\hat \alpha_{i})_{ 1 \leq i \leq n}$ be a sequence of nested partitions, $(t_{i})_{0 \leq i \leq n-1}$ a decreasing sequence of $\cB$-measurable sets, $(\varepsilon_{i})_{ 1 \leq i \leq n-1}$ a sequence of positive numbers, and $(N_{i})_{1 \leq i \leq n-1}$ an increasing  sequence of integers. Assume that  $\hat \alpha_{n}$ refines $\beta_{n}$ and that for $1 \leq i \leq n-1$,
\begin{itemize}
\item[--] the partition $\hat \alpha_{i}$ refines the partition $\beta_{i}$,
\item[--] the partition   $\hat \alpha_{i}$ is $(\varepsilon_{i}, i\mbox{-blocks})$-represented by  $t_{i}$,
\item[--]  the $i$-cylinders of  $\hat \alpha_{i}$ are $\beta_{N_{i}}$-full and ($\beta_{N_{i}}, t_{i}$)-stable under brackets.
\end{itemize}
Let $\varepsilon_{n} >0$.
Then there exist a sequence of nested partitions $(\alpha_{i})_{1 \leq i \leq n}$, a $\cB$-measurable set $t_{n}\subset t_{n-1}$ and an integer $N_n\geq N_{n-1}$ such that, for every $1 \leq i \leq n$,
\begin{enumerate}
\item the partition $\alpha_{i}$ refines the partition  $\beta_{i}$,
\item the partition  $\alpha_{i}$ is $(\varepsilon_{i}, i\mbox{-blocks})$-represented by  $t_{i}$,
\item the $i$-cylinders of  $\alpha_{i}$ are $\beta_{N_{i}}$-full and ($\beta_{N_{i}}, t_{i}$)-stable under brackets,
\item  the partition $\alpha_{i}$ has the same number of elements as $\hat \alpha_{i}$, and $d_{\mathrm{part}}(\alpha_{i},\hat \alpha_{i}) < \varepsilon_{n}$.
\end{enumerate}
\end{sublemma}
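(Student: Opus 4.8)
The plan is to prove Sub-lemma~\ref{lem.open} by running the proof of Sub-lemma~\ref{lem.weiss} again, within the same three-step architecture (Step~I: choice of a first Rokhlin tower over a base $t'_n\subset t_{n-1}$ of large minimal height; Step~II: construction of $\alpha_1,\dots,\alpha_n$ by copying and painting inside that tower; Step~III: choice of $t_n$), and adding to it the extra bookkeeping demanded by conclusion~3. All the estimates of Sub-lemma~\ref{lem.weiss} that produce conclusions 1, 2 and 4 are left untouched; the only genuinely new work is to control how the modification interacts with the \emph{fixed} data $\beta_{N_i}$ and $t_i$ ($1\le i\le n-1$) and to produce a suitable new index $N_n$. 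Throughout, the measure $\eta$ of the modified region is kept much smaller than the (finitely many, hence uniformly positive) measures of the cells of $\beta_{N_1},\dots,\beta_{N_{n-1}}$, and much smaller than $\varepsilon_n/n$ as before.

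\emph{Step~I (preparation).} To the conditions already imposed on the $\cB$-measurable partition $\theta$ of the base $t'_n$ (that it refine $\beta_n$ and each $(t_i,Y\setminus t_i)$), I would add that $\theta$ refine each $\beta_{N_i}$, $1\le i\le n-1$, together with all the pulled-back partitions $S^{-\ell}(\beta_{N_i})$ and $S^{-\ell}(t_i,Y\setminus t_i)$ for $\ell$ ranging up to the (finite, since $\cB$ is uniform) height of the tower. This is still a finite $\cB$-measurable partition, so such a $\theta$ exists; its effect is that any two fibres of one column of the $t'_n$-tower are \emph{aligned}: they are cut by the small towers $t_i$ at the same levels and carry the same $\beta_{N_i}$-name level by level. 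Because distinguishing good fibres from bad ones (for $\widehat\alpha_n$) would require a window of size $\sim m'$, this refinement does not separate good from bad fibres, so each column still contains, generically, a good fibre, exactly as in Sub-lemma~\ref{lem.weiss}. The value of the new index $N_n\ge N_{n-1}$ is pinned down only at the end of Step~III.

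\emph{Step~II (the new copying).} Instead of copying, in a column, the name of a single good fibre onto the bad fibres, I would copy from a whole family of good fibres of that column, and moreover copy $t_i$-fibre by $t_i$-fibre along each small tower: by alignment, the $t_i$-fibres of a bad big-fibre can be overwritten one after another by $t_i$-fibres borrowed from (possibly different) good big-fibres of the same column. One arranges the choice of sources so that, inside each column, every $n$-block of $\widehat\alpha_n$ and every bracket $[A,A']_\ell$ of $n$-blocks occurring there is realised, and realised on a $\cB$-measurable union of target (bad) fibres of positive measure; here one uses the ``rectangle'' shape that $(\beta_{N_i},t_i)$-bracket stability of $\widehat\alpha_i$ forces on the set of $i$-blocks occurring at a $\beta_{N_i}$-cell (that set equals the product of the prefixes it contains with the suffixes it contains), which tells us which combinations the assignment must cover. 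Since only fibres bad for $\widehat\alpha_n$ are touched, the ``general principle'' of Sub-lemma~\ref{lem.weiss} still applies verbatim, so conclusions 1, 2 and 4 survive; and for $i<n$ the partition $\widehat\alpha_i$ is modified only by copying $\widehat\alpha_i$-names of whole $\beta_{N_i}$-aligned $t_i$-fibres, so any $i$-block appearing at a $\beta_{N_i}$-cell $B$ under $\alpha_i$ is either one already appearing there under $\widehat\alpha_i$ or a bracket $[A,A']_\ell$ of two such. In the latter case $(\beta_{N_i},t_i)$-bracket stability of $\widehat\alpha_i$ gives $X_{[A,A']_\ell,\widehat\alpha_i}\cap B\neq\emptyset$, and $\beta_{N_i}$-fullness of $\widehat\alpha_i$ (with $\eta$ small) lets one place such an occurrence either in the part of $B$ untouched by the copying, or, for a small $\cB$-measurable pocket of $B$ inside the modified region, on a copied fibre realising the prescribed prefix/suffix combination. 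This yields both $\beta_{N_i}$-fullness and $(\beta_{N_i},t_i)$-bracket stability of the $i$-cylinders of $\alpha_i$ for $i<n$.

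\emph{Step~III, and the main obstacle.} One picks $t_n\subset t'_n$ by the Rokhlin fact exactly as in Sub-lemma~\ref{lem.weiss}, checks $(\varepsilon_n,n\mbox{-blocks})$-representation of $\alpha_n$ by $t_n$ as there, and then chooses $N_n$ so large that $\beta_{N_n}$ refines the $\cB$-measurable partition $\beta$ adapted to the $t'_n$-tower used in Step~II (column-and-level partition) and so that each $S^{\ell}(\beta_{N_n})$, $1\le\ell\le n-1$, lies on a definite side of $t_n$; this is possible because the partitions $\beta_k$ generate $\cB$. By the Step~II construction the $n$-cylinders of $\alpha_n$ are $\beta$-full and $(\beta,t'_n)$-stable under brackets, and the monotonicity Fact ($\beta_{N_n}$ refining $\beta$, $t_n\subset t'_n$) upgrades this to $\beta_{N_n}$-fullness and $(\beta_{N_n},t_n)$-bracket stability, which is conclusion~3 at level $n$. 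The step I expect to be the real obstacle is precisely the Step~II construction: one must choose the copying assignment rich enough that, for \emph{every} cell of every $\beta_{N_i}$ ($i\le n$) and \emph{every} nonempty $\cB$-measurable subset of it, each block occurring at that cell is realised with positive measure --- that is, the copying must be ``spread out'' $\cB$-measurably and compatibly across all orders $1,\dots,n$ at once --- while keeping the bulk of the fibres with good $n$-block statistics and leaving the whole-$t_i$-fibre structure intact; carrying out all three demands inside a region of measure $<\eta$ is where all the care lies. Feeding the resulting sub-lemma into the inductive construction of Subsection~\ref{ss.algebra}, now also letting the auxiliary indices $N_i$ tend to infinity, then yields Lemma~\ref{lem.uniform-algebra-open}.
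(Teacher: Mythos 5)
Your architecture (same three steps, extra bookkeeping for conclusion~3, alignment of fibres via a finer $\theta$) matches the paper's, and you have correctly located the difficulty; but the step you flag as ``where all the care lies'' is precisely the step you have not carried out, and the two devices the paper uses there are absent from your argument. First, for the fullness of the $i$-cylinders of $\alpha_i$ ($i<n$): you write that $\beta_{N_i}$-fullness of $\hat\alpha_i$ ``with $\eta$ small'' lets one place an occurrence in the untouched part of $B$ or in a small pocket inside the modified region. Smallness of $\eta$ cannot do this: fullness is a condition against \emph{every} non-empty $\cB$-measurable $B'\subset B$, and $\cB$ contains countably many such sets of arbitrarily small measure, so for any fixed $\eta$ some $B'$ may lie entirely inside the repainted region and lose all of its intersection with $X_{A,\hat\alpha_i}$. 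The paper resolves this by reserving, \emph{before} any painting, a set $Z_1$ built as a countable disjoint union of positive-measure pieces $Z_1(i,A,B,B')\subset X_{A,\hat\alpha_i}\cap B'$, one for each constraint, and exempting $T(Z_1)$ (the fibres meeting the first $n$ iterates of $Z_1$) from all modifications; the witnesses are then never destroyed, rather than being relocated onto copied fibres as you propose.

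Second, for conclusion~3 at level $n$, your single-pass assignment (``realise every $n$-block and every bracket occurring in the column on a positive-measure union of target fibres'') faces two problems you do not address. Realising a bracket $[A,A']_\ell$ creates a new occurring block, whose own brackets with the previously occurring blocks must in turn be realised for bracket stability to hold, so the construction must be closed under the bracket operation; the paper achieves this with an iterated sequence of repaintings $\gamma_1,\gamma_2,\dots$ performed on successive filling-up pieces of a second reserved set $Z_2$, stopping when the finite set $\Xi_j$ of occurring pairs (block, cell) stabilises. And ``positive measure'' is again not enough for fullness: the targets must meet every non-empty $\cB$-measurable subset of the cell, which is why $Z_2$ is required to fill up $Y$ and is subdivided into pieces $Z_2'(A)$ each still filling up $Y$. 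Finally, your proposal to reassemble bad fibres $t_i$-piece by $t_i$-piece from \emph{different} good fibres would produce patchwork fibres with no control on their $n$-block statistics, jeopardising conclusion~2 at level $n$; the paper keeps all such irregular painting confined to $T(Z_1\cup Z_2)$ and controls its contribution in Step~III by requiring $Z_1\cup Z_2$ to be disjoint from its first $L$ iterates for $L$ large, which yields the extra term in the counting of uncontrolled sub-orbits. Without $Z_1$, $Z_2$, the stabilisation argument, and the $L$-separation, the proof does not go through.
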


\subsection{Proof of sub-lemma~\ref{lem.open}}
We explain how to modify the proof of sub-lemma~\ref{lem.weiss} to get sub-lemma~\ref{lem.open}

\subsubsection*{Step I: choice of a first tower}

\paragraph{I.a. Choice of the height $m'$.---}
As before.

\paragraph{I.b. Choice of the basis $t_n'$.---}
As before.

\paragraph{I.c. Choice of the partition $\theta$.---}
We consider a tower with basis $t'_{n}$, and we choose a $\cB$-measurable partition of the basis which induces a partition $\theta$ of $Y$ satisfying  the same condition~(A) as in the proof of sub-lemma~\ref{lem.weiss} ($\theta$ refines $\beta_{n}$), as well as the following condition.
\begin{itemize}
\item[(B)] For every $x,x'$ belonging to the same element of  $\theta$, for every $\ell \in \{0, \dots , n-1\}$, the fibres of $S^\ell (x)$ and $S^\ell (x')$ have the same height, and the same $\beta_{N_{n-1}}$-name.
\end{itemize}
Note that as a consequence of condition~(B), for every $x,x'$ belonging to the same element of  $\theta$ and every $\ell \in \{0, \dots , n-1\}$, the points $S^\ell (x)$ and  $S^\ell (x')$ are at the same level of the $t'_{n}$-tower, and have the same return-time in the basis. Conditions (A) and (B) amounts to saying that the partition $\theta$ refines some $\cB$-measurable partition of $Y$, thus they are achieved by choosing a sufficiently fine partition of the basis.

\subsubsection*{Step II: construction of the partitions $\alpha_{1},\dots,\alpha_n$}

\paragraph{II.a.  General principle of the construction.---}
As before.

\paragraph{II.b. Easy verifications.---}
As before. After the explicit construction, it will remain to check the uniformity of $\alpha_{n}$ (property 2 for $i=n$), and property 3 (fullness and bracket stability).

\paragraph{II.c. Construction.---}
In what follows, we will only mention what happens to the partition $\hat \alpha_{n}$; according to the general principle of the construction, the modifications of the partitions $\hat \alpha_{1},\dots,\hat\alpha_{n-1}$  follow from the modifications of $\hat\alpha_n$. Let us explain the outline of the construction.
As before we will make a (first)  modification of the partition $\hat \alpha_{n}$ copying good fibres over  bad fibres; the key difference, as compared to the proof of sub-lemma~\ref{lem.weiss}, is that we will choose some very small sets $Z_{1},Z_{2}$ that  will be kept unmodified, together with their first $n$ iterates. On the set $Z_{1}$ we will not make any modification whatsoever, and this will preserve the fullness and bracket stability of the partitions $\alpha_{i}$ for $i<n$.
Then the set $Z_{2}$ will be painted so as to get the fullness and bracket stability for the partition $\alpha_{n}$.

We first choose an integer $L$  much bigger than  $\max(\tau_{t'_{n}})/\varepsilon_{n}$,
where $\tau_{t'_{n}}$ is the return-time function of $t'_{n}$. For every set $Z$ we define the set $T(Z)$ as the union of the fibres of the tower $t'_{n}$ that meet $S^\ell(Z)$ for some $\ell\in\{0,\dots,n-1\}$.
\footnote{Here, for every point $z$, the set $T(\{z\})$ is the union of at most two fibres; the exposition is designed to work almost verbatim in the bi-ergodic open case in section~\ref{sec.bi-ergodic-open} below.}
We pick two measurable sets $Z_{1}$ and $Z_{2}$ with the following properties.
\begin{enumerate}
\item $Z_{1}$ and $Z_{2}$ are disjoint and $Z_{1} \cup Z_{2}$ is disjoint from its first $L$ iterates.
\item\label{Z.2} The union of the fibres of $t'_n$ that are bad,
or that are contained in  $T(Z_1\cup Z_2)$,
has measure less than $\eta$.
\item\label{Z.3} For every $i = 1, \dots , n-1$, for every $i$-block $A$ of the partition $\hat \alpha_{i}$, for every set $B \in \beta_{N_{i}}$, if the $i$-cylinder $X_{A,\hat\alpha_i}$ meets (and thus fills up) $B$, then $X_{A,\hat\alpha_i} \cap Z_{1}$ still fills up $B$.
\item  $Z_{2}$ fills up $Y$.
\end{enumerate}
The sets $Z_1,Z_2$ can be obtained as follow. For each triple $(i,A,B)$ as in property~\ref{Z.3} above ($i \in\{ 1, \dots , n-1\}$, $A$ an $i$-block  of the partition $\hat \alpha_{i}$ and $B \in \beta_{N_{i}}$)
and each non-empty $\cB$-measurable subset $B'$ of $B$, one considers a subset $Z(i,A,B,B')\subset X_{A,\hat \alpha_i}\cap B'$ having a small positive measure. Note that there are countably many such triples, so up to reducing each set $Z(i,A,B,B')$ one can assume that $f^k(Z(i_1,A_1,B_1,B'_1))$ and $f^\ell(Z(i_2,A_2,B_2,B'_2))$ for $k,\ell\in \{0,\dots,L\}$ are disjoint if $(i_1,A_1,B_1,B'_1,k)$ and $(i_2,A_2,B_2,B'_2,\ell)$ are distinct. One may decompose $Z(i,A,B,B')$ as two disjoint sets $Z_1(i,A,B,B')\cup Z_2(i,A,B,B')$ with positive measure. Then $Z_1$ is the union of all the sets $Z_1(i,A,B,B')$ and $Z_2$ is the union of all the sets $Z_2(i,A,B,B')$.

We now make a first modification. In each column, if there exists a good fibre in that column, we copy its $\hat \alpha_{n}$-name on all the bad fibres of the same column, except for the bad fibres included in $T(Z_{1} \cup Z_{2})$. This produces a new partition, which we denote by $\gamma_{1}$.
We then choose $N'_{n}$ so that $S^\ell(\beta_{N'_{n}})$
refines the partition of $Y$ associated to the tower $t'_n$ for each $\ell\in \{0,\dots,n\}$.
Note in particular that if $S^\ell(B)$ meets $t'_{n}$
for some $B \in \beta_{N'_{n}}$ and $\ell \in \{1,\dots, n-1\}$,
then $S^\ell(B)$ is included in $t'_{n}$: in other words $\beta_{N'_{n}}$ and $t'_{n}$ satisfy the first item of the bracket stability.

We now make  a sequence of  modifications of $\gamma_{1}$, which will produce a sequence of partitions $(\gamma_{j})_{j \geq 1}$. All the modifications will consist in copying and painting from a source fibre to a target fibre, both meeting the same set $S^\ell(B)$ for some element $B$ of the partition $\beta_{N'_{n}}$, and some $\ell\in\{0,\dots,n-1\}$. By the choice of $N'_{n}$,
 two such fibres belong to the same column of the tower, thus this construction will abide by the general principle.
The sequence $(\gamma_{j})_{j \geq 1}$ will meet the three following properties.
For every $n$-block $A$ of $\hat \alpha_{n}$, for every $B \in \beta_{N'_{n}}$,
for every $j \geq 1$,
\begin{enumerate}
\item[(i)] if $X_{A, \gamma_{j}}$ meets $B$ then $X_{A , \gamma_{j+1}}$ fills up $B$; 
\item[(ii)] if $S^\ell(B) \subset t'_{n}$ for some $\ell \in\{1,\dots, n-1\}$ and if $A = [A_{1},A_{2} ]_{\ell}$ for some $n$-blocks $A_{1},A_{2}$ of $\hat \alpha_{n}$ such that $X_{A_{1}, \gamma_{j}}$ and $X_{A_{2}, \gamma_{j}}$ meet $B$, then $X_{A , \gamma_{j+1}}$ meets $B$;
\item[(iii)] in the complement of $T(Z_{2})$, the partitions $\gamma_{j}$ and $\gamma_{j+1}$ coincide.
\end{enumerate}

We now explain the construction of the partition $\gamma_{2}$ as a modification of $\gamma_{1}$. We first select a subset $Z'_{2}\subset Z_2$ such that both $Z'_2$ and $Z_2\setminus Z'_2$ fill up $Y$.
We further partition $Z'_{2}$ into sets $Z'_{2}(A)$ indexed by $n$-blocks of $\hat \alpha_{n}$
so that each set $Z'_{2}(A)$ still fills up $Y$. For each $n$-block $A$ and each $B \in \beta_{N'_{n}}$ we will modify the partition $\gamma_{1}$ 
along the set $T(Z'_{2}(A) \cap B)$.
Note that because of the way the sets $Z'_{2}(A)$ have been chosen, the sets $T(Z'_{2}(A) \cap B)$ for different pairs $(A,B)$ are mutually disjoint,
so that all these modifications will be compatible.
We now proceed with the modification.
\begin{itemize}
\item[--] If the set $X_{A, \gamma_{1}}$ meets $B$
then we choose one point $x$ in $X_{A, \gamma_{1}} \cap B$ and, for every $0 \leq \ell \leq n-1$, we copy the $\gamma_{1}$-name of the fibre of $S^\ell(x)$ onto every fibre that meets $S^\ell(Z'_{2}(A) \cap B)$.
In this case the copying process relative to $(A,B)$ is over.

\item[--] In the opposite case we examine the following possibility.
Suppose that $S^\ell(B) \subset t'_{n}$ for some $\ell \in\{1,\dots, n-1\}$ and that
$A = [A_{1},A_{2} ]_{\ell}$ for some $n$-blocks $A_{1},A_{2}$ of $\hat \alpha_{n}$ such 
$X_{A_{1}, \gamma_{1}}$ and $X_{A_{2}, \gamma_{1}}$ meet $B$.
Then we choose two points $x_{1} \in X_{A_{1}, \gamma_{1}} \cap B$ and
$x_{2} \in X_{A_{2}, \gamma_{1}} \cap B$; for every $0 \leq k < \ell$
we copy the  $\gamma_{1}$-name of the fibre of $S^k(x_{1})$ on
every fibre that meets $S^k(Z'_{2}(A) \cap B)$; and  for every $\ell \leq k \leq n-1$
we copy   the  $\gamma_{1}$-name of the fibre of $S^k(x_{2})$ on
every fibre that meets $S^k(Z'_{2}(A) \cap B)$.
\end{itemize}

These modifications, when made for every pair $(A,B)$, produce a new partition $\gamma_{2}$
such that properties (i), (ii) and (iii) above are (clearly) satisfied.
Then we construct the partition $\gamma_{3}$ from $\gamma_{2}$ in the same way we got $\gamma_{2}$ from $\gamma_{1}$,
with the set $Z_{2}$ replaced by $Z_{2} \setminus Z'_{2}$, and so on to produce all the $\gamma_{j}$'s.

Let $\Xi_{j}$ be the set of pairs $(A,B)$ such that $X_{A, \gamma_{j}}$ meets $B$.
Thanks to property (i), the sequence $(\Xi_{j})_{j \geq 1}$ is non-decreasing.
Since the number of $n$-blocks of $\hat \alpha_{n}$ and the partition
$\beta_{N'_{n}}$ are finite, there must be an integer $k>1$ such that
$\Xi_{k} = \Xi_{k-1}$. Then we define $\alpha_{n} = \gamma_{k}$. From property~\ref{Z.2} required on $Z_2$ and from property (iii) of the construction,
we note that $\hat \alpha_n$ and $\alpha_n$ coincide outside a union of fibres of $t'_n$
having measure less than $\eta$.

\paragraph{II.d. Fullness and bracket stability.---}
Let us check that the $n$-cylinders of $\alpha_{n}$ are $\beta_{N'_{n}}$-full.
If the cylinder $X_{A,\alpha_n}$ intersects $B\in \beta_{N'_n}$ for some $n$-block $A$ of $\alpha_n$,
then the pair $(A,B)$ belongs to $\Xi_{k}$. By definition of $k$, it also belongs to $\Xi_{k-1}$, so $X_{A,\gamma_{k-1}}$ meets $B$. Property (i) now implies that $X_{A,\alpha_n}=X_{A,\gamma_k}$ fills up $B$ as required.

The $n$-cylinders of $\alpha_n$ are also $(\beta_{N'_n},t'_n)$-stable under bracket.  Indeed the first item of the definition is satisfied by the choice of $N'_{n}$ at step II.c. For the second one, suppose that $S^\ell(B) \subset t'_{n}$ for some $\ell\in \{1,\dots,n-1\}$, $B\in \beta_{N'_n}$ and consider some $n$-cylinders
$A_1$, $A_2$, $A = [A_{1},A_{2} ]_{\ell}$ of $\alpha_{n}$ such that $X_{A_{1}, \alpha_{n}}$ and $X_{A_{2}, \alpha_{n}}$ meet $B$. Then, the definition of $k$ implies that  $X_{A_{1}, \gamma_{k-1}}$ and $X_{A_{2}, \gamma_{k-1}}$
also meet $B$. Property (ii) thus gives that $X_{A,\alpha_n}=X_{A,\gamma_k}$ meets~$B$.

In order to get the $\beta_{N_n}$-fullness and the $(\beta_{N_n},t_n)$-stability under bracket it will be now enough to choose $t_n\subset t'_n$ and $N_n\geq N'_n$ such that $S^\ell(\beta_{N_{n}})$ refines the partition of $Y$ associated to the tower $t_{n}$ for each $0 \leq \ell \leq n-1$ (see the monotonicity property following definition of bracket stability at section~\ref{ss.statement-open}). This will guarantee property 3 of the sub-lemma for $i=n$

\medskip

We now check property 3 for the partitions $\alpha_{i}$ with $i<n$. Let $A$ be an $i$-block of $\alpha_{i}$, and $B \in \beta_{N_{i}}$. 
\begin{description}
\item[First claim.] If  $X_{A,\alpha_{i}}$ meets $B$, then $X_{A, \hat \alpha_{i}}$ meets $B$.
\item[Second claim.]  If $X_{A, \hat \alpha_{i}}$ meets $B$, then $X_{A,\alpha_{i}}$ fills up $B$.
\end{description}
These two claims entail that the $i$-cylinders of $\alpha_{i}$ are $\beta_{N_{i}}$-full and ($\beta_{N_{i}}, t_{i}$)-stable under brackets, given that $\hat \alpha_{i}$ is.

The second claim is a direct consequence of the definition of $Z_1$ and of the fact that on $T(Z_{1})$ the partitions $\hat \alpha_{i}$ and $\alpha_{i}$ coincide. Let us prove the first claim. Remember that in each copying process the source and target fibres are in the same column of the tower $t'_{n}$ and thus have the same $\beta_{N_{i}}$-name. 
Now assume that the set $X_{A, \alpha_{i}}$ meets $B$ and let $x$ be a point in the intersection.
One may assume that $x$ does not belong to $X_{A, \hat \alpha_{i}}$
since otherwise we are done.
Thus some of the iterates $S^k(x)$, with $0\leq k<i$ have been involved in a copying process.
Two cases appear.
\begin{description}
\item[First case.] If all the iterates $S^k(x)$, with $0\leq k<i$, belong to the same fibre of the $t'_{n}$-tower, then this fibre has been given the $\hat \alpha_{i}$-name of some other fibre in the same column;
moreover this other fibre must intersect $B$
since $B \in \beta_{N_{i}}$ and fibres in the same column have the same $\beta_{N_{i}}$-name.
In this case we see that $X_{A, \hat \alpha_{i}}$ meets $B$.
\item[Second case.] If the iterates $S^k(x)$, with $0\leq k<i$,
meet  two different fibres,
then there exists $1 \leq \ell  < i$ such that the sequence $x, \dots , S^{i-1}(x)$ decomposes into two pieces:
\begin{itemize}
\item[--] $x, \dots, S^{\ell-1}(x)$ is the end of the fibre of $x$;  its $\alpha_{i}$-name coincides with the $\hat \alpha_{i}$-name of some finite orbit segment $y_{1}, \dots , S^{\ell-1}(y_{1})$;
\item[--] $S^{\ell}(x), \dots, S^{i-1}(x)$ is the beginning of the fibre of $S^{i-1}(x)$;  its $\alpha_{i}$-name coincides with the $\hat \alpha_{i}$-name of some finite orbit segment $S^{\ell}(y_{2}), \dots, S^{i-1}(y_{2})$.
\end{itemize}
The points $x,y_{1},y_{2}$ belong to the same element of the partition of $Y$ induced by the partition of the tower $t'_{n}$, and in particular $y_{1}, y_{2}$ belong to $B$ (property (B) of step I.c). 
Let $A_{1}, A_{2}$ be the  $i$-blocks such that   $y_{1} \in X_{A_{1}, \hat \alpha_{i}}$ and $y_{2} \in X_{A_{2}, \hat \alpha_{i}}$. Then 
$A = [A_{1},A_{2}]_{\ell}$. Furthermore the set $S^\ell(B)$ meets $t'_{n}$ and thus it also meets $t_{i}$.
Since  $\hat \alpha_{i}$ is ($\beta_{N_{i}}, t_{i}$)-stable under brackets 
 this entails again that $X_{A, \hat \alpha_{i}}$ meets $B$.
\end{description}
The proof of the first claim is complete, implying property 3 of the sub-lemma for
$i<n$.

\subsubsection*{Step III: construction of the set $t_n$}

\paragraph{III.a. Choice of the height $m$.---}
There are now two types of fibres of $t'_n$ whose $\alpha_n$-name is bad:
the fibres whose $\hat \alpha_n$- and $\alpha_n$-names coincide and are bad,
and some other fibres which are contained in the set $T(Z_1\cup Z_2)$.
By construction the union $\Delta$ of the bad fibres with respect to $\alpha_n$
is thus contained in a union $\Delta_1\cup \Delta_2$ where
\begin{itemize}
\item[--] $\Delta_1$ is a union of column, is uniform and has measure less than $\eta$
(by step I.a and the choice of $Z_1\cup Z_2$),
\item[--] $\Delta_2=T(Z_1\cup Z_2)$.
\end{itemize}
One chooses the integer $m$ much larger than $n$ and $L$
and such that within almost any finite segment of orbit of length greater than $m$
the proportion of points belonging to $\Delta_1$ is less than $2\eta$.

As before, a fibre of $t_n'$ contained in $Y\setminus \Delta$ has $\eta$-uniform distribution of the $n$-blocks of $\alpha_n$.

\paragraph{III.b. Choice of the set $t_{n}$.---}
We choose the set $t_{n}$ as  in the proof of sub-lemma~\ref{lem.weiss}. Furthermore we choose an integer $N_{n} \geq N'_{n}$ big enough so that $\beta_{N_{n}}$ and $t_{n}$ satisfy the first item in the definition of bracket stability (and thus the second item also, see step II.d).

\paragraph{III.c. Uniformity of the partition $\alpha_{n}$.---}
It remains to check that  $\alpha_{n}$ is $(\varepsilon_{n},  n-\mbox{blocks})$-represented by the set $t_{n}$.
The argument is roughly the same as in sub-lemma~\ref{lem.weiss}. We now categorise the sub-orbit of length $n$ in the fibre $f$ into four types (instead of three):
\begin{itemize}
\item[$(a)$] those that are included in a fibre of $t'_{n}$ contained in $Y\setminus \Delta$,
\item[$(b_1)$] those that are included in a fibre of $t'_{n}$ contained in $\Delta_1$,
\item[$(b_2)$] those that  are included in a fibre of $t'_{n}$ contained in $\Delta_2$,
\item[$(c)$] those that are not included in a fibre of the $t'_{n}$-tower.
\end{itemize}
The number of ``uncontrolled" sub-orbits of length $n$ in $f$ is 
$$
n_{0} = n_{b_1}+n_{b_2}+n_c \leq 2\eta N+  \left(\frac{N}{L} +1\right)2\max(\tau_{t'_n})+ n\frac{N}{m'} < N \frac{4\varepsilon_n}{100}
$$
(The estimates on $n_{b_1}$ and $n_c$ are the same as in sub-lemma~\ref{lem.weiss}. The estimate on $n_{b_2}$ follows from the fact that $Z_1\cup Z_2$ is disjoint from its $L$ first iterates, and that for any point $z \in Z_{1} \cup Z_{2}$, the set $T(\{z\})$ is a segment of orbit of length at most $2\max(\tau_{t'_n})+n$. The estimate of $\frac{N}{L} +1$ follows from the fact that the maximal return-time $\max(\tau_{t'_n})$ into $t'_n$
is much smaller than $\varepsilon L$ and $N$ is much larger than $L$). One concludes as before in sub-lemma~\ref{lem.weiss}. This completes the proof of sub-lemma~\ref{lem.open}.

\subsection{Construction of the uniform algebra (proof of lemma~\ref{lem.uniform-algebra-open})}
\label{ss.algebra-open}

The proof follows the lines of the proof of lemma~\ref{lem.uniform-algebra}:
we will construct a nested sequence of partitions $(\alpha_i^\infty)_{i\geq 1}$
which satisfies properties 1, 2, 3 of section~\ref{ss.algebra}, as well as
\begin{enumerate}
\item[4.] For every $i\geq 1$, the $i$-cylinders of the partition $\alpha_i^\infty$
are full.
\end{enumerate}
These partitions are obtained as limits of a triangular array of partitions
as in  section~\ref{ss.algebra}.
Contrarily to what was done in section~\ref{ss.algebra}, this time the summable sequence $(\varepsilon'_{i})_{i\geq 1}$ is not determined at the beginning of the construction. Instead the sequence is constructed inductively, as well as a sequence of positive numbers $(R_{i})_{i\geq 1}$.
The triangular array of partitions and the two sequences $(\varepsilon'_{i})_{i\geq 1}, (R_{i})_{i\geq 1}$
are required to satisfy properties (i) to (iv) of section~\ref{ss.algebra} as well as
\begin{enumerate}
\item[v.] There exists an increasing sequence of integers $(N_i)_{i\geq 1}$
such that for every $j\geq i$, the $i$-cylinders of the partition $\alpha_i^j$ are $\beta_{N_i}$-full
and $(\beta_{N_i},t_i)$-stable under bracket;
\item[vi.] $\varepsilon'_{j} + R_{j} < R_{j-1}$.
\end{enumerate}
During the induction we require some further condition on $R_{n}$ which will ensure property~4.

The meaning of $R_{n}$ is revealed by the following observation.
 After the construction has been completed, condition (vi) will give, for every $n$, $\sum_{j > n} \varepsilon'_{j} < R_{n}$. Then  condition (ii) will imply that for every $j > n \geq i$,  
 $$
\quad\quad\quad\quad\quad\quad\quad\quad\quad\quad\quad\quad\quad\quad
d_{\mathrm{part}}(\alpha_{i}^n, \alpha_{i}^j) < R_{n}.
\quad\quad\quad\quad\quad\quad\quad\quad\quad\quad\quad\quad\quad\quad (*)
 $$

As before the construction is by induction. Assume $(\alpha_{i}^j)_{ 1 \leq i \leq j \leq n-1}$,
$(\varepsilon'_{j})_{1 \leq j \leq n-1}$, $(R_{j})_{1 \leq j \leq n-1}$, $(N_{j})_{1 \leq j \leq n-1}$
have been constructed. 
First choose any positive $\varepsilon'_{n}$ less than $R_{n-1}$.
Apply sub-lemma~\ref{lem.open} to get the integer $N_{n}$ and the next row $(\alpha_{i}^n)_{1 \leq i \leq n}$ satisfying properties (i) to (v). It remains to explain the choice of $R_{n}$.
Let $i \in \{0,\dots,n\}$, let $A$ be an $i$-block of $\alpha_{i}^n$, and let $b \in \beta_{n}$; notice that, $n$ being given, there is only a finite number of choices of such $(i,A,b)$.
Consider, inside the space of  partitions $\alpha$ that have exactly the same number of elements as $\alpha_{i}^n$,  the open set $O(n,i,A,b)$ defined by the condition
$$
\nu(X_{A,\alpha} \cap b) > 0.
$$
Then $R_{n}$ is chosen so that for every $(i,A,b)$ such that $\alpha_{i}^n$ belongs to $O(n,i,A,b)$, for every  partition $\alpha$ with $d_{\mathrm{part}}(\alpha_{i}^n,\alpha) \leq R_{n}$, the partition $\alpha$ also belongs to $O(n,i,A,b)$. We also choose $R_{n}$ small enough so that $\varepsilon'_{n}+R_{n} < R_{n-1}$ (condition (vi)). This completes the inductive construction. 

The sequence $(\alpha_{i}^\infty)_{i \geq 1}$ is defined as in section~\ref{ss.algebra} and satisfies properties 1,2,3.
Now we turn to  property 4.
Let $i\geq 1$, let $A$ be some $i$-block of $\alpha_{i}^\infty$, we have to find a set $B$ containing $X_{A,\alpha_{i}^\infty}$ modulo a null-set and filled up by $X_{A,\alpha_{i}^\infty}$.
We define  $B$ as the union of the elements of $\beta_{N_{i}}$
whose intersection with  the $i$-cylinder $X_{A,\alpha_{i}^\infty}$ has positive measure.
Let $b$ be a $\cB$-measurable set included in $B$. There exists some  $n$ such that 
\begin{enumerate}
\item[(a)] $\nu(X_{A,\alpha_{i}^n} \cap B)>0$,
\item[(b)] $b$ is a union of elements of $\beta_{n}$.
\end{enumerate}
Since by property (v) the $i$-cylinders of $\alpha_{i}^n$ are $\beta_{N_{i}}$-full, condition (a) implies $\nu(X_{A,\alpha_{i}^n} \cap b)>0$. Then, using conditions (b) and $(*)$ and the choice of $R_{n}$, we get that  the partition $\alpha_{i}^\infty$ belongs to the set $O(n,i,A,b)$; in other words we also have
$\nu(X_{A,\alpha_{i}^\infty} \cap b)>0$. 
Hence $X_{A,\alpha_{i}^\infty}$ fills up $B$. This proves property~4.

As in section~\ref{ss.algebra} we now consider the algebra $\cA_0$ generated by the partitions $\alpha_i^\infty$
and the map $S$. It satisfies (as in section~\ref{ss.algebra}) all the conclusions of lemma~\ref{lem.uniform-algebra}.
Any element $X\in \cA_0$ is a finite union of iterates of $i$-cylinders of the
partition $\alpha_i^\infty$ for some $i\geq 1$.
By property 4, each of these cylinders is full.
Since fullness is preserved under iterations and finite unions,
one deduces that $X$ is full.
This concludes the proof of lemma~\ref{lem.uniform-algebra-open}.

\subsection{Proof of the relative open Jewett-Krieger theorem~\ref{theo.open}}\label{ss.jewett-open}
Following the strategy explained in section~\ref{ss.proof-Weiss},
we assume the hypotheses of the theorem, and apply lemma~\ref{lem.uniform-algebra-open} to get the algebra $\cA_0$ which is full. 
Then we construct,  as in section~\ref{ss.proof-Weiss}, $Y_{0}, \cA_{1}, \cC, \nu', g, \Pi$.
Now we have to prove that the map $\Pi:\cC\to\cK$ is open.
Since the elementary cylinders  form a basis of the topology of $\cC$, it is enough to prove that their images under $\Pi$ are open subsets of the Cantor set~$\cK$. 
Consider an elementary cylinder $A'=\mbox{Cl}(\Phi(A)) = p_{A}^{-1}(1) \cap \cC$ with $A\in\cA_{1}^*$.
Recall that by definition of $\cA_{1}$, there exists a set $A_{0} \in \cA_{0}$ such that $A = A_{0} \cap Y_{0}$.
The set $A_{0}$ is full: there exists a $\cB$-measurable set $B$ containing $A_{0}$ modulo a null-set and such that $A_{0}$ fills up $B$.
By construction of $Y_{0}$ the null-set $A_{0} \setminus B$ is disjoint from $Y_{0}$, hence $A \subset B$; note that the set $A$ still fills up $B$.
 By definition of the algebra $\cB$ the set $p(B)$ is clopen in $\cK$ (see section~\ref{ss.strategy}), and thus it is enough to prove that  $\Pi(A')=p(B)$.

The direct inclusion $\Pi(A')\subset p(B)$ follows immediately from the inclusion  $A\subset B$, using $\Pi\circ\Phi=p_{|Y_0}$ and the fact that $p(B)$ is closed in $\cK$.

Let us prove the reverse inclusion. The family
$$
\{p(B'), B' \in \cB^*, B' \subset B\}
$$
 form a basis of open sets in $B$. Since $A'$ is compact the set $\Pi(A')$ is closed, and thus it is enough to prove that it meets every  set of this family. Since $A$ fills up $B$ it meets every $B' \in \cB^*$ included in $B$. Consequently $\Pi(A')$ meets $p(B')$, as required.
This completes the proof of theorem~\ref{theo.open}. 

\pagebreak
\section{Bi-ergodic version}

\subsection{Statement and strategy}

\begin{theo}\label{theo.bi-ergodic}
Suppose that we are given 
\begin{itemize}
\item[--] a homeomorphism $f$  on a Cantor set $\cK$, 
which admits exactly two ergodic measures $\mu$ and $\delta _{\infty}$,
where $\mu$ has full support and $\delta _{\infty}$ is a Dirac measure at a point of $\cK$;
\item[--] an ergodic  measured dynamical system $(Y,\nu,S)$ on a standard Borel space, and a measurable map $p : Y \ra \cK$ such that
$p_{*}\nu=\mu$ and $f \circ p = p \circ S$.
\end{itemize}
Then there exist a homeomorphism $g$  on a Cantor set  $\cC$
which preserves an ergodic measure $\nu'$,
an isomorphism $\Phi$ between $(Y,\nu,S)$ and $(\cC, \nu',g)$,
an onto continuous map $\Pi\colon\cC\to\cK$ and a full measure subset $Y_0$ of $Y$
such that we have the same commutative diagram as in proposition~\ref{p.bi-ergodic-ouvert}.
Furthermore, $\Pi^{-1}(\infty)$ is a singleton set and $g$ preserves exactly two ergodic measures: the first one is $\nu'$ and
the other one is the Dirac measure on $\Pi^{-1}(\infty)$.
\end{theo}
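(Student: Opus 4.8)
The plan is to run the same construction as in the proof of the open relative Jewett--Krieger theorem~\ref{theo.open}, but applied to the bi-ergodic system, and then to control what happens near the exceptional point $\infty$. The ambient system $(Y,\nu,S)$ is still ergodic, so the machinery of uniform sets, uniform algebras and sub-lemma~\ref{lem.weiss}/\ref{lem.open} applies verbatim: the only subtlety is that the ``base'' algebra $\cB = \{p^{-1}(K) : K\subset\cK \text{ clopen}\}$ now contains sets $p^{-1}(K)$ with $K\ni\infty$, and such a set need not have bounded return time in $(Y,\nu,S)$. However, $\mu=p_*\nu$ has full support on $\cK$ (given), is ergodic, and $\delta_\infty$ is the \emph{only} other ergodic measure; hence $\mu(K)>0$ for every non-empty clopen $K$, so every non-empty set of $\cB$ still has positive $\nu$-measure, and — crucially — the clopen subsets of $\cK$ are still uniform for $(\cK,\mu,f)$ (this is where one uses that $f$ is uniquely ergodic \emph{on the complement of} $\infty$, i.e.\ that $\mu$ is the unique invariant measure charging any fixed clopen neighbourhood of a point $\neq\infty$; more precisely, for a clopen $K$ not containing $\infty$ the Birkhoff averages of $1_K$ converge uniformly to $\mu(K)$ since $\delta_\infty(K)=0$, and a general clopen set differs from such a $K$ by a clopen piece containing $\infty$, whose Birkhoff averages one bounds using that $\mu$ is the unique measure giving it mass less than $1$). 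So $\cB$ still satisfies the hypotheses of lemmas~\ref{lem.uniform-algebra} and~\ref{lem.uniform-algebra-open}, and we obtain a full $S$-invariant uniform algebra $\cA_0\supset\cB$ generating the $\sigma$-algebra of $Y$ mod a null-set.

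Next I would repeat the construction of section~\ref{ss.proof-Weiss}: restrict to a full-measure invariant set $Y_0$ on which $\cA_1=\cA_{0|Y_0}$ separates points and contains no non-empty null-set, form the coding map $\Phi:Y_0\to\{0,1\}^{\cA_1^*}$, set $\cC_0=\mathrm{Cl}(\Phi(Y_0))$, $g_0=\sigma_{|\cC_0}$, $\nu'=\Phi_*\nu$, and build the semi-conjugacy $\Pi_0:\cC_0\to\cK$ from the inclusion $\cB^*\subset\cA_1^*$ and the conjugacy $\Psi:\cK\hookrightarrow\{0,1\}^{\cB^*}$. Exactly as in section~\ref{ss.proof-Weiss}, the fact about Birkhoff averages of elementary cylinders shows that \emph{$\nu'$ is the unique $g_0$-invariant measure giving the value $\nu'(A')=\nu(A)$ to each elementary cylinder}; but here uniqueness of $\nu'$ among \emph{all} invariant measures fails — there may be invariant measures supported on $\cC_0\setminus\Phi(Y_0)$. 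The point is that $\Pi_0$ pushes any $g_0$-invariant measure to an $f$-invariant measure on $\cK$, hence to a convex combination $s\mu+(1-s)\delta_\infty$; the argument of the Birkhoff-average fact shows that on the part projecting ``like $\mu$'' the measure is forced to be $\nu'$, so any ergodic $g_0$-invariant measure other than $\nu'$ must project to $\delta_\infty$, i.e.\ be carried by $\Pi_0^{-1}(\infty)$. As in lemma~\ref{ss.jewett-open}, fullness of $\cA_0$ gives that $\Pi_0$ is open.

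The remaining work — and the main obstacle — is to arrange that $\Pi_0^{-1}(\infty)$ is a \emph{single point}. In general $\Pi_0^{-1}(\infty)=\bigcap_{K\ni\infty\text{ clopen}}\mathrm{Cl}(\Phi(p^{-1}(K)))$ is a Cantor set (a nested intersection of clopen sets of $\cC_0$), on which $g_0$ acts; it is the ``boundary at infinity'' added by the coding. To collapse it I would perform a final quotient: let $\cC$ be the quotient of $\cC_0$ obtained by identifying all of $\Pi_0^{-1}(\infty)$ to one point $\ast$ (equivalently, re-code using only those elements of $\cA_1^*$ that are, mod a null-set, of the form $A\cap Y_0$ with $A\subset p^{-1}(K)$ for some clopen $K$ avoiding a neighbourhood of $\infty$, together with $\cB$). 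One must check that (a) $\cC$ is still a Cantor set — it is metrizable, compact, totally disconnected, and has no isolated point because $\nu'$ has full support on $\cC_0\setminus\Pi_0^{-1}(\infty)$ and $\ast$ is a limit of such points; (b) $g_0$ descends to a homeomorphism $g$ of $\cC$ (because $g_0\big(\Pi_0^{-1}(\infty)\big)=\Pi_0^{-1}(\infty)$, since $f(\infty)=\infty$ and $\Pi_0 g_0=f\Pi_0$); (c) $\Phi$ is unaffected on the full-measure set $Y_0$ (the quotient map is injective off $\Pi_0^{-1}(\infty)$, a $\nu'$-null set), so $\Phi:Y_0\to\cC$ is still an isomorphism onto $(\cC,\nu',g)$; (d) $\Pi_0$ descends to a continuous open onto map $\Pi:\cC\to\cK$ with $\Pi^{-1}(\infty)=\{\ast\}$ and $\Pi g=f\Pi$, and the full diagram of proposition~\ref{p.bi-ergodic-ouvert} commutes mod a null-set. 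Finally, the ergodic measures of $g$ are the images of the ergodic measures of $g_0$: any ergodic $g_0$-measure $\neq\nu'$ is carried by $\Pi_0^{-1}(\infty)$, hence pushes to $\delta_\ast$; and $\nu'$ pushes to itself. Conversely $\delta_\ast$ is $g$-invariant (as $g(\ast)=\ast$). So $g$ has exactly the two ergodic measures $\nu'$ and $\delta_{\ast}=\delta_{\Pi^{-1}(\infty)}$, as required. The delicate points are verifying that $g_0$ genuinely preserves $\Pi_0^{-1}(\infty)$ setwise (not just its image) and that no isolated point is created by the collapse — both follow from $f(\infty)=\infty$, $\Pi_0 g_0=f\Pi_0$, and full support of $\nu'$ — together with the bookkeeping needed to see that the quotient coding still separates points of $Y_0$, which is where one uses that $\cB$ (hence the ``finite'' part of $\cA_1$) already separated points of $\cK$ and that $\cA_1$ separated points of $Y_0$.
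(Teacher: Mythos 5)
Your proposal has a genuine gap at its foundation: the claim that the elements of $\cB=\{p^{-1}(K):K\subset\cK\text{ clopen}\}$ are still \emph{uniform} sets, so that lemmas~\ref{lem.uniform-algebra} and~\ref{lem.uniform-algebra-open} ``apply verbatim''. This is false. Since $\delta_\infty$ is an invariant Dirac measure, $\infty$ is a fixed point of $f$, so for a clopen set $K$ containing $\infty$ the continuous function $\phi_n=\frac1n\sum_{j<n}1_K\circ f^j$ equals $1$ at $\infty$, hence exceeds $\mu(K)+\varepsilon$ on a non-empty open set; as $\mu$ has full support, that set has positive $\mu$-measure, so $p^{-1}(K)$ is not $(\varepsilon,N)$-uniform for any $N$ whenever $\mu(K)<1$. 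Dually, for clopen $K$ not containing $\infty$, $\phi_n$ is near $0$ on a positive-measure neighbourhood of $\infty$, so the \emph{lower} Birkhoff bound fails and only the one-sided ``sub-uniform'' estimate survives; in particular the return time to such a set is unbounded (your parenthetical justification, which asserts uniform convergence for these $K$, is exactly what breaks). This is not a peripheral subtlety but the entire content of the bi-ergodic case: the paper replaces uniformity by sub-uniformity, equips $\cB$ with a $\{0,1\}$-valued ``Dirac measure'' $\delta$ recording whether $K\ni\infty$, works with \emph{marked} partitions and \emph{marked} Rokhlin towers (whose basis has $\delta$-measure $1$ and which have a column of height one to absorb the long sojourns near $\infty$), and reproves the sub-lemma and lemma~\ref{lem.uniform-algebra} in that setting (lemma~\ref{lem.sub-uniform}). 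Without some substitute for this, the tower constructions, the boundedness of return times, and the representation of partitions all collapse, and so does the later Birkhoff-average argument you invoke to pin down the invariant measures (which in the paper becomes a one-sided estimate plus a squeeze using a small cylinder around $\infty'$).

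Your treatment of $\Pi^{-1}(\infty)$ is also a genuinely different route from the paper's. The paper builds into $\cA_0$ the property that every element or its complement is contained in a $\cB$-measurable set of $\delta$-measure $0$; this forces $\Pi^{-1}(\infty)$ to be a single point of $\cC$ directly, with no quotient. Your plan instead collapses the Cantor set $\Pi_0^{-1}(\infty)$ to a point afterwards; the topological verifications you list (the quotient is a Cantor set, $g_0$ descends, openness persists) are plausible and could likely be carried out, but they do not repair the main gap above, and the identification of the ergodic measures after the quotient still relies on the unavailable uniformity.
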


In what follows, unless otherwise explicitly stated, all measurements are made with respect to the measure $\nu$ (and not with respect to $\delta_\infty$). We try to follow the strategy of the proof of Weiss theorem.
This time the clopen sets of $\cK$ are not uniform but only ``sub-uniform'' (see below).
The key point is that many arguments in the proof of Weiss theorem only requires sub-uniformity and not uniformity.
\begin{defi}
A set $X\subset Y$ is $(\varepsilon,N)$-\emph{sub-uniform} if for $\nu$-a.e. $y\in Y$, for every $n \geq N$, the proportion of the finite orbit $y, \dots, S^{n-1}(y)$ in $X$ is less than $\nu(X)+\varepsilon$. 
The set $X$  is \emph{sub-uniform} if for every $\varepsilon >0$ there exists $N> 0$ such that $X$ is $(\varepsilon,N)$-sub-uniform.
\end{defi}

Under the hypotheses of the theorem, note that any  clopen set $K$ of $\cK$ that does not contain the point $\infty$ is a sub-uniform set (with respect to the map $f$ and the measure $\mu$). Indeed, otherwise we would find, as a limit of Birkhoff sums of Dirac measures, an $f$-invariant  measure $\mu'$ such that $\mu'(K)$ is strictly greater than both $\mu(K)$ and $\delta_{\infty}(K)=0$, implying the existence of a third ergodic measure, a contradiction.

Consider as before  the algebra  $\cB = \{p^{-1}(K), K \subset \cK \mbox{ is a clopen set\}}$.
It is equipped with a \emph{Dirac measure} $\delta$
(\emph{i.e.} an additive function on $\cB$ with values in $\{0,1\}$)
defined by $\delta(p^{-1}(K))=1$ if and only if the clopen set $K$ contains $\infty$. Then every element $B$ of $\cB$ such that $\delta(B)=0$ is  sub-uniform (with respect to the map $S$ and the measure $\nu$).
This motivates the following lemma.
\begin{lemma}
\label{lem.sub-uniform}
Let $(Y,\nu, S)$ be an ergodic measured dynamical system on a standard Borel space.
Suppose that $\cB$ is an algebra on $Y$ equipped with a Dirac measure $\delta$ such that
\begin{enumerate}
\item $\cB$ is a countable family of measurable sets; it has no atom, and every element of $\cB$ with $\delta$-measure $1$ has positive $\nu$-measure;
\item $\cB$ is $S$-invariant and its elements with $\delta$-measure $0$ are sub-uniform sets.
\end{enumerate}
Then there exists an algebra $\cA_0$ which contains $\cB$, which is equipped with a Dirac measure that extends $\delta$, which still satisfies properties 1 and 2 above, and which in addition
generates the $\sigma$-algebra of $Y$ modulo a null-set.
Furthermore for every element $X\in \cA_0$ either $X$ or $Y \setminus X$ is contained in a $\cB$-measurable set with $\delta$-measure $0$ (this last property characterises the extension of $\delta$ to $\cA_0$).
\end{lemma}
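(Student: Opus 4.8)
The plan is to imitate the proof of lemma~\ref{lem.uniform-algebra} given in sections~\ref{ss.proof-weiss} and~\ref{ss.algebra}, replacing ``uniform'' throughout by ``sub-uniform'' and carrying the Dirac measure along at every step. Fix a nested sequence $(\beta_i)_{i\geq 1}$ of $\cB$-measurable partitions generating $\cB$. Since $\delta$ is $\{0,1\}$-valued, additive and $S$-invariant, $\delta(Y)=1$, so each $\beta_i$ has a unique element $b_i$ with $\delta(b_i)=1$, and $b_{i+1}\subset b_i$; we choose the $\beta_i$ so that moreover $\bigcap_i b_i$ is $\nu$-null, which is automatic in the situation of interest, where $\delta$ arises from a point of $\cK$ carrying no $\mu$-mass. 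The relevant elementary observations, all consequences of the $S$-invariance of $\delta$ and of the fact that $(\beta_i)$ generates $\cB$, are: a $\cB$-measurable set has $\delta$-measure $0$ if and only if it is contained in $Y\setminus b_i$ for some $i$; each $S^{-k}(b_i)$ has $\delta$-measure $1$; and any finite intersection of sets of $\delta$-measure $1$ has $\delta$-measure $1$ and contains some $b_\ell$. Accordingly we define the candidate extension $\bar\delta$ on the algebra $\cA_0$ to be built by declaring $\bar\delta(X)=0$ when $X$ lies in a $\cB$-measurable set of $\delta$-measure $0$ and $\bar\delta(X)=1$ when $Y\setminus X$ does; the whole construction will be arranged so that for every $X\in\cA_0$ exactly one alternative occurs, which at once shows $\bar\delta$ well-defined and additive and gives the last sentence of the lemma.

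The partitions $\alpha_i^\infty$ are produced exactly as in section~\ref{ss.algebra}, as limits of a triangular array $(\alpha_i^j)_{1\leq i\leq j}$ obtained from a sub-uniform analogue of sub-lemma~\ref{lem.weiss}. The only extra feature imposed on the array is that each partition in it has a distinguished element containing $b_j$ for some large index $j$, and that the copying-and-painting never alters the column over that element; in the limit this yields, for every $i$, a distinguished element $a_i^\infty$ of $\alpha_i^\infty$ with $b_{j(i)}\subset a_i^\infty$, $j(i)\to\infty$. Two structural consequences make the Dirac bookkeeping work. First, among the cells of any finite join $\alpha_i^\infty\vee S^{-1}(\alpha_i^\infty)\vee\cdots\vee S^{-(M-1)}(\alpha_i^\infty)$, exactly one has $\delta$-measure $1$ --- the cell all of whose coordinates equal $a_i^\infty$, which contains $\bigcap_{k<M}S^{-k}(b_{j(i)})$ hence some $b_\ell$ --- while every other cell lies in $S^{-k}(Y\setminus b_{j(i)})$ for some $k<M$, hence in a $\cB$-set of $\delta$-measure $0$. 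Second, $\bigcap_i a_i^\infty\subset\bigcap_i b_{j(i)}$ is $\nu$-null, so the $(\alpha_i^\infty)$ still generate the $\sigma$-algebra of $Y$ modulo a null-set. Granting these, the algebra $\cA_0$ generated by the $\alpha_i^\infty$ and $S$ contains $\cB$, is countable, $S$-invariant, has no atom, generates the $\sigma$-algebra modulo a null-set, and carries $\bar\delta$; any of its elements either contains the unique $\delta$-measure-$1$ cell of a fine enough such join, in which case $\bar\delta(X)=1$ and no uniformity is required of $X$, or is a finite \emph{disjoint} union of $\delta$-measure-$0$ cells of such a join, for which sub-uniformity must be checked.

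It therefore remains to set up the sub-uniform version of sub-lemma~\ref{lem.weiss} and to verify that its $\delta$-measure-$0$ cells are sub-uniform; here lies the only genuine difficulty of the bi-ergodic case. With merely sub-uniformity at our disposal, a $\cB$-measurable set of $\delta$-measure $0$ need \emph{not} have bounded return time --- an orbit may linger arbitrarily long in the small-$\nu$-measure region surrounding the distinguished point --- so the Rokhlin lemma used in section~\ref{ss.proof-weiss} is not available verbatim, and the towers one builds (over $\cB$-measurable bases, necessarily of $\delta$-measure $0$ after deleting the first iterates) have unbounded height. The remedy is to truncate them at a large height $M$ and to paint the discarded ``tall'' fibres with the distinguished name --- the element $a_i^\infty$ wherever the fibre sits in $b_i$, and a fixed element of the appropriate $\beta_i$-piece elsewhere: since a sufficiently long excursion away from the base is, up to a small proportion of its length, contained in $b_i$, such a fibre then contributes to the $i$-block statistics essentially only the distinguished $i$-block --- precisely the one exempted from the representation requirement --- and to every $\delta$-measure-$0$ $i$-block only a vanishing proportion. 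Because sub-uniformity, unlike uniformity, is a \emph{one-sided} condition, this suffices: every estimate in steps~I.a, II.b and III.c of section~\ref{ss.proof-weiss} is used only in its ``$\leq$'' direction, ``$\varepsilon$-uniform distribution of $i$-blocks'' is weakened to the upper bound on block frequencies alone (the distinguished block excepted), and the tall fibres, the bad short fibres and the fibre-crossing sub-orbits are all absorbed as error terms of small proportion --- the first because it carries only the exempt block, the second because it forms a $\cB$-measurable set of small measure hence of $\delta$-measure $0$ hence sub-uniform, the third by the min-height bound exactly as before.

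Finally one concludes, as in section~\ref{ss.algebra}, that every element of $\cA_0$ of $\bar\delta$-measure $0$ is sub-uniform: it is a finite disjoint union of $\delta$-measure-$0$ cells of a join $\alpha_\ell^\infty\vee\cdots\vee S^{-M}(\alpha_\ell^\infty)$ with $\ell$ large, each such cell is in turn (by nestedness of the $(\alpha_i^\infty)$ and the ``general remark'' of section~\ref{ss.algebra}) a finite disjoint union of $\delta$-measure-$0$ $\ell$-cylinders of $\alpha_\ell^\infty$, each of which is sub-uniform, and sub-uniformity is stable under $S$ and under finite disjoint unions. Together with the properties recorded in the previous paragraphs, this yields all the conclusions of lemma~\ref{lem.sub-uniform}. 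The main obstacle, and the only point where the argument genuinely departs from the proof of Weiss's theorem, is the treatment of the unbounded-height towers just described.
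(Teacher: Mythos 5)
Your overall architecture matches the paper's: distinguished (``marked'') elements of $\delta$-measure one, exemption of the constant distinguished block, one-sided (``sub-uniform'') estimates, and the extension of $\delta$ to $\cA_0$ defined by containment in $\cB$-sets of $\delta$-measure zero --- this is exactly the content of section~\ref{ss.algebra-bi-ergodic} and sub-lemma~\ref{lem.weiss-bi-ergodic}. The gap is precisely where you locate ``the only genuine difficulty'': the towers. You build Rokhlin towers over bases of $\delta$-measure $0$, accept unbounded height, truncate at height $M$, and paint the tall fibres with the distinguished name on $b_i$ and a fixed element of the ambient $\beta_i$-piece elsewhere. This fails for two reasons. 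First, sub-uniformity of a cylinder $X_{A,\alpha_i^\infty}$ is a statement about \emph{every} sufficiently long segment of a.e.\ orbit, including segments contained in a single tall fibre and the $t_i$-fibres ($i<n$) of which a tall $t'_n$-fibre is a concatenation. Your justification (``a sufficiently long excursion away from the base is, up to a small proportion of its length, contained in $b_i$'') concerns whole excursions only: a sub-segment of a tall fibre, or a $t_i$-fibre of length near its minimum $i/\varepsilon_i$ sitting inside one, can spend a proportion of its length up to $\nu(Y\setminus b_i)+\varepsilon$ (which is not small) outside $b_i$ --- nothing prevents such a short fibre from lying entirely outside $b_i$ --- and there your painting produces a \emph{fixed} non-exempt block whose frequency then exceeds $\nu(X_{A,\alpha_i})+\varepsilon_i$. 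Second, repainting tall fibres is not a copy of whole $t_i$-fibres onto whole $t_i$-fibres, so it destroys the inductively maintained property that $\alpha_i$ is sub-represented by $t_i$ for $i<n$ (this is exactly what item (iii) of step II.b of the proof of sub-lemma~\ref{lem.weiss} rests on); and the passage from fibre statistics to statistics of arbitrary long segments (the fact in section~\ref{ss.algebra} involving the upper bound $N(t_k)$ of the return time) becomes vacuous when that bound is infinite.

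The missing idea is the paper's \emph{marked Rokhlin lemma}, which resolves the difficulty by a structurally different move: take the base $t'=t\cap S^{-1}t\cap\dots\cap S^{-(N-1)}t$ with $\delta(t)=1$, so that $\delta(t')=1$, the complement of $t'$ is sub-uniform, and the return time to $t'$ is \emph{bounded}, with values in $\{1\}\cup[N,M]$. The lingering near the Dirac point then happens \emph{inside the base}, in the column of height $1$ (the marked element of the tower), which is required to lie in the marked element $a$ of the partition; there the name is constantly $a$, only the exempt block occurs, and no painting is needed, while every fibre of height $>1$ has length in $[N,M]$ and is handled by the usual copying machinery. In short: rather than truncating unbounded towers over a base of $\delta$-measure $0$, one puts the point at infinity inside the base.
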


From now on we consider an algebra $\cB$ satisfying the hypotheses of the lemma. As in section~\ref{s.weiss} we will furthermore assume that $\cB$ has no non-empty null-set  (see the remark following lemma~\ref{lem.uniform-algebra}).
In particular for every non-empty $\cB$-measurable set $t$, the return-time function of $t$ is bounded from above.
For the next definition, remember that we can see a partition with $n$ elements as a map from $Y$ to the set $\{1,\dots,n\}$. 
\begin{defi}
A partition $\alpha$ is called a \emph{marked partition} if $a=\alpha^{-1}(1)$ is $\cB$-measurable and $\delta (a)=1$.
\end{defi}

When we want to emphasise the properties of the \emph{marked element} $a$ of $\alpha$ we will also say that the couple $(\alpha,a)$ is a marked partition. Note that if $(\alpha_{i},a_{i})$ is a nested sequence of marked partitions, then  the sequence $(a_{i})_{i\geq 1}$ is decreasing.

The following  is similar to the definition of uniform distribution of $i$-blocks stated before sub-lemma~\ref{lem.weiss}; note however  that here the set $t$ has to be included in the marked element $a$, and that the tower with basis $t$ may have some column of height $1$.

\begin{defi}
Let $(\alpha,a)$ be a marked partition. A finite orbit $y, \dots , S^{N-1}(y)$ has \emph{$\varepsilon$-sub-uniform distribution of $i$-blocks of $\alpha$} if for every $i$-block $A$ of $\alpha$ that is different from $(a,\dots,a)$, 
the proportion of occurrence of this $i$-block among the $(N-i+1)$   $i$-blocks appearing in the $\alpha$-name of the orbit is less than $\nu(X_{A,\alpha}) + \varepsilon$.

A marked partition $(\alpha,a)$ is \emph{$(\varepsilon,i\mbox{-blocks})$-sub-represented by a set $t$}  if $t$ is 
included in the marked element $a$, and if 
\begin{enumerate}
\item there exists an integer $N$ such that  the return-time function $\tau$ of $t$  satisfies, for all $y \in t$, 
$$
\tau(y)  \in \{1\} \cup \left[\frac{i}{\varepsilon} , N \right],
$$ 
\item for all $y \in t$ with $\tau(y)>1$, the finite orbit $y, \dots , S^{\tau(y)-1}(y)$ has $\varepsilon$-sub-uniform distribution of $i$-blocks of $\alpha$.
\end{enumerate}
\end{defi}

For the next sub-lemma, we assume the hypotheses of lemma~\ref{lem.sub-uniform}.
We fix a sequence $(\beta_{i})_{i\geq 1}$ of nested marked partitions by $\cB$-measurable sets.

\begin{sublemma}\label{lem.weiss-bi-ergodic}
For $n\geq 1$, let $(\hat \alpha_{i})_{1\leq i \leq n}$ be a sequence of nested marked partitions, $(t_{i})_{0 \leq i \leq n-1}$ a decreasing sequence of $\cB$-measurable sets, and $(\varepsilon_{i})_{ 1 \leq i \leq n-1}$ a sequence of positive numbers.
Assume that $\hat \alpha_{n}$ refines $\beta_{n}$ and that for $1 \leq i \leq  n-1$,
\begin{itemize}
\item[--] the partition $\hat \alpha_{i}$ refines the partition $\beta_{i}$,
\item[--] the partition   $\hat \alpha_{i}$ is $(\varepsilon_{i}, i\mbox{-blocks})$-sub-represented by $t_{i}$.
\end{itemize}
Let $\varepsilon_{n} >0$.
Then there exist a $\cB$-measurable set $t_{n} \subset t_{n-1}$ and a sequence of nested marked partitions
$(\alpha_{i})_{1\leq i \leq n}$ such that for every $1 \leq i \leq n$,
\begin{enumerate}
\item the partition $\alpha_{i}$ refines the partition  $\beta_{i}$,
\item the partition $\alpha_{i}$ is $(\varepsilon_{i}, i\mbox{-blocks})$-sub-represented by the set $t_{i}$,
\item the partition $\alpha_{i}$ has the same number of elements as $\hat \alpha_{i}$, the same marked element,
and  $d_{\mathrm{part}}(\alpha_{i},\hat \alpha_{i}) < \varepsilon_{n}$.
\end{enumerate}
\end{sublemma}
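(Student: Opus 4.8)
The plan is to reproduce, almost line by line, the three-step proof of sub-lemma~\ref{lem.weiss}, inserting the modifications forced by the three new features: (a) the partitions are \emph{marked}, so every modification must leave the marked element fixed; (b) $\cB$-measurable sets are no longer uniform, only \emph{sub}-uniform, and this only for those of $\delta$-measure $0$; (c) the Rokhlin towers must have their base inside the marked element $a$ and are allowed to have columns of height~$1$, which absorb the orbit excursions that stay near $\infty$. All computations remain with respect to $\nu$ and $S$.

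The one genuinely new ingredient, and the step I expect to be the main obstacle, is a \textbf{bi-ergodic Rokhlin lemma}: given an integer $N$ and a nonempty $\cB$-measurable set $a$ with $\delta(a)=1$, there is a $\cB$-measurable $t\subset a$ with $\delta(t)=1$ whose return-time function $\tau$ takes values in $\{1\}\cup[N,N']$ for some integer $N'$. The shape of this statement is forced. The bound $\tau\le N'$ is available only when $\delta(t)=1$: a $\cB$-measurable set of $\delta$-measure $1$ is $p^{-1}$ of a clopen neighbourhood $V$ of $\infty$, and since the only $f$-invariant measures are $\mu$ (full support) and $\delta_\infty$, the closed $f$-forward-invariant set of points whose orbit never meets $V$ is empty, so by compactness $V$ is reached by every $f$-orbit within a bounded number of steps; whereas if $\delta(t)=0$ the clopen neighbourhoods $\bigcap_{j=1}^{M}f^{-j}(\cK\setminus p(t))$ of $\infty$ are nonempty for all $M$, which makes $\tau$ unbounded on a set of positive $\nu$-measure. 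And once $t=p^{-1}(V)$, continuity of $f$ at its fixed point $\infty$ forces points of $t$ close to $\infty$ to satisfy $S(y)\in t$, i.e.\ to lie in columns of height~$1$; so such columns cannot be avoided and the definition of ``sub-represented'' is designed accordingly. To \emph{produce} $t$ one starts from a very small clopen neighbourhood $K_0\ni\infty$ inside $p(a)$ and thins it out so that every non-trivial excursion leaving the resulting clopen set has length $\ge N-1$, keeping the ``core'' near $\infty$ as a union of height-$1$ columns; carrying out this thinning consistently (so that removing medium excursions creates no new short ones) while keeping $\delta(t)=1$ is the delicate point.

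Granting this tool, Step~I runs as in sub-lemma~\ref{lem.weiss}: choose $m'$ via the sub-uniform analogue of the Birkhoff fact of Step~I.a (only the \emph{tall} fibres of a tower based in $a$ can fail $\eta$-sub-uniform distribution of $n$-blocks, and for minimal tall-height $\ge m'$ their union has measure $<\eta$), take $t'_n\subset t_{n-1}\subset a$ from the bi-ergodic Rokhlin lemma with $m'$ large (in particular $m'\ge n/\varepsilon_n$ and $m'^{-1}\ll\varepsilon_n/n$), and pick the $\cB$-measurable column partition $\theta$ fine enough that copying names within a single column changes neither the $\beta_i$-names, the $t_i$-membership, nor the marked elements of the $\hat\alpha_i$ (this needs only that $\theta$ refine finitely many $\cB$-measurable partitions). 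In Step~II the copying-and-painting is literally the one of sub-lemma~\ref{lem.weiss}: in each column containing a good tall fibre, copy its $\hat\alpha_i$-name (simultaneously for $i=1,\dots,n$) onto the bad tall fibres of that column; the height-$1$ columns are never touched, which is harmless because any $n$-block sitting inside a run of consecutive height-$1$ columns is the marked block $(a,\dots,a)$, exempt from the sub-uniform-distribution requirement. The ``easy verifications'' (that the $\alpha_i$ stay nested, refine the $\beta_i$, keep the same marked element and the same number of elements, are $\varepsilon_n$-close to the $\hat\alpha_i$, and remain $(\varepsilon_i,i\text{-blocks})$-sub-represented by $t_i$ for $i<n$) transfer verbatim, using that $\eta$ was chosen with room in all the finitely many strict inequalities involved.

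Step~III also follows sub-lemma~\ref{lem.weiss}, the one observation being that sub-uniformity is available where uniformity was used before: the union $\Delta$ of the bad tall columns of $t'_n$ is $\cB$-measurable and disjoint from the height-$1$ columns, and since every clopen neighbourhood of $\infty$ meets the height-$1$ part of $t'_n$ (again by continuity of $f$ at $\infty$), $\Delta$ contains no such neighbourhood, so $\delta(\Delta)=0$ and $\Delta$ is sub-uniform. Choosing $m\gg n$ past the sub-uniformity threshold of $\Delta$ and re-applying the bi-ergodic Rokhlin lemma inside $t'_n$ to get $t_n\subset t'_n\subset a$ with $\delta(t_n)=1$ and $\tau_{t_n}\in\{1\}\cup[m,M]$, I would then estimate, for a tall fibre $f$ of $t_n$ and a non-marked $n$-block $A$, the number of occurrences of $A$ in the $\alpha_n$-name of $f$ by classifying its length-$n$ sub-orbits into: those inside a single good tall $t'_n$-fibre (controlled by $\eta$-sub-uniform distribution); those inside a single bad tall $t'_n$-fibre ($\le 2\eta|f|$ in all, by the $m$-sub-uniformity of $\Delta$); those inside a run of consecutive height-$1$ columns (these have $\alpha_n$-name $(a,\dots,a)$, so they do not contribute to the non-marked block $A$); and those meeting the boundary of some tall $t'_n$-fibre ($\le 2n|f|/m'$ in all, as $f$ contains at most $|f|/m'$ tall fibres). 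Since $\eta$ and $m'^{-1}$ are much smaller than $\varepsilon_n/n$ this yields the $\varepsilon_n$-sub-uniform distribution of $n$-blocks, i.e.\ property~2 for $i=n$; properties~1 and~3 for $i=n$ are immediate from the construction. Apart from the bi-ergodic Rokhlin lemma, the only recurring care is to verify at each step that the sets for which sub-uniformity is invoked have $\delta$-measure $0$, which is automatic once the height-$1$ columns of every tower are pinned to the neighbourhoods of $\infty$.
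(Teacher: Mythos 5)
Your proposal follows the paper's route essentially step by step: the marked Rokhlin lemma with return times in $\{1\}\cup[N,M]$, the tower based inside the marked element, the conditions on $\theta$ guaranteeing that copying within a column preserves $\beta_i$-names, $t_i$-membership and marked elements, and the final classification of length-$n$ sub-orbits in which the height-one runs carry only the marked block $(a,\dots,a)$ and hence contribute no occurrence of $A$ --- all of this is exactly what the paper does. The only place you stop short is the one you yourself flag as ``the delicate point'': actually producing the base $t'\subset t$ with $\delta(t')=1$ and return times in $\{1\}\cup[N,M]$. This step is not delicate, and the iterative thinning you worry about (removing medium excursions possibly creating new short ones) is unnecessary. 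First shrink $t$ so that $\nu(t)$ is as small as you wish while keeping $\delta(t)=1$ (possible since $\cB$ has no atom; this smallness is also needed, and missing from your Step I.a, so that the union of the height-one columns --- which are too short to have uniform distribution of $n$-blocks --- has measure less than $\eta/2$). Then set
$$t' := t\cap S^{-1}t\cap\cdots\cap S^{-(N-1)}t.$$
If $y\in t'$ has return time $k$ with $1<k<N$, then $S(y),\dots,S^{N-1}(y)\in t$ forces $S^{N}(y)\notin t$ (otherwise $S(y)\in t'$), while $S^{k}(y)\in t'$ forces $S^{k}(y),\dots,S^{k+N-1}(y)\in t$ and in particular $S^{N}(y)\in t$, a contradiction; so the return times lie in $\{1\}\cup[N,\infty)$ in one shot, with no consistency issue.

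A second, minor point concerns how you bound the return time from above. You argue via compactness of $\cK$ and continuity of $f$ at $\infty$, identifying $\delta$-measure-one sets with preimages of clopen neighbourhoods of $\infty$. But the sub-lemma is used to prove lemma~\ref{lem.sub-uniform}, which is stated for an abstract algebra $\cB$ equipped with a Dirac measure $\delta$, with no underlying Cantor factor available. The correct (and simpler) argument is internal: since $\delta$ is $S$-invariant and $\{0,1\}$-valued, $\delta(t')=1$, hence $Y\setminus t'$ is a $\cB$-measurable set of $\delta$-measure $0$ and is therefore sub-uniform by hypothesis; after the standing reduction to an algebra with no non-empty null-set, every sufficiently long orbit segment spends a proportion of time less than $\nu(Y\setminus t')+\varepsilon<1$ outside $t'$, which bounds the return time. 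With these two repairs your argument coincides with the paper's proof.
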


\subsection{Proof of sub-lemma~\ref{lem.weiss-bi-ergodic}}
A \emph{marked tower} is a tower whose basis $t$ is $\cB$-measurable and satisfies $\delta(t) =1$, and whose partition has exactly one element (called the \emph{marked element}) whose  column has height $1$.
The minimal height of a marked tower is defined as the minimum of the height of the columns of height $>1$.

\begin{fact}[marked Rokhlin lemma]
For every $N>0$, and every $\cB$-measurable set $t$ with $\delta$-measure $1$, there exists a $\cB$-measurable set $t' \subset t$  with $\delta$-measure $1$ and arbitrarily small $\nu$-measure, with the following property.
Let $\tau$ denotes the return-time function on $t'$. Then there exists $M>0$ such that, on $t'$, $\tau$ takes its values in   $\{1\} \cup [N,  M]$.
\end{fact}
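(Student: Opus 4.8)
\emph{Overall plan.} I would realize $t'$ as the union of an ordinary Rokhlin base \emph{away} from infinity with the part of $t$ sitting \emph{near} infinity, the latter providing the height-one column. Throughout, all measures are with respect to $\nu$, and I would use freely that every non-empty $\cB$-measurable set has a return-time function bounded from above (recalled right after lemma~\ref{lem.sub-uniform}).

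\emph{Step 1: shrinking $t$.} First I would use the no-atom property of $\cB$ (just as in the proof of the ordinary Rokhlin lemma, now also keeping track of $\delta$) to partition $t$ into finitely many $\cB$-measurable pieces of small $\nu$-measure; since $\delta$ is additive and $\{0,1\}$-valued, exactly one piece — call it $t_1$ — has $\delta(t_1)=1$, and $\nu(t_1)$ can be made as small as required. Then I would fix an upper bound $M_1$ for the return-time function $\tau_{t_1}$; consequently every \emph{gap} of $t_1$ (a maximal block of consecutive iterates avoiding $t_1$) has length at most $M_1-1$.

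\emph{Step 2: filling in the short gaps.} Next I would set aside the \emph{short} gaps of $t_1$, a gap being short when its length is at most $N-2$; let $D$ be their union. Since the short gap following a point $y\in t_1$ with $\tau_{t_1}(y)=j\leq N-1$ is $\{\,S(y),\dots,S^{j-1}(y)\,\}$, one has $D=\bigcup_{j=2}^{N-1}\bigcup_{k=1}^{j-1}S^{k}\big(\{\,y\in t_1:\tau_{t_1}(y)=j\,\}\big)$, a finite union of $\cB$-measurable sets, so $D\in\cB$; moreover $D\cap t_1=\varnothing$ and $\nu(D)\leq(N-2)\,\nu(t_1)$. I would then set $t':=t_1\cup D$. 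It is $\cB$-measurable, $\nu(t')\leq(N-1)\,\nu(t_1)$ is arbitrarily small, and since $D$ is disjoint from $t_1$ and $\delta(Y)=1$, additivity forces $\delta(D)=0$, hence $\delta(t')=\delta(t_1)=1$.

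\emph{Step 3: the return-time condition, and the main point.} Finally I would check that $\tau_{t'}$ avoids $\{2,\dots,N-1\}$. Because $t'$ is obtained from $t_1$ by adjoining exactly the short gaps, a point $y\in t'$ with $S(y)\notin t'$ must lie in $t_1$, must satisfy $S(y)\notin t_1$, and the gap of $t_1$ beginning at $S(y)$ cannot be short, hence has length at least $N-1$; therefore $S(y),\dots,S^{N-1}(y)$ all avoid $t_1$ and — lying inside that long gap — also avoid $D$, so they avoid $t'$ and $\tau_{t'}(y)\geq N$. Together with the boundedness of the return-time of the non-empty $\cB$-measurable set $t'$, this gives $\tau_{t'}(y)\in\{1\}\cup[N,M]$ with $M$ that bound, as desired. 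The one point that requires care is precisely this case analysis: one must check that filling in \emph{all} the short gaps leaves no intermediate return time in $\{2,\dots,N-1\}$ and introduces none (this is exactly why $D$ is taken to be the union of every short gap, and why gaps of $t_1$ are bounded, so that $D$ stays small); everything else is bookkeeping, and the ordinary Rokhlin lemma enters only through the bounded-return-time fact used above.
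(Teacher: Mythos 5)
There is a genuine gap: the set $t'=t_1\cup D$ you construct is not contained in $t$, whereas the statement explicitly requires $t'\subset t$. The short gaps of $t_1$ are blocks of orbit points lying outside $t_1$, and nothing constrains them to lie in $t$: in the motivating example where $t=p^{-1}(K)$ for a clopen neighbourhood $K$ of $\infty$ and $t_1=p^{-1}(K_1)$ for a much smaller such neighbourhood, a short excursion out of $K_1$ will typically leave $K$ as well. This inclusion is not cosmetic: where the fact is applied (step I.b of the proof of sub-lemma~\ref{lem.weiss-bi-ergodic}), the new basis $t'_n$ must be contained both in the previous basis $t_{n-1}$ and in the marked element $\hat a_{n}$ of $\hat\alpha_{n}$, and the definition of ``sub-represented'' itself requires the representing set to sit inside the marked element. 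Intersecting your $t'$ with $t$ afterwards does not repair the argument, since removing the part of $D$ lying outside $t$ reintroduces return times in $\{2,\dots,N-1\}$.

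The rest of your bookkeeping (the $\delta$-measure argument, the bound $\nu(D)\leq (N-2)\nu(t_1)$, the disjointness of gaps, the case analysis for $\tau_{t'}$) is fine; the defect is only in the choice of modification. The paper avoids it by going in the opposite direction: after shrinking $t$ (your Step 1), it sets $t':=t\cap S^{-1}t\cap\dots\cap S^{-(N-1)}t$, i.e.\ it \emph{removes} the points that initiate short returns rather than \emph{adding} the points visited during them. This $t'$ is a subset of $t$ by construction, has $\delta$-measure $1$ because $\delta$ is $S$-invariant and finitely additive with values in $\{0,1\}$, and its return times avoid $\{2,\dots,N-1\}$ by an overlap argument analogous to the one in the ordinary Rokhlin lemma (with intersection replacing set difference precisely so as to allow the height-one column); boundedness above then follows, as in your proof, from the sub-uniformity of the complement of a $\delta$-measure-one set.
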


\begin{proof}[Proof of the fact]
Since $\cB$ has no atom, without loss of generality we can assume that $\nu(t)$ is arbitrarily small.
Consider the $\cB$-measurable set $t' := t \cap S^{-1}t \cap \cdots \cap S^{-(N-1)}t$. Then the return-time function of $t'$ takes values in $\{1, N, N+1, \dots\}$.
Since the measure $\delta$ is invariant under $S$ we have  $\delta (t') = 1$, thus    the complementary set of $t'$ is sub-uniform, which entails the boundedness of the return-time function.
\end{proof}

We now assume the hypotheses of sub-lemma~\ref{lem.weiss-bi-ergodic}. We follow the steps of the proof of sub-lemma~\ref{lem.weiss}, indicating the changes.

\subsubsection*{Step I: choice of a first tower}

\paragraph{I.a. Choice of the height.---}
We adapt the fact from the step I.a of section~\ref{ss.proof-weiss} to get the following, which is again contained in lemma 15.16 of~\cite{Glasner}.
\begin{fact}
For every partition $\alpha$, every $n>0$, and every $\eta >0$ there exists some integer $m'$ such that for every marked tower with marked element of measure less than $\frac{\eta}{2}$ and minimal height bigger than $m'$, the union of the good fibres, that is, fibres of height $>1$ having $\eta$-uniform distribution of $n$-blocks of $\alpha$, has measure greater than $1-\eta$.
\end{fact}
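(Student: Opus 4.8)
The plan is to reuse the Birkhoff-theorem argument behind the corresponding Fact in the proof of sub-lemma~\ref{lem.weiss} (equivalently, lemma~15.16 of~\cite{Glasner} applied to the partition $\alpha\vee S^{-1}\alpha\vee\cdots\vee S^{-n}\alpha$), adding the bookkeeping needed to accommodate the two features specific to a \emph{marked} tower: the presence of the column of height~$1$ over the marked element, and the fact that the minimal height is now computed while ignoring that column.

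First I would reduce the statement to bounding by $\eta/2$ the measure of the union of the \emph{bad tall fibres}, i.e. the fibres of height $>1$ that fail to have $\eta$-uniform distribution of $n$-blocks of $\alpha$. Indeed the complement of the union of the good fibres is precisely this set together with the column over the marked element, and the measure of that column equals the measure of the marked element, which is $<\eta/2$ by hypothesis.

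To control the bad tall fibres, I would apply Birkhoff theorem to the finitely many characteristic functions $1_{X_{A,\alpha}}$, with $A$ ranging over the $n$-blocks of $\alpha$, both for $S$ and for the (also ergodic) map $S^{-1}$. This gives an integer $M$ and measurable sets $G,G^-$ with $\nu(Y\setminus G)<\eta/8$ and $\nu(Y\setminus G^-)<\eta/8$ such that any orbit segment of length $\geq M$ starting at a point of $G$, or ending at a point of $G^-$, has $(\eta/4)$-uniform distribution of $n$-blocks of $\alpha$. I would then take $m'$ to be a large multiple of $M/\eta$, also large enough that $n/m'<\eta/4$. The crucial observation is that a tall column of height $h\geq m'$ which is bad must have all of its levels $\ell$ with $M\leq\ell\leq h-M$ lying outside $G\cap G^-$: otherwise, splitting the column at such a level into two segments of length $\geq M$ and combining the $(\eta/4)$-uniformity of each — at the cost of an error of order $n/h$ coming from the $n$-blocks that straddle the cut — would show that the whole column has $\eta$-uniform distribution of $n$-blocks, contradicting badness.

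Consequently the union of the bad tall fibres is contained in the union of $(G\cap G^-)^c$, of measure $<\eta/4$, with the set of points sitting in the top $M$ or the bottom $M$ levels of some tall column; since every tall column has height $\geq m'$, this last set has measure at most $2M/m'$ times the (at most~$1$) total measure of the tower, hence $<\eta/4$. Adding these gives the bound $\eta/2$ on the bad tall fibres, which completes the proof. The main obstacle is the column-splitting step and the two error estimates it needs — the junction error from $n$-blocks crossing the cut, and the error from discarding the $M$ extremal levels of each column — and it is precisely here that one exploits that the relevant columns are long compared with $M$, which is why $m'$ must be chosen large.
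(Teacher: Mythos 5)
Your argument is correct. The paper gives no proof of this Fact beyond pointing to the analogous Fact of Step I.a in the uniquely ergodic case and to Lemma 15.16 of Glasner's book (i.e.\ to Birkhoff's theorem plus Egorov), and your write-up is a correct, complete implementation of exactly that strategy: the reduction to bounding the bad tall fibres by $\eta/2$ (the height-one column over the marked element accounting for the other $\eta/2$), the forward/backward Egorov sets $G,G^-$, and the splitting of a tall fibre at an interior level of $G\cap G^-$ with the $O(n/h)$ junction error and the $2M/m'$ bound on the extremal levels all check out.
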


As before we choose a small $\eta>0$ and apply the fact to the partition $\hat \alpha_{n}$, getting a big integer $m'$.

\paragraph{I.b. Choice of the basis.---}
We apply the marked Rokhlin lemma to get a $\cB$-measurable set $t'_{n}$ included not only in $t_{n-1}$, but also in the marked element $\hat a_{n}$ of the partition $\hat \alpha_{n}$.

\paragraph{I.c. Choice of the partition.---}
We consider a tower with basis $t'_{n}$, and we choose a $\cB$-measurable partition of the basis which induces a partition $\theta$ of $Y$ satisfying  the same condition~A as before ($\theta$ refines $\beta_{n}$), as well as the following condition~:
\begin{itemize}
\item[(C)] The marked elements $\hat a_{i}$ of $\hat \alpha_{i}$, $i=1, \dots,n$, are union of elements of $\theta$.
\end{itemize}
It is possible to get property (C) since the marked elements of the partitions  $\hat \alpha_{i}$, $i=1, \dots,n$, are $\cB$-measurable sets. Conditions (A) and (C) amounts to saying that $\theta$ refines some $\cB$-measurable partition of $Y$, thus they are achieved by choosing a sufficiently fine partition of the basis.

\subsubsection*{Step II: construction of the partitions $\alpha_{1},\dots,\alpha_n$}

\paragraph{II.a.  General principle of the construction.---}
As before.

\paragraph{II.b. Easy verifications.---}
Most properties  will follow from the above general principle, with the same argument as before. 
Let us add that the partitions $\alpha_{i}$ and $\hat \alpha_{i}$ will automatically have the same marked element, due to the choice of the partition  on the tower (property C). After the explicit construction, it will remain to check the sub-uniformity of $\alpha_{n}$ (property 2 for $i=n$).

\paragraph{II.c. Construction.---}
As before.

\subsubsection*{Step III: construction of the set $t_n$}

\paragraph{III.a. Choice of the height $m$.---}
We consider the union $\Delta$ of fibres of $t'_n$ whose length is greater than $1$ and which
are bad for the partition $\alpha_n$ (i.e. which do not have $\eta$-uniform distribution of $n$-blocks of the partition $\alpha_n$). This set has $\nu$-measure less than $\eta$;
moreover it is a $\cB$-measurable set with $\delta$-measure $0$, hence it is a sub-uniform.
So there exists an integer $m$ much larger than $n$
such that within almost every finite segment of orbit of length greater than $m$
the proportion of points belonging to $\Delta$ is less than $2\eta$.

\paragraph{III.b. Choice of the set $t_{n}$.---}
As before, we apply the marked Rokhlin lemma  to get a $\cB$-measurable set $t_{n} \subset t'_{n}$ whose return-time function takes its values in $\{1\} \cup [m, M]$ for some integer $M$.

\paragraph{III.c. Sub-uniformity of the partition $\alpha_{n}$.---}
It remains to check that $\alpha_{n}$ is $(\varepsilon_{n},  n\mbox{-blocks})$-sub-represented by the set $t_{n}$.
Let us choose some specific $n$-block $A \in \alpha^n$
which  is not the $n$-block $(a,\dots,a)$, where $a$ is the marked element of $\alpha_{n}$.
We now categorise the sub-orbits of length $n$ in a fibre $f$ of $t_n$
of length greater than $1$ into four types:
\begin{itemize}
\item[$(a)$] those that are included in a fibre of length $>1$ of $t'_{n}$ contained in $Y\setminus \Delta$,
\item[$(b)$] those that are included in a fibre of length $>1$ of $t'_{n}$ contained in $\Delta$,
\item[$(c)$] those that meet a fibre of $t'_{n}$ of length $>1$ without being included in it, 
\item[$(d)$] those that are included in the union of the fibres of length $1$
(\emph{i.e.} in the marked element $t'_{n} \cap S^{-1}(t'_{n})$ of the tower).
\end{itemize}
Since the marked element of the tower $t'_n$ is included in $a$ (see step I.b),
the sub-orbits of type $(d)$ have their $\alpha_{n}$-name equal to $(a,\dots,a)$,
thus there is no occurrence of $A$ in the sub-orbits of type $(d)$.
Hence the uncontrolled orbit segments in $f$ are those of type $(b)$ or $(c)$.
The number $n_{0}=n_b+n_c$ of sub-orbits of type $(b)$ or $(c)$ satisfies the same inequality as in the proof of sub-lemma~\ref{lem.weiss}. Hence, the same estimates as in sub-lemma~\ref{lem.weiss} show that the total number of occurrences of the $n$-block $A$ in the fibre $f$ is less than $(N-n-1)(b + \varepsilon_{n})$ where $b$ is the expected proportion (as before). This shows that $\alpha_{n}$ is  $(\varepsilon_{n}, n\mbox{-blocks})$-sub-represented by the set $t_{n}$, and completes the proof of the sub-lemma.

\subsection{Construction of the sub-uniform algebra (proof of lemma~\ref{lem.sub-uniform})}
\label{ss.algebra-bi-ergodic}

The proof of lemma~\ref{lem.sub-uniform} mainly consists in copying the proof of lemma~\ref{lem.uniform-algebra} above, replacing partitions by marked partitions and uniformity by sub-uniformity. We only indicate the most substantial changes.

Given a nested sequence of $\cB$-measurable marked partitions $(\beta_i)_{i\geq 1}$ that generates $\cB$,
we have to find a nested sequence of \emph{marked} partitions $(\alpha_{i}^\infty,a_i)$
which  satisfies  properties 1 and 2 (as in section~\ref{ss.algebra}), and the following strong sub-uniformity property.
\begin{enumerate}
\item[3'.] For every $i\geq 1$, and for every $i$-block $A\neq (a_i,\dots,a_i)$ of $\alpha_i^\infty$,
the corresponding $i$-cylinder $X_{A,\alpha_i^\infty}$ is sub-uniform.
\end{enumerate}
We start the construction by choosing a sequence of nested marked partitions $(\gamma_i)_{i\geq 1}$
that generates the $\sigma$-algebra of $Y$ and such that $\gamma_i$ refines $\beta_i$ for each $i\geq 1$.
The key property for getting such a sequence is that the intersections of the marked elements of the partitions $\beta_{n}$ has $\nu$-measure $0$ (since $\cB$ has no atom).
Then we produce an array of partitions $(\alpha_i^j)_{1\leq i\leq j}$
satisfying properties (i) to (iv) as before (replacing ``represented" by ``sub-represented" in
property (iv)).
We add that all the partitions in the same column have the same marked element $a_i$,
as will be granted by point 3 of sub-lemma~\ref{lem.weiss-bi-ergodic}.
This will clearly imply that the limit partition $\alpha_{i}^\infty$ is a marked partition
with marked element $a_i$.
The properties 1, 2 and 3' are now checked as in section~\ref{ss.algebra}.

As before $\cA_{0}$ is the algebra generated by the partitions $\alpha_{i}^\infty$ and the dynamics.
It is countable, $S$-invariant, contains $\cB$, has no atom and no null-set, generates the
$\sigma$-algebra of $Y$ modulo a null-set.
It remains to prove that any $X\in \cA_0$ has the following property.
\begin{itemize}
\item[(*)] Either $X$ or $Y\setminus X$ is contained in a $\cB$-measurable set with $\delta$-measure zero.
Furthermore, in the first case, $X$ is sub-uniform.
\end{itemize}
For this we first consider the case when $X$ is an $i$-cylinders of $\alpha_{i}^\infty$ for some $i$. Two situations may occur.
\begin{itemize}
\item[--] Either $X$ is the ``marked cylinder'' $X_{A,\alpha_i^\infty}$
corresponding to the $i$-block $A=(a_i,\dots,a_i)$ where $a_i$ is the marked element of $\alpha_{i}^\infty$.
Then $Y\setminus X$ belongs to the algebra $\cB$ and has $\delta$-measure zero.
\item[--] Or $X$ is contained in the complementary set $Y\setminus X_{A,\alpha_i^\infty}$
of the marked cylinder, which is a $\cB$-measurable set of $\delta$-measure $0$.
Furthermore $X$ is a union of cylinders and is sub-uniform, thanks to the ``strong sub-uniformity property''.
\end{itemize}
In both cases $X$ satisfies  property (*). Now this property is invariant under $S$
and preserved by finite disjoint unions, and thus it is satisfied by every element $X\in\cA_{0}$.

\subsection{Proof of the bi-ergodic relative Jewett-Krieger theorem~\ref{theo.bi-ergodic}}\label{ss.jewett-bi-ergodic}

Again the proof of theorem~\ref{theo.bi-ergodic} follows the line of the proof of theorem~\ref{theo.weiss}.
More precisely, the definitions of $\cC$, $\nu'$, $g$ and $\Pi$ are exactly the same.
These definitions being given, it remains to prove that $\Pi^{-1}(\infty)$ is a single point $\infty'\in\cC$, and that $\nu'$ is ergodic and is
the unique $g$-invariant measure that projects down onto $\mu$. 

For the first point, note that if $\Pi(x)=\infty$ for some $x\in \cC$, then $x$ belongs to all the elementary cylinders $\Phi(B)$ with $B\in \cB$ such that $\delta(B)=1$. If one assumes by contradiction that $\Pi^{-1}(\infty)$ contains two distinct points $x,x'$, there exists an elementary cylinder $\Phi(A)=p_A^{-1}(1)\cap\cC$ which separates $x$ and $x'$, say e.g. $x\in\Phi(A)$ but $x'\notin\Phi(A)$. Since $\Pi(x)=\infty$, we have $\delta(A)=1$. By lemma~\ref{lem.sub-uniform}, there exists $B\subset A$ in $\cB$ such that $\delta(B)=1$. The point $x'$ does not belong to $A$, and thus, does not belong to $B$. This gives the desired contradiction.

Let us check the second point. Using an argument entirely similar to the proof of unique ergodicity in theorem~\ref{theo.weiss} (substituting as usual uniformity by sub-uniformity), we first note that for any elementary cylinder $A' \subset \cC$ that does not contain the point $\infty'$, for any point $c\in \cC$,
the upper limit of the sequence of Birkhoff averages $(\frac 1 n S_{n}(1_{A'},c))_{n \geq 1}$ is less than $\nu'(\cC')$.
Now let us consider an ergodic measure $\nu''$  for $g$ such that $\nu''(\infty')=0$, and let us prove that $\nu''=\nu'$. For this it suffices to see that $\nu''(A') = \nu'(A')$ for any elementary cylinder $A'$ which does not contains $\infty'$.
Using the Birkhoff sums of a point $c$ that is typical for $\nu''$, we get that $\nu''(A') \leq \nu'(A')$.
For any $\varepsilon>0$, one may write the complement of $A'$ as a disjoint union of two elementary cylinders $A'_{1}\cup A'_{2}$
such that $A'_{1}$ contains $\infty'$ but has $\nu''$ measure less than $\varepsilon$.
As for $A'$ one has $\nu''(A_{2})\leq \nu'(A_{2})$. One deduces $\nu''(\cC\setminus A')\leq \nu'(\cC\setminus A')+\varepsilon$.
Since this holds for any $\varepsilon$, one deduces $\nu''(A')\geq \nu'(A')$. We get $\nu''(A') = \nu'(A')$ as required. 

Since $\nu'$ is the unique measure that projects on the ergodic measure $\mu$ by $\Pi$, it is ergodic.
This completes the proof.

\section{Bi-ergodic open version}
\label{sec.bi-ergodic-open}

\subsection{Statement and strategy}
Proposition~\ref{p.bi-ergodic-ouvert} can now be restated as follows.

\begin{theo}\label{theo.bi-ergodic-open}
In the statement of theorem~\ref{theo.bi-ergodic} one can add that the map $\Pi$ is open.
\end{theo}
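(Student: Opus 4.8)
The proof of theorem~\ref{theo.bi-ergodic-open} follows the now-familiar four-step scheme, obtained by merging the modifications that led to theorem~\ref{theo.open} (openness, via \emph{fullness} and \emph{bracket stability}) with those that led to theorem~\ref{theo.bi-ergodic} (bi-ergodicity, via \emph{marked partitions} and \emph{sub-uniformity}). First I would state and prove a new sub-lemma, obtained from sub-lemma~\ref{lem.open} by replacing everywhere ``partition'' with ``marked partition'' and ``$(\varepsilon,i\mbox{-blocks})$-represented by'' with ``$(\varepsilon,i\mbox{-blocks})$-sub-represented by'', while keeping the fullness and bracket-stability requirements on the $i$-cylinders --- it being understood that the \emph{marked} $i$-cylinder $X_{(a,\dots,a),\alpha}$ is exempt from these requirements, since it is automatically $\cB$-measurable with $\delta$-measure $1$. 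Then I would prove a new algebra lemma combining lemma~\ref{lem.uniform-algebra-open} with lemma~\ref{lem.sub-uniform}: from an algebra $\cB$ equipped with a Dirac measure $\delta$ as in the hypotheses of lemma~\ref{lem.sub-uniform}, one produces an algebra $\cA_0\supset\cB$ carrying an extension of $\delta$, still satisfying those hypotheses, generating the $\sigma$-algebra of $Y$ modulo a null-set, and such that for every $X\in\cA_0$ the member of the pair $\{X,Y\setminus X\}$ contained in a $\cB$-measurable set of $\delta$-measure $0$ is moreover \emph{full}. Finally one runs the construction from the proof of theorem~\ref{theo.bi-ergodic} to obtain $\cC,\nu',g,\Pi$, and combines the ergodicity/uniqueness argument used there with the openness argument used for theorem~\ref{theo.open}.

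For the sub-lemma, I would copy the proof of sub-lemma~\ref{lem.open} almost verbatim, using the marked version of Rokhlin's lemma (introduced in the proof of theorem~\ref{theo.bi-ergodic}) in place of the ordinary one, so that every tower has a height-$1$ column equal to the marked element; the auxiliary sets $Z_1,Z_2$ are chosen inside the complement of the marked element, so that the copying-and-painting never touches it and each $\alpha_i$ keeps the marked element of $\hat\alpha_i$. Steps~I, II and III.a--III.b then go through with the obvious changes, and in step~III.c one categorises the sub-orbits of length $n$ inside a $t_n$-fibre of length $>1$ by combining the four types of the open sub-lemma with the ``marked type'' of the bi-ergodic sub-lemma (sub-orbits lying in the union of the height-$1$ fibres, whose $\alpha_n$-name is $(a,\dots,a)$ and which therefore contain no occurrence of a non-marked $n$-block), the bound on the number of uncontrolled blocks being the sum of the two previous bounds. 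The construction of the algebra then follows the proof of lemma~\ref{lem.uniform-algebra-open}: one builds a triangular array of \emph{marked} partitions $(\alpha_i^j)_{1\leq i\leq j}$ together with the summable sequence $(\varepsilon'_j)$ and the auxiliary sequence $(R_j)$ --- the latter chosen inductively so that the open conditions ``$\nu(X_{A,\alpha}\cap b)>0$'' persist under $d_{\mathrm{part}}$-perturbations of size $R_n$ --- requiring moreover that all partitions in a given column share the same marked element; passing to the limit yields marked partitions $\alpha_i^\infty$ whose non-marked $i$-cylinders enjoy both the strong sub-uniformity property and the fullness property. As in the proof of lemma~\ref{lem.sub-uniform}, the algebra $\cA_0$ generated by the $\alpha_i^\infty$ and $S$ then satisfies the conclusions of lemma~\ref{lem.sub-uniform}; moreover, reducing as usual to a union of cylinders of a single $\alpha_m^\infty$, every element of $\cA_0$ is a disjoint union of full non-marked cylinders possibly together with the ($\cB$-measurable) marked one, so, fullness being preserved by finite disjoint unions and by iterates, for every $X\in\cA_0$ the member of $\{X,Y\setminus X\}$ contained in a $\cB$-measurable set of $\delta$-measure $0$ is full.

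With $\cC,\nu',g,\Pi$ defined exactly as in the proof of theorem~\ref{theo.bi-ergodic}, the facts that $\Pi^{-1}(\infty)$ is a singleton and that $\nu'$ is the unique $g$-invariant measure projecting onto $\mu$ (hence is ergodic) are proved word for word as there. It remains to prove that $\Pi$ is open, and since the elementary cylinders form a basis of $\cC$ it suffices to prove that $\Pi$ maps each of them onto an open --- in fact clopen --- subset of $\cK$; here I would argue cylinder by cylinder. Any $A\in\cA_1^*$ equals $A_0\cap Y_0$ with $A_0\in\cA_0$ a finite union of iterates $S^{k_j}(X_{A_j,\alpha_{i_j}^\infty})$; since $\Phi$ intertwines $S$ with $g$ and carries finite unions to finite unions, and $\Pi$ is continuous with $\cC$ compact, $\Pi(\mathrm{Cl}(\Phi(A_0\cap Y_0)))$ is the finite union of the sets $f^{k_j}\bigl(\Pi(\mathrm{Cl}(\Phi(X_{A_j,\alpha_{i_j}^\infty}\cap Y_0)))\bigr)$, so, $f$ being a homeomorphism, it is enough to check that $\Pi\bigl(\mathrm{Cl}(\Phi(X_{A',\alpha_i^\infty}\cap Y_0))\bigr)$ is clopen for a single $i$-cylinder $X_{A',\alpha_i^\infty}$. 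If $A'$ is the marked block, then $X_{A',\alpha_i^\infty}=p^{-1}(K')$ for a clopen $K'\ni\infty$, and $\Pi\bigl(\mathrm{Cl}(\Phi(X_{A',\alpha_i^\infty}\cap Y_0))\bigr)=\mathrm{Cl}\bigl(p(X_{A',\alpha_i^\infty}\cap Y_0)\bigr)=\mathrm{Cl}\bigl(p(Y_0)\cap K'\bigr)=K'$, since $p(Y_0)$ is dense in $\cK$. If $A'$ is not the marked block, then $X_{A',\alpha_i^\infty}$ is full, hence contained modulo a null-set in a $\cB$-measurable set $B=p^{-1}(\widetilde K)$ with $\widetilde K$ clopen and $\infty\notin\widetilde K$, which it fills up; then, exactly as in the proof of theorem~\ref{theo.open}, $p(X_{A',\alpha_i^\infty}\cap Y_0)$ is dense in $\widetilde K$, so $\Pi\bigl(\mathrm{Cl}(\Phi(X_{A',\alpha_i^\infty}\cap Y_0))\bigr)=\widetilde K$. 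In either case the image is clopen; a finite union of clopen sets is clopen, so $\Pi$ is open, and the proof is complete.

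The step I expect to be the main obstacle is the bookkeeping forced by the marked element: at every stage one must check that copying-and-painting never alters the marked element (so that all partitions in a column share a marked element and the limit is a genuine marked partition), that fullness and bracket stability are needed only for --- and survive on --- the non-marked cylinders (including the delicate point that a bracket $[A,A']_\ell$ may land on the marked block, and that height-$1$ columns are present in every tower), and, in the last step, that the marked cylinder must be handled separately, its $\Pi$-image being a clopen neighbourhood of $\infty$ rather than a clopen set avoiding $\infty$. None of this is conceptually new, but threading the four sets of modifications together consistently does require some care.
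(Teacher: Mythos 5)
Your overall architecture is exactly the paper's: merge the ``open'' modifications (fullness, bracket stability, the sets $Z_1,Z_2$, the sequence $(N_i)$) with the ``bi-ergodic'' ones (marked partitions, sub-representation, the marked Rokhlin lemma), prove the combined sub-lemma and algebra lemma, and then run the bi-ergodic construction of $\cC,\nu',g,\Pi$ followed by the openness argument of theorem~\ref{theo.open} (your cylinder-by-cylinder version of that last argument, with the marked cylinder treated separately, is a harmless reorganisation of the paper's uniform treatment, which simply uses that \emph{every} element of $\cA_0$ is full --- the marked cylinder being $\cB$-measurable, it fills up itself).

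There is, however, one concrete misstep. You propose to choose $Z_1,Z_2$ inside the complement of the marked element ``so that the copying-and-painting never touches it''. This device neither achieves its purpose nor is harmless. It does not achieve its purpose because the painting takes place on $T(Z_2)$, a union of \emph{whole fibres} of the tower $t'_n$, and these fibres meet the marked elements $\hat a_i$ regardless of where $Z_2$ sits (and the first modification, copying good fibres onto bad ones, is not governed by $Z_1,Z_2$ at all). The correct mechanism is the paper's condition~(C): the partition $\theta$ of the tower is required to refine the marked elements $\hat a_1,\dots,\hat a_n$, so that source and target fibres in a column agree, level by level, on membership in each $\hat a_i$, and painting automatically preserves the marked elements. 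Worse, your restriction breaks fullness: a non-marked block such as $A=(a_n,a',\dots)$ with $a'\neq a_n$ has its cylinder $X_{A,\alpha_n}$ contained in the marked element $a_n$, and to make it fill up the elements $B\in\beta_{N'_n}$ it meets (which lie inside $a_n$) one must paint on sets $Z'_2(A)\cap B\subset a_n$ of positive measure --- impossible if $Z_2$ avoids $a_n$. Since such cylinders must be full for the final openness argument, this is a step that would fail as written. A second, lesser point: the one genuinely new argument in the paper's proof is the modification of the ``Second case'' in step~II.d, where an orbit segment of length $i$ may now traverse the basis (height-one columns) between two fibres, forcing a three-piece decomposition whose middle piece lies in $t'_n\subset a_i$ and hence carries the marked name; you correctly flag this as the delicate point but do not carry it out.
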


The reader 
 will not be surprised by the following statement.
 
\begin{lemma}\label{lem.sub-uniform-open}
In the statement of lemma~\ref{lem.sub-uniform}, one can add that every element of the algebra $\cA_0$ is full.  
\end{lemma}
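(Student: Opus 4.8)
The plan is to combine the two earlier "augmentation" arguments: the sub-uniform (bi-ergodic) construction of Lemma~\ref{lem.sub-uniform} and the fullness construction of Lemma~\ref{lem.uniform-algebra-open}. Concretely, one copies the proof of Lemma~\ref{lem.sub-uniform} from section~\ref{ss.algebra-bi-ergodic}, but works with \emph{marked} partitions throughout and inserts the fullness/bracket-stability bookkeeping from section~\ref{ss.algebra-open}. So one constructs a nested sequence of marked partitions $(\alpha_i^\infty,a_i)_{i\ge 1}$ satisfying properties~1, 2, 3' of section~\ref{ss.algebra-bi-ergodic} (strong sub-uniformity of all non-marked $i$-cylinders) together with the extra property: for every $i\ge 1$, every $i$-cylinder of $\alpha_i^\infty$ is full. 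The algebra $\cA_0$ generated by the $\alpha_i^\infty$ and the dynamics then contains $\cB$, is $S$-invariant, carries the Dirac extension $\delta$, generates the Borel $\sigma$-algebra modulo a null-set, each element satisfies property~(*) of section~\ref{ss.algebra-bi-ergodic}, and each element is full; since fullness is preserved by iterates and finite unions, fullness passes from the $i$-cylinders to all of $\cA_0$.

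The main technical ingredient is a "marked open" sub-lemma combining Sub-lemma~\ref{lem.open} and Sub-lemma~\ref{lem.weiss-bi-ergodic}: given nested marked partitions $(\hat\alpha_i)_{1\le i\le n}$, a decreasing sequence of $\cB$-measurable sets $(t_i)$, positive reals $(\varepsilon_i)$, and integers $(N_i)$ such that for $i<n$ the partition $\hat\alpha_i$ refines $\beta_i$, is $(\varepsilon_i,i\text{-blocks})$-sub-represented by $t_i$, and has $\beta_{N_i}$-full, $(\beta_{N_i},t_i)$-stable-under-bracket $i$-cylinders, one produces $t_n\subset t_{n-1}$, $N_n\ge N_{n-1}$ and nested marked partitions $(\alpha_i)_{1\le i\le n}$ with the analogous conclusions (same marked element as $\hat\alpha_i$, and $d_{\mathrm{part}}(\alpha_i,\hat\alpha_i)<\varepsilon_n$). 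The proof is the proof of Sub-lemma~\ref{lem.open} run inside a \emph{marked} tower with basis $t'_n$ chosen (via the marked Rokhlin lemma) inside the marked element $\hat a_n$; the partition $\theta$ on the basis is taken to satisfy conditions (A), (B) \emph{and} (C) simultaneously, which is possible since all the constraints are $\cB$-measurable. The copying-and-painting over good/bad fibres, the small reserved sets $Z_1,Z_2$, and the iterative fullness-saturation producing $\gamma_1,\gamma_2,\dots$ are carried over verbatim; one only notes that sub-orbits lying in the height-$1$ part of the tower have name $(a,\dots,a)$ and hence contribute nothing to the sub-uniformity estimate for a non-marked $n$-block $A$, exactly as in section~\ref{ss.algebra-bi-ergodic}, step III.c.

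For the inductive construction of $(\alpha_i^\infty)$ one mimics section~\ref{ss.algebra-open}: the summable sequence $(\varepsilon'_i)$ and auxiliary radii $(R_i)$ are chosen along the way, with $\varepsilon'_j+R_j<R_{j-1}$, and $R_n$ chosen so small that the open conditions $\nu(X_{A,\alpha}\cap b)>0$ (for the finitely many relevant triples $(i,A,b)$ with $b\in\beta_n$) are stable under $d_{\mathrm{part}}$-perturbations of size $\le R_n$. Then, as in section~\ref{ss.algebra-open}, for each $i$-block $A$ of $\alpha_i^\infty$ one sets $B$ to be the union of the elements of $\beta_{N_i}$ whose intersection with $X_{A,\alpha_i^\infty}$ has positive measure; $\beta_{N_i}$-fullness of the $\alpha_i^n$-cylinders plus the estimate $d_{\mathrm{part}}(\alpha_i^n,\alpha_i^\infty)<R_n$ gives that $X_{A,\alpha_i^\infty}$ fills up $B$ and is contained in $B$ modulo a null-set, i.e.\ is full.

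The step I expect to be the main obstacle is checking that the marked structure and the fullness/bracket-stability bookkeeping are genuinely compatible — in particular that the basis $t'_n$ can be taken simultaneously inside $t_{n-1}$, inside the marked element $\hat a_n$, and with a $\theta$-partition fine enough to satisfy (A), (B), (C), and that the reserved sets $Z_1,Z_2$ (needed to protect fullness of the lower-index partitions and to saturate fullness at level $n$) can be chosen inside $Y\setminus t'_n$-independent data without conflicting with the marked-element constraint. All of these are $\cB$-measurability and small-measure arguments of the kind already used, so I expect no real new difficulty, only careful combination; accordingly the write-up will mostly consist of saying "run the proof of Sub-lemma~\ref{lem.open} in the marked setting of Sub-lemma~\ref{lem.weiss-bi-ergodic}, using the marked Rokhlin lemma" and then repeating the limit argument of section~\ref{ss.algebra-open} with sub-uniformity in place of uniformity.
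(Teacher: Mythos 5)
Your proposal is correct and follows essentially the same route as the paper: the paper's proof of this lemma is precisely ``run the proof of Lemma~\ref{lem.sub-uniform} with the modifications of section~\ref{ss.algebra-open}'', resting on the combined Sub-lemma~\ref{lem.weiss-bi-ergodic-open} proved by running the open-case construction inside a marked tower with basis in the marked element. The only point where ``verbatim'' is slightly optimistic is the bracket-stability verification (Step II.d, Second case): because the marked tower has columns of height one, an orbit segment can now traverse the basis, so the decomposition into two pieces must be replaced by a decomposition into three pieces whose middle piece lies in $t'_n$, hence in the marked element -- a small but genuine adjustment the paper spells out.
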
 

We state a new version of sub-lemma~\ref{lem.weiss-bi-ergodic}.
\begin{sublemma}\label{lem.weiss-bi-ergodic-open}
For $n \geq 1$, let $(\hat \alpha_{i})_{ 1 \leq i \leq n}$ be a sequence of nested marked partitions,
$(t_{i})_{0 \leq i \leq n-1}$ a decreasing sequence of $\cB$-measurable sets, 
$(\varepsilon_{i})_{ 1 \leq i \leq n-1}$ a sequence of positive numbers,
and $(N_{i})_{1 \leq i \leq n-1}$ a increasing sequence of integers.
Assume that  $\hat \alpha_{n}$ refines $\beta_{n}$ and that for $1 \leq i \leq n-1$,
\begin{itemize}
\item[--] the partition $\hat \alpha_{i}$ refines the partition $\beta_{i}$,
\item[--] the partition $\hat \alpha_{i}$ is $(\varepsilon_{i}, i\mbox{-blocks})$-sub-represented by  $t_{i}$,
\item[--] the $i$-blocks of  $\hat \alpha_{i}$ are $\beta_{N_{i}}$-full and ($\beta_{N_{i}}, t_{i}$)-stable under brackets.
\end{itemize}
Let $\varepsilon_{n} >0$.
Then there exist a sequence of marked nested partitions $(\alpha_{i})_{1 \leq i \leq n}$,
a $\cB$-measurable set $t_{n}\subset t_{n-1}$ and an integer $N_n\geq N_{n-1}$ such that, for every $1 \leq i \leq n$,
\begin{enumerate}
\item the partition $\alpha_{i}$ refines the partition $\beta_{i}$,
\item the partition $\alpha_{i}$ is $(\varepsilon_{i}, i\mbox{-blocks})$-sub-represented by  $t_{i}$,
\item the $i$-blocks of $\alpha_{i}$ are $\beta_{N_{i}}$-full and ($\beta_{N_{i}}, t_{i}$)-stable under brackets,
\item the partition $\alpha_{i}$ has the same number of elements as $\hat \alpha_{i}$, the same marked element, and $d_{\mathrm{part}}(\alpha_{i},\hat \alpha_{i}) < \varepsilon_{n}$.
\end{enumerate}
\end{sublemma}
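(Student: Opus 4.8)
The statement is obtained by merging the two modifications of sub-lemma~\ref{lem.weiss} already carried out: the \emph{open} modification of sub-lemma~\ref{lem.open}, which produces the $\beta_{N_i}$-fullness and the $(\beta_{N_i},t_i)$-stability under brackets of the $i$-cylinders, and the \emph{bi-ergodic} modification of sub-lemma~\ref{lem.weiss-bi-ergodic}, which replaces towers, uniformity and representation by marked towers, sub-uniformity and sub-representation. As the footnote in the proof of sub-lemma~\ref{lem.open} already anticipates, the copying-and-painting scheme there --- in particular the sets $T(Z)$, each a union of at most two fibres of the tower --- was written so as to survive the presence of height-$1$ columns. So the plan is to rerun the proof of sub-lemma~\ref{lem.open} step by step, substituting everywhere the bi-ergodic ingredients from the proof of sub-lemma~\ref{lem.weiss-bi-ergodic}.

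For Steps I and II, I would replace Rokhlin's lemma by the marked Rokhlin lemma and the Birkhoff fact of step I.a by its marked counterpart (so ``good'' and ``bad'' fibres now refer only to fibres of height $>1$), and choose the basis $t'_n$ inside $t_{n-1}$ \emph{and} inside the marked element $\hat a_n$ of $\hat\alpha_n$. I would then pick the $\cB$-measurable partition of $t'_n$ so that the induced partition $\theta$ satisfies simultaneously condition~(A) (refining $\beta_n$ and the partitions $(t_i,Y\setminus t_i)$), condition~(B) of the proof of sub-lemma~\ref{lem.open} (same heights and same $\beta_{N_{n-1}}$-names for $S^\ell(x)$ and $S^\ell(x')$, $0\leq\ell\leq n-1$), and condition~(C) of the proof of sub-lemma~\ref{lem.weiss-bi-ergodic} (the marked elements $\hat a_i$ are unions of elements of $\theta$); this is possible since the three conditions together just say that $\theta$ refines a fixed $\cB$-measurable partition. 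In Step II I would run the construction of sub-lemma~\ref{lem.open} verbatim: first copy the $\hat\alpha_n$-name of a good fibre onto the bad fibres of its column except those meeting $T(Z_1\cup Z_2)$, then run the modifications $\gamma_1,\gamma_2,\dots$ along the sets $T(Z_2'(A)\cap B)$ (only for the non-marked $n$-blocks $A$, the marked $n$-cylinder being automatically full as it is $\cB$-measurable), stopping at the first $\gamma_k$ with $\Xi_k=\Xi_{k-1}$ and setting $\alpha_n:=\gamma_k$. Since $t'_n\subset\hat a_n$ and $\theta$ respects the marked elements, no modification ever touches the height-$1$ columns, so each $\alpha_i$ keeps the marked element of $\hat\alpha_i$; the fullness and bracket-stability verifications for $i<n$ (the two Claims of step~II.d of sub-lemma~\ref{lem.open}) and for $i=n$ go through unchanged, as they use only the tower combinatorics and conditions (A), (B).

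For Step III, I would choose $m$ much larger than $n$ and $L$ so that the set $\Delta_1$ --- which is $\cB$-measurable with $\delta$-measure $0$, hence \emph{sub}-uniform, and sub-uniformity is all that is needed --- occupies proportion $<2\eta$ in almost every orbit segment of length $\geq m$. Then I would apply the marked Rokhlin lemma to get $t_n\subset t'_n$ with return time in $\{1\}\cup[m,M]$, and pick $N_n\geq N'_n$ so that $\beta_{N_n}$ and $t_n$ satisfy the first item of bracket-stability (hence also the second, as in step~II.d), which by the monotonicity fact of section~\ref{ss.statement-open} upgrades the $\beta_{N'_n}$-fullness and $(\beta_{N'_n},t'_n)$-stability of the $n$-cylinders to $\beta_{N_n}$-fullness and $(\beta_{N_n},t_n)$-stability. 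For the sub-uniformity of $\alpha_n$, I would fix an $n$-block $A\neq(a,\dots,a)$, where $a$ is the common marked element, and sort the length-$n$ sub-orbits of a fibre of $t_n$ of length $>1$ into the types $(a),(b_1),(b_2),(c)$ of sub-lemma~\ref{lem.open} together with the type $(d)$ of sub-lemma~\ref{lem.weiss-bi-ergodic}, namely those contained in the height-$1$ part $t'_n\cap S^{-1}(t'_n)$ of the tower. Type-$(d)$ sub-orbits carry the $\alpha_n$-name $(a,\dots,a)$ and so never equal $A$; the uncontrolled count $n_0=n_{b_1}+n_{b_2}+n_c$ obeys the same bound $n_0<N\frac{4\varepsilon_n}{100}$ as in sub-lemma~\ref{lem.open}, whence $A$ occurs at most $(N-(n-1))(b+\varepsilon_n)$ times in the $\alpha_n$-name of the fibre, where $b=\nu(X_{A,\alpha_n})$; this is exactly the required $(\varepsilon_n,n\mbox{-blocks})$-sub-representation by $t_n$ (only the upper bound is needed for sub-representation).

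The main obstacle will be bookkeeping rather than a new idea: one must check that the two layers of modification --- ``good over bad'' and the $\gamma_j$-iteration --- stay compatible \emph{both} with the marked-element structure (never modify inside the height-$1$ columns, which forces $t'_n\subset\hat a_n$ and makes condition~(C) essential) \emph{and} with the sub-uniformity estimate (the type-$(d)$ sub-orbits must be discarded before counting occurrences of $A$, and $\Delta_1$ must be recognised as sub-uniform, not uniform). Every other ingredient --- the estimates on $n_{b_1},n_{b_2},n_c$, the fullness and bracket Claims, the monotonicity upgrade --- is copied verbatim from the proofs of sub-lemmas~\ref{lem.open} and~\ref{lem.weiss-bi-ergodic}.
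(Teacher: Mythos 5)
Your overall strategy is exactly the paper's: rerun the proof of sub-lemma~\ref{lem.open} with the bi-ergodic substitutions from sub-lemma~\ref{lem.weiss-bi-ergodic} (marked Rokhlin lemma, basis $t'_n\subset t_{n-1}\cap\hat a_n$, condition~(C) on $\theta$, the extra type-$(d)$ sub-orbits in step~III.c, and sub-uniformity rather than uniformity of $\Delta_1$). Almost all of your proposal matches the paper's proof. But there is one concrete gap, and it sits precisely at the point you dismiss: you assert that the fullness and bracket-stability verifications of step~II.d of sub-lemma~\ref{lem.open} ``go through unchanged, as they use only the tower combinatorics and conditions (A), (B).'' This is false for the Second case of the First claim. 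In the open case one uses that if the iterates $S^k(x)$, $0\leq k<i$, meet two different fibres, then the orbit segment splits into exactly two pieces (the end of one fibre, the beginning of the next), yielding $A=[A_1,A_2]_\ell$. In the presence of height-one columns the segment $x,\dots,S^{i-1}(x)$ can pass through the basis $t'_n$ several times between the two tall fibres, so it may meet many fibres and the two-piece decomposition fails.

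The fix --- which is the only genuinely new ingredient in the paper's proof of this sub-lemma --- is to decompose the segment into \emph{three} pieces: $x,\dots,S^{k_1-1}(x)$ (end of the fibre of $x$), a possibly empty middle piece $S^{k_1}(x),\dots,S^{k_2-1}(x)$ entirely contained in $t'_n$, and $S^{k_2}(x),\dots,S^{i-1}(x)$ (beginning of the fibre of $S^{i-1}(x)$). One then uses condition~(B) (the fibres of $S^k(x)$, $S^k(y_1)$, $S^k(y_2)$ have the same heights, so $S^{k}(y_1)$ and $S^{k}(y_2)$ also lie in $t'_n$ for $k_1\leq k<k_2$) together with the inclusion $t'_n\subset\hat a_n\subset\hat a_i$ to see that all three points carry the marked symbol on the middle piece; hence $A=[A_1,A_2]_{k_1}$ and the $(\beta_{N_i},t_i)$-bracket-stability of $\hat\alpha_i$ applies as before. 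So your insistence on $t'_n\subset\hat a_n$ and condition~(C) is exactly what makes the repair possible, but the repair itself is a genuine modification of the argument, not a verbatim transfer, and your write-up would fail without it.
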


\subsection{Proof of sub-lemma~\ref{lem.weiss-bi-ergodic-open}}
\subsubsection*{Step I: choice of a first tower}

\paragraph{I.a. Choice of the height.---}
As in the bi-ergodic case.

\paragraph{I.b. Choice of the basis.---}
As in the bi-ergodic case.

\paragraph{I.c. Choice of the partition.---}
As in the open case, with the addition that the partition $\theta$ must satisfy condition~C from the bi-ergodic case.

\subsubsection*{Step II: construction of the partitions $\alpha_{1},\dots,\alpha_n$}

\paragraph{II.a.  General principle of the construction.---}
As before.

\paragraph{II.b. Easy verifications.---}
As in the bi-ergodic case.

\paragraph{II.c. Construction.---}
As in the open case.

\paragraph{II.d. Fullness and bracket stability.---}
Almost as in the open case, with the following change in the Second case of the proof of the First claim. This change is made necessary by the existence of  some columns of height one in the tower $t'_{n}$, allowing the orbit of the point $x$ between time $0$ and $i-1$ to spend some time in the basis. We replace the Second case by the following.
\begin{description}
\item[Second case.] If the iterates $S^k(x)$, with $0\leq k<i$,
meet at least two different fibres,
then there exist $1 \leq k_{1} \leq k_{2}  < i$ such that the sequence $x, \dots , S^{i-1}(x)$ decomposes into three pieces:
\begin{itemize}
\item[--] $x, \dots, S^{k_{1}-1}(x)$ is the end of the fibre of $x$;  its $\alpha_{i}$-name coincides with the $\hat \alpha_{i}$-name of some sequence $y_{1}, \dots , S^{k_{1}-1}(y_{1})$ for some point $y_{1} \in B$;
\item[--] $S^{k_{1}}(x), \dots, S^{k_{2}-1}(x)$ (which may be empty) is contained in $t'_{n}$;
\item[--] $S^{k_{2}}(x), \dots, S^{i-1}(x)$ is the beginning of the fibre of $S^{i-1}(x)$;  its $\alpha_{i}$-name coincides with the $\hat \alpha_{i}$-name of some sequence $S^{k_{2}}(y_{2}), \dots, S^{i-1}(y_{2})$ for some point $y_{2} \in B$.
\end{itemize}
The points $x,y_{1},y_{2}$ belong to the same element of the partition of $Y$ induced by the partition of the tower $t'_{n}$. Thus the fibres of the points $S^k(x), S^k(y_{1}),S^k(y_{2})$ have the same height, in particular the points $S^{k_{1}}(y_{1}), \dots, S^{k_{2}-1}(y_{1})$ and $S^{k_{1}}(y_{2}), \dots, S^{k_{2}-1}(y_{2})$
also belong to $t'_{n}$. We recall that $t'_{n}$ is included in the marked element of $\hat \alpha_{i}$, and thus also in the marked element of $\alpha_{i}$.
Let $A_{1}, A_{2}$ be the  $i$-blocks such that   $y_{1} \in X_{A_{1}, \hat \alpha_{i}}$ and $y_{2} \in X_{A_{2}, \hat \alpha_{i}}$. Then 
$A = [A_{1},A_{2}]_{k_{1}}$.
Since $\hat \alpha_{i}$ is ($\beta_{N_{i}}, t_{i}$)-stable under brackets, this entails again that $X_{A, \hat \alpha_{i}}$ meets $B$.
\end{description}

\subsubsection*{Step III: construction of the set $t_n$}

\paragraph{III.a. Choice of the height $m$.---}
The union $\Delta$ of the fibres of the tower $t_n'$ of height $>1$ and whose $\alpha_n$-name is bad
is now contained in the union $\Delta_1\cup \Delta_2$ of two sets:
\begin{itemize}
\item[--] $\Delta_1$ is a union of columns of $t'_n$. It has measure less than $\eta$. Moreover is $\cB$-measurable, has $\delta$-measure $0$ and hence is sub-uniform.
\item[--] $\Delta_2$ is the set $T(Z_1\cup Z_2)$.
\end{itemize}
One chooses the integer $m$ as in the open case.
As before, the fibres of the tower $t_n'$ included in $Y\setminus \Delta$ have $\eta$-uniform distribution of the $n$-blocks of $\alpha_n$.

\paragraph{III.b. Choice of the set $t_{n}$.---}
As in the bi-ergodic case.

\paragraph{III.c. Sub-uniformity of the partition $\alpha_{n}$.---}
It remains to check that   $\alpha_{n}$ is $(\varepsilon_{n},  n\mbox{-blocks})$-sub-represented by the set $t_{n}$.
As in the bi-ergodic case, we choose some specific $n$-block $A \in \alpha^n$, and suppose that this is not the $n$-block $(a,\dots,a)$ where $a$ is the marked element of $\alpha_{n}$.
We categorise the sub-orbits of length $n$ included in a fibre $f$ of length $>1$ into five types:
\begin{itemize}
\item[$(a)$] those that are included in a fibre of $t'_{n}$ of length $>1$ contained in $Y\setminus \Delta$,
\item[$(b_1)$] those that are included in a fibre of $t'_{n}$ of length $>1$ contained in $\Delta_1$,
\item[$(b_2)$] those that are included in a fibre of $t'_{n}$ of length $>1$ contained in $\Delta_2$,
\item[$(c)$] those that meet a fibre of $t'_{n}$ of length $>1$ without being included in it, 
\item[$(d)$] those that are included in the fibres of length $1$ (i.e. in the marked element $t'_{n} \cap S^{-1}(t'_{n})$ of the tower).
\end{itemize}
As in the bi-ergodic case, there is no occurrence of $A$ in the orbit segments of type $(d)$.
Hence the uncontrolled sub-orbits of length $n$ have type $(b_1)$, $(b_2)$ or $(c)$. The number $n_{0}=n_{b_1}+n_{b_2}+n_c$ of sub-orbits of type $(b_1)$, $(b_2)$ or $(c)$ satisfies the same inequality as in the open case. Hence, the same estimates as before show that the total number of occurrences of the $n$-block $A$ in the fibre $f$ is less than $(N-n-1)(b + \varepsilon_{n})$ where $b$ is the expected proportion. This shows that $\alpha_{n}$ is  $(\varepsilon_{n}, n\mbox{-blocks})$-sub-represented by the set $t_{n}$, and completes the proof of the sub-lemma.

\subsection{Construction of the sub-uniform algebra (proof of lemma~\ref{lem.sub-uniform-open})}
One has to modify the proof of lemma~\ref{lem.sub-uniform} as one modified
the proof of lemma~\ref{lem.uniform-algebra} at section~\ref{ss.algebra-open}:
one build a nested sequence of marked partitions $(\alpha_i^\infty)$
that satisfies properties 1, 2, 3', 4 and 5 from sections~\ref{ss.algebra}, \ref{ss.algebra-open}
and~\ref{ss.algebra-bi-ergodic}.
The algebra $\cA_0$ generated by these partitions and $S$ then satisfies all the required properties.

\subsection{Proof of the bi-ergodic open relative Jewett-Krieger theorem~\ref{theo.bi-ergodic-open}}
One repeats the proof of section~\ref{ss.jewett-bi-ergodic}.
In order to check that the map $\Pi$ is open using that the algebra $\cA_{0}$ is full,
the argument is the same as in section~\ref{ss.jewett-open} (the dynamics is not used here). This completes the proof of theorem~\ref{theo.bi-ergodic-open} (and thus of proposition~\ref{p.bi-ergodic-ouvert}).

\part{Semi-conjugacies: from open  to fibred}\label{B}

In this part we complete the proof of theorem~\ref{theo.bi-ergodic-fibred}: we explain how to turn the open  semi-conjugacy provided by proposition~\ref{p.bi-ergodic-ouvert}
 into a skew-product. Contrarily to what precedes, here the results and methods are purely topological.
 
\begin{propalpha}\label{prop.open-fibred}
Let $g:\cC \ra \cC$ and $f:\cK \ra \cK$ be two homeomorphisms on two Cantor sets, and $\Pi : \cC \ra \cK$ a continuous surjective map between such that $f\Pi = \Pi g $. Suppose in addition that $\Pi$ is an open map.

Then there exist a Cantor set $\cQ$, a homeomorphism $g_1 : \cK \times \cQ \ra \cK \times \cQ$ and a continuous injective map $\Phi : \cC \ra \cK \times \cQ$  such that the diagram below commutes (where $\pi_{1}: \cK \times \cQ \to\cK$ is the projection on the first coordinate). Furthermore, every point of $\cK \times \cQ$ that is not in the image of $\Phi$ is a wandering point of $g_1$, In particular every $g_1$-invariant measure is supported on~$\Phi(\cC)$.
$$
\xymatrix{ 
&   \cK \times \cQ  \ar@{.>}[rr]  ^{g_1}    \ar@{.>}[ddddl] ^{\pi_{1}} & & \cK \times \cQ  \ar@{.>}[ddddl]  ^{\pi_{1}} \\ 
\cC \ar@{.>}[ur]    ^{\Phi}   \ar@{>}[rr]   ^(.65){g}    \ar@{>}[ddd]   ^(.4){\Pi}  & & \cC \ar@{.>}[ur]    ^{\Phi}   \ar@{>}[ddd]   ^(.4){\Pi} \\
\\
\\
\cK \ar@{>}[rr]   ^{f}    & & \cK 
}$$
\end{propalpha}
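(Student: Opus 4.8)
The plan is to decompose the problem into a (trivial) embedding part and a (substantial) extension part, and to invoke the openness of $\Pi$ only in the latter. First I would record the two consequences of openness that get used. Since $\cC$ is compact, $\Pi$ is automatically a closed map; being also open, it follows that the fibre map $k\mapsto\Pi^{-1}(k)$ is continuous from $\cK$ into the hyperspace of closed subsets of $\cC$ (for the Hausdorff metric). More importantly, for any clopen partition $\cV$ of $\cC$ the images $\Pi(V)$ ($V\in\cV$) are clopen, so the atoms of the finite Boolean algebra they generate form a clopen partition $\cW$ of $\cK$ such that every nonempty atom of $\cV\vee\Pi^{-1}(\cW)$ lying over some $W\in\cW$ actually maps \emph{onto} $W$ (because $\Pi(V\cap\Pi^{-1}(W))=\Pi(V)\cap W=W$ once $W\subseteq\Pi(V)$). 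This ``fullness'' mechanism is what will let me refine clopen partitions of $\cC$ and of $\cK$ in tandem while keeping $\Pi$ fibrewise surjective on atoms.

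For the embedding I would take any homeomorphic embedding $\cC\hookrightarrow\{0,1\}^{\NN}$, compose it with a fixed embedding $\{0,1\}^{\NN}\hookrightarrow\{0,1\}^{\NN}=:\cQ$ whose image is nowhere dense, call the composite $\psi$, and set $\Phi:=(\Pi,\psi)$. Then $\Phi$ is a continuous injection, hence a homeomorphism onto its compact image $Z:=\Phi(\cC)$, it satisfies $\pi_1\circ\Phi=\Pi$, each fibre $Z_k:=\psi(\Pi^{-1}(k))$ is nowhere dense in $\cQ$, and $Z$ is nowhere dense in $\cK\times\cQ$. Any homeomorphism $g_1$ as in the statement is \emph{forced} on $Z$ to equal $\bar g:=\Phi g\Phi^{-1}$, which covers $f$, i.e.\ $\bar g(k,q)=(f(k),\bar g_k(q))$ with $\bar g_k:Z_k\to Z_{f(k)}$ a homeomorphism depending continuously on $(k,q)$. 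So the whole content of the proposition is the construction of an extension $g_1$ of $\bar g$ to $\cK\times\cQ$ covering $f$.

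The extension I aim for is one for which $\Omega:=\cK\times\cQ\setminus Z$ carries a \emph{free clopen $\ZZ$-tower}: $\Omega=\bigsqcup_{m\in\ZZ}D_m$ with the $D_m$ open, pairwise disjoint, and $g_1(D_m)=D_{m+1}$. This yields the conclusion at once: every point of $\Omega$ is wandering, since $D_m$ is a neighbourhood of each of its points and $g_1^{n}(D_m)=D_{m+n}$ is disjoint from $D_m$ for $n\neq0$; and any $g_1$-invariant probability $\mu$ satisfies $\mu(D_m)=\mu(D_0)$ for all $m$, whence $\sum_{m}\mu(D_m)=\mu(\Omega)\le1$ forces $\mu(D_0)=0$, so $\mu(\Omega)=0$ and $\mu$ is carried by $Z$. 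To build such a $g_1$ I would argue by induction over a decreasing sequence $(\cN_n)_{n\ge0}$ of clopen neighbourhoods of $Z$ with $\bigcap_n\cN_n=Z$, starting from $\cN_0=\cK\times\cQ$ and $\cN_1$ a clopen neighbourhood of $Z$ containing no whole fibre $\{k\}\times\cQ$ (possible because the $Z_k$ are nowhere dense and vary continuously), and then, working annulus by annulus $\cN_n\setminus\cN_{n+1}$, extending the partial definition of $g_1$ by a Knaster--Reichbach--type back-and-forth: at each stage one refines the relevant clopen partitions of $\cC$ and of $\cK$ using the fullness mechanism above, defines $g_1$ on the new clopen approximation as a fibre-preserving homeomorphism covering $f$, carves the pieces $D_m$ as one goes so that $g_1(D_m)=D_{m+1}$ and $\bigsqcup_m D_m=\Omega$, and arranges the compatibility needed for $g_1$ to agree with $\bar g$ in the limit. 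Passing to the limit produces $g_1$, its continuity at $Z$ being guaranteed by the compatibility built in at each level.

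The main obstacle is the inductive step, where all the requirements compete simultaneously. On one hand $g_1$ must be a genuine homeomorphism covering the \emph{prescribed} $f$, which means every refinement of a clopen partition of $\cC$ has to be matched by a refinement of a partition of $\cK$ on which $\Pi$ is onto every atom fibrewise --- this is precisely where the openness of $\Pi$ is indispensable. On the other hand the complement of $Z$ must be organised into a single bi-infinite $g_1$-orbit of clopen sets shrinking to $Z$, not merely into wandering cells that could accumulate away from $Z$; and since $\cK\times\cQ$ is compact, $Z$ cannot be made an attractor for $g_1$, so the wandering genuinely has to come from this two-sided ladder structure. Making the ``covering $f$'' bookkeeping and the ``free $\ZZ$-tower on $\Omega$'' bookkeeping coexist consistently through all the levels is the heart of the argument; the embedding, the reduction, and the passage to the limit are routine.
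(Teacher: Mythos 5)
Your overall strategy is the right one and is, in outline, the paper's: shrink a nested sequence of clopen neighbourhoods onto the image of $\Phi$, organise the complement into a bi-infinite ladder of clopen sets $D_m$ with $g_1(D_m)=D_{m+1}$ (the paper calls them $X_n$, with $X_n\to F$ both as $n\to+\infty$ and $n\to-\infty$), and use the openness of $\Pi$ to keep every clopen piece projecting \emph{onto} a clopen subset of $\cK$ so that the pairing can be made fibred over $f$. But the proposal stops exactly where the proof starts: the inductive step in which the ``covering $f$'' bookkeeping, the ``free $\ZZ$-tower'' bookkeeping, and --- crucially --- the continuity of $g_1$ at $Z$ together with the identity $g_1|_Z=\bar g$ must all be arranged simultaneously is described as a goal, not constructed. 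Continuity at $Z$ is the genuinely delicate point: you must guarantee that a point of $D_m$ sitting in a tiny clopen piece near $z\in Z$ is sent near $\bar g(z)$, uniformly as $m\to\infty$, and nothing in your sketch controls this. As written, your reduction makes the problem \emph{harder} than necessary, because after embedding $\cC$ into $\cK\times\{0,1\}^{\NN}$ you only have the partial map $\bar g$ on $Z$, so your back-and-forth must perform a fibred Knaster--Reichbach extension (continuously in $k$) \emph{and} build the wandering structure at the same time.

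The paper's key device, which you are missing, removes the extension problem entirely: take $\cQ=\cC\times\cD$ for an auxiliary Cantor set $\cD$ with a marked point $d_0$, and set $\Phi(c)=(\Pi(c),c,d_0)$. Then $g_0:=f\times g\times\mathrm{Id}$ is \emph{already} a globally defined homeomorphism of $\cK\times\cQ$ fibring over $f$ and restricting on $F:=\Phi(\cC)$ to the conjugate of $g$ (the factor $\cD$ only serves to make $F$ meet each fibre $\{\theta\}\times\cQ$ in a set with empty interior). The task is thereby reduced to modifying $g_0$ \emph{off} the invariant set $F$ so that the complement becomes wandering, and the continuity of the modified map at $F$ comes almost for free: one pairs a clopen piece $R$ of $X_n$ with a piece $R'$ of $X_{n+1}$ only when $g_0(\widehat R)$ meets $\widehat R'$ for suitable clopen ``parents'' $\widehat R,\widehat R'$ meeting $F$, so the new map's image of a small set near $F$ stays close to the $g_0$-image, and continuity follows from that of $g_0$. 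The openness of $\Pi$ enters precisely to show that these parents exist, cover $F$, and project onto the correct clopen sets of $\cK$ so that the pairing is compatible with $f$ --- this is the rigorous form of your ``fullness mechanism''. I would encourage you either to adopt this reduction or to supply, in full, the fibred extension-plus-tower induction you outline; in its current state the argument has a genuine gap at its centre.
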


We will deduce proposition~\ref{prop.open-fibred} from the following lemma.
\begin{lemma*}
Let $f$ be a homeomorphism of a Cantor set $\cK$. Let $\cQ$ be another Cantor set, and $g_0$ be a homeomorphism of the product $\cK\times \cQ$ which fibres over $f$. Let $F$ be a closed $g_{0}$-invariant subset of $\cK\times \cQ$. Assume that, for every $\theta\in\cK$, the closed set $F_\theta:=F\cap (\{\theta\}\times \cQ)$ has empty interior in $\{\theta\}\times \cQ$. Also assume that the first projection $\pi_{1} : \cK\times \cQ \to \cK$ restricts to an open map $\pi_{1} : F \to \cK$.
Then there exists a homeomorphism $g_{1}:\cK\times \cQ\to \cK\times \cQ$ with the following properties:
\begin{itemize}
\item[--] $g_{1}$ fibres over $f$;
\item[--] $g_{1}=g_0$ on $F$;
\item[--] the non-wandering set of $g_{1}$ is included in $F$.
\end{itemize}
\end{lemma*}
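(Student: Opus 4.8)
The plan is to reduce the lemma to a purely topological reformulation and then build $g_1$ by hand. Write $U:=(\cK\times\cQ)\setminus F$; since each $F_\theta$ has empty interior and $\cQ$ is infinite, $F\neq\cK\times\cQ$, and in fact $\pi_1(U)=\cK$. The first observation is that the conclusion ``$\Omega(g_1)\subseteq F$'' follows as soon as $g_1$ carries on $U$ a \emph{bi-infinite clopen tower}: a partition $U=\bigsqcup_{k\in\ZZ}D_k$ into sets that are clopen \emph{in} $U$ (hence open in $\cK\times\cQ$) with $g_1(D_k)=D_{k+1}$. Indeed each $D_k$ is then an open neighbourhood of each of its points disjoint from all its nonzero iterates, so every point of $U$ is wandering and $\Omega(g_1)$, being closed, lies in $(\cK\times\cQ)\setminus U=F$. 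Thus the whole problem becomes: produce a homeomorphism $g_1$ of $\cK\times\cQ$ that fibres over $f$, agrees with $g_0$ on $F$, and admits such a tower on $U$.

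Second, I would extract from the openness of $\pi_1|_F$ a clopen skeleton of $F$ compatible with the fibration. Because $F$ is closed, $\pi_1|_F$ open and onto, one checks that for any finite clopen partition $\cP$ of $\cQ$ the set of atoms of $\cP$ meeting $F_\theta$ depends \emph{locally constantly} on $\theta$ (upper semicontinuity is automatic from $F$ closed and $\cQ$ compact; lower semicontinuity is exactly openness of $\pi_1|_F$). Fixing a refining sequence of finite clopen partitions $\cP_m$ of $\cQ$ with mesh tending to $0$, and letting $(W_m)_\theta$ be the union of the atoms of $\cP_m$ meeting $F_\theta$, one gets clopen sets $W_m\subseteq\cK\times\cQ$ with $F\subseteq W_{m+1}\subseteq W_m$ and $\bigcap_m W_m=F$. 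The annuli $A_m:=W_{m-1}\setminus W_m$ (with $W_0:=\cK\times\cQ$) are clopen, pairwise disjoint, and $\bigsqcup_{m\ge1}A_m=U$; the function $h\colon\cK\times\cQ\to\NN\cup\{\infty\}$, $h(x):=\sup\{m:x\in W_m\}$, is continuous with $h^{-1}(\infty)=F$ and $h|_U$ locally constant. Emptiness of interior of $F_\theta$ makes the fibres of $U$ ``large'': each nonempty fibre of each $A_m$ is a nonempty clopen subset of $\{\theta\}\times\cQ$, hence homeomorphic to $\cQ$, so such fibres can be split and matched freely.

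Third, I would build $g_1$ so that $h$ is \emph{strictly unimodal downward} along every $g_1$-orbit in $U$: along each orbit the integer sequence $(h(g_1^nx))_n$ strictly decreases, attains a unique minimum, then strictly increases, and tends to $+\infty$ at both ends. Granting this, put $\rho(x):=-(\text{time of the minimum of }h\text{ along the orbit of }x)$; then $\rho\circ g_1=\rho+1$ and, by strict unimodality, $\{\rho=0\}=\{x\in U: h(g_1x)>h(x)\text{ and }h(g_1^{-1}x)>h(x)\}$, which is clopen in $U$ since $h|_U$, $(h\circ g_1)|_U$, $(h\circ g_1^{-1})|_U$ are continuous into a discrete space. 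Hence $D_k:=g_1^k(\{\rho=0\})=\{\rho=k\}$ is the desired tower. To construct such a $g_1$, set $g_1|_F:=g_0|_F$ and on $U$ route an ``outward stream'' and an ``inward stream'' through the annuli: each orbit leaves $F$, moves outward one annulus at a time up to a turning annulus, then moves back inward one annulus at a time toward $F$. Formally, by induction on $m$: having fixed $g_1$ on $A_1\cup\dots\cup A_{m-1}$ together with a clopen splitting of each of these annuli into an inward part and an outward part, one splits $A_m$ fibrewise into the piece that receives the inward stream (a clopen copy, fibre over fibre and compatibly with $f$, of the inward part of $A_{m-1}$), the piece that emits the outward stream (likewise matched to $A_{m-1}$), and a turning piece — using the flexibility of clopen subsets of $\cQ$; emptiness of interior guarantees there is always room, and the local constancy of the $W_m$'s in $\theta$ keeps everything continuous in $\theta$. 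Arranging in addition that $g_1(A_m)\subseteq W_{m-1}$ with the images shrinking toward $F$ forces $g_1$ to extend continuously across $F$ with value $g_0$ there, and bijectivity, bicontinuity and ``fibres over $f$'' are built in at each step.

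The main obstacle is exactly this last construction: carrying out the routing so that simultaneously $g_1$ is a genuine homeomorphism, it fibres on the nose over $f$, it varies continuously with $\theta$ over all of $\cK$, and it matches $g_0$ continuously along $F$. The two hypotheses are what unlock it: emptiness of interior of the $F_\theta$ provides the room in the fibres for the two streams, while openness of $\pi_1|_F$ is precisely what makes the clopen annular skeleton $(W_m)$ locally constant in $\theta$, hence what makes the routed $g_1$ continuous up to and across $F$. Everything else — the reduction, and the verification that a bi-infinite clopen tower kills the non-wandering set outside $F$ — is soft.
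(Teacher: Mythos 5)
Your architecture is in fact the paper's own: a decreasing sequence of clopen neighbourhoods of $F$ shrinking to $F$, annuli split into an outgoing and an incoming stream, orbits routed out to a turning annulus and back in, and the resulting bi-infinite clopen tower forcing every point off $F$ to wander. The reduction in your first paragraph and your identification of where each hypothesis enters are correct. But the step you yourself flag as ``the main obstacle'' is where the content of the lemma lives, and as sketched it has two concrete holes. The first: your skeleton $W_m$, built from a refining sequence of partitions of $\cQ$ alone, does not have the property your routing needs, namely that inside every box of the skeleton meeting $F$, and over every base point $\theta$ of that box, the next annulus has a nonempty fibre. Empty interior of $F_\theta$ only gives, for each fixed $\theta$ and each atom $b$ of $\cP_{m-1}$ meeting $F_\theta$, some atom of $\cP_{m'}$ inside $b$ missing $F_\theta$ for $m'$ large \emph{depending on $\theta$ and $b$} --- not for $m'=m$. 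So for fixed $m$ the relevant piece of $(A_m)_\theta$ can be empty, and a piece of the inward stream of $A_{m-1}$ then has nowhere to go; the matching is not total. The paper repairs exactly this by constructing its partitions of $\cK\times\cQ$ adaptively (property 4 of the $\cP_n$): for every box $P$ and every $\theta\in\pi_1(P)$ there is a box of the next partition inside $P$, over a neighbourhood of $\theta$, disjoint from $F$ --- obtained from empty interior plus closedness of $F$ plus compactness of $\pi_1(P)$. You need this uniformization before ``there is always room'' is true at a fixed stage $m$.

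The second hole is in the continuity at $F$. Knowing that $g_1(A_m)$ lies in a neighbourhood of $F$ that shrinks with $m$ gives $g_1(x)\to F$ as $x\to F$, not $g_1(x)\to g_0(x_0)$ as $x\to x_0\in F$. You must also localize horizontally: a stream piece lying in a small box $P$ meeting $F$ has to be sent into a small box meeting $g_0(P)$. The paper enforces this through the father/mother relation ($R$ may be matched to $R'$ only if $g_0(\widehat R)$ meets $\widehat{R'}$, and both parents meet $F$ thanks to the openness hypothesis), which is precisely what makes its Claim 4, hence continuity on $F$ and the identity $g_1=g_0$ there, go through; your routing as described carries no such constraint. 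Both gaps are fixable, but fixing them essentially amounts to reconstructing the paper's partitions and its ``divide and marry'' step, so the proposal stops just short of the part of the argument that is actually nontrivial.
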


\subsection*{Proof of proposition~\ref{prop.open-fibred} assuming the lemma}

Let $f:\cC\to\cC$, $g:\cK\to\cK$ and $\Pi:\cC\to\cK$ be as in the statement of the proposition. 
We choose an auxiliary Cantor set $\cD$, and we pick a point $d_0$ in $\cD$. Then we consider the homeomorphism $g_{0} = f \times g\times \mbox{Id}$ on the Cantor set $\cK \times \cC\times \cD$. The map $\Phi: \cC \to  \cK \times \cC \times \cD$ given by $\Phi(c) = (\Pi(c),c,d_0)$ is continuous and injective. The set
$$F :=\Phi(\cC) = \{(\theta,x,d_0)\in\cK\times\cC\times\cD \mbox{ such that } \Pi(x)=\theta \}$$
is invariant under $g_{0}$. 
Thus if we denote the Cantor set $\cC \times \cD$ by $\cQ$, we see that the map $g_{0}$ satisfies all properties required by proposition~\ref{prop.open-fibred} apart from the fact that points outside the image of $\Phi$ are wandering. This last property will be achieved by modifying $g_{0}$ outside the set $\Phi(\cC)$, thanks to the lemma stated above. Observe that  the map $g_0$ does satisfy the hypothesis of this lemma: indeed, the intersection of $F$ with any fibre $\{\theta\}\times \cQ$ of the first projection $\pi_{1}:\cK \times \cQ \to \cK$ is contained in $\{\theta\}\times \cC\times \{0\}$ and has empty interior in  the fibre (this was the reason for considering the product $\cC\times\cD$ instead of $\cC$). Therefore, proposition~\ref{prop.open-fibred} will follow from the lemma.

\subsection*{Proof of the lemma} 

The proof of the lemma
 occupies the remainder of part~\ref{B}. During this proof, we will construct many partitions of the Cantor set $\cK\times\cQ$. Unless otherwise explicitly stated, the elements of these partitions will be products of clopen (closed and open) subsets of $\cK$ by clopen subsets of~$\cQ$.
 
\paragraph{Construction  of a sequence of partitions $(\cP_n)_{n\geq 0}$ of $\cK\times\cQ$.}~
We will construct a sequence  $(\cP_{n})_{n\geq 0}$ of finite partitions of $\cK\times \cQ$ satisfying the following properties.
\begin{enumerate}
\item For every $n\in\NN$, the elements of the partition $\cP_n$ are products of clopen subsets of $\cK$ by clopen subsets of $\cQ$.
\item For every $n\in\NN$, the partition $\cP_{n+1}$ refines the partition $\cP_n$, and the family of all the elements of all the partitions $\cP_{n}$, $n\geq 0$, is a basis for the topology: for every point $x$ in $\cK\times \cQ$, if we denote by $P_n(x)$ the element of the partition $\cP_n$ containing $x$, then the decreasing sequence $(P_{n}(x))_{n\geq 0}$ is a basis of neighbourhoods of $x$.
\item~\label{pn.open} For every $n$ and every set $P \in \cP_{n}$, if $P$ meets $F$ then the projection $\pi_{1}(F \cap P)$ coincides with $\pi_{1}(P)$.
\item~\label{pn.last} For every $n$ and every set $P\in \cP_{n}$, for every $\theta\in\pi_1(P)\subset\cK$, there exists an element $\check P \in\cP_{n+1}$, included in $P$, such that:
\begin{itemize}
\item[(i)] $\pi_{1}(\check P)$ contains $\theta$,
\item[(ii)] $\check P$ is disjoint from $F$.
\end{itemize}
\end{enumerate}

The construction of the sequence $(\cP_n)_{n\geq 0}$ has two steps. We first construct a sequence $(\cP'_{n})_{n\geq 0}$ satisfying properties 1, 2 and 4 but maybe not 3. Such a sequence is constructed by induction: we get $\cP'_{n+1}$ from $\cP'_{n}$ by   subdividing every element $P \in \cP'_{n}$  in order to guarantee  property 4, making use of the hypothesis that every set $F_{\theta}$ has empty interior in $\{\theta\} \times \cQ$.
Then the partition $\cP_{n}$  is obtained from $\cP'_{n}$ by dividing each element into two pieces, in the following way. For each element $P = a \times b \in \cP'_{n}$ (with $a\subset\cK$ and $b\subset\cQ$), we observe that the set $a_{1} = \pi_{1}(P \cap F)$ is a clopen subset of $\cK$,  thanks to the hypothesis that $\pi_{1} : F \to \cK$ is an open map. Let $a_{2}$ be the clopen set $a \setminus a_{1}$; then we divide $P$ into the two sets $P_{1}= a_{1} \times b$, $P_{2}= a_{2} \times b$, and define the partition $\cP_{n}$ to be the family of all such sets $P_{1}$ and $P_{2}$.

\paragraph{Construction  of the sequence of sets $(X_n)_{n\in\ZZ}$.}~
Now, we want to define a sequence of sets $(X_{n})_{ n \in \ZZ}$ that constitutes an infinite partition of the complement of $F$, with the idea that the homeomorphism $g_{1}$ required by the lemma will map each set $X_{n}$ onto the set $X_{n+1}$. The construction is illustrated by the left-hand part of figure~\ref{figure}.

First, for every $n\in\NN$, we define $V_n$ to be the union of the elements of the partition $\cP_n$ which have a non-empty intersection with $F$. Clearly, $V_n$ is a clopen neighbourhood of $F$.

The set $X_{0}$ is defined as $V_{0} \setminus V_{1}$.
For $n\in\NN\setminus\{0\}$, we define the sets $X_n, X_{-n}$ as follows.
First observe that the set $V_n\setminus V_{n+1}$ is a union of elements $P$ of the partition $\cP_{n+1}$. Now, consider such an element $P$.
It is the product of a clopen subset $a$ of $\cK$ by a clopen subset $b$ of $\cQ$. We divide $b$ into two non-empty clopen subsets $b^-$ and $b^+$, and we set $P^-:=a\times b^-$ and $P^+:=a\times b^+$. And we define, for $n>0$,
\begin{itemize}
\item[--] the set $X_{-n}$ to be the union of all the sets $P^-$ when $P$ ranges over the elements of the partition $\cP_{n+1}$ contained in $V_n\setminus V_{n+1}$;
\item[--] the set $X_{n}$ to be the union of all the sets $P^+$ when $P$ ranges over the elements of the partition $\cP_{n+1}$ contained in $V_n\setminus V_{n+1}$.
\end{itemize}
For every $n \in \ZZ$, the set $X_{n}$ is equipped with the partition $\cP(X_{n})$ induced by  the partition $\cP_{\mid n \mid +1}$.

\bigskip

Our goal is now to construct a homeomorphism $g_1:\cK\times C\to\cK\times C$ such that $g_1=g_0$ on $F$ and, for every $n\in\ZZ$, $g_1(X_n)=X_{n+1}.$ The next paragraph may be seen as a first approximation to $g_{1}$, selecting for each point $x$ a set of points in which, in the last paragraph,  $g_{1}(x)$ will be chosen.

\paragraph{Pairing between $X_{n}$ and $X_{n+1}$}~
We fix some integer $n \in \ZZ$.
We now define a partition $\cX_{n}$ on $X_{n}$ and  a partition $\cX'_{n}$ on $X_{n+1}$ that are adapted to the map $f : \cK \to \cK$. 
We adopt the following notations. First remember that the \emph{wedge}  $\cP_{1} \wedge \cP_{2}$ of two partitions $\cP_{1},\cP_{2}$
is the coarsest  partition refining both $\cP_{1}$ and $\cP_{2}$, that is,   $\cP_{1} \wedge \cP_{2} = \{p_{1} \cap p_{2}, p_{1}  \in \cP_{1}, p_{2} \in \cP_{2}\}$.
If $\cP$ is a partition on some subset of $\cK \times \cQ$, such that every element $P$ of $\cP$ is the product of a clopen subset $a$ of $\cK$ by a clopen subset $b$ of $\cQ$, then we define a partition $\pi_{1*}(\cP)$ of $\cK$ as follows
 $$
 \pi_{1*}(\cP) := \mathop{\bigwedge}_{P=a \times b \in \cP} \left\{ a, \cK \setminus a \right\}.
 $$
This is the coarsest partition such that every set $\pi_{1}(P)$ with $P \in \cP$ can be expressed as the union of elements of the partition.

Now let us consider the following partition on $\cK$:
$$
\cV_{n} = \pi_{1*}(\cP(X_{n}))\wedge f^{-1}(\pi_{1*}(\cP(X_{n+1}))).
$$
Then we set
$$
\cX_{n} = \cP(X_{n}) \wedge \pi_{1}^{-1}(\cV_{n}) \mbox{ and } 
\cX'_{n+1} = \cP(X_{n+1}) \wedge \pi_{1}^{-1}(f(\cV_{n})).
$$ 
In other words, to get $\cX_{n}$ and $\cX'_{n+1}$, the elements of $\cP(X_{n})$ and $\cP(X_{n+1})$ have been cut off in the vertical direction, so that the projection on $\cK$ of the elements of  $\cX_{n}$ are unions of elements of a partition $\cV_n$ of $\cK$, and the projection  of the elements of  $\cX'_{n+1}$ are unions of elements of the image under $f$ of the same  partition $\cV_n$. This is the only property we will use, we do not need  the explicit form of the partition $\cV_{n}$.

Note that as usual the elements of our new partitions are products of clopen sets. For every element $R=a \times b \in \cX_{n}$, we consider the unique element $A \times B \in \cP_{\mid n \mid}$ that contains $R$, and  define the \emph{father} of $R$ to be the set 
$$
\widehat R := a \times B.
$$
Thus $\widehat R$ is obtained by enlarging $R$ vertically to the biggest possible rectangle refining $\cP_{\mid n \mid}$ (see figure~\ref{figure}).
Similarly we define the \emph{mother} of an element $R'=a \times b $ in $\cX'_{n+1}$ which is included in an element  $A \times B$ in the partition $\cP_{\mid n +1\mid}$ to be $\widehat{R'} := a \times B$.

\begin{figure}[htbp]
\includegraphics[width=15cm]{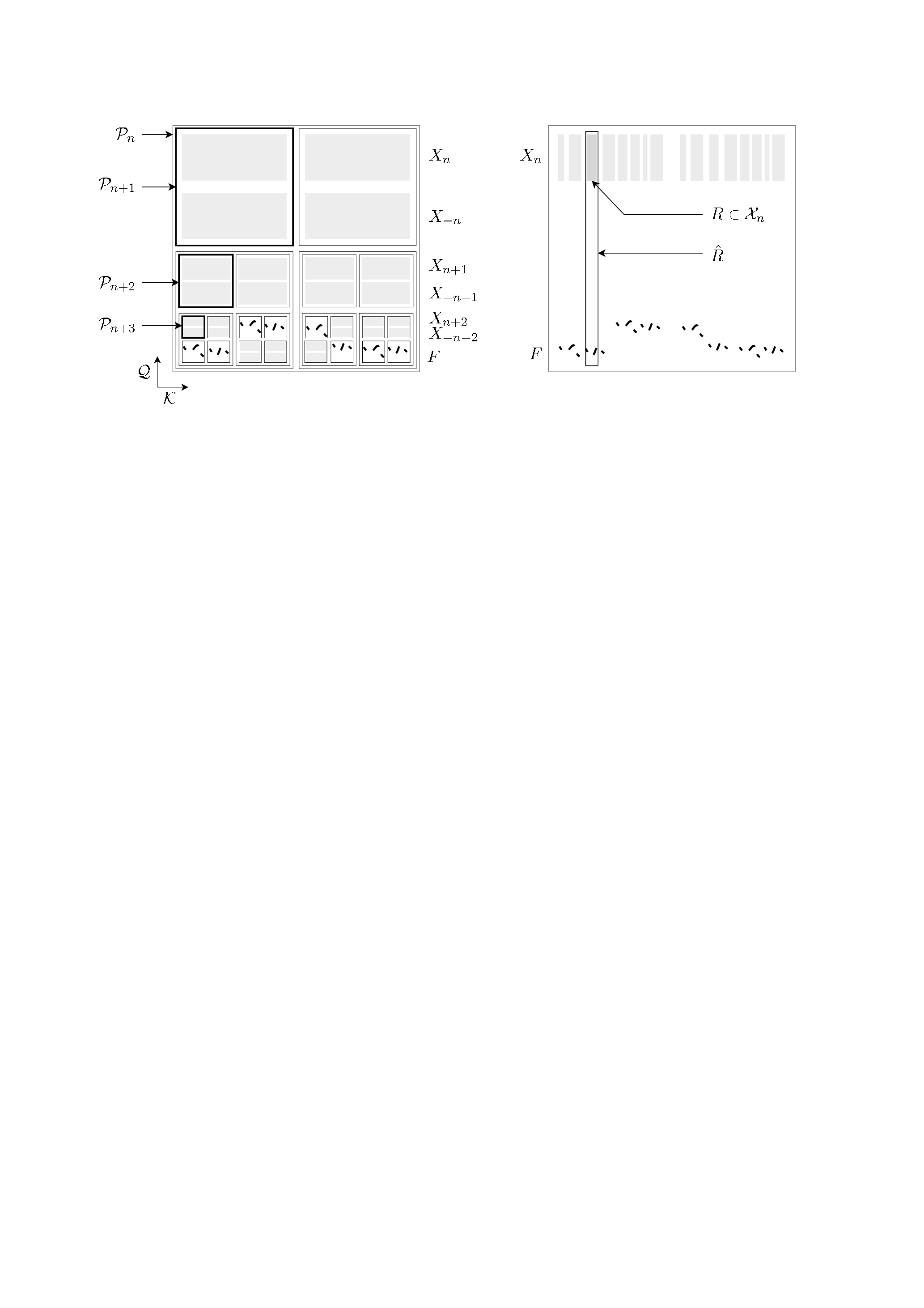}
\caption{The sequence $(X_{n})$ (left) and the definition of fathers (right)}
\label{figure}
\end{figure}

\bigskip

\noindent\textit{Claim~1. Every father and mother meet the set $F$.}

\bigskip

\begin{proof}[Proof of the claim]
Consider some element $R = a \times b$ of the partition $ \cX_{n}$, his father $\widehat R = a \times B$, and the element $P = A \times B$ of the partition $\cP_{\mid n\mid}$ as in the definition of $\widehat R$, with $a \subset A$. The set $R$ is included in $X_{n}$, thus also in $V_{\mid n \mid}$; the definition of $V_{\mid n \mid}$ shows that $P$ is also included in $V_{\mid n \mid}$ and meets $F$.
We now use property~\ref{pn.open} of the partition $\cP_{\mid n \mid}$, which says that $\pi_{1}(F \cap P) = \pi_{1}(P)$:
in other words, the set $\{\theta\}\times B$ meets $F$ for every $\theta \in \pi_{1}(P)$.
In particular $\widehat R = a \times B$ also meets $F$.
This proves the claim concerning fathers. The part concerning mothers is similar.
\end{proof}

We define a relation between the elements of $\cX_n$ and the elements of $\cX'_{n+1}$ as follows: an element $R$ is related to an element $R'$  if and only if $g_0(\widehat R)$ intersects $\widehat R'$.

\bigskip

\noindent \textit{Claim~2 (``No loneliness claim"). Every element $R$ of $\cX_n$ is in relation with at least one element $R'$ of $\cX'_{n+1}$. Every element $R'$ of $\cX'_{n+1}$ is in relation with at least one element $R$ of $\cX_{n}$.
Furthermore, if $R\in\cX_n$ is in relation with $R'\in\cX'_{n+1}$ then $f(\pi_{1}(R))=\pi_{1}(R')$.}

\bigskip

\begin{proof}
Consider an element $R$ of $\cX_{n}$ and his father $\widehat R$. According to claim~1, $\widehat R$ meets $F$. Since $F$ is invariant under $g_{0}$ we get that $g_{0}(\widehat R)$ still meets $F$. Thus to prove the first assertion of claim~2, we only need to see that the collection of all mothers of elements of $\cX'_{n+1}$ cover $F$. Similarly, the second assertion of claim~2 will follow from the fact that the collection of fathers of elements of $\cX_{n}$ cover $F$. Let us prove this last fact (the property concerning mothers follows from similar arguments).

Clearly the  partition $\cP_{\mid n\mid}$ covers $F$. Thus it suffices to show that any element $P=A \times B \in \cP_{\mid n \mid}$ meeting $F$ is a union of fathers of elements of $\cX_{n}$. Let us fix $\theta \in A$, we look for a father containing $\{\theta\} \times B$.
Here we make use of property~\ref{pn.last} of the partition $\cP_{\mid n\mid}$, which provides us with an element $\check P \in \cP_{\mid n\mid}+1$
such that 
\begin{enumerate}
\item $\theta \in \pi_{1}(\check P)$;
\item $\check P \cap F = \emptyset$.
\end{enumerate}
This last property implies that $\check P$ is included in $V_{\mid n \mid } \setminus V_{\mid n \mid +1}$, and thus $\check P$ has been divided horizontally into two sets, one of which is included in $X_{n}$ (the other one in $X_{-n}$). In particular there exists some  $q \in \cQ$ such that $X_{n} \cap \check P$ contains the point $(\theta,q)$. Let $R=a \times b$ be the element of the partition $\cX_{n}$ containing this point. We just have found a father $\widehat R = a \times B$ that contains $\{\theta\} \times B$, as desired.

We are left to prove the last assertion of claim~2. Suppose $R\in\cX_{n}$ is in relation with $R'\in\cX'_{n+1}$, that is, $g_{0}(\widehat R)$ meets $\widehat R'$.
Of course we also have $f(\pi_{1}(\widehat R))$ meets $\pi_{1}(\widehat R')$. By definition of the parents we have $f(\pi_{1}(\widehat R)) = f(\pi_{1}(R))$ and 
$\pi_{1}(\widehat R') = \pi_{1}(R')$. Now by definition of the partitions $\cX_{n}, \cX'_{n+1}$ both sets belongs to the partition $f(\cV_{n})$; since they meet they are equal. This completes the proof of the claim.
\end{proof}

\paragraph{The homeomorphism $g_1$} We first define the restriction of the homeomorphism $g_1$ to the set $X_n$, for every $n\in\ZZ$:

\bigskip

\noindent\textit{Claim~3. We can find a homeomorphism $g_1:X_n\to X_{n+1}$ such that:
\begin{itemize}
\item[--] $g_1$ fibres over $f$;
\item[--] $g_1$ is compatible with the above relation, in the following sense: for every $R\in \cX_n$ and every $R'\in \cX'_{n+1}$, if $g_1(R)$ meets $R'$ then $R$ is in relation with $R'$.
\end{itemize}}

\bigskip

\begin{proof}(``Divide and marry'')
We divide each set $R=a \times b \in \cX_{n}$ horizontally into as many pieces as the number of elements $X' \in \cX_{n+1}$ that are related to $R$,
thus getting  a partition of $R$ by the sets
$$
\{a \times b_{R'}, R \mbox{ is related to } R'\}.
$$
Similarly we consider for each $R'=a' \times b' \in \cX'_{n+1}$ a partition of $R'$,
$$
\{a' \times b'_{R}, R \mbox{ is related to } R'\}.
$$
Now for every related couple $(R=a \times b,R'=a'\times b')$ we define the map $g_{1}$ on $a\times b_{R'}$  by the formula
$$
g_{1}(\theta,q) = (f(\theta), G_{R,R'}(q))
$$
where $G_{R,R'}$ is any homeomorphism between the clopen sets $b_{R'}$ and $b'_{R}$.
Using  claim~2 we see that $g_{1}$ sends $a\times b_{R'}$ to $a'\times b'_{R}$, and that
this process defines a bijection from $X_{n}$ to $X_{n+1}$.
This map is clearly a homeomorphism which fibres over $f$ and is compatible with our relation.
\end{proof} 

We now define the homeomorphism $g_{1}$ on $\cK \times \cQ$ in the following way:
$g_{1}$ coincides with $g_{0}$ on $F$ and $g_{1}$ is given by claim~3 on each set $X_{n}, n \in \ZZ$. Since the $X_{n}$'s form a partition of the complement of $F$, we see that $g_{1}$ is a bijection. 
The continuity is obvious on the complement of $F$ since the sets $X_{n}$ are clopen. In order to prove the continuity of $g_1$ on $F$, we need the following claim.

\bigskip

\noindent\textit{Claim~4. Let $n \geq 0$. Then for every set $P \in \cP_{n}$ that meets $F$,
the set $g_{1}(P)$ is included in the union of the elements of $\cP_{n-1}$ that meets $g_{0}(P)$.
}

\bigskip

\begin{proof}
Let  $P \in \cP_{n}$ be a set  that meets $F$, we want to control the image of $P$. By construction, the map  $g_{1}$ coincides with $g_{0}$ on  $P \cap F$, thus on $P \cap F$ there is nothing to check. Let $x$ be a point in $P \setminus F$. Then there is some integer $m$ with $ | m  | \geq n$ such that $x$ belongs to  $X_{m}$.
Let $R$ be the element of the partition $\cX_{m}$ of $X_{m}$ that contains $x$.
By definition of $g_{1}$, the point $g_{1}(x)$ belongs to the mother $\widehat R'$ of some set $R' \in \cX_{m+1}$ which is related to $R$.
By definition of mothers, the set $\widehat R'$ is included in some element of the partition $\cP_{|m+1|}$, thus also in some element $P'$ of the partition $\cP_{n-1}$ since $|m+1| \geq n-1$. 
On the other hand the father $\widehat R$ of $R$ is included in $P$. By definition of the relation, the set $g_{0}(\widehat R)$ meets the set $\widehat R'$.
Hence the set $g_{0}(P)$ meets the set $P'$. The point $g_{1}(x)$ belongs to the element $P'$ of $\cP_{m-1}$ that meets $g_{0}(P)$, as desired.
\end{proof}

The continuity of $g_1$ on $F$ is now an immediate consequence of claim~4, together with property~2 of the sequence of partitions $(\cP_n)_{n\geq 0}$ and the continuity of $g_0$. Therefore $g_1$ is continuous everywhere. Then the continuity of $g_{1}^{-1}$ is automatic due to the compactness.
Finally let us note that  the sequence of sets $(X_n)_{n\in\ZZ}$ is a partition of $(\cK\times \cQ)\setminus F$ into clopen subsets, and by construction the  $g_{1}$ maps each $X_{n}$ onto $X_{n+1}$. Thus every point outside $F$ is a wandering point and the map $g_{1}$ satisfies the conclusion lemma~B. This completes the proof of the lemma.

\subsection*{Proof of theorem~\ref{theo.fibred-weiss} and~\ref{theo.bi-ergodic-fibred}}

Theorem~\ref{theo.bi-ergodic-fibred} is now an immediate consequence of propositions~\ref{p.bi-ergodic-ouvert} and~\ref{prop.open-fibred}:  the homeomorphism $g$ and the conjugating map $\Pi$ provided by proposition~\ref{p.bi-ergodic-ouvert} satisfy the hypotheses of proposition~\ref{prop.open-fibred}, and the diagram of theorem~\ref{theo.bi-ergodic-fibred} is obtained by concatenating the diagrams of propositions~\ref{p.bi-ergodic-ouvert} and~\ref{prop.open-fibred}. Similarly, theorem~\ref{theo.fibred-weiss} is an immediate consequence of theorem~\ref{theo.open} and proposition~\ref{prop.open-fibred}.


\part{Realisation of circle rotations on the two-torus}
\label{C}

In this part, we prove the existence of uniquely ergodic minimal realisations of circle rotations on the two-torus (theorem~\ref{theo.circle-rotation}). The construction has some additional properties that requires the following definition. If $\phi : \SS^1 \to \SS^1$ is a measurable function, then the set $\{(x, \phi(x)), x \in \SS^1\}$ is called a \emph{measurable graph}. If $\phi$ is continuous then this set is called a \emph{continuous graph}. Remember that the group $\mathrm{SL}(2,\RR)$ acts projectively on the circle. We will prove the following statement.
\begin{theodeuxbis}
\label{t.exotic-rotation}
For every $\alpha\in\RR\setminus\QQ$,  there exists a continuous map $A : \SS^1 \ra \mathrm{SL}(2,\RR)$ homotopic to a constant such that the skew-product homeomorphism $f:\TT^2\ra\TT^2$ defined by $f(x,y)=(x+\alpha, A(x).y)$ has the following properties:
\begin{itemize}
\item[--] $f$ is minimal; 
\item[--] $f$ is uniquely ergodic and the invariant measure $\mu$ is supported on a measurable graph.
\end{itemize}
\end{theodeuxbis}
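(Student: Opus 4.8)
The plan is to construct $A$ by the classical conjugation (Anosov--Katok) method, realising $f$ as a $C^0$-limit of skew-products conjugate to the trivial one. Write $\TT^2=\SS^1\times\SS^1$, the second factor being identified with the projective line on which $\mathrm{SL}(2,\RR)$ acts, and let $R_\alpha(x,y)=(x+\alpha,y)$. Fix a sequence of continued-fraction convergents $p_n/q_n\to\alpha$ (so $|\alpha-p_n/q_n|<q_n^{-2}$). I would build inductively continuous, null-homotopic maps $\delta_n:\SS^1\to\mathrm{SL}(2,\RR)$ that are $\tfrac1{q_n}$-periodic, set $h_n=h_{n-1}\cdot\delta_n$ (pointwise product in the group, $h_0\equiv\mathrm{Id}$), $H_n(x,y)=(x,h_n(x)\cdot y)$, $\Delta_n(x,y)=(x,\delta_n(x)\cdot y)$, and $f_n=H_n\circ R_\alpha\circ H_n^{-1}$. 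Then $f_n$ is the skew-product with cocycle $A_n(x)=h_n(x+\alpha)h_n(x)^{-1}$, and each $A_n$ is null-homotopic (pointwise products, inversions and precompositions by rotations preserve the homotopy class in $\pi_1(\mathrm{SL}(2,\RR))\cong\ZZ$).

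The convergence mechanism is standard. Since $\delta_n$ is $\tfrac1{q_n}$-periodic it commutes with $R_{p_n/q_n}$, so $\Delta_n R_\alpha\Delta_n^{-1}=R_{p_n/q_n}(\Delta_n R_{\alpha-p_n/q_n}\Delta_n^{-1})$ is $C^0$-close to $R_\alpha$ with error $\lesssim \mathrm{Lip}(\delta_n)\cdot|\alpha-p_n/q_n|$; likewise $A_n(x)=h_{n-1}(x+\alpha)\,\big(\delta_n(x+\alpha)\delta_n(x)^{-1}\big)\,h_{n-1}(x)^{-1}$, and the bracket differs from $\mathrm{Id}$ by a term of order $\mathrm{Lip}(\delta_n)\cdot|\alpha-p_n/q_n|$, so $A_n$ differs from $A_{n-1}$ by an amount bounded by that error times a constant depending only on the already-fixed map $h_{n-1}$. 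Since $q_n$ ranges over an infinite set, at stage $n$ I may choose $q_n$ large enough (after the shape of $\delta_n$, hence $\mathrm{Lip}(\delta_n)$, is fixed) that $d_{C^0}(f_n,f_{n-1})$, $\|A_n-A_{n-1}\|_\infty$ and the analogous quantities for inverses are all $<2^{-n}$. Then $A=\lim A_n$ is continuous, bounded (hence staying in a compact part of $\mathrm{SL}(2,\RR)$, so $A_n^{-1}\to A^{-1}$ uniformly too) and null-homotopic (the class is locally constant under uniform convergence), and $f(x,y)=(x+\alpha,A(x)\cdot y)$ is a homeomorphism, the $C^0$-limit of the $f_n$.

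All the work is in the \emph{shape} of $\delta_n$ on one period $[0,\tfrac1{q_n}]$, where it is a null-homotopic loop $\gamma_n$ based at $\mathrm{Id}$. I would take $\gamma_n$ of ``there-and-back'' type: on most of the parameter interval $\gamma_n(s)=r_{\theta(s)}\,D_n\,r_{-\theta(s)}$ with $D_n=\mathrm{diag}(R_n,R_n^{-1})$ very hyperbolic ($R_n\to\infty$) and $\theta$ increasing from $0$ to $\pi-\tfrac1n$ then returning; on a sub-interval of total measure $\eta_n\to0$ it equals $\mathrm{Id}$. This single shape is designed to do three things at once. \emph{(i) Minimality.} For every $y$ the graph $\Gamma^{(n)}_y=H_n(\SS^1\times\{y\})$ is $\varepsilon_n$-dense in $\TT^2$ (off the bad intervals $\delta_n(x)\cdot y$ is essentially the turning expanding direction, giving a low-complexity curve; but at the $q_n$ bad locations, spaced $\tfrac1{q_n}$ apart, the near-pole of the hyperbolic matrix makes $\delta_n(x)\cdot y$ sweep across all of $\SS^1$). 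Having fixed a time $T_n$ large enough that $R_\alpha$-orbits are $\tfrac{\varepsilon_n}2$-dense in horizontal circles, I then impose on all later stages $m>n$ a smallness budget (depending on the now-known $T_n$ and $\mathrm{Lip}(f_n)$) guaranteeing that $f$ and $f_n$ stay $\tfrac{\varepsilon_n}2$-close on orbits of length $T_n$; thus every $f$-orbit is $2\varepsilon_n$-dense for all $n$, so $f$ is minimal. \emph{(ii) Collapsing.} For any probability measure $\lambda$ on $\SS^1$, $(\Delta_n)_*(\mathrm{Leb}\otimes\lambda)$ is weak-$\ast$ close (up to $\eta_n$ plus a small error from atoms of $\lambda$ near the swept eigendirections, controlled because those directions sweep) to $\mathrm{Leb}\otimes\rho_n$ for a fixed measure $\rho_n$ produced by the $\theta$-schedule; letting $\theta_{\max}=\pi-\tfrac1n$ vary so that $\rho_n$ converges, $(H_n)_*(\mathrm{Leb}\otimes\lambda)$ becomes a Cauchy sequence \emph{independent of $\lambda$}, converging to a single measure $\mu$. \emph{(iii) Growth.} Choosing $R_n\gg\|h_{n-1}\|_\infty^2$ forces $\|h_n(x)\|\ge R_n/\|h_{n-1}\|_\infty$ on the bulk of each period; since the complement of the bulks is summable in measure, $\limsup_n\|h_n(x)\|=\infty$ for a.e.\ $x$. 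Hence the disintegration of $\mu=\lim_n(H_n)_*(\mathrm{Leb}\otimes\lambda)$ over the first coordinate is a Dirac mass $\mu_x=\delta_{\phi(x)}$ for a.e.\ $x$ (the unique subsequential limit point of $(h_n(x))_*\lambda$), with $\phi$ measurable; thus $\mu$ is carried by the measurable graph of $\phi$.

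With these properties the conclusion follows quickly: $\mu$ is $f$-invariant and projects to Lebesgue; any $f$-invariant $\nu$ projects to an $R_\alpha$-invariant measure on $\SS^1$, hence to Lebesgue, and averaging $\nu$ under $f_n$ (with $n$, then the averaging length, suitably large) gives an $f_n$-invariant measure $(H_n)_*(\mathrm{Leb}\otimes\lambda_n)$ close to $\nu$, which by (ii) is close to $\mu$; letting $n\to\infty$ yields $\nu=\mu$, so $f$ is uniquely ergodic and $\mu$ ergodic, with invariant measure supported on a measurable graph as required, and $A$ homotopic to a constant by construction. The main obstacle is the bookkeeping: one must choose, in order, $\eta_n$ and $\theta_{\max}^{(n)}$ (fixing the shape of $\gamma_n$), then $R_n$ large enough for (ii) and (iii) to beat the fixed modulus of continuity of $H_{n-1}$, then $q_n$ large enough for the convergence estimate $\mathrm{Lip}(\delta_n)\cdot|\alpha-p_n/q_n|<2^{-n}\|h_{n-1}\|_\infty^{-2}$, then $T_n$ and the smallness budget imposed on all future steps, and verify mutual compatibility and that all exceptional (``bad-direction'', non-bulk) sets genuinely have summable measure uniformly. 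Controlling the a.e.\ divergence $\limsup_n\|h_n(x)\|=\infty$ while avoiding systematic cancellation between the expanding direction of $\delta_n(x)$ and the contracting direction of $h_{n-1}(x)$ is the delicate analytic point.
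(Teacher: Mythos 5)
Your overall architecture is the right one and is close in spirit to the paper's: both proofs realise $f$ as a $C^0$-limit of skew-products $H_n\circ(R_\alpha\times\mathrm{Id})\circ H_n^{-1}$, get minimality from the $\varepsilon_n$-density of the invariant continuous graphs at stage $n$ together with an open finite-time condition imposed on all later perturbations, and get unique ergodicity from the collapse of the family of fibred invariant measures. The approximation mechanism differs: you use $\tfrac1{q_n}$-periodic conjugacies commuting with $R_{p_n/q_n}$ (classical Anosov--Katok), while the paper uses a Rokhlin tower for $R_\alpha$ of height $N$ on which the conjugacy $m$ is ramped up through powers $S_j^i$ of a fixed hyperbolic matrix and back down, so that $m(x+\alpha)^{-1}m(x)$ is $\varepsilon$-close to the identity for combinatorial rather than arithmetic reasons. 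Both mechanisms work, and your Liouville-free version is, if anything, more flexible.

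There is, however, one genuine gap, precisely at the point you flag as ``delicate'': the passage from $\limsup_n\|h_n(x)\|=\infty$ for a.e.\ $x$ to ``the disintegration of $\mu=\lim_n (H_n)_*(\mathrm{Leb}\otimes\lambda)$ is a.e.\ a Dirac mass $\delta_{\phi(x)}$''. This implication is false as stated: norm blow-up of $h_n(x)$ does not give convergence of $(h_n(x))_*\lambda$ (the expanding directions may oscillate, so there need be no ``unique subsequential limit point''), and even a.e.\ pointwise convergence of the fibre measures to Diracs would not by itself identify the disintegration of the weak-$*$ limit of the joint measures without a uniformity/domination argument to exchange the two limits. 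What is actually needed --- and what your construction does provide if you extract it --- is a quantitative concentration statement at each finite stage: since $\delta_n(x)$ crushes the complement of a small neighbourhood of its repelling direction into a tiny interval around $\theta(q_nx)$, the measure $(H_n)_*(\mathrm{Leb}\otimes\lambda)$ gives mass $\ge 1-\epsilon_n$, \emph{uniformly in} $\lambda$, to the set $\Gamma_n:=H_{n-1}\bigl(\{(x,y):\operatorname{d}(y,\theta(q_nx))<r_n\}\bigr)$, which is a ``strip'' of vertical width at most $1/n$ once $r_n$ is small against the modulus of continuity of $H_{n-1}$. One must then show $\mu(\Gamma_n)\ge 1-\epsilon_n$ for \emph{every} $f$-invariant $\mu$ (not just for the constructed limit), which is exactly where your ``averaging $\nu$ under $f_n$'' step is too loose: the correct route is to fix, at stage $n$, a time $N_n$ for which all $f_n$-orbit segments of length $N_n$ spend a fraction $\ge 1-\epsilon_n$ of their time in the open set $\Gamma_n$ (this uses only the unique ergodicity of $R_\alpha$), observe that this is a $C^0$-open property of the map, and budget the tail so that $f$ inherits it; Birkhoff then gives $\nu(\Gamma_n)\ge 1-\epsilon_n$ for every invariant $\nu$. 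The set $\bigcup_{k_0}\bigcap_{n\ge k_0}\Gamma_n$ is then a full-measure measurable graph for every invariant measure, which yields simultaneously the graph structure and uniqueness (a measure carried by a graph and projecting to Lebesgue is determined). This is exactly the paper's item~4 and its limit argument; replacing your norm-growth step by this strip argument closes the gap.
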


If $f$ is a map given by theorem~\ref{t.exotic-rotation}  then the first coordinate projection $\pi_1:\TT^2\to\SS^1$ induces an isomorphism between $(\TT^2,\mu,f)$ and $(\SS^1,\mathrm{Leb}, R_{\alpha})$, where $R_{\alpha}$ is the rotation $x\mapsto x+\alpha$. Therefore, theorem~\ref{theo.circle-rotation} will follow from theorem~\ref{t.exotic-rotation}. The core of the proof of theorem~\ref{t.exotic-rotation} is the following technical lemma.
\begin{lemma*}
For every $\alpha\in\RR\setminus\QQ$,
and every $\varepsilon>0$, there exists a homeomorphism $g : \TT^2 \ra \TT^2$ with the following properties.
\begin{enumerate}
\item There exists a continuous map $m : \SS^1 \ra \mathrm{SL}(2,\RR)$ homotopic to a constant, such that
$$
g = M^{-1} \circ ( R_{\alpha} \times \mathrm{Id} ) \circ M
$$
where $M(x,y)=(x,m(x).y)$. In particular, the homeomorphism $g$ is a skew-product over the circle rotation $R_\alpha$, and is conjugated to $R_\alpha\times\mbox{Id}$.
\item The homeomorphism $g$ is $\varepsilon$-close to $R_{\alpha} \times \mathrm{Id}$ in the $C^0$-topology.
\item Every $g$-invariant continuous graph $C$ is $\varepsilon$-dense in $\TT^2$.
\item There exists a horizontal open strip
 $\Gamma=\{(x,y)\in\TT^2 \mid y\in V_x\}$ of width $\varepsilon$ (by such we mean that $V_x$ is an interval of length $\varepsilon$ depending continuously on $x$) such that, for every $g$-invariant continuous graph $C=\{(x,\phi(x))\mid x\in\SS^1\}$, one has
$$\mbox{Leb}\left(\pi_1(C\cap\Gamma)\right)=\mbox{Leb}\left(\{x\in\SS^1 \mid \phi(x)\in V_x\}\right)\geq 1-\epsilon.$$
\end{enumerate}
\end{lemma*}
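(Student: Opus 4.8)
\emph{Strategy.} The plan is to obtain $g$ as a conjugate of $R_\alpha\times\mathrm{Id}$ by a fibre‑preserving homeomorphism whose fibre part oscillates extremely rapidly along $\SS^1$, the oscillation speed being calibrated to a good rational approximation of $\alpha$ so that the conjugate stays $C^0$‑close to $R_\alpha\times\mathrm{Id}$. Throughout, $\SS^1$ will also denote the projective line $\RR\PP^1$ on which $\mathrm{SL}(2,\RR)$ acts by Möbius transformations. I first reduce the statement to the construction of one \emph{profile}: a continuous map $N:\SS^1\to\mathrm{SL}(2,\RR)$ homotopic to a constant and an interval $V\subset\SS^1$ of length $\varepsilon$ such that
\begin{itemize}
\item[(i)] $\mathrm{Leb}\bigl(\{u\in\SS^1:\ c\in N(u)\cdot V\}\bigr)\ge 1-\varepsilon$ for every $c\in\SS^1$;
\item[(ii)] $\{N(u)^{-1}\cdot c:\ u\in\SS^1\}$ is $\varepsilon$‑dense in $\SS^1$ for every $c\in\SS^1$.
\end{itemize}
Given such an $N$, I would pick a convergent $p/q$ of $\alpha$ with $q>2/\varepsilon$ and with $\|q\alpha\|=|q\alpha-p|$ so small that both $N$ and $u\mapsto N(u)^{-1}$ displace every point of $\SS^1$ by less than $\varepsilon$ when the argument is translated by $\|q\alpha\|$ (possible by uniform continuity, since such $q$ can be taken arbitrarily large), and set $m(x)=N(qx)$, $M(x,y)=(x,m(x)\cdot y)$, $g=M^{-1}\circ(R_\alpha\times\mathrm{Id})\circ M$. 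The map $m$ is continuous and homotopic to a constant because $x\mapsto qx$ has degree $q$ while $N$ has degree $0$, so item~1 of the lemma holds; and $g(x,y)=(x+\alpha,A(x)\cdot y)$ with $A(x)=m(x+\alpha)^{-1}m(x)=N(qx+q\alpha)^{-1}N(qx)$, which by $1$‑periodicity of $N$ equals $N(qx+(q\alpha-p))^{-1}N(qx)$, so the choice of $q$ makes $A(x)$ displace every point by less than $\varepsilon$; this is item~2.

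\emph{Invariant graphs.} Because $M$ preserves the fibres of $\pi_1$ and $R_\alpha$ is minimal, a continuous graph $C$ is $g$‑invariant exactly when $C=M^{-1}(\SS^1\times\{c\})$ for some $c\in\SS^1$, i.e. $C$ is the graph of $\phi_c(x)=m(x)^{-1}\cdot c=N(qx)^{-1}\cdot c$. I would then use that the map $x\mapsto qx$ of $\SS^1$ preserves Lebesgue measure and sends every interval of length $>1/q$ onto all of $\SS^1$. For item~4, with the constant strip $\Gamma=\SS^1\times V$,
$$\mathrm{Leb}\bigl(\{x:\phi_c(x)\notin V\}\bigr)=\mathrm{Leb}\bigl(\{u:N(u)^{-1}\cdot c\notin V\}\bigr)=\mathrm{Leb}\bigl(\{u:c\notin N(u)\cdot V\}\bigr)\le\varepsilon$$
by (i). For item~3, given $(x_0,y_0)\in\TT^2$, the surjectivity statement applied to an $\varepsilon$‑interval around $x_0$ together with (ii) produces an $x$ in that interval with $\phi_c(x)$ within $\varepsilon$ of $y_0$; shrinking $\varepsilon$ at the outset absorbs the constants lost here.

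\emph{Building the profile.} I would fix $V$ of length $\varepsilon$, an interval $V'\supset V$ of length $2\varepsilon$, and split $\SS^1$ into a short arc $W_0$ of length $\varepsilon/2$ disjoint from $V'$ and its complement $G$. On $G$, let $N(u)$ be hyperbolic with a large fixed multiplier $\lambda$, with repelling fixed point $r(u)$ confined to the middle third of $V$ and attracting fixed point $a(u)$ running across $\SS^1\setminus V$ and back (winding number $0$, speed bounded and bounded away from $0$), so that every point of $\SS^1\setminus V$ is crossed by $a$. On $W_0$, let $N(u)$ again be hyperbolic with multiplier $\lambda$, with $a(u)$ parked at a point outside $V'$ and $r(u)$ running across $V'$ and back (winding $0$), so that every point of $V'$ is crossed by $r$; glue the two pieces continuously at $\partial G$. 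Then $a$ and $r$ stay in disjoint arcs, so they never collide, and the resulting loop has winding number $0$ in $\mathrm{PSL}(2,\RR)$, hence lifts to a loop in $\mathrm{SL}(2,\RR)$ homotopic to a constant. To check (i): for $u\in G$, since $r(u)$ sits well inside $V$ and $\lambda$ is large, $N(u)\cdot V$ is the complement of an interval around $a(u)$ of length $\rho(\lambda)\to0$, so $\{u\in G:\ c\notin N(u)\cdot V\}\subset\{u\in G:\ |a(u)-c|\le\rho(\lambda)\}$ has measure $O(\rho(\lambda))<\varepsilon/2$ once $\lambda$ is large; together with $\mathrm{Leb}(W_0)=\varepsilon/2$ this gives (i) (no control is needed on $W_0$). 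To check (ii): given $c\notin V$, the point $c$ is crossed by $a(u)$ at some $u^\ast\in G$, and near $u^\ast$ the point $N(u)^{-1}\cdot c$ equals $c$ when $a(u)=c$ and otherwise lies near $r(u)$ on the arc from $c$ to $r(u)$ missing $a(u)$; as $a(u)$ crosses $c$ this arc switches to the complementary one, so $N(u)^{-1}\cdot c$ sweeps one side‑arc up to $c$ and then the other side‑arc away from $c$, and those two arcs cover $\SS^1$; for $c\in V$ one argues identically on $W_0$, where $r(u)$ — the attracting point of $N(u)^{-1}$ — crosses $c$. This gives (ii).

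\emph{Where the difficulty lies.} Everything after the profile is soft, coming from equidistribution of $x\mapsto qx$ together with the minimality of $R_\alpha$. The real work is the explicit construction of $N$ inside $\mathrm{SL}(2,\RR)$: keeping one family of fixed points confined to the tiny interval $V$ (which forces (i)) while simultaneously making the other family sweep past every point of the circle (which forces (ii)), all without fixed‑point collisions and with total winding number zero, so that $N$ — and hence $m$ — is homotopic to a constant, as demanded by item~1 and by the iterative scheme that will construct $f$ in Theorem~\ref{t.exotic-rotation}.
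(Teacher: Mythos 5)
Your overall architecture --- reduce everything to a single ``profile'' $N:\SS^1\to\mathrm{SL}(2,\RR)$ and then set $m(x)=N(qx)$ for a convergent $p/q$ of $\alpha$, so that $m(x+\alpha)^{-1}m(x)=N\bigl(qx+(q\alpha-p)\bigr)^{-1}N(qx)$ is uniformly close to the identity --- is a legitimate alternative to the paper's construction, which instead builds $m$ along a Rokhlin tower for $R_\alpha$ and makes it ramp up and down slowly along orbits. Your identification of the invariant graphs, and your derivation of items 1, 2 and 4 from property (i) of the profile, are sound. The gap is in property (ii) of the profile, and it propagates to a genuine failure of item 3 of the lemma.

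The sweeping mechanism you invoke --- $N(u)^{-1}\cdot c$ running over essentially the whole circle as a fixed point passes through $c$ --- operates only when the \emph{repelling} fixed point of $N(u)^{-1}$, i.e.\ the attracting fixed point $a(u)$ of $N(u)$, crosses $c$ transversally. In your profile $a(u)$ is confined to $\SS^1\setminus V$ on $G$ and to $\SS^1\setminus V'$ on $W_0$, so it never reaches any $c$ in the interior of $V$. On $W_0$ it is the \emph{attracting} fixed point $r(u)$ of $N(u)^{-1}$ that crosses $c$, and the two situations are not ``identical'': when the attracting point crosses $c$ the image simply tracks it. Indeed $N(u)^{-1}c$ always lies on the arc from $c$ to $r(u)$ that avoids $a(u)$; for $c\in V$ this forces $N(u)^{-1}c\in V$ for all $u\in G$ and $N(u)^{-1}c\in V'$ for all $u\in W_0$. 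Consequently, for every $c$ in the interval $V$ the $g$-invariant graph $M^{-1}(\SS^1\times\{c\})$ is contained in the strip $\SS^1\times V'$ of width $2\varepsilon$, and item 3 fails for these graphs --- which is fatal, since the iterative scheme needs \emph{every} invariant graph of $f_k$ to be $\varepsilon_k$-dense in order to get minimality of the limit. The defect is repairable: insert a third short block $W_1$ of measure at most $\varepsilon/4$ on which $r(u)$ is parked outside $V'$ while $a(u)$ sweeps across $V'$; this activates the sweep for $c\in V$ and only costs an extra $\varepsilon/4$ in estimate (i), at the price of some extra care with the gluing and the winding numbers. The paper sidesteps the issue structurally: it uses $k$ hyperbolic families $S_j$ whose attracting intervals $A_j$ \emph{cover} the vertical circle while the repelling intervals $R_j$ are pairwise disjoint, so every $c$ avoids all but at most one $R_{j_0}$ and density of the graph follows from $\bigcup_{j\neq j_0}A_j$ being $\varepsilon/2$-dense.
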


\subsection*{Proof of the lemma}

The following fact, which is a refinement of the classical Rokhlin lemma, will be applied to the circle rotation $R_{\alpha}$.
\begin{fact}
Let us consider a compact manifold $X$, a homeomorphism $h$ of $X$ and a $h$-invariant probability measure $\mu$ which has no atom.  Let $N,k$ be any positive integers and $\epsilon$ be any positive real number. Then there exist some compact sets $D_1,\dots,D_k\subset X$ such that: 
\begin{enumerate}
\item the sets $h^i(D_j)$ for $0\leq i \leq N$ and $1\leq j \leq k$ are pairwise disjoint;
\item the union of the $h^i(D_j)$ for $0\leq i \leq N$ and $1\leq j \leq k$ has $\mu$-measure larger than $1-\varepsilon$;
\item for any $j_0\in \{1,\dots,k\}$, the $\mu$-measure of the union $\bigcup_{i=0}^N h^i(D_{j_0})$ is smaller than $k^{-1}$.
\end{enumerate}
\end{fact}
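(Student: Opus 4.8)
The plan is to derive this from the classical Rokhlin--Halmos lemma by cutting a single Rokhlin tower into $k$ pieces. First, observe that the statement implicitly needs $h$ to be aperiodic with respect to $\mu$ (it fails verbatim for $h=\mathrm{Id}$): in our application $h=R_{\alpha}$ with $\alpha$ irrational, so every orbit is infinite and aperiodicity holds; in general one would split off the periodic part of $X$, but we shall only use the aperiodic case. Granting aperiodicity, the Rokhlin--Halmos lemma applied with tower height $N+1$ and error $\varepsilon/2$ yields a measurable set $B\subset X$ such that $B,h(B),\dots,h^N(B)$ are pairwise disjoint and $\mu\bigl(\bigcup_{i=0}^N h^i(B)\bigr)>1-\varepsilon/2$.

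Next I would cut $B$ into $k$ disjoint pieces and make them compact. Using that $\mu$ has no atom, choose pairwise disjoint measurable sets $E_1,\dots,E_k\subset B$ with $\mu(E_j)=(1-\delta)\,\mu(B)/k$ for some small $\delta>0$, so that $\mu\bigl(\bigcup_j E_j\bigr)=(1-\delta)\,\mu(B)$. By inner regularity of the Borel measure $\mu$ on the compact metric space $X$, replace each $E_j$ by a compact subset $D_j\subset E_j$ with $\mu(E_j\setminus D_j)$ so small that $\mu\bigl(\bigcup_{0\leq i\leq N,\ 1\leq j\leq k} h^i(D_j)\bigr)>1-\varepsilon$; this sum of measures behaves well because $h$ is a homeomorphism and all the sets involved are pairwise disjoint, so it suffices to take $\delta$ small and the compact approximations fine enough for this last inequality to survive.

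Finally I would check the three conclusions. Conclusion 1 is immediate: the $D_j$ are pairwise disjoint subsets of $B$, hence $h^i(D_j)\subset h^i(B)$ and the sets $h^i(B)$, $0\leq i\leq N$, are pairwise disjoint. Conclusion 2 was arranged in the approximation step. For conclusion 3, fix $j_0$; by disjointness inside the tower,
$$\mu\Bigl(\bigcup_{i=0}^N h^i(D_{j_0})\Bigr)=(N+1)\,\mu(D_{j_0})\leq (N+1)\,\mu(E_{j_0})=(1-\delta)\,\frac{(N+1)\,\mu(B)}{k}\leq\frac{1-\delta}{k}<\frac1k,$$
where we used that $(N+1)\,\mu(B)=\mu\bigl(\bigcup_{i=0}^N h^i(B)\bigr)\leq 1$. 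The one genuinely delicate point is the aperiodicity remark at the start; beyond that, the argument is the standard Rokhlin construction together with routine bookkeeping of $\varepsilon$ and $\delta$ and an inner-regularity step to pass from measurable to compact sets.
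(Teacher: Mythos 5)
Your proof is correct and follows essentially the same route as the paper: Rokhlin's lemma for the tower, non-atomicity to split the base into $k$ pieces of (nearly) equal measure, and inner regularity to pass to compact sets, with only the order of the last two steps interchanged. Your remark that aperiodicity is implicitly required (and holds in the intended application to $R_\alpha$) is a valid observation that the paper's proof glosses over.
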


\begin{proof}
Rokhlin lemma ensures the existence of a measurable set $A\subset X$ such that the iterates $h^i(A)$
for $0\leq i \leq N$ are pairwise disjoint and their union has $\mu$-measure larger than $1-\varepsilon$. Then, using the fact that the measure $\mu$ is regular, we can find a compact set $B\subset A$ such that these properties are is still satisfied when we replace $A$ by $B$.
One can assume that $B$ is the union of disjoint compact sets with arbitrarily small measure:
this allows to decompose $B$ as a disjoint union $D_1\cup \dots\cup D_k$ of compact sets whose $\mu$-measure
is close to $\mu(B)/k$. These sets satisfy the conclusion of the fact.
\end{proof}

Let us come to the proof of the lemma. We see the torus $\TT^2$ as the product of two copies of $\SS^1$. For sake of clarity, we shall distinguish between these two copies, denoting them respectively by $\SS^1_h$ and $\SS^1_v$ (where ``$h$" and ``$v$" stand for ```horizontal" and ``vertical"). The rotation $R_{\alpha}$ acts on $\SS^1_{h}$, whereas the elements of $\mbox{SL}(2,\RR)$ defined below act on $\SS^1_{v}$. 

\paragraph{Construction of the homeomorphism $g$.} According to the first item of the lemma
we have to construct a continuous map \hbox{$m:\SS^1_{h}\to\mbox{SL}(2,\RR)$} homotopic to a constant.
We proceed step by step.

\begin{itemize}
\item We first choose a finite collection of intervals $A_1,\dots,A_k\subset\SS^1_{v}$ such that
\begin{itemize}
\item for every $j$, the length of the interval $A_j$ is less than $\epsilon/2$;
\item the union $A_1\cup\dots\cup A_k$ covers the whole circle $\SS^1_{v}$.
\end{itemize}
Note that in particular one has $k\varepsilon/2\geq 1$.

\item We choose some pairwise disjoint intervals $R_1,\dots,R_k\subset\SS^1_{v}$ such that, for each $j\in\{1,\dots,k\}$, the interval $R_j$ is disjoint from the interval $A_j$. 

\item For each $j\in\{1,\dots,k\}$, we choose a hyperbolic map 
$S_j\in\mathrm{SL}(2,\RR)$ whose attractive fixed point is included in the interior of $A_{j}$ and whose repulsive fixed point is included in the interior of  $R_{j}$,
and which is $\epsilon$-close to the identity of $\SS^1_{v}$ (for the $C^0$ topology). Note that  $S_j$  maps $A_j$ into itself.
Then  we choose an integer $\ell_j$ such that  $S_j^{\ell_j}$ maps $\SS^1_{v}\setminus R_j$ into $A_j$.
We set $\displaystyle \ell:=\max_{j\in\{1\dots k\}} \ell_j$. So, for every $j$, the homeomorphism $S_j^\ell$ maps $\SS^1_{v}\setminus R_j$ into $A_j$.

\item We choose an integer $N>8\ell/\varepsilon$ large enough so that for every $x\in\SS^1_{h}$, the orbit segment $x,R_\alpha(x),\dots,R_\alpha^{N-2\ell}(x)$ is $(\epsilon/2)$-dense in~$\SS^1_{h}$.

\item We choose by the above fact a finite number of compact sets
$D_1,\dots,D_k\subset\SS^1_{h}$ such that:
\begin{itemize}
\item the sets $R_\alpha^i(D_j)$ for $j=1,\dots,k$ and $i=0,\dots,N$ are pairwise disjoint;
\item the Lebesgue measure of the union $\bigcup_{j=1}^k\bigcup_{i=0}^N R_\alpha^i(D_j)$ is larger than $1-\epsilon/4$;
\item for each $j_0\in \{1,\dots,k\}$, the measure of $\bigcup_{i=0}^N R^i_\alpha(D_{j_0})$
is smaller than $k^{-1}\leq \varepsilon/2$.
\end{itemize}

\item For $j\in \{1,\dots,k\}$ and $i\in \{0,\dots,N\}$
we define the map $m$ on $R^i_\alpha(D_j)$ by
$$
m:=\left\{
\begin{array}{ll}
 S_j^i & \mbox{if }0\leq i\leq \ell\\
 S_j^\ell & \mbox{if }\ell\leq i\leq N-\ell\\
 S^{N-i}_j & \mbox{if }N-\ell\leq i\leq N.
\end{array}
\right.$$

Note that $m(x+\alpha)^{-1}m(x)$ is $\varepsilon$-close to the identity for every $x\in \bigcup_{j=1}^{k}\bigcup_{i=0}^{N-1} R_\alpha^i(D_j)$. 
Now, we extend continuously the map $m$ on $\SS^1_h$ with the constraints that
$m$ is homotopic to a constant and $m(x+\alpha)^{-1}m(x)$ is $\varepsilon$-close
to the identity for every $x\in\SS^1_h$.
This can be done as follow. For each $j\in \{1,\dots,k\}$,
we choose a small neighbourhood $U_j$ of $D_j$ and
a continuous map $\phi\colon U_j \to SL(2,\RR)$, $\varepsilon$-close to the identity of $\SS^1_v$, equal to $S_{j}$ on $D_{j}$,
and that coincides with the identity on the boundary of $U_j$.
Then we set for $x\in R^i_\alpha(U_j)$,
$$
m(x):=\left\{
\begin{array}{ll}
 (\varphi(R_\alpha^{-i}(x)))^i & \mbox{if }0\leq i \leq \ell\\
 (\varphi(R_\alpha^{-i}(x)))^\ell & \mbox{if }\ell\leq i\leq N-\ell\\
 (\varphi(R_\alpha^{-i}(x)))^{N-i} & \mbox{if }N-\ell\leq i\leq N.
\end{array}
\right.$$
If the neighbourhoods $U_j$ are chosen small enough then the sets
$R_\alpha^i(U_j)$ for  $0 \leq i \leq N, 1\leq j\leq k$ are pairwise disjoint and the above formulae do make sense. For the points $x$ that do not belong to one of these sets, $m(x)$ is defined to be  the identity map of $\SS^1_v$.

\item We choose an open interval $V_x\subset \SS^1_v$ of length $\varepsilon$ which depends continuously
on $x\in \SS^1_h$ and such that for every $1\leq j\leq k$
it contains $A_j$ whenever $x$ belongs to $R^i_\alpha(D_j)$
for some $0\leq i\leq N$.
Then we consider the horizontal strip $\Gamma:=\{(x,y) \mid y\in V_x\}$.
\end{itemize}

\paragraph{Properties of the homeomorphism $g$.}
Let us check that the maps $g$ and $M$ associated to $m$
as in the statement of the lemma satisfy the required properties:

\begin{enumerate}

\item The first item of the lemma is a consequence of the definition of $g$.

\item Since the map $x\mapsto m(x+\alpha)^{-1}m(x)$ is $\varepsilon$-close to the identity map of the circle $\SS^1_{v}$, the homeomorphism $g:(x,y)\mapsto (x+\alpha \; , \; m(x+\alpha)^{-1}m(x).y)$ is $\varepsilon$-close to $R_\alpha\times\mbox{Id}$. 

\item Let $C$ be a  $g$-invariant  continuous graph. Then $M(C)$ is a $(R_\alpha\times\mbox{Id})$-invariant continuous graph. Hence there exists a point $y\in\SS^1_{v}$ such that $C=M(\SS^1_{h}\times \{y\})$.
Let $j\in\{1,\dots,k\}$ be an integer such that $y\notin R_j$.
We claim that $C$ is $\varepsilon/2$-dense in $\SS^1_{h}\times A_j$.
Indeed consider a point $x$ in $D_j$.
On the one hand, for every $i\in\ZZ$, the point $M(R_\alpha^i(x),y)$ belongs to the graph $C$.
On the other hand, for $i\in \{\ell,\dots,N-\ell\}$, by our choice of $x$ we have 
$$
M\left(R_\alpha^i(x),y\right) = \left(R_\alpha^i(x), m\left(R_\alpha^i(x)\right).y\right) = \left(R_\alpha^i(x), S_j^\ell.y\right) 
$$
and this point belongs to $\{R_\alpha^i(x)\}\times A_j$ by our choice of $S_{j}$ and since $y \not \in R_{j}$.
Now remember that the length of the interval $A_j$ is less than $\epsilon/2$ and that the integer $N$ was chosen in such a way that the sequence $R_\alpha^{\ell}(x),\dots,R_\alpha^{N-\ell}(x)$ is $\epsilon/2$-dense in $\SS^1$. This shows the claim.

Since the intervals $R_j$ are pairwise disjoint, there is at most one integer $j_0\in\{1,\dots,k\}$ such that $y \in R_{j_0}$. By construction the union $\bigcup_{j\neq j_0} A_j$ is $\epsilon/2$-dense in $\SS^1_{v}$. Therefore the graph $C$ is $\epsilon$-dense in $\TT^2=\SS^1_{h}\times\SS^1_{v}$.

\item Let us once again consider a $g$-invariant continuous graph $C=M(\SS^1_h\times \{y\})$.
As above for every $i\in \{\ell,\dots,N-\ell\}$ and $j\in \{1,\dots, k\}$
such that $y\notin R_j$, the point $M\left(R_\alpha^i(x),y\right)$ belongs to  the set $\{R_\alpha^i(x)\}\times A_j$ which  is included in the strip $\Gamma$.
There is at most one $j_0\in \{1,\dots,k\}$ such that $y\in R_{j_0}$. Hence,
$$\pi_1(C\cap\Gamma)\supset\bigcup_{j\notin j_0}\bigcup_{i=\ell}^{N-\ell} R_\alpha^i(D_j).$$
As a consequence, we get
\begin{eqnarray*}
\mbox{Leb}\left(\pi_1(C\cap\Gamma)\right) & \geq & \frac{N-2\ell}{N}
\left(\mbox{Leb}\left(\bigcup_{j=1}^k\bigcup_{i=1}^{N} R_\alpha^i(D_j)\right)
- \mbox{Leb}\left(\bigcup_{i=1}^{N} R_\alpha^i(D_{j_0})\right)\right) \\
& \geq & (1-\epsilon/4)(1-\epsilon/4-\varepsilon/2)\geq 1-\varepsilon
\end{eqnarray*}
(the second inequality follows from the definition of the intervals $D_1,\dots,D_k$ and the integer $N$).
\end{enumerate}

This completes the proof of the lemma.

\subsection*{Proof of theorem~\ref{t.exotic-rotation}}

We will use the lemma to construct inductively a sequence of homeomorphisms $(M_{k})_{k \geq 0}$.
This will give rise to the sequences $(\Phi_{k})_{k \geq 0}$ and $(f_{k})_{k \geq 0}$
defined by
$$
\Phi_{k}= M_{0} \circ \cdots \circ M_{k} \mbox{ and } f_{k} = \Phi_{k} \circ (R_{\alpha} \times \mathrm{Id})
\circ \Phi_{k}^{-1}.
$$

We set $M_{0} = \mathrm{Id}$. Assuming that the sequence $M_{0}, \dots , M_{k}$ has been constructed, we consider a small positive number $\varepsilon_{k+1}$ (the conditions on $\varepsilon_{k+1}$ will be detailed below) and we apply the lemma to $\varepsilon=\varepsilon_{k+1}$.
The lemma provides the maps $g_{k+1}$ and $M_{k+1}$, and 
 we have
$$f_{k+1} = \Phi_{k} \circ M_{k+1} \circ (R_{\alpha} \times \mathrm{Id}) \circ M_{k+1}^{-1} \circ \Phi_{k}^{-1} =    \Phi_{k} \circ g_{k+1} \circ  \Phi_{k}^{-1}.$$
By the second item of the lemma the homeomorphisms $g_{k+1}$ and $R_{\alpha} \times \mathrm{Id}$
are $\varepsilon_{k+1}$-close, so if $\varepsilon_{k+1}$ has been chosen small enough,
then $f_{k+1}$ is $2^{-k}$-close to 
$f_{k} = \Phi_{k} \circ  (R_{\alpha} \times \mathrm{Id}) \circ  \Phi_{k}^{-1}$.
One can thus assume that the sequence $(f_{k})$ is a Cauchy sequence.
It converges to a homeomorphism $f$ of the two-torus,
which will be a skew-product of the form required by the theorem (because of the first item of the lemma
and since $SL(2,\RR)$ is closed in the space of circle homeomorphisms).
It remains to check that, as soon as the sequence $(\varepsilon_{k})$ decreases sufficiently fast,
the map $f$ satisfies the conclusions of the theorem.

Let us address the minimality. By item 3 of the lemma, at step $k$ every $g_{k}$-invariant continuous graph is $\varepsilon_{k}$-dense in $\TT^2$. Consequently we can choose $\varepsilon_{k}$ small enough so that every $f_{k}$-invariant graph is $1/k$-dense in $\TT^2$. Since $f_{k}$ is conjugate to $R_{\alpha} \times \mathrm{Id}$,
every orbit is dense in an invariant circle, and there exists a positive integer $N_{k}$ such that  every piece of orbit $p, f_{k}(p), \dots , f^{N_{k}}(p)$ is again $1/k$-dense in $\TT^2$.
This last property is open: the  sequence $(\varepsilon_{\ell})_{\ell > k}$ can be chosen so that  this property is shared by the limit map  $f$, namely,  
every piece of $f$-orbit of length $N_{k}$ is $1/k$-dense. This entails the minimality of $f$.

We now turn to the ergodic properties.
Let $\Gamma$ be the horizontal strip given by item 4 of the lemma when applied at step $k$
and let $\Gamma_{k } := \Phi_{k-1}(\Gamma)$.
Then $\Gamma_{k}$ is a horizontal strip: the  intersection of $\Gamma_{k}$ with any circle
$\{x\}\times \SS^1_{v}$ is an interval, whose length is less than $1/k$ if $\varepsilon_{k}$ is small enough.
Furthermore, for every $f_{k}$-invariant continuous graph $C$, the  set 
 $\pi_{1}(C \cap \Gamma_{k})$ is an open subset of the circle whose Lebesgue measure is bigger than  $1-\varepsilon_{k}$. Then the unique ergodicity of the rotation $R_{\alpha}$ entails the following fact.
\begin{fact}
There exists a positive integer $N_{k}$ with the following property:
 every $f_{k}$-orbit of length  $N_{k}$ spends more than a ratio $(1-\varepsilon_{k})$ of its time within the strip $\Gamma_{k}$, in other words
for every  point $p$, 
$$
\mathrm{Card} (\{ 0 \leq n < N_{k} , f_{k}^n(p) \in \Gamma_{k} \})
 >  (1-\varepsilon_{k})N_{k}.
$$

\end{fact}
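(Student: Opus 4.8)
The plan is to reduce the statement, via the conjugacy $f_k=\Phi_k\circ(R_\alpha\times\mathrm{Id})\circ\Phi_k^{-1}$ (where $\Phi_k=M_0\circ\cdots\circ M_k$), to a \emph{uniform} ergodic statement for $R_\alpha$ read along the invariant circles of $f_k$. First I would observe that every $f_k$-orbit lies on a single $f_k$-invariant continuous graph: writing $(x_0,y_0):=\Phi_k^{-1}(p)$ one has $f_k^n(p)=\Phi_k(x_0+n\alpha,y_0)$, so the orbit of $p$ is contained in $C_{y_0}:=\Phi_k(\SS^1_h\times\{y_0\})$, which is the continuous graph $\{(x,N(x).y_0):x\in\SS^1_h\}$ with $N(x)=m_0(x)\cdots m_k(x)\in\mathrm{SL}(2,\RR)$. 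Since each $M_i$ fibres over the identity of the base, $\pi_1\circ\Phi_k=\pi_1$, so $f_k^n(p)\in\Gamma_k$ exactly when $x_0+n\alpha\in E_{y_0}:=\pi_1(C_{y_0}\cap\Gamma_k)\subset\SS^1_h$; and $E_{y_0}$ is an open set with $\mathrm{Leb}(E_{y_0})\geq 1-\varepsilon_k$ by item~4 of the lemma applied at step $k$ (using $\Gamma_k=\Phi_{k-1}(\Gamma)$ and $\pi_1\circ\Phi_{k-1}=\pi_1$). Equivalently, setting $\nu_y:=(\Phi_k)_*(\mathrm{Leb}_{\SS^1_h}\times\delta_y)$ — a probability measure carried by $C_y$, projecting to $\mathrm{Leb}$ under $\pi_1$, hence with $\nu_y(\Gamma_k)=\mathrm{Leb}(E_y)\geq 1-\varepsilon_k$ — what we must prove is that the Birkhoff averages $\frac1N\sum_{n<N}1_{\Gamma_k}(f_k^n(p))$ exceed $1-\varepsilon_k$ for $N$ large, \emph{uniformly} in $p$.

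The function $1_{\Gamma_k}$ is not continuous, so it cannot be fed directly to a uniform ergodic theorem; the remedy is to bound it below by a single continuous function which is still of almost full mass for \emph{every} $\nu_y$ simultaneously. Here I would use that $\TT^2\setminus\Gamma_k$ is closed and that $y\mapsto\nu_y$ is weak-$*$ continuous on the compact circle $\SS^1_v$ (immediate from continuity of $\Phi_k$). Pick continuous $\psi_m:\TT^2\to[0,1]$ with $\psi_m\downarrow 1_{\TT^2\setminus\Gamma_k}$. If no $\psi_m$ satisfied $\int\psi_m\,d\nu_y\leq 2\varepsilon_k$ for all $y$, one could choose $y_m$ with $\int\psi_m\,d\nu_{y_m}>2\varepsilon_k$, extract $\nu_{y_{m_\ell}}\to\nu_{y_*}$, and get $\int\psi_{m_0}\,d\nu_{y_*}\geq 2\varepsilon_k$ for every $m_0$ (using $\psi_{m_0}\geq\psi_{m_\ell}$ for $m_\ell\geq m_0$), hence $\nu_{y_*}(\TT^2\setminus\Gamma_k)\geq 2\varepsilon_k$ by monotone convergence — contradicting $\nu_{y_*}(\TT^2\setminus\Gamma_k)\leq\varepsilon_k$. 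So some $m$ works, and $\psi:=1-\psi_m$ is continuous, satisfies $\psi\leq 1_{\Gamma_k}$, and $\int\psi\,d\nu_y>1-2\varepsilon_k$ for every $y$.

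Then I would apply the classical uniform ergodic theorem to the continuous function $\psi\circ\Phi_k$ for the map $R_\alpha\times\mathrm{Id}$: its Birkhoff averages converge \emph{uniformly on} $\TT^2$ to the fibrewise mean $(x,y)\mapsto\int_{\SS^1_h}(\psi\circ\Phi_k)(t,y)\,dt=\int\psi\,d\nu_y$. This is a one-line Fourier computation (for a character $e^{2\pi i(jx+\ell y)}$ the averages are $O(1/N)$ when $j\neq0$ and $R_\alpha\times\mathrm{Id}$-invariant when $j=0$), or, in the spirit of the text, it follows from the unique ergodicity of $R_\alpha$ on each circle $C_y$ together with compactness of the family $\{C_y\}_{y\in\SS^1_v}$. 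Transporting through $\Phi_k$ — i.e. $\frac1N\sum_{n<N}\psi(f_k^n(p))=\frac1N\sum_{n<N}(\psi\circ\Phi_k)\big((R_\alpha\times\mathrm{Id})^n(\Phi_k^{-1}p)\big)$ — and using $\int\psi\,d\nu_y>1-2\varepsilon_k$, we obtain an integer $N_k$ with $\frac1{N_k}\sum_{n<N_k}\psi(f_k^n(p))>1-3\varepsilon_k$ for every $p$, whence $\mathrm{Card}\{0\leq n<N_k:f_k^n(p)\in\Gamma_k\}\geq\sum_{n<N_k}\psi(f_k^n(p))>(1-3\varepsilon_k)N_k$. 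The factor $3$ is harmless: it suffices to have invoked the lemma at step $k$ with parameter $\varepsilon_k/3$, which costs nothing since $\varepsilon_k$ is a free small parameter.

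The main obstacle is precisely the middle step: upgrading the fibrewise estimate $\nu_y(\Gamma_k)\geq 1-\varepsilon_k$ into one continuous test function $\psi\leq 1_{\Gamma_k}$ that is almost of full mass against \emph{all} the invariant measures $\nu_y$ at once, so that a genuinely uniform-in-$p$ conclusion can be drawn. The two endpoints — the conjugacy bookkeeping that puts each orbit on a graph of controlled $\Gamma_k$-density, and the Fourier estimate for $R_\alpha\times\mathrm{Id}$ — are routine.
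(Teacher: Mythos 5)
Your proposal is correct, and it is exactly the argument the paper leaves implicit when it writes that ``the unique ergodicity of the rotation $R_\alpha$ entails the following fact'': the only genuinely delicate point is the one you isolate, namely that $f_k$ itself is \emph{not} uniquely ergodic and $1_{\Gamma_k}$ is not continuous, so one must get uniformity simultaneously in the base point and in the invariant circle. Your construction of a single continuous minorant $\psi\leq 1_{\Gamma_k}$ with $\int\psi\,d\nu_y>1-2\varepsilon_k$ for all $y$ (via weak-$*$ continuity of $y\mapsto\nu_y$ and compactness of $\SS^1_v$) does this correctly; an equivalent and slightly shorter route is the standard semi-uniform ergodic theorem for open sets (if every $f_k$-invariant measure gives $\Gamma_k$ measure $>c$, then some finite-time Birkhoff average exceeds $c$ uniformly), applied after noting that every $f_k$-invariant measure is an average of the $\nu_y$. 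The loss from $\varepsilon_k$ to $3\varepsilon_k$ is indeed harmless for the way the Fact is used afterwards (only $\sum_k\varepsilon_k$ small matters), and is absorbed, as you say, by invoking the lemma with a smaller parameter.
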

Since $\Gamma_k$ is open, 
every point $p$ has a neighbourhood $V_{p}$ such that this inequality remains true when we replace $p$ by any  point $p' \in V_{p}$ and $f_{k}$ by any map which is $\varepsilon_{p}$-close to $f_{k}$ (for some positive $\varepsilon_{p}$). Thus, by compactness, the property expressed in the fact is open, and shared by the map $f$ as soon as the sequence
$(\varepsilon_{\ell})_{\ell > k}$ tends to $0$ fast enough.
As a consequence, for any $f$-invariant measure $\mu$ one has
$$\mu(\Gamma_{k}) \geq 1-\varepsilon_{k}.$$

For any positive integer $k_{0}$ we set
$$
\cC_{k_{0}} = \bigcap_{i \geq k_{0}}  \Gamma_{i}
\ \ \ \ \ \mbox{ and } \ 
\cC= \bigcup_{k_{0}\geq 0} \cC_{k_{0}}.
$$
For any $f$-invariant measure $\mu$,
the measure of $\cC_{k_0}$ is bounded from below by
$1-\sum_{k\geq k_0}\varepsilon_k$
and is positive if the sequence $(\varepsilon_{k})$ goes to $0$ fast enough;
hence $\cC$ has measure $1$.
Remember also that $\Gamma_{k}$ is a strip of thickness less than $1/k$.
Thus the intersection of $\cC$ with any vertical circle is empty or reduced to a point:
$\cC$ is a measurable graph over a set of full Lebesgue measure.
The unique ergodicity of the rotation $R_{\alpha}$ implies that $f$ is also uniquely ergodic, the only invariant measure being the measure $\mu$ defined by the formula
$$
\mu(E) = \mathrm{Leb}(\pi_{1}(E \cap \cC)).
$$
This completes the proof of theorem~\ref{t.exotic-rotation}.




\begin{thebibliography}{150}

\bibitem{AnoKat}
Anosov, Dmitri and Katok, Anatole.
New examples in smooth ergodic theory. Ergodic diffeomorphisms.
\textit{Transactions of the Moscow Mathematical Society}
\textbf{23} (1970), 1--35.


\bibitem{BegCroLeR}
B\'eguin, Fran\c cois; Crovisier, Sylvain and Le~Roux, Fr\'ed\'eric.
Construction of curious minimal uniquely ergodic homeomorphisms on manifolds: the Denjoy-Rees technique.
\textit{Ann. Sci. \'Ecole Norm. Sup.} \textbf{40} (2007), 251--308.


\bibitem{FayKat}
Fayad, Bassam and Katok, Anatole. 
Constructions in elliptic dynamics.  \emph{Ergod. Th.  Dyn. Sys.} \textbf{24}  (2004), 1477--1520.

\bibitem{Glasner}
Glasner, Eli. \textit{Ergodic theory via joinings.}
Mathematical Surveys and Monographs \textbf{101}. AMS, Providence (2003).

\bibitem{GlaWei}
 Glasner, Eli and Weiss, Benjamin. On the interplay between measurable and topological dynamics. \emph{Handbook of dynamical systems},
  Vol. 1B, 597--648, Elsevier, Amsterdam (2006).


\bibitem{Hansel-Raoult}
Hansel, Georges and Raoult, Jean-Pierre.
Ergodicity, uniformity and unique ergodicity.
\textit{Indiana Univ. Math. J.} \textbf{23} (1973/74), 221--237.


\bibitem{Jewett}
Jewett, Robert I. The prevalence of uniquely ergodic systems. \emph{J. Math. Mech.} \textbf{19} (1969/1970), 717--729. 


\bibitem{Kechris}
Kechris, Alexander S. \textit{Classical descriptive set theory.}
Graduate Texts in Mathematics \textbf{156}. Springer-Verlag, New York (1995).

\bibitem{Krieger}
Krieger, Wolfgang. On unique ergodicity. \emph{Proceedings of the Sixth Berkeley Symposium on Mathematical Statistics and Probability 1970/1971},
Vol. II: Probability theory,  327--346, Univ. California Press, Berkeley, Calif. (1972). 



\bibitem{Skl}
\v Sklover, M. D.
Classical dynamical systems on the torus with continuous spectrum. 
\textit{Izv. Vys\v s. U\v cebn. Zaved. Matematika} \textbf{10} (1967), 113--124. 

\bibitem{Weiss}
Weiss, Benjamin. Strictly ergodic models for dynamical systems.
\emph{Bull. Amer. Math. Soc. (N.S.)} \textbf{13} (1985), 143--146.



\end{thebibliography}
\end{document}